\numberwithin{equation}{section}
\theoremstyle{plain}
\newtheorem{theorem}[subsubsection]{Theorem}
\newtheorem{lemma}[subsubsection]{Lemma}
\newtheorem{prop}[subsubsection]{Proposition}
\newtheorem{cor}[subsubsection]{Corollary}
\newtheorem{conj}[subsubsection]{Conjecture}
\theoremstyle{definition}
\newtheorem{defn}[subsubsection]{Definition}
\newtheorem{remark}[subsubsection]{Remark}
\newtheorem{exam}[subsubsection]{Example}
\def\AA{\mathbb{A}}
\def\BB{\mathbb{B}}
\def\CC{\mathbb{C}}
\def\FF{\mathbb{F}}
\def\GG{\mathbb{G}}
\def\PP{\mathbb{P}}
\def\QQ{\mathbb{Q}}
\def\SS{\mathbb{S}}
\def\TT{\mathbb{T}}
\def\UU{\mathbb{U}}
\def\WW{\mathbb{W}}
\def\ZZ{\mathbb{Z}}
\def\calA{\mathcal{A}}
\def\calB{\mathcal{B}}
\def\calC{\mathcal{C}}
\def\calD{\mathcal{D}}
\def\calE{\mathcal{E}}
\def\calF{\mathcal{F}}
\def\calG{\mathcal{G}}
\def\calH{\mathcal{H}}
\def\calK{\mathcal{K}}
\def\calL{\mathcal{L}}
\def\calO{\mathcal{O}}
\def\calT{\mathcal{T}}
\def\calU{\mathcal{U}}
\def\calV{\mathcal{V}}
\def\calZ{\mathcal{Z}}
\def\bA{\mathbf{A}}
\def\bG{\mathbf{G}}
\def\bI{\mathbf{I}}
\def\bK{\mathbf{K}}
\def\bL{\mathbf{L}}
\def\bM{\mathbf{M}}
\def\bP{\mathbf{P}}
\def\bR{\mathbf{R}}
\def\bV{\mathbf{V}}
\newcommand\frI{\mathfrak{I}}
\newcommand\frT{\mathfrak{T}}
\newcommand\frg{\mathfrak{g}}
\newcommand\frt{\mathfrak{t}}
\newcommand\frh{\mathfrak{h}}
\newcommand\frk{\mathfrak{k}}
\renewcommand\frm{\mathfrak{m}}
\newcommand\tilW{\widetilde{W}}
\newcommand\tilA{\widetilde{A}}
\newcommand\tilL{\widetilde{L}}
\newcommand\tilX{\widetilde{X}}
\newcommand\tilv{\widetilde{v}}
\def\dG{\widehat{G}}
\def\dT{\widehat{T}}
\def\htheta{\widehat{\theta}}
\newcommand\ab{\textup{ab}}
\newcommand\an{\textup{an}}
\newcommand\AS{\textup{AS}}
\newcommand{\Bun}{\textup{Bun}}
\newcommand{\chk}{\textup{char}(k)}
\newcommand{\coker}{\textup{coker}}
\newcommand\cont{\textup{cont}}
\newcommand{\Cox}{\textup{Cox}}
\newcommand\cusp{\textup{cusp}}
\newcommand\ev{\textup{ev}}
\newcommand\fin{\textup{fin}}
\newcommand\Frob{\textup{Frob}}
\newcommand{\Fun}{\textup{Fun}}
\newcommand\Gal{\textup{Gal}}
\newcommand\geom{\textup{geom}}
\newcommand{\Gr}{\textup{Gr}}
\newcommand{\GR}{\textup{GR}}
\newcommand{\Herm}{\textup{Herm}}
\newcommand\Hk{\textup{Hk}}
\newcommand\IC{\textup{IC}}
\newcommand\id{\textup{id}}
\renewcommand{\Im}{\textup{Im}}
\newcommand{\Ind}{\textup{Ind}}
\newcommand\Kl{\textup{Kl}}
\newcommand\Lie{\textup{Lie}\ }
\newcommand\Loc{\textup{Loc}}
\newcommand{\Nm}{\textup{Nm}}
\newcommand\Out{\textup{Out}}
\newcommand\Perv{\textup{Perv}}
\newcommand{\Pic}{\textup{Pic}}
\newcommand\pt{\textup{pt}}
\newcommand\rank{\textup{rank}}
\newcommand{\red}{\textup{red}}
\newcommand\Rep{\textup{Rep}}
\newcommand{\Res}{\textup{Res}}
\newcommand\res{\textup{res}}
\newcommand\Rig{\textup{Rig}}
\newcommand\Spec{\textup{Spec}\ }
\newcommand\St{\textup{St}}
\newcommand\st{\textup{st}}
\newcommand\Sw{\textup{Sw}}
\newcommand\Sym{\textup{Sym}}
\newcommand{\Tr}{\textup{Tr}}
\newcommand\triv{\textup{triv}}
\newcommand{\univ}{\textup{univ}}
\newcommand{\Vect}{\textup{Vec}}
\newcommand\Aut{\textup{Aut}}
\newcommand\Hom{\textup{Hom}}
\newcommand\End{\textup{End}}
\newcommand\uAut{\underline{\Aut}}
\newcommand\uHom{\underline{\Hom}}
\newcommand{\Isom}{\textup{Isom}}
\newcommand\GL{\textup{GL}}
\newcommand\PGL{\textup{PGL}}
\newcommand\SL{\textup{SL}}
\renewcommand\sl{\mathfrak{sl}}
\newcommand\GU{\textup{GU}}
\newcommand\SO{\textup{SO}}
\newcommand\PSO{\textup{PSO}}
\newcommand\Sp{\textup{Sp}}
\newcommand{\Gm}{\GG_m}
\def\Ga{\GG_a}
\newcommand{\ad}{\textup{ad}}
\newcommand{\Ad}{\textup{Ad}}
\renewcommand\sc{\textup{sc}}
\newcommand{\der}{\textup{der}}
\newcommand\xch{\mathbb{X}^*}
\newcommand\xcoch{\mathbb{X}_*}
\newcommand{\incl}{\hookrightarrow}
\newcommand{\isom}{\stackrel{\sim}{\to}}
\newcommand{\surj}{\twoheadrightarrow}
\newcommand{\leftexp}[2]{{\vphantom{#2}}^{#1}{#2}}
\newcommand{\twtimes}[1]{\stackrel{#1}{\times}}
\newcommand{\jiao}[1]{\langle{#1}\rangle}
\newcommand{\wt}[1]{\widetilde{#1}}
\newcommand{\un}[1]{\underline{#1}}
\newcommand\nth{^{\textup{th}}}
\newcommand\upH{\textup{H}}
\newcommand\ur{\textup{ur}}
\newcommand{\oll}[1]{\overleftarrow{#1}}
\newcommand{\orr}[1]{\overrightarrow{#1}}
\renewcommand{\l}{\lambda}
\newcommand{\ep}{\epsilon}
\newcommand{\om}{\omega}
\newcommand{\Om}{\Omega}
\renewcommand{\L}{\Lambda}
\renewcommand{\a}{\alpha}
\renewcommand{\b}{\beta}
\newcommand{\cohog}[2]{\textup{H}^{#1}({#2})}     
\newcommand{\cohoc}[2]{\textup{H}_{c}^{#1}({#2})}     
\newcommand{\diag}{\textup{diag}}
\newcommand{\Wt}{\textup{Wt}}
\newcommand{\Tt}{\textup{Tt}}
\newcommand{\Sat}{\textup{Sat}}
\newcommand\wta{\widetilde{\alpha}}
\newcommand\GQ{\Gal(\overline{\QQ}/\QQ)}
\newcommand\Gk{\Gal(\overline{k}/k)}
\newcommand{\Ql}{\QQ_{\ell}}
\newcommand{\Zl}{\ZZ_{\ell}}
\newcommand\Qbar{\overline{\QQ}}
\newcommand{\Qlbar}{\overline{\QQ}_\ell}
\newcommand{\kbar}{\overline{k}}
\newcommand{\Fbar}{\overline{F}}
\newcommand{\xbar}{\overline{x}}
\newcommand\cG{\mathcal{G}}
\newcommand\cK{\mathcal{K}}
\newcommand\cZ{\mathcal{Z}}
\newcommand\cT{\mathcal{T}}
\newcommand\cL{\mathcal{L}}
\newcommand\cD{\mathcal{D}}
\newcommand\cE{\mathcal{E}}
\newcommand\cF{\mathcal{F}}
\newcommand{\cLoc}{\mathcal{L}oc}
\renewcommand{\top}{\textup{top}}
\newcommand\hZZ{\widehat{\ZZ}}
\newcommand\CS{\textup{CS}_{1}}
\newcommand\cCS{\mathcal{CS}_{1}}
\newcommand\AG{G(F)\backslash G(\AA_{F})}
\newcommand\AZ{Z(F)\backslash Z(\AA_{F})}
\renewcommand{\c}{\circ}
\newcommand{\onat}{\omega^{\natural}}
\newcommand{\n}{\natural}
\newcommand\AZn{Z(F)\backslash Z(\AA_{F})^{\n}}
\newcommand\Cov{\textup{Cov}}
\newcommand\Serre{\textup{Serre}}
\newcommand\coev{\textup{coev}}
\newcommand\frG{\mathfrak{G}}
\newcommand\tfrI{\widetilde{\mathfrak{I}}}
\newcommand{\oHk}{\leftexp{\c}\Hk}
\newcommand{\oHko}{\leftexp{\c}{\Hk}^{\c}}
\newcommand{\Hko}{\Hk^{\c}}
\newcommand{\oo}[1]{\leftexp{\c}{#1}^{\c}}
\newcommand{\oGR}{\leftexp{\c}\GR}
\newcommand{\tbM}{\wt{\bM}}
\newcommand{\Sram}{S-\textup{ram}}
\newcommand{\bAd}{\textup{Ad}^{\der}}
\title{Rigidity in automorphic representations and local systems}
\author{Zhiwei Yun}
\thanks{Supported by the Packard Foundation and the NSF grant DMS-1302071.}
\address{Department of Mathematics, Stanford University, 450 Serra Mall, Building 380, Stanford, CA 94305}
\email{zwyun@stanford.edu}
\date{}
\subjclass[2010]{Primary 11F70, 14D24; Secondary 14F05}
\keywords{Rigid local systems, Langlands correspondence, automorphic representations}
\begin{document}

\begin{abstract}
We introduce the notion of rigidity for automorphic representations of groups over global function fields. We construct the Langlands parameters of rigid automorphic representations explicitly as local systems over open curves. We expect these local systems to be rigid. Examples of rigid automorphic representations from previous work are reviewed and more examples for $\GL_{2}$ are discussed in details.
\end{abstract}

\maketitle

\tableofcontents

\section{Introduction}
In this article, we shall study rigid local systems over algebraic curves from the point of view of the Langlands correspondence.

\subsection{The goal} 
Fix a smooth,  projective and connected algebraic curve $X$ over an algebraically closed field $k$. Let $S\subset X$ be a finite set of closed points. A local system $\calF$ over $X-S$ is {\em physically rigid} if it is determined up to isomorphism by its local monodromy around points $x\in S$. A local system $\calF$ over $X-S$ is {\em cohomologically rigid} if $\cohog{1}{X,j_{*}\End^{\circ}(\calF)}=0$, where $\End^{\circ}(\calF)$ is the local system of trace-free endomorphisms of $\calF$, and $j_{*}$ means the sheaf (not derived) push-forward along $j:X-S\incl X$. These notions were defined and studied in depth by N. Katz \cite{Katz}. The main result of \cite{Katz} is an algorithmic description of tame local systems.  

We are interested in local systems in a broader sense. Let $H$ be a connected reductive algebraic group over $\Qlbar$. An $H$-local system on $U=X-S$ is a continuous homomorphism of the \'etale fundamental group $\pi_{1}(U,u)$ (for some base point $u$) into $H(\Qlbar)$. This specializes to the notion of rank $n$ local systems when $H=\GL_{n}$. Both notions of rigidity can be easily extended to $H$-local systems. Details are discussed in \S\ref{s:ls}.

We would like to construct many examples of $H$-local systems that are rigid, or, at least, expected to be rigid. The tool we use for our construction is the Langlands correspondence over function fields. 

Now let $X$ be a smooth, projective and geometrically connected curve over a finite field $k$. Let $G$ be a connected reductive group over the function field $F$ of $X$, which for simplicity is assumed to be split. In this case, Langlands philosophy predicts that there should be a finite-to-one correspondence $\pi\mapsto \rho_{\pi}$ from automorphic representations $\pi$ of $G(\AA_{F})$ to continuous representations $\rho: W_{F}\to\dG(\Qlbar)$, where $W_{F}$ is the Weil group of the function field $F$ and $\dG$ is the Langlands dual group to $G$. When the automorphic representation $\pi$ is unramified outside a finite set of places $S$, $\rho_{\pi}$ should also be unramified outside $S$, and the datum of $\rho_{\pi}$ is the same as a $\dG$-local system $\cF_{\pi}$ over the open curve $U=X-S$. The correspondence should satisfy the following property: for each closed point $x\notin S$ of $X$, the Satake parameter of the spherical representation $\pi_{x}$ of $G(F_{x})$ should coincide with the conjugacy class of $\rho_{\pi}(\Frob_{x})$.

The main strategy of our construction may be summarized as follows.
\begin{itemize}
\item There should be a notion of rigidity for automorphic representations of $G(\AA_{F})$. For a rigid automorphic representation $\pi$ of $G(\AA_{F})$, the corresponding $\dG$-local system $\cF_{\pi}$ should also be rigid.
\item Rigid automorphic representations should be {\em easier} to construct than rigid local systems. Once a rigid automorphic representations is known, there should be a way to construct the corresponding local system via the {\em geometric} Langlands correspondence. 
\end{itemize}

\subsection{Applications} 
Before describing how we implement these ideas, let us list a few applications of our construction. In fact, it is these applications that convinced the author that a systematic study of rigid local system from the point of view of Langlands correspondence was meaningful.

\subsubsection{Local systems with exceptional monodromy groups}
Deligne showed in \cite{Deligne-ExpSum} that the classical Kloosterman sums (or hyper Kloosterman sums) are obtained as the Frobenius trace function of a local system over $\PP^{1}_{\FF_{p}}-\{0,\infty\}$, the Kloosterman sheaf $\Kl_{n}$.  Katz showed that for $p>2$, the Zariski closure of the monodromy of $\Kl_{n}$ is either $\SL_{n}$ (when $n$ is odd) or $\Sp_{n}$ (when $n$ is even). In joint work with Heinloth and Ng\^o \cite{HNY}, we construct a $\dG$-local system $\Kl_{\dG}$ on $\PP^{1}_{\FF_{p}}-\{0,\infty\}$ for every almost simple group $\dG$. The local monodromy of $\Kl_{\dG}$ resembles that of the classical Kloosterman sheaf $\Kl_{n}$, and the Zariski closure of their global monodromy is a large subgroup of $\dG$. For example, when $\dG$ is of type  $E_{7}, E_{8}, F_{4}$ or $G_{2}$, the monodromy is Zariski dense in $\dG$. These give the first examples of motivic local systems with Zariski dense monodromy in exceptional groups other than $G_{2}$ (the $G_{2}$ case was constructed earlier by Katz \cite{Katz-DE}). In \cite{Y-GenKloo} we give further generalizations of Kloosterman sheaves.

Our construction in \cite{HNY} was inspired by the construction of simple supercuspidal representations by Gross and Reeder \cite{GR}, an observation of Gross \cite{Gross-Prescribe} on the global realization of such representations and the work of Frenkel and Gross \cite{FG} on rigid irregular connections. Gross showed in \cite{Gross-Prescribe} that when $G$ is simply-connected, the automorphic representations for $G$ over the rational function field $F=k(t)$ which is Steinberg at $0$ and simple supercuspidal at $\infty$ (and unramified elsewhere) should be unique. He then conjectures that when $G=\GL_{n}$, the Satake parameters of this automorphic representation should give the classical Kloosterman sums. Our work \cite{HNY} confirms this conjecture and generalizes it to other reductive groups.

\subsubsection{Motives over number fields with exceptional motivic Galois groups}\label{ss:Serre Q}
In early 1990s, Serre asked the following question \cite{Serre-motive}: Is there a motive over a number field whose motivic Galois group is of exceptional type such as $G_{2}$ or $E_{8}$?

A motive $M$ over a number field $K$ is, roughly speaking, part of the cohomology $\upH^{i}(X)$ for some (smooth projective) algebraic variety $X$ over $K$ and some integer $i$, which is cut out by geometric operations (such as group actions). The definition of the motivic Galois group of $M$ relies on the validity of standard conjectures in algebraic geometry. However, one can use the following alternative definition which is believed to give the same group. For each prime $\ell$, the motive $M$ has the associated $\ell$-adic cohomology $\upH_{\ell}(M)\subset \upH^{i}(X_{\overline{K}},\Ql)$, which admits a Galois action:
\begin{equation*}
\rho_{M,\ell}:\Gal(\overline{K}/K)\to \GL(\upH_{\ell}(M))
\end{equation*}
The {\em $\ell$-adic motivic Galois group } $G_{M,\ell}$ of $M$ is the Zariski closure of the image of $\rho_{M,\ell}$. This is an algebraic group over $\Ql$. Classical groups appear as $\ell$-adic motivic Galois groups of abelian varieties (see \cite{Milne}).  However, it is proved in \cite{Milne} that abelian varieties do not have exceptional motivic Galois groups; nor is there a Shimura variety of type $G_{2}$ or $E_{8}$. This is why Serre raised the question for exceptional groups, and remarked that it was ``plus hasardeuse''. Until recently, the only known case of Serre's question was $G_{2}$, by the work of Dettweiler and Reiter \cite{DR}.

In \cite{Y-motive}, we give a {\em uniform} construction of local systems on $\PP^{1}_{\QQ}-\{0,1,\infty\}$ with Zariski dense monodromy in exceptional groups $E_{7},E_{8}$ and $G_{2}$, which come from cohomology of families of varieties over $\PP^{1}-\{0,1,\infty\}$. As a consequence of this construction, we give an affirmative answer to the $\ell$-adic version of Serre's question for $E_{7}, E_{8}$ and $G_{2}$: these groups can be realized as the $\ell$-adic motivic Galois groups for motives over number fields (in fact the number field is either $\QQ$ or $\QQ(i)$). With a bit more work, one can also realize $F_{4}$ as a motivic Galois group over $\QQ$.

\subsubsection{Inverse Galois Problem}
The inverse Galois problem over $\QQ$ asks whether every finite group can be realized as the Galois group of some Galois extension $K/\QQ$. The problem is still open for many finite simple groups, especially those of Lie type.   The same rigid local systems over $\PP^{1}_{\QQ}-\{0,1,\infty\}$ constructed to answer Serre's question can be used to solve new cases of the inverse Galois problem. We show in \cite{Y-motive} that for sufficiently large primes $\ell$, the finite simple groups $G_{2}(\FF_{\ell})$ and $E_{8}(\FF_{\ell})$ can be realized as Galois groups over $\QQ$. With a bit more work, one can also prove the same statement for $F_{4}(\FF_{\ell})$. 

In inverse Galois theory, people use the ``rigidity method'' to prove certain finite groups $H$ are Galois groups over $\QQ$. The rigidity method is an analog of rigid local systems with finite monodromy group. Although the idea of rigidity has long been used in the inverse Galois theory, the connection with Langlands correspondence and automorphic forms has not been explored before. Our result shows that this connection can be useful in solving the inverse Galois problem, and it even sheds some light to the rigidity method itself. In fact, our construction of the local system over $\PP^{1}-\{0,1,\infty\}$ suggests a triple in $E_{8}(\FF_{\ell})$ which should be a rigid triple (see \cite[Conjecture 5.16]{Y-motive}). This has been confirmed by Guralnick and Malle \cite{GM}, where they used this triple to show that $E_{8}(\FF_{\ell})$ is a Galois group over $\QQ$ as long as $\ell\geq7$.

\subsection{Main results} 
\subsubsection{Rigid automorphic data} Let $X$ be a smooth, projective and geometrically connected curve over a finite field $k$. For simplicity we assume $G$ is split over $F$ and simply-connected. Fix a finite set $S$ of closed points of $X$. By an {\em automorphic datum} for $G$ with respect to $S$ we mean a triple $(\om,K_{S},\chi_{S})$ where $\om$ is a central character $\AZ\to\Qlbar^{\times}$, $K_{S}$ is a collection of compact open subgroups $K_{x}\subset G(F_{x})$ for each $x\in S$, and $\chi_{S}$ is a collection of characters $\chi_{x}:K_{x}\to\Qlbar^{\times}$. An automorphic representation $\pi$ of $G(\AA_{F})$ is called $(\om,K_{S},\chi_{S})$-typical if its central character is $\om$, it is unramified outside $S$ and for each $x\in S$, $\pi_{x}$ has an eigenvector under $K_{x}$ on which $K_{x}$ acts through the character $\chi_{x}$. 

We also introduce the notion of a {\em geometric automorphic datum} $(\Om,\bK_{S}, \cK_{S},\iota_{S})$ in \S\ref{ss:geom data}. The idea is to give more structure to an automorphic datum so that it makes sense to base change to an extension of the ground field $k$. In particular, we replace $K_{x}$ by a pro-algebraic subgroup $\bK_{x}$ of the loop group of $G$ at $x$, and replace the character $\chi_{x}$ by a rank one character sheaf $\cK_{x}$ on $\bK_{x}$.  The geometric automorphic datum $(\Om,\bK_{S},\cK_{S},\iota_{S})$ not only recovers the automorphic datum by the sheaf-to-function correspondence, but also gives an automorphic datum $(\om', K_{S'},\chi_{S'})$ for $G$ over $F\otimes_{k}k'$ for any finite extension $k'/k$. 

We introduce the notion of rigidity for a geometric automorphic datum $(\Om,\bK_{S},\cK_{S}, \iota_{S})$ in Definition \ref{def:geom rigid}. The datum $(\Om,\bK_{S},\cK_{S}, \iota_{S})$ is called {\em strongly rigid} if for every finite extension $k'/k$, there is a unique $(\om',K_{S'},\chi_{S'})$-typical automorphic representation $\pi'$ of $G(\AA_{F\otimes_{k}k'})$ for the automorphic datum $(\om',K_{S'},\chi_{S'})$ obtained from base change. We also introduce the notion of weak rigidity.  We expect that strong (resp. weak) rigidity of geometric automorphic data should correspond to the physical (resp. cohomological) rigidity of $\dG$-local systems under the Langlands correspondence. 

The first main result is Theorem \ref{th:wrigid}, which gives a sheaf-theoretic criterion for rigidity of geometric automorphic data, using ``relevant'' points on certain moduli stack $\Bun_{G}(\bK_{S})$ of $G$-torsors over $X$ with level structures.

\subsubsection{Construction of the local system} The Langlands correspondence $\pi\mapsto\cF_{\pi}$ for cuspidal automorphic representations $\pi$ has been established by recent work of V.Lafforgue \cite{VLaff}. However, the construction in \cite{VLaff} does not give an explicit description of the $\dG$-local system $\cF_{\pi}$. We would like to construct the $\dG$-local system $\cF_{\pi}$ explicitly in the rigid situation.

The main results in this direction are Theorem \ref{th:eigen} and Proposition \ref{p:desc eigen}. The former guarantees that under a certain rigidity assumption on the geometric automorphic datum $(\Om,\bK_{S},\cK_{S}, \iota_{S})$, the $\dG$-local system $\cF_{\pi}$ for an $(\om,K_{S}, \chi_{S})$-typical automorphic representation $\pi$ can be constructed. The latter gives a concrete description of $\cF_{\pi}$ as a direct summand of the direct image sheaf of a family of varieties over $X-S$. 

We also make a Conjecture \ref{conj:rigid} which relates the Artin conductor of $\cF_{\pi}$ at $x\in S$ and certain relative dimension of $\bK_{x}$ in a precise way. This conjecture has been verified for many examples in \S\ref{s:Kl} and \S\ref{s:3p}.

The above discussion is over-simplified. In the main body of the paper we consider quasi-split groups $G$ that are not assumed to be simply-connected. Complications arise in the general case because the moduli stack $\Bun_{G}(\bK_{S})$ has several connected components. The definition of rigidity for geometric automorphic data needs to be modified to take care of unramified twists of automorphic representations. In this generality, Theorem \ref{th:eigen} only guarantees the existence of $\cF_{\pi}$ as a $\dG$-local system in a weakened sense. With a bit extra structure we can construct an actual $\dG$-local system $\cF_{\pi}$ from $(\Om,\bK_{S},\cK_{S}, \iota_{S})$, see Theorem \ref{th:eigenvar}.

\subsubsection{The method} The method we use to construct $\dG$-local systems is the geometric Langlands correspondence, a program initiated by Drinfeld and Laumon. The geometric Langlands correspondence can be set up over a general field, and can be viewed as an upgraded version of the Langlands correspondence, in which functions are replaced by sheaves. The starting point is an observation of Weil which says that the double coset $\AG/\prod_{x\in |X|}G(\calO_{x})$ may be interpreted as the set of $G$-torsors over $X$. Let $\Bun_{G}$ be the moduli stack of $G$-torsors over $X$. By the sheaf-to-function correspondence of Grothendieck, $\Qlbar$-sheaves on $\Bun_{G}$ (for the \'etale topology) should be thought of as an upgraded version of automorphic forms, which are $\Qlbar$-valued functions on the automorphic space $\AG/\prod_{x\in|X|}G(\calO_{x})$ (in the everywhere unramified case). 

Given a geometric automorphic datum $(\Om,\bK_{S},\cK_{S}, \iota_{S})$, assuming each $\cK_{x}$ descends to a finite-dimesional quotient $\bL_{x}$ of $\bK_{x}$, then the automorphic sheaves for the situation are sheaves over $\Bun_{G}(\bK^{+}_{S})$ (the moduli stack of $G$-torsors over $X$ with $\bK^{+}_{x}=\ker(\bK_{x}\to\bL_{x})$ level structures at $x\in S$), together with certain equivariance structures dictated by $\Om$ and $\cK_{S}$. Such sheaves form a derived category $D_{G,\Om}(\bK_{S}, \cK_{S})$. Rigidity roughly means that the category $D_{G,\Om}(\bK_{S}, \cK_{S})$ contains a unique irreducible perverse sheaf $\calA$. 

The advantage of the geometric Langlands correspondence is that it allows us to apply geometric Hecke operators to $\calA$, and in this way we can construct the Hecke eigen $\dG$-local system $\cF$ explicitly. Details will be explained in \S\ref{ss:Hk}-\ref{ss:Hk eigen}.

\subsubsection{Examples}
There are three classes of examples of geometric automorphic data in the case $G=\GL_{2}$ that we work out in details at various places of the paper. The corresponding local systems in these examples are exactly the three types of hypergeometric sheaves of rank two constructed by Katz, of which the Kloosterman sheaf of rank two is a special case. We prove in \S\ref{ss:GL2} that these geometric automorphic data are strongly rigid. We show in \S\ref{ss:rigidlocGL2} that the hypergeometric sheaves of rank two essentially exhaust all rigid local systems of rank two on $\PP^{1}$ ramified at at least two points. In \S\ref{ss:desc eigen} we explain how these geometric automorphic data are related to hypergeometric sheaves via the geometric Hecke operators.

In \S\ref{s:Kl} and \S\ref{s:3p} we review the work \cite{HNY}, \cite{Y-GenKloo} and \cite{Y-motive}, which are the main examples that lead to the theory of rigid automorphic representations.

\subsection{Acknowledgment}
The author would like to thank the organizers of the Current Development of Mathematics conference held at Harvard University in November of 2013.

\section{Rigidity for automorphic representations}\label{s:auto}
\S\ref{ss:X}-\S\ref{ss:auto} contain background material on reductive groups and automorphic representations. \S\ref{ss:Weil}-\S\ref{ss:ff} provide background on the sheaf-theoretic interpretation of automorphic forms. More results on rank one character sheaves are contained in Appendix \ref{a:ch}. The key definitions appear in \S\ref{ss:geom data} and \S\ref{ss:rigid auto}, where we introduce geometric automorphic data and the notion of rigidity for them. The main result is Theorem \ref{th:wrigid} which gives a criterion for the rigidity of geometric automorphic data. Examples in $\GL_{2}$ are worked out in details in \S\ref{ss:GL2}.

\subsection{Function field}\label{ss:X} 
Let $k$ be a perfect field and fix an algebraic closure $\kbar$ of $k$. Let $X$ be a projective, smooth and geometrically connected curve $X$ over $k$. Let $F=k(X)$ be the field of rational functions on $X$. Let $\Gamma_{F}=\Gal(F^{s}/F)$ be the absolute Galois group of $F$, where $F^{s}$ is a separable closure of $F$. The field $F\otimes_{k}\kbar$ has absolute Galois group  $I_{F}:=\Gal(F^{s}_{\kbar}/F\otimes_{k}\kbar)\lhd\Gamma_{F}$ and $\Gamma_{F}/I_{F}\cong\Gk$. 

Let $|X|$ be the set of closed points of $X$. For each $x\in |X|$, let $F_{x}$ denote the completion of $F$ at the place $x$. The valuation ring and residue field of $F_{x}$ are denoted by $\calO_{x}$ and $k_{x}$. The maximal ideal of $\calO_{x}$ is denoted by $\frm_{x}$. The absolute Galois group of $F_{x}$ is denoted by $\Gamma_{x}$, inside of which we have the inertia group $I_{x}=\Gal(F^{s}_{x}/F^{\ur}_{x})\lhd\Gamma_{x}$ where $F^{\ur}_{x}$ is the maximal unramified extension of $F_{x}$ inside a separable closure $F^{s}_{x}$. The quotient group $\Gamma_{x}/I_{x}$ is $\Gal(\kbar/k_{x})$. Fixing an embedding $F^{s}\incl F^{s}_{x}$ for each $x$ gives an embedding of the Galois groups $\Gamma_{x}\incl\Gamma_{F}$ and $I_{x}\incl I_{F}$.

The ring of ad\`eles of $F$ is the restricted product
\begin{equation*}
\AA_{F}:={\prod_{x\in |X|}}'F_{x}
\end{equation*}
where for almost all $x$, the $x$-component of an element $a\in \AA_{F}$ lies in $\calO_{x}$. It is equipped with a natural topology: a neighborhood basis of $0$ is given by $K_{D}=\prod_{x\in|X|}\frm_{x}^{d_{x}}$ where $D=\sum_{x\in |X|}d_{x}\cdot x$ is an effective divisor on $X$.

When $k$ is a finite field,  $F$ is a global function field, and $|X|$ can be identified with the set of places of $F$. The ring of ad\`eles $\AA_{F}$ is locally compact. The Galois group $\Gk$ is topologically generated by the geometric Frobenius element $\Frob_{k}$. Recall the Weil group $W_{F}\subset \Gamma_{F}$ is the preimage of $\Frob^{\ZZ}_{k}$ under the quotient $\Gamma_{F}\surj\Gk$.

\subsection{Groups over a function field}  

\subsubsection{Tori} For any diagonalizable group $T$ over any field $K$, we denote the character and cocharacter lattices of $T\otimes_{K}\overline{K}$ by $\xch(T)$ and $\xcoch(T)$, where $\overline{K}$ is an algebraic closure of $K$.  They are discrete abelian groups with continuous actions of $\Gal(\overline{K}/K)$.

\subsubsection{Quasi-split groups}\label{sss:qsplit} We start with a split connected reductive group $\GG$ over $k$. Let $\ZZ\GG$ be the center of $\GG$ and $\GG^{\ad}=\GG/\ZZ\GG$ be the adjoint form of $\GG$. Fix a pinning of $\GG$, i.e., a split maximal torus $\TT$, a Borel subgroup $\BB$ containing $\TT$ (hence a based root system $\Phi\subset\xcoch(\TT)$ with simple roots $\Delta$) and an isomorphism $\UU_{\alpha}\cong\Ga$ for each simple root subgroup $\UU_{\alpha}\subset \GG$ ($\alpha\in\Delta$).  Let $\Aut^{\dagger}(\GG)$ be the pinned automorphism group of $\GG$, i.e., automorphisms of $\GG$ preserving the pinning. Then $\Aut^{\dagger}(\GG)$ is canonically isomorphic to the outer automorphism group $\Out(\GG)=\Aut(\GG)/\GG^{\ad}$.

Fix a homomorphism $\theta:\Gamma_{F}\to\Aut^{\dagger}(\GG)$. The image of $\theta$ is of the form $\Gamma=\Gal(F'/F)$ for a finite Galois extension $F'/F$ inside $F^{s}$. The extension of $F'/F$ corresponds to a Galois branched cover $\theta_{X}: X'\to X$ of $X$. We assume that {\em $\pi$ is tamely ramified}. Note that we allow base field extension, for example $X'$ can be $X\otimes_{k}k'$ for some finite extension $k'/k$. For any $F$-algebra $R$, we define
\begin{equation}\label{qs G}
G(R)=\{g\in \GG(F'\otimes_{F}R)|\leftexp{\gamma}{g}=\theta(\gamma)g, \forall\gamma\in\Gamma\}.
\end{equation}
Here $g\mapsto \leftexp{\gamma}{g}$ denotes the action of $\Gamma$ on $\GG(F'\otimes_{F}R)$ induced from its action on $F'$. The functor \eqref{qs G} is represented by a connected reductive group $G$ over $F$ which is a quasi-split form of $\GG$ and which becomes split over $F'$. 

The torus $\TT\subset \GG$ gives rise to a maximal torus $T\subset G$. We use $Z, G^{\der}, G^{\ad}, D$ to denote the center, the derived group, the adjoint quotient and the maximal torus quotient of $G$.

\subsubsection{The Langlands dual of $G$} Let $\dT=\xch(\TT)\otimes_{\ZZ}\GG_{m,\Qlbar}$ be a torus over $\Qlbar$. Let $\dG$ be the connected split reductive group over $\Qlbar$ with maximal torus $\dT$ and based root system $\Delta(\dG)\subset \Phi(\dG)$ identified with based coroot system $\Delta^{\vee}\subset\Phi^{\vee}$ of $\GG$ with respect to $\TT$. We extend the based root system of $\dG$ into a pinning, and denote the pinned automorphism of $\dG$ by $\Aut^{\dagger}(\dG)$. Then there is a canonical isomorphism  $\Aut^{\dagger}(\dG)\cong\Aut^{\dagger}(\GG)$. The homomorphism $\theta:\Gamma_{F}\to\Aut^{\dagger}(\GG)$ then induces $\hat{\theta}:\Gamma_{F}\to\Aut^{\dagger}(\dG)$ that factors through the finite quotient $\Gamma=\Gal(F'/F)$.

\subsubsection{Integral models of $G$}\label{sss:cG} An integral model of $G$ over $X$ is a smooth group scheme $\cG$ over $X$ together with an isomorphism of group schemes  $\iota:\cG|_{\Spec F}\isom G$ over $F$ (here we identify $\Spec F$ with the generic point of $X$). One can construct an integral model of $G$ as follows. Let $U_{\theta}=X-S_{\theta}$ where $S_{\theta}$ is the ramification locus of $\theta_{X}$. Then the same formula \eqref{qs G}, now with $R$ a commutative $k$-algebra with a morphism $\Spec R\to U_{\theta}$, gives a reductive group scheme $\cG_{U_{\theta}}$ over $U_{\theta}$. We then need to extend $\cG_{U_{\theta}}$ to a smooth group scheme  over $X$. One can do this locally at every closed point $x\in S_{\theta}$ and patch the results together. At $x\in S_{\theta}$, the theory of Bruhat and Tits gives several choices of smooth group schemes $\cG_{x}$ over $\Spec\calO_{x}$ extending $G|_{\Spec F_{x}}$ (the parahoric subgroups), and we may choose any of them. Once we choose a model $\cG$ for $G$, we can talk about $\cG(\calO_{x})$ for any $x\in|X|$. 

For the maximal torus $T$ of $G$ and the center $Z$ of $G$, we may use formulae similar to \eqref{qs G} to define their integral models $\calT$ and $\calZ$. Then $\calT$ is the finite-type version of the N\'eron model of $T$ over $X$. We make the following technical assumption (which may turn out to hold always, and which holds in all examples we consider):

{\em The group scheme $\cZ$ fits into an exact sequence $1\to \cZ\to \cT_{1}\to \cT_{2}\to1$, where $\cT_{1}$ and $\cT_{2}$ are smooth group schemes over $X$ whose generic fibers are tori.}

\subsubsection{Loop groups} For each $x\in |X|$, we may view $G(F_{x})$ as the set of $k$-points of a group indscheme $L_{x}G$ called {\em the loop group of $G$ at $x$}. The loop group of  $G$ at $x$ represents the functor $R\mapsto G(R\widehat{\otimes}_{k} F_{x})$ where $R$ is any $k$-algebra and $R\widehat{\otimes}_{k}F_{x}$ is the completion of $R\otimes_{k} F_{x}$ with respect to the $R\otimes\frm_{x}$-adic topology. Similarly we may define a pro-algebraic group over $k$ called the positive loop group $L^{+}_{x}\cG$, representing the functor $R\mapsto \cG(R\widehat{\otimes}_{k} \calO_{x})$. More generally, for a parahoric subgroup $\bP_{x}\subset G(F_{x})$ we may define the associated positive loop group which we still denote by $\bP_{x}$ (but viewed as a subgroup scheme of  $L_{x}G$).

{\bf Warning:} $L_{x}G$ and $L^{+}_{x}\cG$ are both over $k$ and not over $k_{x}$: alternatively we may define versions of loop groups over $k_{x}$ and take restriction of scalars to $k$. 

\subsection{Automorphic representations and automorphic data}\label{ss:auto} 
In this subsection we assume $k$ to be finite. We also fix a prime $\ell\neq\chk$, and choose an algebraic closure $\Qlbar$ of $\QQ_{\ell}$. We will consider automorphic representations of $G(\AA_{F})$ on $\Qlbar$-vector spaces. At this point one could replace $\Qlbar$ by $\CC$, but it will be convenient to use $\Qlbar$ directly because later we will relate automorphic forms with $\Qlbar$-sheaves. 

The group $G(\AA_{F})$ of $\AA_{F}$-points of $G$ is the restricted product of $G(F_{x})$ over $x\in|X|$ with respect to the subgroups $\cG(\calO_{x})$. Then $G(\AA_{F})$ is a locally compact topological group with a  neighborhood basis of the identity given by $\prod_{x\in S}K_{x}\times\prod_{x\notin S}\cG(\calO_{x})$, where $S\subset|X|$ is finite, $K_{x}\subset G(F_{x})$ is a compact open subgroup for all $x\in S$.  With this topology on $G(\AA_{F})$ we may talk about locally constant $\Qlbar$-valued functions on $G(\AA_{F})$. Let $C^{\infty}(\AG)$ be the space of locally constant $\Qlbar$-valued functions on $G(\AA_{F})$ that are left invariant under $G(F)$. By definition, we have
\begin{equation*}
C^{\infty}(\AG)=\varinjlim_{(S,\{K_{x}\}_{x\in S})}\Fun(\AG/(\prod_{x\in S}K_{x}\times\prod_{x\notin S}\cG(\calO_{x}))),
\end{equation*}
where $\Fun(-)$ means the space of $\Qlbar$-valued functions. The group $G(\AA_{F})$ acts on $C^{\infty}(\AG)$ via right translation: $(g\cdot f)(x)=f(xg)$ where $g\in G(\AA_{F}), f\in C^{\infty}(\AG)$ and $x\in\AG$.

\begin{defn}[See {\cite[Definition 5.8]{BorelJacquet}}]
\begin{enumerate}
\item A function $f \in C^{\infty}(\AG)$ is called an {\em automorphic form} if for some (equivalently any) $x\in |X|$, the $G(F_{x})$-module spanned by right $G(F_{x})$-translations of $f$ is admissible. Denote the space of automorphic forms by $\calA_{G}$. This is a $G(\AA_{F})$-module under right translation.
\item An {\em automorphic representation} of $G(\AA_{F})$ is an irreducible subquotient of $\calA_{G}$.
\end{enumerate}
\end{defn}

\subsubsection{Kottwitz homomorphism}\label{sss:Kott} For the local field $F_{x}$, Kottwitz defined a homomorphism \cite[\S7]{Kottwitz}
\begin{equation}\label{local Kott}
\kappa_{G,F_{x}}:G(F_{x})\to (\xch(Z\dG)_{I_x})^{\Frob_{x}}.
\end{equation}
Here  $Z\dG$ is the center of $\dG$, and $I_{x}\lhd\Gamma_{F}$ acts on $\dG$, and hence also on  $\xch(Z\dG)$ via $\hat{\theta}$. 
For the global field $F$ one can similarly define a homomorphism
\begin{equation}\label{Kott}
\kappa_{G,F}:G(\AA_{F})\to (\xch(Z\dG)_{I_{F}})^{\Frob_{k}}
\end{equation}
which is trivial on $G(F)$ and is compatible with \eqref{local Kott} in the obvious sense. Since \cite{Kottwitz} works in the local setting, we sketch the construction of \eqref{Kott} along the same lines.

To construct $\kappa_{G,F}$, we first work with $F\otimes_{k}\kbar$ instead of $F$ and try to construct $\kappa_{G,F\otimes_{k}\kbar}:G(\AA_{F\otimes_{k}\kbar})\to \xch(Z\dG)_{I_{F}}$, then take $\Frob_{k}$-invariants to get $\kappa_{G,F}$. 

In the sequel we assume $k=\kbar$. To construct $\kappa_{G,F}$, we follow these steps (see \cite[\S7.2-7.4]{Kottwitz}):
\begin{itemize}
\item  When $G=T$ is a torus,  it can be fit into an exact sequence $T_{1}\to T_{2}\to T\to1$ where $T_{1}$ and $T_{2}$ are induced tori and the corresponding sequence $T_{1}(F)\backslash T_{1}(\AA_{F})\to T_{2}(F)\backslash T_{2}(\AA_{F})\to T(F)\backslash T(\AA_{F})\to1$ is also exact. For an induced torus $T'=\Res^{E}_{F}\Gm$ (where $E$ is a finite Galois $F$-algebra), the map $\kappa_{T',F}: E^{\times}\backslash \AA^{\times}_{E}\to\ZZ=\ZZ[\Gal(E/F)]_{I_{F}}$ is given by the degree map on id\`eles. We then define $\kappa_{T,F}$ to be the unique arrow making the following diagram with exact rows commutative
\begin{equation*}
\xymatrix{T_{1}(F)\backslash T_{1}(\AA_{F})\ar[r]\ar[d]^{\kappa_{T_{1}, F}} & T_{2}(F)\backslash T_{2}(\AA_{F})\ar[r]\ar[d]^{\kappa_{T_{2}, F}} & T(F)\backslash T(\AA_{F})\ar[r]\ar[d]^{\kappa_{T, F}} & 1\\
\xcoch(T_{1})_{I_{F}}\ar[r] & \xcoch(T_{2})_{I_{F}}\ar[r] & \xcoch(T)_{I_{F}}\ar[r] & 0}
\end{equation*}

\item When $G^{\der}$ is simply-connected, $Z\dG$ is a torus and $\xch(Z\dG)=\xcoch(D)$. The map $\kappa_{G,F}$ is defined as the composition of $G(\AA_{F})\to D(\AA_{F})$ and $\kappa_{D, F}$.

\item In general, one can find a central extension $1\to C\to \wt{G}\to G\to1$ such that $\wt{G}^{\der}$ is simply-connected and $C$ is a torus. We have already defined $\kappa_{C,F}$ and $\kappa_{\wt{G},F}$ in the previous steps. We define $\kappa_{G, F}$ to be the unique arrow making the following diagram with exact rows commutative
\begin{equation*}
\xymatrix{C(\AA_{F})\ar[r]\ar[d]^{\kappa_{C, F}} & \wt{G}(\AA_{F})\ar[r]\ar[d]^{\kappa_{\wt{G}, F}} & G(\AA_{F})\ar[r]\ar[d]^{\kappa_{G, F}} & 1\\
\xcoch(\widehat{C})_{I_{F}}\ar[r] & \xcoch(Z\widehat{\wt{G}})_{I_{F}}\ar[r] & \xcoch(Z\dG)_{I_{F}}\ar[r] & 0}
\end{equation*}
\end{itemize}

\begin{defn} An {\em unramified character} of $G(\AA_{F})$ is  a smooth character $\chi:G(\AA_{F})\to \Qlbar^{\times}$ that factors through $\kappa_{G, F}$. Given an irreducible representation $\pi$ of $G(\AA_{F})$ and an unramified character $\chi$ of $G(\AA_{F})$, we obtain another irreducible representation $\pi\otimes\chi$, called an {\em unramified twist of $\pi$}.
\end{defn}

\begin{remark} Unramified characters of $G(\AA_{F})$ are in bijection with $(Z\dG)^{I_{F}}_{\Frob_{k}}$. Langlands correspondence for function fields predicts that to an automorphic representation $\pi$ of $G(\AA_{F})$ one should attach a  continuous cocycle $\rho:W_{F}\to \dG(\Qlbar)$ (with respect to the action $\htheta$ of $\Gamma_{F}$ on $\dG$) up to $\dG$-conjugacy, i.e., a class in $\upH^{1}_{\cont}(W_{F}, \dG)$. Since $W_{F}/I_{F}=\Frob^{\ZZ}_{k}$, there is a natural map $(Z\dG)^{I_{F}}_{\Frob_{k}}=\upH^{1}(\Frob^{\ZZ}_{k}, (Z\dG)^{I_{F}})\to\upH^{1}_{\cont}(W_{F}, Z\dG)$, and the latter acts on $\upH^{1}_{\cont}(W_{F}, \dG)$. Therefore there is a natural action of $(Z\dG)^{I_{F}}_{\Frob_{k}}$ on $\upH^{1}_{\cont}(W_{F}, \dG)$. The Langlands correspondence is expected to intertwine unramified twists of automorphic representations and the action of $(Z\dG)^{I_{F}}_{\Frob_{k}}$ on $\upH^{1}_{\cont}(W_{F}, \dG)$.
\end{remark}

\subsubsection{Central characters and restricted central characters} Let $Z\subset G$ be the center. For a continuous character $\omega:Z(F)\backslash Z(\AA_{F})\to\Qlbar^{\times}$, let $C^{\infty}_{\omega}(\AG)\subset C^{\infty}(\AG)$ be the subspace on which $Z(\AA_{F})$ acts through $\omega$ by right translation. Similarly let $\calA_{G,\omega}=\calA_{G}\cap C^{\infty}_{\omega}(\AG)$.

Let $Z(\AA_{F})^{\n}\subset Z(\AA_{F})$ be the kernel of the composition $Z(\AA_{F})\to G(\AA_{F})\xrightarrow{\kappa_{G,F}}\xch(Z\dG)^{\Frob_{k}}_{I_{F}}$. For each $x\in|X|$, let $Z(F_{x})^{\n}\subset Z(F_{x})$ be the kernel of $Z(F_{x})\to G(F_{x})\xrightarrow{\kappa_{G,F_{x}}}\xch(Z\dG)^{\Frob_{x}}_{I_{x}}$. Then $Z(F_{x})^{\n}$ is a compact open subgroup of $Z(F_{x})$, and for almost all $x\in |X|$, $Z(F_{x})^{\n}=\cZ(\calO_{x})$. Therefore, 
for sufficiently large $S$ and sufficiently small compact open subgroup $K_{Z, S}\subset \prod_{x\in S}Z(F_{x})$, $\prod_{x\notin S}\cZ(\calO_{x})\times K_{Z,S}$ is contained in $(\AG)^{\n}$ and the quotient $\AZn/\prod_{x\notin S}\cZ(\calO_{x})\times K_{Z,S}$ is finite. We call a continuous character $\onat:\AZ^{\natural}\to\Qlbar^{\times}$ a {\em restricted central character}. We can similarly define the spaces $C^{\infty}_{\onat}(\AG)$ and $\calA_{G,\onat}$, which are stable under unramified twists of $G(\AA_{F})$. For a restricted central character $\onat$, we may restrict it to $Z(F_{x})^{\n}$ and get a continuous character $\onat_{x}:Z(F_{x})^{\n}\to\Qlbar^{\times}$.

\subsubsection{Cusp forms} A function $f\in C^{\infty}(\AG)$ is called a {\em cusp form} if for every parabolic subgroup $P\subset G$ defined over $F$ with unipotent radical $N_{P}$, we have
\begin{equation*}
\int_{N_{P}(F)\backslash N_{P}(\AA_{F})}f(ng)dn=0
\end{equation*}
for all $g\in G(\AA_{F})$. For a central character $\omega$, the space of cusp forms in $\calA_{G,\omega}$ form a sub-$G(\AA_{F})$-module $\calA^{\cusp}_{G,\omega}\subset\calA_{G,\omega}$. It is known that $\calA^{\cusp}_{G,\omega}$ decomposes discretely into a direct sum of irreducible admissible $G(\AA_{F})$-modules, called {\em cuspidal automorphic representations}.

\begin{defn}\label{def:auto data} Let $S\subset |X|$ be finite. 
\begin{enumerate}
\item An {\em automorphic datum} for $G$ is  a triple $(\omega, K_{S}, \chi_{S})$ where
\begin{itemize}
\item $\omega:\AZ\to\Qlbar^{\times}$ is a central character;
\item $K_{S}$ is a collection of compact open subgroups $K_{x}\subset G(F_{x})$, one for each $x\in S$;
\item $\chi_{S}$ is a collection of smooth characters $\chi_{x}: K_{x}\to\Qlbar^{\times}$, one for each $x\in S$.
\end{itemize}
Such a triple should satisfy the following compatibility conditions.
\begin{itemize}
\item For each $x\in S$, $\omega_{x}|_{Z(F_{x})\cap K_{x}}=\chi_{x}|_{Z(F_{x})\cap K_{x}}$;
\item For each $x\notin S$, $\omega_{x}$ is trivial on $Z(F_{x})\cap \cG(\calO_{x})$.
\end{itemize}
\item A {\em restricted automorphic datum} is a triple $(\onat, K_{S},\chi_{S})$, where $K_{S}$ and $\chi_{S}$ are as above, $\onat:\AZn\to\Qlbar^{\times}$ is a restricted central character, and the compatibility conditions become
\begin{itemize}
\item For each $x\in S$, $\onat_{x}|_{Z(F_{x})^{\n}\cap K_{x}}=\chi_{x}|_{Z(F_{x})^{\n}\cap K_{x}}$;
\item For each $x\notin S$, $\onat_{x}$ is trivial on $Z(F_{x})^{\n}\cap \cG(\calO_{x})=Z(F_{x})\cap \cG(\calO_{x})$.
\end{itemize}
\end{enumerate}
\end{defn}

\begin{defn} Let $(\omega, K_{S},\chi_{S})$ be an automorphic datum. An automorphic representation $\pi=\otimes'_{x}\pi_{x}$ of $G(\AA_{F})$ is called {\em $(\omega, K_{S},\chi_{S})$-typical} if
\begin{itemize}
\item The central character of $\pi$ is $\omega$; 
\item For each $x\in S$, the local component $\pi_{x}$ contains a nonzero vector on which $K_{x}$ acts through the character $\chi_{x}$;
\item For each $x\notin S$, $\pi_{x}$ is spherical; i.e., $\pi_{x}^{\calG(\calO_{x})}\neq0$.
\end{itemize}
When $(\onat, K_{S},\chi_{S})$ is a restricted automorphic datum, we define {\em $(\onat,K_{S},\chi_{S})$-typical automorphic representations} in a similar way, except that the first condition changes to that the central character of $\pi$ restricted to $\AZn$ is $\onat$. Similarly, we define the notion of {\em $(K_{S},\chi_{S})$-typical automorphic representations} by dropping the central character condition.
\end{defn}

Let
\begin{equation}\label{Kchi fun}
C_{\calG,\omega}(K_{S},\chi_{S}):=C_{\omega}(\AG/\prod_{x\notin S}\calG(\calO_{x})\times\prod_{x\in S}(K_{x},\chi_{x})).
\end{equation}
denote the space of functions on $\AG$ which are eigenvectors under $Z(\AA_{F})$ with eigenvalue $\omega$,  invariant under $\prod_{x\notin S}\calG(\calO_{x})$ and are eigenvectors under the action of each $K_{x}$ with eigenvalue $\chi_{x}$, $x\in S$. Similarly one defines $C_{\calG,\omega^{\n}}(K_{S},\chi_{S})$ for a restricted automorphic datum $(\om^{\n}, K_{S},\chi_{S})$.

\begin{lemma}\label{l:cusp} Let $(\omega, K_{S}, \chi_{S})$ be an automorphic datum. If there is a subset $\Sigma\subset \AG$, compact modulo $Z(\AA_{F})$, such that all functions in $C_{\calG,\omega}(K_{S},\chi_{S})$ vanish outside $\Sigma$, then any $(\omega, K_{S}, \chi_{S})$-typical automorphic representation $\pi$ of $G(\AA_{F})$ is cuspidal. Moreover,
\begin{equation}\label{Csum}
C_{\cG,\omega}(K_{S},\chi_{S})=\bigoplus_{\pi}(\bigotimes_{x\in S}\pi^{(K_{x},\chi_{x})}_{x})\otimes(\bigotimes_{x\notin S}\pi^{\cG(\calO_{x})}_{x})
\end{equation}
where $\pi$ runs over $(\omega, K_{S}, \chi_{S})$-typical automorphic representations, and $\pi^{(K_{x},\chi_{x})}_{x}$ denotes the eigenspace of $\pi_{x}$ under $K_{x}$ on which it acts through $\chi_{x}$.
\end{lemma}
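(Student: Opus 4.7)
The plan is to reduce both assertions to the classical function-field fact that any smooth $\omega$-equivariant function on $\AG$ with support compact modulo $Z(\AA_{F})$ is automatically a cusp form.

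\textbf{Step 1.} First establish the inclusion $C_{\cG,\omega}(K_{S},\chi_{S}) \subset \calA^{\cusp}_{G,\omega}$. Every $f \in C_{\cG,\omega}(K_{S},\chi_{S})$ is smooth, right-invariant under a compact open subgroup of $G(\AA_{F})$, has central character $\omega$, and is supported in $\Sigma$, compact modulo $Z(\AA_{F})$. The function-field version of Harder's theorem guarantees that such an $f$ is a cusp form. The underlying mechanism is reduction theory: for a proper parabolic $P=MN$ of $G$ over $F$, when $a\in M(\AA_{F})$ lies sufficiently deep in the positive chamber of a fixed Siegel domain, conjugation by $a$ enlarges the right-invariance group of $f$ enough that $f(nak) = f(ak)$ for all $n \in N(\AA_{F})$, so $f_{N}(ak) = f(ak)$ after averaging over the compact quotient $N(F)\backslash N(\AA_{F})$. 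The support hypothesis forces $f(ak) = 0$ far out in the chamber, hence $f_{N}(ak) = 0$ there; left-$P(F)$-quasi-invariance of $f_{N}$ then propagates the vanishing to all of $G(\AA_{F})$.

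\textbf{Step 2.} Recall (from the discussion just before the lemma) that $\calA^{\cusp}_{G,\omega}$ decomposes as a direct sum of irreducible cuspidal automorphic representations. Extracting the subspace on which $\prod_{x\notin S}\cG(\calO_{x})$ acts trivially and each $K_{x}$ acts through $\chi_{x}$ ($x \in S$), and combining with Step~1, one obtains
\begin{equation*}
C_{\cG,\omega}(K_{S},\chi_{S}) = \bigoplus_{\pi}\Big(\bigotimes_{x\in S}\pi_{x}^{(K_{x},\chi_{x})}\Big)\otimes\Big(\bigotimes_{x\notin S}\pi_{x}^{\cG(\calO_{x})}\Big),
\end{equation*}
via Flath's factorization $\pi = \otimes'_{x}\pi_{x}$, where $\pi$ runs over cuspidal subrepresentations of $\calA^{\cusp}_{G,\omega}$. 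The nonzero summands are precisely the $(\omega,K_{S},\chi_{S})$-typical ones, which yields \eqref{Csum}.

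\textbf{Step 3.} Finally, an arbitrary $(\omega,K_{S},\chi_{S})$-typical $\pi$ is a priori only a subquotient $W_{2}/W_{1}$ of $\calA_{G,\omega}$. Its typical part lifts to the typical part of $W_{2}$, which lies in $C_{\cG,\omega}(K_{S},\chi_{S}) \subset \calA^{\cusp}_{G,\omega}$ by Step~1. The $G(\AA_{F})$-submodule of $W_{2}$ generated by this typical part lies in $\calA^{\cusp}_{G,\omega}$ and surjects onto $\pi$ by irreducibility; the semisimplicity of $\calA^{\cusp}_{G,\omega}$ then realizes $\pi$ as a direct summand, so $\pi$ is cuspidal. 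The main obstacle is Step~1, namely the function-field reduction-theoretic input that cuspidality follows from compact support modulo the center; given that, Steps~2 and~3 are formal consequences of the semisimple decomposition of the cuspidal spectrum.
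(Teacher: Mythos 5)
Your Steps 2 and 3 are fine and essentially reproduce the paper's formal reduction of both assertions to the inclusion $C_{\cG,\omega}(K_{S},\chi_{S})\subset\calA^{\cusp}_{G,\omega}$, but Step 1 --- the only non-formal step --- rests on a false principle. Over a function field, compact support modulo $Z(\AA_{F})$ does \emph{not} by itself imply cuspidality; Harder's theorem is the converse implication (cusp forms have compact support mod center). Concretely, for $G=\GL_{2}$ the characteristic function of the single double coset $G(F)\cdot\prod_{x}\GL_{2}(\calO_{x})\cdot Z(\AA_{F})$ (the trivial bundle in $\Bun_{2}$, up to central twists) is smooth, has trivial central character and compact support mod center, yet its constant term along the Borel is strictly positive at the identity, because the function is nonnegative and the set of $n\in N(\AA_{F})$ lying in $\prod_{x}\GL_{2}(\calO_{x})$ has positive measure in $N(F)\backslash N(\AA_{F})$; so it is not a cusp form. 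The precise point where your argument breaks is the final clause of Step 1: the constant term $f_{N}$ is left invariant under $N(\AA_{F})M(F)$ and right invariant under a compact open subgroup, and knowing it vanishes at $ak$ for $a$ deep in the positive chamber gives no control at other points of $M(F)\backslash M(\AA_{F})$; $M(F)$-invariance cannot move a deep chamber point to an arbitrary point, and there is no analytic continuation available for smooth functions on a totally disconnected group. What you actually prove is only that $f$ agrees with its constant terms far out in each cusp and that both vanish there, which is just a restatement of the compact support.

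The hypothesis of the lemma is stronger than ``each $f$ has compact support'': it says the \emph{whole space} $C_{\cG,\omega}(K_{S},\chi_{S})$ is supported in one fixed $\Sigma$, hence is finite dimensional; moreover this space is stable under the spherical Hecke algebras at all $x\notin S$, since its defining conditions are (eigen-)invariance conditions at the places of $S$ and at the center, which commute with those Hecke operators. These two facts are the essential inputs, and the paper deduces cuspidality from them by invoking V.~Lafforgue's Lemme 7.16, a criterion saying that compactly supported functions spanning a finite-dimensional space stable under the unramified Hecke algebras are cuspidal (morally, a Hecke operator at an unramified place shifts the support of the constant term unboundedly in the central direction of the Levi, which is incompatible with finite dimensionality unless the constant terms vanish). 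Your proposal never uses finite-dimensionality or Hecke stability, so Step 1 cannot be repaired as written; you must either quote such a Hecke-finiteness criterion or supply an argument of that type.
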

\begin{proof}
If every function in $C_{\calG,\omega}(K_{S},\chi_{S})$ is supported on some $\Sigma$ compact modulo $Z(\AA_{F})$, then the space $C_{\calG,\omega}(K_{S},\chi_{S})\subset\Fun(\AG/\prod_{x\notin S}\cG(\calO_{x})\times\prod_{x\in S}\ker(\chi_{x}))$ is finite-dimensional and stable under the action of the spherical Hecke algebra at all places $x\notin S$. Then we appeal to \cite[Lemme 7.16]{VLaff} to conclude that $C_{\calG,\omega}(K_{S},\chi_{S})$ consists of cusp forms. 

Now let $\pi$ be a $(\omega,K_{S}, \chi_{S})$-typical automorphic representation. By definition $\pi$ is a subquotient of the space $\calA_{G}$, therefore we may assume $\pi$ is a quotient of a $G(\AA_{F})$-submodule $\calA'\subset \calA_{G,\omega}$. Let $\calB=\calA'\cap C_{\calG,\omega}(K_{S},\chi_{S})$, then $\calB$ consists of cusp forms. By the semisimplicity of the action of the compact group   $\prod_{x\notin S}\cG(\calO_{x})\times\prod_{x\in S}K_{x}$, we have $\calB\surj (\otimes_{x\in S}\pi_{x}^{(K_{x},\chi_{x})})\otimes(\otimes_{x\notin S}\pi_{x}^{\cG(\calO_{x})})$. The $G(\AA_{F})$-module $\wt{\calB}$ generated by $\calB$ is then contained in the space of cusp forms, and we have $\wt{\calB}\surj \pi$. Therefore $\pi$ is also cuspidal. The equality \eqref{Csum} follows from the fact that the cuspidal spectrum is discrete.
\end{proof}

\subsection{Moduli of $G$-torsors}\label{ss:Weil} 
A.Weil observed that the double coset $\AG/\prod_{x\in |X|}\cG(\calO_{x})$ may be interpreted as the set of $k$-points of a certain moduli stack. We recall this interpretation in this subsection. 

\subsubsection{Moduli of $\cG$-torsors} Let $k$ be any field. For an integral model $\cG$ of $G$ over $X$, we may talk about $\cG$-torsors on $X$: these are schemes $\calE\to X$ with a right action of $\cG$ such that \'etale locally over $X$, $\calE$ is $\cG$-equivariantly isomorphic to $\cG$ for the right translation of $\cG$ on itself. Similarly one can define $\cG$-torsors over $X\otimes_{k}R$ for any $k$-algebra $R$. Let $\Bun_{\cG}$ be the moduli stack of $\cG$-torsors over $X$, i.e., $\Bun_{\cG}(R)$ is the groupoid of $\cG$-torsors over $X\otimes_{k}R$. Then $\Bun_{\cG}$ is an algebraic stack locally of finite type.

\begin{exam} When $G=\GL_{n}$, there is a natural way to assign a vector bundle $\calV$ of rank $n$ to a $\GL_{n}$-torsor $\calE$ and vice versa: $\calV=\calE\twtimes{\GL_{n}}\AA^{n}_{k}$ and $\calE=\underline{\Isom}(\calO_{X}^{n},\calV)$. Therefore we have an isomorphism of stacks 
\begin{equation*}
\Bun_{\GL_{n}}\cong \Bun_{n}
\end{equation*}
the latter being the moduli stack of vector bundles of rank $n$ on $X$.  In particular, for $n=1$, $\Bun_{\GL_{1}}\cong\Bun_{1}\cong\Pic_{X}$.

Similarly, $\Bun_{\SL_{n}}$ is equivalent to the groupoid of pairs $(\calV,\iota)$ where $\calV$ is a vector bundle of rank $n$ on $X$ and $\iota:\wedge^{n}\calV\isom\calO_{X}$ is a trivialization of the determinant of $\calV$. 

When $G=\PGL_{n}$, $\Bun_{\PGL_{n}}$ is isomorphic to the quotient stack $\Bun_{n}/\Pic_{X}$ with $\Pic_{X}$ acting on $\Bun_{n}$ via tensor product. The content of this statement is Tsen's theorem on the vanishing of the Brauer group for curves over an algebraically closed field.
\end{exam}

\subsubsection{The set $\upH^{1}_{\cG}(F,G)$} For each $x\in |X|$, we have a map $\upH^{1}(\Spec\calO_{x}, \cG)\to\upH^{1}(F_{x}, G)$ obtained by restricting a $\cG$-torsor over $\Spec\calO_{x}$ to $\Spec F_{x}$. Define  $\upH^{1}_{\cG}(F,G)\subset\upH^{1}(F,G)$ using the Cartesian diagram
\begin{equation*}
\xymatrix{\upH^{1}_{\cG}(F,G)\ar[r]\ar@{^{(}->}[d] & \prod_{x\in|X|}\Im(\upH^{1}(\Spec\calO_{x}, \cG)\to\upH^{1}(F_{x},G))\ar@{^{(}->}[d] \\
\upH^{1}(F,G)\ar[r] & \prod_{x\in|X|}\upH^{1}(F_{x},G)}
\end{equation*}
Note that $\upH^{1}(F\otimes_{k}\kbar, G)$ hence $\upH^{1}_{\cG}(F\otimes_{k}\kbar,G)$ is a singleton (see \cite[\S8.6]{BS}, \cite[II, 2.3, Remarque 1)]{Serre-Coho}). When $k$ is finite and $\cG$ has connected geometric fibers, $\upH^{1}(\Spec\calO_{x}, \cG)$ is trivial and $\upH^{1}_{\cG}(F,G)$ is the same as $\ker^{1}(F,G):=\ker(\upH^{1}(F,G)\to\prod_{x\in|X|}\upH^{1}(F_{x}, G))$. It is also known that when $G$ is split and $k$ is finite, $\ker^{1}(F,G)$ is a singleton (\cite[Remark 7.13]{VLaff}). In general, $\upH^{1}_{\cG}(F,G)$ measures how a $\cG$-torsor over $X$ can fail to be trivializable at the generic point of $X$. Any $\cG$-torsor $\calE$ over $X$ determines a class in $\upH^{1}_{\cG}(F,G)$ by restricting it to the generic point $\eta\in X$. 

For each class $\xi\in\upH^{1}_{\cG}(F,G)$, we choose a $\cG$-torsor $\calE_{\xi}$ with generic class $\xi$ (this is possible since the class $\xi$ gives a $\cG$-torsor $\calE_{U}$ over some nonempty Zariski open subset $U\subset X$ and we can extend $\calE_{U}$ across the missing points $x\in X-U$ because the restriction of $\xi$ to $\upH^{1}(F_{x}, G)$ lies in the image of $\upH^{1}(\Spec\calO_{x}, \cG)$). Define $\cG_{\xi}=\uAut_{\cG}(\calE_{\xi})$, the group scheme over $X$ of $\cG$-automorphisms of $\calE_{\xi}$. This is an inner form of $\cG$. We denote by $G_{\xi}$ the generic fiber of $\cG_{\xi}$, which is an inner form of $G$ over $F$.

\subsubsection{Weil's interpretation} Weil observed a natural bijection of groupoids
\begin{equation}\label{Weil e}
e:\bigsqcup_{\xi\in\upH^{1}_{\cG}(F,G)}G_{\xi}(F)\backslash G_{\xi}(\AA_{F})/\prod_{x\in|X|}\cG_{\xi}(\calO_{x})\isom\Bun_{\cG}(k).
\end{equation}
In other words, not only are the isomorphism classes of both sides in bijection, but for any coset $g=(g_{x})\in\prod'_{x\in |X|}G_{\xi}(F_{x})/\cG_{\xi}(\calO_{x})$, the automorphism group of $e(g)$ (as a $\cG_{\xi}$-torsor) is isomorphic to the stabilizer of $g$ under the left action of  $G(F)$.

We give the definition of the map $e$ on the part where $\xi$ is the trivial class, i.e., $\AG/\prod_{x\in|X|}\cG(\calO_{x})\to\Bun_{\cG}(k)$. We have
\begin{equation*}
G(F)\backslash G(\AA_{F})/\prod_{x\in |X|}\cG(\calO_{x})=\bigcup_{S\subset|X|,\textup{finite}}\cG(\calO_{X-S})\backslash \prod_{x\in S}G(F_{x})/\cG(\calO_{x}).
\end{equation*}
Thus it suffices to construct maps
\begin{equation*}
e_{S}:\cG(\calO_{X-S})\backslash \prod_{x\in S}G(F_{x})/\cG(\calO_{x})\to \Bun_{G}(k)
\end{equation*}
compatible with inclusions $S\incl S'$. To $g=(g_{x})\in \prod _{x\in S}G(F_{x})$, we assign a $\cG$-torsor $\calE_{g}$ by gluing the trivial torsor $\calE^{\triv}_{X-S}=\cG|_{X-S}$ over $X-S$ with the trivial $\cG$-torsor $\calE^{\triv}_{x}=\cG|_{\Spec\calO_{x}}$ over $\Spec\calO_{x}$ for each $x\in S$ along $\Spec F_{x}$. The gluing datum at $x$ is given by the isomorphism $G|_{\Spec F_{x}}=\calE^{\triv}_{x}|_{\Spec F_{x}}\isom\calE^{\triv}_{X-S}|_{\Spec F_{x}}=G|_{\Spec F_{x}}$ which is given by left multiplication by $g_{x}$. Changing the trivializations of $\calE^{\triv}_{X-S}$ and $\calE^{\triv}_{x}$ results in a right multiplication of $g_{x}$ by an element in $\cG(\calO_{x})$ and left multiplication by an element in $\cG(\calO_{X-S})$. Thus the isomorphism type of the resulting torsor $\cG$-torsor $\calE_{g}$ only depends on the double coset $\cG(\calO_{X-S})g\prod_{x\in S}\cG(\calO_{x})$. This defines the map $e_{S}$.

Working with the base field $\kbar$, noting that $\upH^{1}(F\otimes_{k}\kbar, G)$ vanishes, \eqref{Weil e} becomes an equivalence of groupoids
\begin{equation}\label{Weil e kbar}
G(F\otimes_{k}\kbar)\backslash G(\AA_{F\otimes_{k}\kbar})/\prod_{x\in X(\kbar)} \calG(\calO^{\ur}_{x})\cong\Bun_{\cG}(\kbar).
\end{equation}

\begin{exam}\label{ex:unitary} Unitary similitude groups. Assume $\textup{char}(k)\neq2$. Let $\pi:X'\to X$ be a double cover (possibly ramified) of $X$ which is generically \'etale with $X'$ smooth. Let $\sigma\in\Aut_{X}(X')$ be the nontrivial involution. Fix a line bundle $\calL$ on $X$. An $\calL$-Hermitian vector bundle of rank $n$ on $X'$ is a pair $(\calE',h)$ where  $\calE'$ is a vector bundle of rank $n$ on $X'$ and $h$ is a pairing
\begin{equation*}
h:\calE'\otimes_{\calO_{X'}}\sigma^{*}\calE'\to \pi^{*}\calL
\end{equation*}
such that 
\begin{enumerate}
\item $\sigma^{*}h=h\circ s$ where $s:\calE'\otimes_{\calO_{X'}}\sigma^{*}\calE'\to\sigma^{*}\calE'\otimes_{\calO_{X'}}\calE'$ switches the two factors. (Here we use the canonical isomorphism $\sigma^{*}\pi^{*}\calL\cong\pi^{*}\calL$) to identify the targets of $\sigma^{*}h$ and $h\circ s$.
\item The map $\calE'\to \uHom_{X'}(\sigma^{*}\calE',\pi^{*}\calL)=\sigma^{*}\calE'^{\vee}\otimes\pi^{*}\calL
$ induced by $h$ is an isomorphism.
\end{enumerate}
Let $\Bun^{\Herm}_{n, X'/X}$ be the moduli stack classifying triples $(\calL,\calE',h)$ as above, i.e., $\calL$ is a line bundle over $X$ and $(\calE',h)$ is an $\calL$-Hermitian vector bundles of rank $n$ over $X'$. 

Let $\calE'_{0}=\calO^{\oplus n}_{X'}$ and $h_{0}: \calE'_{0}\otimes\sigma^{*}\calE'_{0}\to\calO_{X'}$ be the direct sum of $n$ copies of rank one $\calO_{X}$-Hermitian bundles. We define a group scheme $\cG$ over $X$ in the following way: for each $k$-algebra $R$ and $x\in X(R)$, we denote the preimage of $x$ in $X'$ by $\Spec R'$, then the fiber $\cG_{x}(R)=\{(g,\l)\in \GL_{n}(R')\times R^{\times}|h_{0}(ge_{1},ge_{2})=\l h_{0}(e_{1},e_{2})\textup{ for all }e_{1},e_{2}\in R'^{n}\}$. The generic fiber $G$ of $\cG$ is the unitary similitude group $\GU(n,h_{0})$ over $F$ defined using the Hermitian form $h_{0}|_{\Spec F}$ on $F'^{n}$. 

The set $\upH^{1}(F,G)$ parametrizes $F^{\times}$-homothety classes of ($F'$-valued) Hermitian forms $h$ on $F'^{n}$ up to change of bases. The subset $\upH^{1}_{\cG}(F,G)\subset \upH^{1}(F,G)$ parametrizes those Hermitian forms $h$ on $F'^{n}$ whose base change to $F'^{n}_{x}$ for each $x\in |X|$ contains an $\calO'_{x}$-lattice $\L_{x}$ such that $\L^{\bot}_{x}=c\L_{x}$ for some $c\in F^{\times}_{x}$  (here $\L^{\bot}_{x}=\{v\in F'^{n}_{x}|h(v,\L_{x})\subset\calO'_{x}\}$). We call such a lattice {\em essentially self-dual under $h$}. Let $\Herm_{\textup{esd}}(n,F)$ be the set of Hermitian forms on $F'^{n}$ whose base change to each $F'^{n}_{x}$ contains an essentially self-dual lattice. Then $\upH^{1}_{\cG}(F,G)=\Herm_{\textup{esd}}(n,F)/(\GL_{n}(F')\times F^{\times})$, where $\GL_{n}(F')$ acts by changing bases for $F'^{n}$ and $F^{\times}$ acts by scaling the form.

For each $h\in \Herm_{\textup{esd}}(n,F)$, there exists a nonempty Zariski open subset $U\subset X$ such that $(F'^{n},h)$ extends to an $\calO_{U}$-Hermitian bundle $(\calO^{n}_{U'}, h_{U'})$ where $U'=\pi^{-1}(U)$. For each $x\in X-U$ we may choose an essentially self-dual lattice $\L^{h}_{x}\subset F'^{n}_{x}$ and $c_{x}\in F_{x}^{\times}/\calO^{\times}_{x}$ such that $\L_{x}^{h,\bot}=c_{x}\L^{h}_{x}$. Define $c_{x}=1$ for $x\in |U|$. Let $\calL_{c}$ be the line bundle on $X$ defined by the id\`ele class $c=(c_{x})_{x\in |X|}$. We may glue $(\calO^{n}_{U'}, h_{U'})$ with the lattices $\L^{h}_{x}$ to form an $\calL_{c}$-Hermitian vector bundle $(\calE'_{h}, h)$. Then we may define the unitary similitude group scheme over $X$ in a similar way as we defined $\cG$, using $(\calE'_{\xi}, h_{\xi})$ instead of $(\calO^{n}_{X'}, h_{0})$, and its generic fiber now becomes $\GU(n,h)$. The equivalence \eqref{Weil e} in this case states an equivalence of groupoids
\begin{equation}\label{Herm bun}
\bigsqcup_{[h]\in\Herm_{\textup{esd}}(n,F)/(\GL_{n}(F')\times F^{\times})}\GU(n,h, F)\backslash \GU(n,h, \AA_{F})/\prod_{x\in|X|}\textup{GAut}_{\calO'_{x}}(\L^{h}_{x}, h)\isom\Bun^{\Herm}_{n}(k).
\end{equation}
Here $\textup{GAut}_{\calO'_{x}}(\L^{h}_{x}, h)$ is the group of $\calO'_{x}$-linear automorphisms of $\L^{h}_{x}$ that preserve $h$ up to a scalar in $\calO^{\times}_{x}$. Note that the left side of \eqref{Herm bun} maps to $F^{\times}\backslash \AA^{\times}_{F}/\prod_{x}\calO^{\times}_{x}\cong\Pic_{X}(k)$ by taking the similitude character, which corresponds to the morphism $\Bun^{\Herm}_{n}\to\Pic_{X}$ sending $(\calL,\calE',h)$ to $\calL$.
\end{exam}

\subsubsection{Level structures}\label{sss:level str} One can generalize $\Bun_{\cG}$ to $\cG$-torsors with level structures. Fix a finite set $S\subset |X|$, and for each $x\in S$ let $\bK_{x}\subset L_{x}G$ be a pro-algebraic subgroup that is contained in some parahoric subgroup of $L_{x}G$, and has finite codimension therein. Then there is a group scheme $\cG(\bK_{S})$ obtained by modifying $\cG$ at each $x\in S$ such that $L^{+}_{x}\cG(\bK_{s})=\bK_{x}$. A $\cG$-torsors over $X$ with $\bK_{S}$-level structures is the same as a $\cG(\bK_{S})$-torsor. We shall denote the corresponding moduli stack by $\Bun_{\cG}(\bK_{S})$. Let $K_{x}=\bK_{x}(k)\subset G(F_{x})$. Then Weil's equivalence \eqref{Weil e} can be generalized to an equivalence of groupoids
\begin{equation}\label{Weil Bun general}
\bigsqcup_{\xi\in\upH^{1}_{\cG, \bK_{S}}(F,G)}G_{\xi}(F)\backslash G_{\xi}(\AA_{F})/(\prod_{x\notin S}\calG_{\xi}(\calO_{x})\times\prod_{x\in S}K_{x})\isom\Bun_{\calG}(\bK_{S})(k).
\end{equation}
Here $\upH^{1}_{\cG, \bK_{S}}(F,G)$ is the set defined similarly as $\upH^{1}_{\cG}(F,G)$, replacing $\cG$ with $\cG(\bK_{S})$.

\subsubsection{Connected components of $\Bun_{\cG}$}\label{sss:comp Bun} The construction of the Kottwitz map $\kappa_{G,F}$ \eqref{Kott} can be adapted to give a map on the level of moduli stacks. For $S$ containing $S_{\theta}$ and $\bK_{x}$ contained in some parahoric subgroup of $L_{x}G$, there is a morphism
\begin{equation}\label{geom Kott}
\kappa:\Bun_{\calG}(\bK_{S})\to \un{\xch(Z\dG)_{I_{F}}}.
\end{equation}
Here $\un{\xch(Z\dG)_{I_{F}}}$ means the finite \'etale group scheme over $k$ with geometric points $\xch(Z\dG)_{I_{F}}$ and the obvious $\Frob_{k}$-action on it. This map is always surjective. In \cite[Theorem 6]{Heinloth}, J. Heinloth showed that when $G$ is semisimple or a torus (and $\bK_{x}$ are contained in parahoric subgroups), the map $\kappa$ exhibits $\xch(Z\dG)_{I_{F}}$ as the set of  geometric connected components of the stack $\Bun_{\calG}(\bK_{S})$.

\subsubsection{The action of $\Bun_{\cZ}$}\label{sss:BunZ} Recall from \S\ref{sss:cG} that $Z$ admits an integral model $\cZ$ over $X$. We may define the moduli functor $\Bun_{\cZ}$ classifying $\cZ$-torsors over $X$. The assumption in \S\ref{sss:cG} guarantees that $\Bun_{\cZ}$ is representable by an algebraic stack. In fact, present $\cZ$ as the kernel of a surjection $\cT_{1}\surj \cT_{2}$ of smooth models of tori $T_{1}\surj T_{2}$ over $X$, $\Bun_{\cZ}$ is represented by the kernel of the morphism $\Bun_{\cT_{1}}\to\Bun_{\cT_{2}}$ between Picard stacks. Concretely, $\Bun_{\cZ}$ can be identified with the moduli stack of pairs $(\calE_{1}, \tau)$ where $\calE_{1}$ is a $\cT_{1}$-torsor over $X$ and $\tau$ is a trivialization of the associated $\cT_{2}$-torsor $\calE_{1}\twtimes{\cT_{1}}\cT_{2}$. 

More generally, for a pro-algebraic subgroup $\bK_{Z, x}\subset L^{+}_{x}\cZ$ of finite codimension (one for each $x\in S$), we may modify the integral model $\cZ$ at $S$ to get another integral model $\cZ(\bK_{Z,S})$. We can form the moduli stack $\Bun_{\cZ}(\bK_{Z, S})$ of $\cZ(\bK_{Z,S})$-torsors over $X$ (i.e., $\cZ$-torsors with $\bK_{Z,x}$-level structures at $x\in S$). This is a $\prod_{x\in S}L^{+}_{x}\cZ/\bK_{Z,x}$-torsor over $\Bun_{\cZ}$.

Now consider a certain level structure $\bK_{S}=\{\bK_{x}\}_{x\in S}$ for $G$. Let $\cG(\bK_{S})$ be the group scheme defined in \S\ref{sss:level str}. Let $\bK_{Z,x}=L_{x}Z\cap\bK_{x}$, which is necessarily contained in $L^{+}_{x}\cZ$. Therefore $\cZ(\bK_{Z,S})$ and $\Bun_{\cZ}(\bK_{Z, S})$ are defined. For every point $\calE\in\Bun_{\cG}(\bK_{S})$, the automorphism group scheme $\uAut_{\cG, \bK_{S}}(\calE)$ is a group scheme over $X$ which is an inner form of $\cG(\bK_{S})$. Since $\cZ(\bK_{Z,S})\subset\cG(\bK_{S})$, $\cZ(\bK_{Z,S})$ is canonically a central subgroup scheme of $\uAut_{\cG, \bK_{S}}(\calE)$. It therefore makes sense to twist $\calE$ by a $\cZ(\bK_{Z,S})$-torsor. In conclusion we get an action of the Picard stack $\Bun_{\cZ}(\bK_{Z,S})$ on $\Bun_{\cG}(\bK_{S})$.

\begin{exam} 
\begin{enumerate}
\item In Example \ref{ex:unitary}, the center $Z\subset\GU(n,h_{0})$ is the induced torus $\Res^{F'}_{F}\Gm$ with the integral model $\cZ=\Res^{X'}_{X}\Gm$. Therefore $\Bun_{\cZ}\cong\Pic_{X'}$. The action of $\calL'\in\Pic_{X'}$ on $\Bun^{\Herm}_{n}$ is given by $(\calL, \calE', h)\mapsto(\Nm_{X'/X}(\calL')\otimes \calL, \calE'\otimes\calL', h')$, where $h'$ is the obvious $\Nm_{X'/X}(\calL')\otimes \calL$-Hermitian structure on $\calE'\otimes\calL'$ induced from $h$ by tensoring with the natural isomorphism  $\calL'\otimes\sigma^{*}\calL'\cong\pi^{*}\Nm_{X'/X}(\calL')$.
\item Suppose $G$ is split (i.e., $G=\GG\otimes_{k}F$) and $X=\PP^{1}$. In this case, $\cG=\GG\times X$ and $\cZ=\ZZ\GG\times X$. Then $\Bun_{\cZ}$ is simply the classifying space $\BB(\ZZ\GG)$, because any $\ZZ\GG$-torsor over $\PP^{1}$ is trivial. The fact that there exists an action of $\Bun_{\cZ}=\BB(\ZZ\GG)$ on $\Bun_{\cG}$ simply says that the automorphism group of every point of $\Bun_{\cG}$ contains $\ZZ\GG$. 

In general, let $\bA_{Z,S}$ be the automorphism group of points in $\Bun_{\cZ}(\bK_{Z,S})$, then $\bA_{Z,S}$ is contained in the automorphism group of every point of $\Bun_{\cG}(\bK_{S})$.
\end{enumerate}
\end{exam}

\subsection{The sheaf-to-function correspondence}\label{ss:ff}  

\subsubsection{Convention}
We temporarily use $X$ to denote an algebraic stack locally of finite type over a field $k$. Let $D^{b}_{c}(X)$ denote the bounded derived category of constructible complex of $\Qlbar$-sheaves for the \'etale topology of $X$. For detailed definition see \cite{WeilII} for the case of schemes and \cite{LO} and \cite{LZ} for the case of stacks. All complexes of sheaves we consider will have $\Qlbar$-coefficients unless otherwise stated. All sheaf-theoretic functors are understood to be derived unless otherwise stated.

\subsubsection{The correspondence} For the rest of this subsection, $k$ is a finite field. Let $\calF\in D^{b}_{c}(X)$. For each $x\in X(k')$,  $\Gal(\kbar/k')$ acts on the geometric stalk $\calF_{\xbar}$ (a complex of $\Qlbar$-vector spaces). Let $\Frob_{k'}\in\Gal(\kbar/k')$ be the geometric Frobenius element. Define a function
\begin{eqnarray*}
f_{\calF, k'}: X(k')&\to& \Qlbar\\
x  &\mapsto& \sum_{i\in\ZZ}(-1)^{i}\Tr(\Frob_{k'}, \upH^{i}\calF_{\xbar})
\end{eqnarray*}
We get a family of $\Qlbar$-valued functions $\{f_{\calF,k'}\}_{k'/k}$ defined on $X(k')$ for varying finite extensions $k'/k$. Let $\Fun(S)$ denote the vector space of $\Qlbar$-valued functions on a set $S$. The map
\begin{eqnarray*}
\textup{Ob }D^{b}_{c}(X)&\to& \prod_{k'/k}\Fun(X(k'))\\
\calF&\mapsto& \{f_{\calF,k'}\}_{k'/k}
\end{eqnarray*}
is called the {\em sheaf-to-function} correspondence.

\subsubsection{Functorial properties of the sheaf-to-function correspondence} If $\phi: X\to Y$ is a schematic morphism of finite type between algebraic stacks over $k$ which induces a map $\phi(k'):X(k')\to Y(k')$ between finite sets, then for any $\calF\in D^{b}_{c}(X)$, we have
\begin{equation}\label{f!}
f_{\phi_{!}\calF,k'}=\phi(k')_{!}f_{\calF,k'}.
\end{equation}
Here $\phi_{!}$ is the derived push-forward functor with compact support, while $\phi(k')_{!}: \Fun(X(k'))\to \Fun(Y(k'))$ is ``summation along the fibers'' (in the case where $\phi$ is not schematic, summation along the fibers should be the weighted by the cardinalities of the automorphism groups of the $k$-points on the fiber).

A special case of \eqref{f!} is the Lefschetz trace formula. Namely if we take $Y=\Spec k$, $X$ a scheme over $k$ with $\phi$ the structure morphism and $k'=k$, we have on the one hand
\begin{equation}\label{Lef left}
f_{\phi_{!}\calF,k}=\sum_{i\in\ZZ}(-1)^{i}\Tr(\Frob_{k}, \cohoc{i}{X\otimes_{k}\kbar,\calF})
\end{equation}
and on the other
\begin{equation}\label{Lef right}
\phi(k)_{!}f_{\calF, k}=\sum_{x\in X(k)}f_{\calF,k}(x)=\sum_{x\in X(k)}\sum_{i\in\ZZ}(-1)^{i}\Tr(\Frob_{k},\upH^{i}\calF_{\xbar}).
\end{equation}
The equality of the right hand sides of \eqref{Lef left} and \eqref{Lef right} is the Lefschetz trace formula for the Frobenius morphism of $X$.

For any $\calG\in D^{b}_{c}(Y)$, we have
\begin{equation*}
f_{\phi^{*}\calG, k'}=\phi(k')^{*}f_{\calG,k'}
\end{equation*}
where $\phi(k')^{*}:\Fun(Y(k'))\to \Fun(X(k'))$ is the pullback of functions.

If $\calF,\calG\in D^{b}_{c}(X)$, then we have
\begin{equation*}
f_{\calF\otimes\calG, k'}=f_{\calF, k'}f_{\calG,k'}.
\end{equation*}
where the right hand side means pointwise multiplication of functions on $X(k')$.

\subsubsection{Character sheaves} In general, there is no canonical way to go from functions back to sheaves. However in many cases, starting from a function $f$ on $X(k)$, there is a natural candidate sheaf $\calF$ for which $f_{\calF,k}=f_{k}$. When $X=L$ is an algebraic group over $k$ and $f:L(k)\to\Qlbar^{\times}$ is a character, we will see in Appendix \ref{a:ch} that often times one can construct a {\em rank one character sheaf} on $L$ whose associated function is $f$. The correspondence between rank one character sheaves on $L$ and characters of $L(k)$ will play an important role in defining geometric automorphic data in the next subsection, and we shall frequently refer to the notations and results in Appendix \ref{a:ch}.

\subsection{Geometric automorphic data and automorphic sheaves}\label{ss:geom data}  
Let $k$ be any field.
We fix an integral model $\cG$ of $G$ over $X$ constructed as in  \S\ref{sss:cG}. Recall that $S_{\theta}\subset |X|$ is the locus where $\theta_{X}$ is ramified. The group scheme $\cG|_{X-S_{\theta}}$ is reductive with connected fibers. The exact choice of the model $\cG$ at $x\in S_{\theta}$ is not important at this point because we will impose level structures at these points. Let $S\subset|X|$ be a finite set containing $S_{\theta}$.

\subsubsection{Geometric (restricted) central character} Let $\cZ$ be the integral model of $Z$ over $X$ as in \S\ref{sss:cG}. For each $x\in S$, let $\bK^{+}_{Z, x}\subset L^{+}_{x}\cZ$ be a {\em connected} pro-algebraic subgroup. We may form the stack $\Bun_{\cZ}(\bK^{+}_{Z, S})$ of $\cZ$-torsors over $X$ with $\bK^{+}_{Z, x}$-level structures at $x$ for each $x\in S$, as in \S\ref{sss:BunZ}. Restricting the Kottwitz morphism $\kappa_{\Bun_{\cG}}$ in \S\ref{sss:comp Bun} to $\Bun_{\cZ}(\bK^{+}_{Z,S})$, we get a homomorphism $\Bun_{\cZ}(\bK^{+}_{Z,S})\to \underline{\xch(Z\dG)_{I_{F}}}$. Let $\Bun^{\n}_{\cZ}(\bK^{+}_{Z,S})$ be the kernel of this homomorphism. When $S\neq\varnothing$ and $\bK^{+}_{Z,x}$ is chosen small enough, $\Bun^{\n}_{\cZ}(\bK^{+}_{Z,S})$ is an algebraic group over $k$ (i.e., $\cZ$-torsors with $\bK^{+}_{Z,S}$-level structures have trivial automorphism groups, and $\Bun^{\n}_{\cZ}(\bK^{+}_{Z,S})$ is of finite type over $k$). We consider the category $\cCS(\Bun^{\n}_{\cZ}(\bK^{+}_{Z,S}))$ of rank one character sheaves on $\Bun^{\n}_{\cZ}(\bK^{+}_{Z,S})$. 

If we shrink $\bK^{+}_{Z,x}$ to a smaller level $\bK^{++}_{Z,x}$ for each $x\in S$, then we have a natural projection $\Bun^{\n}_{\cZ}(\bK^{++}_{S})\to\Bun^{\n}_{\cZ}(\bK^{+}_{S})$ which is a $\prod_{x\in S}\bK^{+}_{Z,x}/\bK^{++}_{Z,x}$-torsor. The pullback functor $\cCS(\Bun^{\n}_{\cZ}(\bK^{+}_{Z,S}))\to \cCS(\Bun^{\n}_{\cZ}(\bK^{++}_{Z,S}))$ is fully faithful since $\bK^{+}_{Z,x}/\bK^{++}_{Z,x}$ is geometrically connected. We may form the colimit over smaller and smaller $\bK^{+}_{Z,S}$ with respect to the fully faithful embeddings of categories
\begin{equation*}
\cCS^{\n}(Z;S):=\varinjlim_{\bK^{+}_{Z,S}}\cCS(\Bun^{\n}_{\cZ}(\bK^{+}_{Z,S})).
\end{equation*}
An object in the category $\cCS^{\n}(Z;S)$ is called a {\em geometric (restricted) central character}.

When $k$ is a finite field, let $K^{+}_{Z,x}=\bK^{+}_{Z,x}(k)$. Via the sheaf-to-function correspondence and the homomorphism $\AZn/(\prod_{x\notin S}\cZ(\calO_{x})\times\prod_{x\in S}K^{+}_{Z,x})\incl\Bun^{\n}_{\cZ}(\bK_{Z,S})(k)$ as in \eqref{Weil Bun general},  we get a homomorphism
\begin{equation*}
\CS(\Bun^{\n}_{\cZ}(\bK^{+}_{Z,S}))\to\Hom(\AZn/(\prod_{x\notin S}\cZ(\calO_{x})\times\prod_{x\in S}K^{+}_{Z,x}), \Qlbar^{\times}).
\end{equation*}
Here $\CS(-)$ means the group of isomorphism classes of objects in $\cCS(-)$. 
Passing to the colimit, we get a homomorphism
\begin{equation*}
\CS^{\n}(Z;S):=\textup{Ob } \cCS^{\n}(Z;S)\to\Hom_{\cont}(\AZn/\prod_{x\notin S}\cZ(\calO_{x}),\Qlbar^{\times}).
\end{equation*}
i.e., each geometric restricted central character gives a restricted central character.
 
For any choice of $\bK^{+}_{Z,S}$ and each  $x\in S$, there is a homomorphism $i_{x}:L_{x}Z\to L_{x}Z/\bK^{+}_{Z,x}\to\Bun_{\cZ}(\bK^{+}_{Z,S})$ by locally modifying the $\cZ$-torsors at $x$. Let $(L_{x}Z)^{\n}\subset L_{x}Z$ be the kernel of $L_{x}Z\to L_{x}G\to \underline{\xch(Z\dG)_{_{I_{x}}}}$ (the local Kottwitz homomorphism \eqref{local Kott}). Then $i_{x}$ restricts to a homomorphism $i^{\n}_{x}: (L_{x}Z)^{\n}\to (L_{x}Z)^{\n}/\bK^{+}_{Z,x}\to\Bun^{\n}_{\cZ}(\bK^{+}_{Z,S})$. Suppose $\Om\in\cCS^{\n}(Z;S)$ comes from an object in $\CS(\Bun^{\n}_{\cZ}(\bK^{+}_{Z,S}))$, we let
\begin{equation*}
\Om_{x}:=i^{\n,*}_{x}\Om\in\cCS((L_{x}Z)^{\n}),
\end{equation*}
which is a rank one character sheaf that descends to the algebraic group $(L_{x}Z)^{\n}/\bK^{+}_{Z,x}$. The object $\Om_{x}$ is independent of the choice of $\bK^{+}_{Z,S}$. Let $\Om_{S}=\boxtimes_{x\in S}\Om_{x}\in\cCS(\prod_{x\in S}(L_{x}Z)^{\n})$.

\begin{defn}\label{def:geom data} A quadruple $(\Om, \bK_{S},\calK_{S},\iota_{S})$ is a {\em geometric automorphic datum} with respect to $S$ if
\begin{itemize}
\item $\Om\in\cCS^{\n}(Z;S)$ is a geometric restricted central character;
\item $\bK_{S}$ is a collection $\{\bK_{x}\}_{x\in S}$, where  $\bK_{x}\subset L_{x}G$ is a pro-algebraic group contained in some parahoric subgroup of $L_{x}G$ with finite codimension. We require that each $\bK_{x}$ is generated by $\bK_{x}\cap L^{x}Z$ and $\bK_{x}\cap L_{x}G^{\der}$. We often use $\bK_{S}$ also to denote the product $\prod_{x\in S}\bK_{x}$.
\item $\calK_{S}$ is a collection $\{\calK_{x}\}_{x\in S}$ where each $\calK_{x}\in\cCS(\bK_{x})$ is a rank one character sheaf on $\bK_{x}$ that is the pullback of a rank one character sheaf from some finite dimensional quotient $\bK_{x}\surj \bL_{x}$.  We often use $\cK_{S}$ to denote the tensor product $\boxtimes_{x\in S}\cK_{x}\in\cCS(\bK_{S})$.
\item Note that $\bK_{Z,x}:=\bK_{x}\cap L_{x}Z$ is automatically contained in  $(L_{x}Z)^{\n}$, since $\bK_{x}$ was assumed to be contained in a parahoric subgroup. Let $\bK_{Z,S}:=\prod_{x\in S}\bK_{Z,x}$. Then $\iota_{S}$ is an isomorphism $\iota_{S}: \Om_{S}|_{\bK_{Z,S}}\cong\cK_{S}|_{\bK_{Z, S}}$ in the category $\cCS(\bK_{Z,S})$ (which is the same as a collection of isomorphisms $\iota_{x}:\Om_{x}|_{\bK_{Z,x}}\cong\cK_{x}|_{\bK_{Z, x}}$, one for each $x\in S$). 
\end{itemize}
\end{defn}

When $k$ is a finite field, a geometric automorphic datum $(\Om, \bK_{S}, \cK_{S},\iota_{S})$ gives rise to a restricted automorphic datum $(\om^{\n}, K_{S}, \chi_{S})$ as in Definition \ref{def:auto data}, by setting $K_{x}:=\bK_{x}(k)$ and using the sheaf-to-function correspondence to turn $\Om$ and $\cK_{x}$ into $\om^{\n}$ and $\chi_{x}$. However, passage from a geometric automorphic datum to a restricted automorphic datum loses information, especially that carried by $\iota_{S}$, as we shall see in \S\ref{sss:sec central}.

\subsubsection{A smaller level $\bK^{+}_{S}$}\label{sss:Kplus} Let $(\Om, \bK_{S}, \cK_{S},\iota_{S})$ be a geometric automorphic datum. For each $x\in S$ we choose a pro-algebraic normal subgroup $\bK^{+}_{x}\lhd\bK_{x}$ of finite codimension such that
\begin{itemize}
\item The character sheaf $\cK_{x}$ descends to the finite-dimensional quotient $\bL_{x}:=\bK_{x}/\bK^{+}_{x}$. 
\item Let $\bK^{+}_{Z,x}=\bK^{+}_{x}\cap L_{x}Z$. Then $\bK^{+}_{Z,x}$ is connected and $\Bun_{\cZ}(\bK^{+}_{Z, S})$ is a scheme (i.e., the automorphic group of every point is trivial), and the geometric central character $\Om$ descends to $\Bun^{\n}_{\cZ}(\bK^{+}_{Z,S})$.
\end{itemize}
By shrinking $\bK^{+}_{S}$ these conditions can always be satisfied. With this choice of $\bK^{+}_{S}$, we shall view $\cK_{x}$ as an object in $\cCS(\bL_{x})$, and view $\Om$ as an object in $\cCS(\Bun^{\n}_{\cZ}(\bK^{+}_{Z, S}))$. 

For each $x\in S$, let $\bK_{Z,x}=\bK_{x}\cap L_{x}Z$. Then $\bL_{Z,x}:=\bK_{Z,x}/\bK^{+}_{Z,x}$ is a central subgroup of $\bL_{x}$.  Let
\begin{equation*}
\bL_{S}:=\prod_{x\in S}\bL_{x}; \quad \cK_{S}:=\boxtimes_{x\in S}\cK_{x}\in\cCS(\bL_{S}); \quad \bL_{Z,S}:=\bL_{Z, x}
\end{equation*}
The morphism $\Bun_{\calG}(\bK^{+}_{S})\to \Bun_{\calG}(\bK_{S})$ is an $\bL_{S}$-torsor. Similarly, $\Bun^{\n}_{\cZ}(\bK^{+}_{Z,S})\to\Bun^{\n}_{\cZ}(\bK_{Z,S})$ is an $\bL_{Z,S}$-torsor. The group scheme $\Bun_{\cZ}(\bK^{+}_{Z,S})$ acts on $\Bun_{\cG}(\bK^{+}_{S})$ by the discussion in \S\ref{sss:BunZ}. Summarizing the situation we have a diagram
\begin{equation}\label{action Bun}
\xymatrix{\Bun^{\n}_{\cZ}(\bK^{+}_{Z, S})\ar@{^{(}->}[r]\ar[d]_{\bL_{Z,S}} & \Bun_{\cZ}(\bK^{+}_{Z, S})\ar@<-4ex>[d]_{\bL_{Z,S}}  \curvearrowright  \Bun_{\cG}(\bK^{+}_{S})\ar@<4ex>[d]^{\bL_{S}} \\
\Bun^{\n}_{\cZ}(\bK_{Z, S})\ar@{^{(}->}[r] & \Bun_{\cZ}(\bK_{Z, S})  \curvearrowright \Bun_{\cG}(\bK_{S})}
\end{equation}

\subsubsection{Secondary central character}\label{sss:sec central} The isomorphism $\iota_{S}$ is an extra piece of structure in the geometric automorphic datum that do not appear in the restricted automorphic datum $(\onat, K_{S}, \chi_{S})$. Choose $\bK^{+}_{S}$ as in \S\ref{sss:Kplus}, then $\iota_{S}$ is an isomorphism $\Om|_{\bL_{Z,S}}\cong\cK_{S}|_{\bL_{Z,S}}$. Let $\bA_{Z, S}$ be the automorphism group of the identity point of the Picard stack $\Bun_{\cZ}(\bK_{Z,S})$. Then we have a canonical homomorphism $i_{Z}: \bA_{Z,S}\to \bK_{Z,S}\incl\bK_{S}$. On the other hand, $\bA_{Z,S}$ is the kernel of the homomorphism $\bL_{Z,S}=\bK_{Z,S}/\bK^{+}_{Z,S}\incl\prod_{x\in S}L_{x}Z/\bK^{+}_{Z,S}\xrightarrow{(i_{x})_{x\in S}}\Bun_{\cZ}(\bK^{+}_{Z,S})$ for $\bK^{+}_{Z,S}$ as in \S\ref{sss:Kplus}. Since $\Om$ is a local system on $\Bun^{\n}_{\cZ}(\bK^{+}_{Z, S})$, the restriction $\Om_{S}|_{\bA_{Z,S}}$ admits a canonical trivialization. Composing this trivialization with the restriction of the isomorphism  $\iota_{S}: \Om_{S}|_{\bL_{Z,S}}\cong\cK_{S}|_{\bL_{Z,S}}\in\cCS(\bL_{Z,S})$ to $\bA_{Z,S}$, we get a trivialization
\begin{equation*}
\tau:\cK_{S}|_{\bA_{Z,S}}\cong\Qlbar\in\cCS(\bA_{Z,S}).
\end{equation*}
We would like to call $\tau$ the {\em secondary central character} attached to the geometric automorphic datum $(\Om, \bK_{S}, \cK_{S},\iota_{S})$. By Remark \ref{r:char}\eqref{char bc}, the trivializations of $\cK_{S}|_{\bA_{Z,S}}$ form a torsor under $\Hom(\pi_{0}(\bA_{Z,S})_{\Gk}, \Qlbar^{\times})$, so the secondary central character can add as much as  $\#\pi_{0}(\bA_{Z,S})_{\Gk}$ extra constraints in addition to the restricted automorphic datum $(\onat, K_{S}, \chi_{S})$. 

The above discussion also shows that when the coarse moduli space of $\Bun_{\cZ}(\bK_{Z,S})$ is a point, the secondary central character determines $(\Om,\iota_{S})$. We summarize this into a lemma.

\begin{lemma}\label{l:unique Om} Let $(\bK_{S},\chi_{S})$ be as in Definition \ref{def:geom data}. Let $\bK^{+}_{S}$ be chosen as in \S\ref{sss:Kplus}.
Suppose the coarse moduli space of $\Bun_{\cZ}(\bK_{Z,S})$ is a point, i.e., $\Bun_{\cZ}(\bK_{Z,S})\cong\BB\bA_{Z,S}$. Then the pairs $(\Om,\iota_{S})$ such that $(\Om, \bK_{S}, \cK_{S},\iota_{S})$ form a geometric automorphic datum correspond bijectively to the descents of $\cK_{S}$ to $\bL_{S}/\bA_{Z,S}$. 

In particular, if $\Bun_{\cZ}(\bK_{Z,S})\cong\BB\bA_{Z,S}$ and $\bA_{Z,S}$ is connected, there is up to isomorphism a unique pair $(\Om,\iota_{S})$ making $(\Om, \bK_{S}, \cK_{S},\iota_{S})$  a geometric automorphic datum. 
\end{lemma}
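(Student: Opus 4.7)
The plan is to reinterpret the classification of $(\Om,\iota_{S})$ as a descent problem for rank one multiplicative $\Qlbar$-sheaves, using the secondary central character construction of \S\ref{sss:sec central} as the bridge. First I would pin down the structure of $\Bun^{\n}_{\cZ}(\bK^{+}_{Z,S})$ under the hypothesis. Since $\Bun_{\cZ}(\bK^{+}_{Z,S})\to\Bun_{\cZ}(\bK_{Z,S})=\BB\bA_{Z,S}$ is an $\bL_{Z,S}$-torsor, it is classified by the translation action of $\bA_{Z,S}$ on $\bL_{Z,S}$ via $i_{Z}:\bA_{Z,S}\incl \bL_{Z,S}$, giving $\Bun_{\cZ}(\bK^{+}_{Z,S})\cong \bL_{Z,S}/i_{Z}(\bA_{Z,S})$ as group schemes. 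Because each $\bK_{Z,x}$ sits in $(L_{x}Z)^{\n}=\ker(\kappa_{Z,F_{x}})$ (as $\bK_{x}$ is contained in a parahoric), the Kottwitz map is already trivial on $\bL_{Z,S}$, and in fact $\Bun^{\n}_{\cZ}(\bK^{+}_{Z,S})=\Bun_{\cZ}(\bK^{+}_{Z,S})=\bL_{Z,S}/i_{Z}(\bA_{Z,S})$.

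With this identification in hand, I would build the bijection in both directions. From a pair $(\Om,\iota_{S})$, the secondary central character recipe of \S\ref{sss:sec central} produces a trivialization $\tau:\cK_{S}|_{\bA_{Z,S}}\cong \Qlbar$ by composing $\iota_{S}|_{\bA_{Z,S}}$ with the canonical trivialization of $\Om_{S}|_{\bA_{Z,S}}$, which is available because $\bA_{Z,S}$ is the kernel of $\bL_{Z,S}\twoheadrightarrow\Bun^{\n}_{\cZ}(\bK^{+}_{Z,S})$. Standard descent theory for rank one multiplicative local systems along the central surjection $\bL_{S}\twoheadrightarrow\bL_{S}/i_{Z}(\bA_{Z,S})$ (central because each $\bK_{Z,x}$ is central in $\bK_{x}$) then identifies such a trivialization of $\cK_{S}|_{\bA_{Z,S}}$ with a descent $\bar{\cK}$ of $\cK_{S}$ to $\bL_{S}/i_{Z}(\bA_{Z,S})$: by multiplicativity the descent datum on $\bL_{S}\times_{\bL_{S}/\bA_{Z,S}}\bL_{S}\cong \bL_{S}\times\bA_{Z,S}$ reduces to a trivialization on $\bA_{Z,S}$. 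Conversely, given $\bar{\cK}$, I would recover $\Om$ as the restriction of $\bar{\cK}$ along $\bL_{Z,S}/\bA_{Z,S}\incl \bL_{S}/\bA_{Z,S}$ (landing in $\Bun^{\n}_{\cZ}(\bK^{+}_{Z,S})$ by the first paragraph), and read off $\iota_{S}$ from the tautological identification of the pullback of $\bar{\cK}$ to $\bL_{S}$ with $\cK_{S}$, restricted to the subgroup $\bL_{Z,S}$. The two constructions are mutually inverse by unwinding the definition of the secondary central character.

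For the ``in particular'' part, when $\bA_{Z,S}$ is connected we have $\pi_{0}(\bA_{Z,S})_{\Gk}=0$, so by Remark \ref{r:char}\eqref{char bc} the set of trivializations of $\cK_{S}|_{\bA_{Z,S}}$, hence (via the bijection) the set of isomorphism classes of pairs $(\Om,\iota_{S})$, contains at most one element, which yields the claimed uniqueness. The main technical hurdle I expect is the descent step in the middle paragraph: verifying carefully that the equivalence between multiplicativity-compatible descent data on $\bL_{S}\times\bA_{Z,S}$ and trivializations on $\bA_{Z,S}$ is natural enough to match the secondary central character of \S\ref{sss:sec central} (which is itself a composition of two trivializations built from different pieces of $(\Om,\iota_{S})$), and then propagating this compatibility through the restriction to $\bL_{Z,S}$ where $\iota_{S}$ actually lives. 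Once that is secured, everything else is diagram-chasing given the structural identification of $\Bun^{\n}_{\cZ}(\bK^{+}_{Z,S})$ from the first paragraph.
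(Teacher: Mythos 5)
Your argument is correct and follows essentially the same route as the paper: the secondary central character of \S\ref{sss:sec central}, descent of rank one character sheaves along the central quotient $\bL_{S}\to\bL_{S}/\bA_{Z,S}$ (the paper's Lemma \ref{l:descent cs}), and the identification $\Bun^{\n}_{\cZ}(\bK^{+}_{Z,S})\cong\bL_{Z,S}/\bA_{Z,S}$ are precisely the discussion that the paper summarizes into this lemma. Your reading of the connected case as giving \emph{at most} one pair is also the right one and matches how the lemma is used, since unconditional existence fails when $\cK_{S}|_{\bA_{Z,S}}$ is nontrivial (cf.\ condition \eqref{det GL2} in Proposition \ref{p:GL2S3}).
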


\begin{exam} Suppose $G$ is split and semisimple and $X=\PP^{1}$. Then $\cG=\GG\times X$ and $\cZ=\ZZ\GG\times X$ to are constant group schemes over $X$. Assume each $\bK_{x}$ contains $\ZZ\GG$, then $\Bun_{\cZ}(\bK_{Z, S})=\BB(\ZZ\GG)$ and $\bA_{Z,S}=\ZZ\GG$. Given $\cK_{S}$, the datum of $(\Om,\iota_{S})$ correspond bijectively to descents  $\overline{\cK}_{S}\in\cCS(\bK_{S}/\ZZ\GG)$ of $\cK_{S}$. Via the sheaf-to-function correspondence, $\overline{\cK}_{S}$ gives a character of $(\bK_{S}/\ZZ\GG)(k)$, which is in general a larger group than $\bK_{S}(k)/\ZZ\GG(k)$. On the other hand, the corresponding restricted  automorphic datum $(\onat, K_{S},\chi_{S})$ only gives a character on the smaller group $\bK_{S}(k)/\ZZ\GG(k)$. Therefore in general geometric automorphic data contain more information than the corresponding restricted automorphic data.
\end{exam}

\subsubsection{Base change of geometric automorphic data}\label{sss:bc} Let $k'/k$ be a field extension. Let $S'$ be the preimage of $S$ in $X\otimes_{k}k'$ (a closed point in $S$ may split into several in $S'$).  Given a geometric automorphic datum $(\Om, \bK_{S}, \cK_{S},\iota_{S})$ for $G$ over $F$ with respect to $S$, we may define a corresponding geometric automorphic datum $(\Om', \bK_{S'}, \cK_{S'},\iota_{S'})$ for the function field $F\otimes_{k}k'$ with respect to $S'$ as follows. 

The base change $\bK_{x}\otimes_{k}k'$ naturally decomposes as a product $\prod_{y\mapsto x}\bK_{y}$ where $y$ runs over the preimages of $x$ in $S'$. The pullback of $\cK_{x}$ to $\cK_{x}\otimes_{k}k'$ then takes the form $\boxtimes_{y}\cK_{y}$, whose tensor factors give the base change character sheaves $\cK_{y}\in\cCS(\bK_{y})$.

If $\Om$ is a geometric central character coming from $\Bun^{\n}_{\cZ}(\bK^{+}_{Z,S})$ for a certain level $\bK^{+}_{Z,S}$, we may similarly define the base change level $\bK^{+}_{Z,S'}$ over $k'$, so that $\Bun^{\n}_{\cZ_{k'}}(\bK^{+}_{Z,S'})=\Bun^{\n}_{\cZ}(\bK^{+}_{Z,S})\otimes_{k}k'$. Then define $\Om'\in\cCS(\Bun^{\n}_{\cZ_{k'}}(\bK^{+}_{Z,S'}))$ to be the pullback of $\Om$ via the morphism $\Bun^{\n}_{\cZ}(\bK^{+}_{Z,S})\otimes_{k}k'\to \Bun^{\n}_{\cZ}(\bK^{+}_{Z,S})$. The isomorphism $\iota_{S}$ induces $\iota_{S'}$.

When $k$ is finite, $k'/k$ a finite extension, and the restricted automorphic datum $(\omega^{\n}, K_{S}, \chi_{S})$ over $F$ comes from a geometric automorphic datum $(\Om,\bK_{S},\cK_{S})$,  we may base change it to an automorphic datum $(\omega'^{\n}, K_{S'}, \chi_{S'})$ over $F\otimes_{k}k'$ coming from the base change $(\Om', \bK_{S'}, \cK_{S'})$ of $(\Om,\bK_{S},\cK_{S})$ via the sheaf-to-function correspondence.  For example, $\omega'^{\n}$ is the composition $Z(F\otimes_{k}k')\backslash Z(\AA_{F\otimes_{k}k'})^{\n}\xrightarrow{\Nm}\AZn\xrightarrow{\onat}\Qlbar^{\times}$. However, the datum $(K_{S},\chi_{S})$ does not have a well-defined base change unless we specify a geometric automorphic datum $(\bK_{S}, \cK_{S})$ from which it comes.

Let $(\Om, \bK_{S}, \cK_{S},\iota_{S})$ be a geometric automorphic datum giving rise to a restricted automorphic datum $(\omega^{\n}, K_{S}, \chi_{S})$. We are interested in the sheaf-theoretic analog of the function space \eqref{Kchi fun}, under the sheaf-to-function correspondence.

\subsubsection{The category of automorphic sheaves} We form the group scheme
\begin{equation*}
\bM_{S}:=\Bun^{\n}_{\cZ}(\bK^{+}_{S})\twtimes{\bL_{Z,S}}\bL_{S}
\end{equation*}
which acts on $\Bun_{\cG}(\bK^{+}_{S})$ preserving each fiber of the geometric Kottwitz morphism (cf. the diagram \eqref{action Bun}). The quotient $[\Bun_{\cG}(\bK^{+}_{S})/\bM_{S}]$ should be thought of as the quotient of $\Bun_{\cG}(\bK_{S})$ by the Picard stack $\Bun^{\n}_{\cZ}(\bK_{Z,S})$, although we do not want to get into the issue of making quotients of a stack by another Picard stack. The isomorphism $\iota_{S}$ in the geometric automorphic datum gives a trivialization of the restriction of $\Om\boxtimes\cK_{S}\in\cCS(\Bun^{\n}_{\cZ}(\bK^{+}_{S})\times\bL_{S})$ to the anti-diagonally embedded $\bL_{Z,S}$. By Lemma \ref{l:descent cs}, such a trivialization gives a descent of $\Om\boxtimes\cK_{S}$ to $\cK_{S,\Om}\in\cCS(\bM_{S})$. We may then talk about the derived category $D^{b}_{(\bM_{S}, \cK_{S,\Om})}(\Bun_{\cG}(\bK^{+}_{S}))$ of $(\bM_{S}, \cK_{S, \Om})$-equivariant $\Qlbar$-complexes on $\Bun_{\calG}(\bK^{+}_{S})$, as we mentioned in \S\ref{sss:equiv sh}. 

\begin{remark} There is a subtlety in defining the category $D^{b}_{(\bM_{S}, \cK_{S,\Om})}(\Bun_{\cG}(\bK^{+}_{S}))$ because each connected component of $\Bun_{\calG}(\bK^{+}_{S})$ is not of finite type in general. To remedy, we can write $\Bun_{\calG}(\bK^{+}_{S})$ as a union of $\bM_{S}$-stable open substack $\Bun_{\xi}$ that are of finite type modulo the action of $\bM_{S}$ (depending on a parameter $\xi$ from a filtered set of indices, for example the set of Harder-Narasimhan polygons of $\cG$-torsors). The category $D^{b}_{(\bM_{S},\calK_{S,\Om})}(\Bun_{\calG}(\bK^{+}_{S}), \Qlbar)$ is defined as the colimit of $D^{b}_{(\bM_{S},\calK_{S,\Om})}(\Bun_{\xi}, \Qlbar)$ as $\Bun_{\xi}$ gets larger (with respect to the extension by zero functor). In practice, the categories $D^{b}_{(\bM_{S},\calK_{S,\Om})}(\Bun_{\xi}, \Qlbar)$ will often stabilize when $\xi$ is large enough (this is the sheaf-theoretic manifestation of the cuspidality of automorphic forms).
\end{remark}

If we shrink $\bK^{+}_{S}$ to another level $\bK^{++}_{S}$, we have the corresponding algebraic groups $\wt\bL_{S}=\prod_{x\in S}\bK_{x}/\bK^{++}_{x}$, $\bK^{++}_{Z, x}=\bK^{++}_{x}\cap L_{x}Z$ and $\wt\bL_{Z,S}=\prod_{x\in S}\bK_{Z, x}/\bK^{++}_{Z, x}$. We define similarly $\wt\bM_{S}:=\Bun_{\cZ}(\bK^{++}_{Z, S})\twtimes{\wt\bL_{Z,S}}\wt\bL_{S}$ and $\Om\boxtimes\cK_{S}$ descends to a rank one character sheaf $\wt\cK_{S,\Om}$ on $\wt\bM_{S}$. There is a projection $\wt\bM_{S}\to\bM_{S}$ whose kernel is $\bK^{+}_{S}/\bK^{++}_{S}$, and $\wt\cK_{S,\Om}$ is the pullback of $\cK_{S,\Om}$. The natural projection $\pi: \Bun_{\cG}(\bK^{++}_{S})\to\Bun_{\cG}(\bK^{+}_{S})$ is also a $\bK^{+}_{S}/\bK^{++}_{S}$-torsor. Therefore the quotient stacks $[\Bun_{\cG}(\bK^{++}_{S})/\wt\bM_{S}]$ and $[\Bun_{\cG}(\bK^{+}_{S})/\bM_{S}]$ are isomorphic, and the pullback along  $\pi$ induces an equivalence of categories
\begin{equation*}
D^{b}_{(\bM_{S}, \cK_{S,\Om})}(\Bun_{\cG}(\bK^{+}_{S}))\isom D^{b}_{(\wt\bM_{S}, \wt\cK_{S,\Om})}(\Bun_{\cG}(\bK^{++}_{S})).
\end{equation*}
We define
\begin{equation*}
D_{\cG,\Om}(\bK_{S},\cK_{S}):=\varinjlim_{\bK^{+}_{S}}D^{b}_{(\bM_{S},\calK_{S,\Om})}(\Bun_{\calG}(\bK^{+}_{S})).
\end{equation*}
where the transition functors are equivalences. Objects in the category $D_{\calG, \Om}(\bK_{S},\calK_{S})$ are called {\em $(\Om, \bK_{S}, \cK_{S}, \iota_{S})$-typical automorphic sheaves}.

Let $K^{+}_{x}:=\bK^{+}_{x}(k)$. For an object $\calF\in D_{\calG,\Om}(\bK_{S},\cK_{S})$ the corresponding function $f_{\calF, k}$ on $\Bun_{\cG}(\bK^{+}_{S})(k)$ is an eigenfunction under $\bM_{S}(k)$ with eigenvalues given by the character corresponding to $\cK_{S,\Om}$. In particular, $f_{\calF, k}$ is $(K_{S}/K^{+}_{S},\chi_{S})$-equivariant and $(\AZn,\onat)$-equivariant.  According to \eqref{Weil Bun general}, we may identify the double coset $\AG/\prod_{x\notin S}\cG(\calO_{x})\times\prod_{x\in S}K^{+}_{x}$ as a sub-groupoid of $\Bun_{\cG}(\bK^{+}_{S})(k)$. Restricting $f_{\calF, k}$ to $\AG/\prod_{x\notin S}\cG(\calO_{x})\times\prod_{x\in S}K^{+}_{x}$ gives a function in $C_{\calG,\omega}(K_{S}, \chi_{S})$. Thus we get an additive map
\begin{equation*}
\textup{Ob}D_{\calG,\Om}(\bK_{S},\cK_{S})\to C_{\calG,\omega^{\n}}(K_{S}, \chi_{S}). 
\end{equation*}

\subsubsection{Variants of the category of automorphic sheaves}\label{sss:DCa} For an extension $k'/k$, we may take the base-changed geometric automorphic datum $(\Om', \bK_{S'}, \cK_{S'},\iota_{S'})$ for $G(F\otimes_{k}k')$ and define the corresponding category of automorphic sheaves. We denote this category by $D_{\calG,\Om}(k'; \bK_{S},\cK_{S})$. Similarly, when $k'/k$ is a finite extension, we may define the function space $C_{\calG,\omega^{\n}}(k'; K_{S}, \chi_{S})$.

By the Kottwitz homomorphism \eqref{Kott}, we may decompose $\Bun_{\cG}(\bK_{S})$ into open and closed substacks $\Bun_{\cG}(\bK_{S})_{\a}$ according to the $\Frob_{k}$-orbits $\a$ in $\xch(Z\dG)_{I_{F}}$. Each $\Bun_{\cG}(\bK_{S})_{\a}$ is stable under the action of $\Bun^{\n}_{\cZ}(\bK_{Z,S})$. Let $D_{\calG,\Om}(\bK_{S},\cK_{S})_{\a}\subset D_{\calG,\Om}(\bK_{S},\cK_{S})$ be the full subcategory consisting of those sheaves supported on $\Bun_{\cG}(\bK_{S})_{\a}$. Similarly we may define $C_{\calG,\omega^{\n}}(K_{S}, \chi_{S})_{\a}$ for any $\a\in\xch(Z\dG)$ (nonzero only when $\a$ is fixed by $\Frob_{k}$).

\subsection{Rigidity of geometric automorphic data}\label{ss:rigid auto}   
In this subsection we introduce several notions of rigidity for geometric automorphic data, and give a criterion for the weak rigidity using the notion of relevant orbits.

\begin{defn}\label{def:auto strong rigid} A restricted automorphic datum $(\onat,K_{S},\chi_{S})$ is called {\em strongly rigid}, if there is a unique $(\onat,K_{S},\chi_{S})$-typical automorphic representation up to unramified twists.
\end{defn}

\begin{defn}\label{def:geom rigid} Let $k$ be a finite field and let $(\Om, \bK_{S}, \cK_{S},\iota_{S})$ be a geometric automorphic datum giving rise to a restricted automorphic datum $(\onat, K_{S}, \chi_{S})$. For any finite extension $k'/k$, let $(\om'^{\n}, K_{S'}, \chi_{S'})$ denote the automorphic datum for $G$ over $F'$ obtained from the base-changed geometric automorphic datum $(\Om', \bK_{S'}, \cK_{S'}, \iota_{S'})$ from $F$ to $F\otimes_{k}k'$. 
\begin{enumerate}
\item The geometric automorphic datum $(\Om, \bK_{S}, \cK_{S},\iota_{S})$ is called {\em strongly rigid}, if for {\em every} finite extension $k'/k$, the base-changed restricted automorphic datum $(\om'^{\n}, K_{S'}, \chi_{S'})$ is strongly rigid in the sense of Definition \ref{def:auto strong rigid}.
\item The geometric automorphic datum $(\Om, \bK_{S}, \cK_{S},\iota_{S})$ is called {\em weakly rigid}, if there is a constant $N$ such that for {\em every} finite extension $k'/k$, we have $\dim C_{\cG, \onat}(k';K_{S}, \chi_{S})_{\a}\leq N$ for any $\a\in\xch(Z\dG)_{I_{F}}$. For notation see \S\ref{sss:DCa}.
\end{enumerate}
\end{defn}

We will see several examples of strongly rigid restricted automorphic data for $G=\GL_{2}$ and $F=k(t)$ in \S\ref{ss:GL2}.

\begin{lemma} If  $(\Om, \bK_{S}, \cK_{S},\iota_{S})$ is weakly rigid, then there exists a number $N$ such that for each finite extension $k'/k$, there are at most $N$ isomorphism classes of $(\om'^{\n}, K_{S'}, \chi_{S'})$-typical automorphic representations $\pi'$ of $G(\AA_{F\otimes_{k}k'})$ up to unramified twists, and all of them are cuspidal (here, $(\om'^{\n}, K_{S'}, \chi_{S'})$ is the restricted automorphic datum for $G(F\otimes_{k}k')$ obtained from base change).
\end{lemma}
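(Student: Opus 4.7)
The plan is to deduce both conclusions---cuspidality and the bound on the number of orbits---from the weak rigidity hypothesis combined with Lemma \ref{l:cusp}.

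For cuspidality: Via Weil's identification \eqref{Weil Bun general}, a function in $C_{\cG, \om'^\n}(k'; K_{S'}, \chi_{S'})_\a$ is a $(K_{S'}, \chi_{S'})$-equivariant function on the $k'$-points of the Kottwitz component $\Bun_\cG(\bK_{S'})_\a$. I would apply the Harder--Narasimhan stratification of this stack: the $k'$-point count of strata of positive instability grows with $|k'|$, so a dimension bound that is uniform in $k'/k$ forces the support of these functions to lie inside a fixed finite-type open substack independent of $k'$. Pulling back to $G(F\otimes_{k}k')\backslash G(\AA_{F\otimes_{k}k'})$ gives support compact modulo $Z(\AA_{F\otimes_{k}k'})$. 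Applying Lemma \ref{l:cusp} with any full central character $\om'$ extending $\om'^\n$ then shows that every typical $\pi'$ is cuspidal and yields the decomposition $C_{\cG, \om'}(K_{S'}, \chi_{S'})=\bigoplus_{\pi'}L_{\pi'}$ with $L_{\pi'}={\pi'}^{K_{S'}^{+}, \chi_{S'}}$.

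For the orbit bound: Fix a Kottwitz component $\a\in\xch(Z\dG)^{\Frob_{k'}}_{I_{F}}$. Projecting $L_{\pi'}$ onto this component gives subspaces $\mathrm{pr}_\a(L_{\pi'})\subset C_{\cG, \om'^\n}(K_{S'},\chi_{S'})_\a$. Because an unramified character is constant on each Kottwitz coset, the projections $\mathrm{pr}_\a(L_{\pi'})$ and $\mathrm{pr}_\a(L_{\pi'\otimes\chi_0'})$ differ only by the scalar $\chi_0'(\a)$ and so coincide as subspaces of $C_\a$; thus each unramified-twist orbit contributes a single subspace. Distinct orbits give linearly independent subspaces, separated by their joint eigenvalues under the unramified spherical Hecke algebra modulo the action of unramified twists. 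Consequently, the number of orbits $[\pi']$ with $\mathrm{pr}_\a(L_{\pi'})\neq 0$ is bounded by $\dim C_{\cG, \om'^\n}(K_{S'},\chi_{S'})_\a\leq N$. Since the Kottwitz support of each nonzero $\pi'$ is a union of cosets of $\Lambda_Z:=\kappa(Z(\AA_{F\otimes_{k}k'}))$ in $\xch(Z\dG)^{\Frob_{k'}}_{I_{F}}$, summing over the finite quotient $\xch(Z\dG)^{\Frob_{k'}}_{I_{F}}/\Lambda_Z$---whose cardinality is bounded by a constant $c$ depending only on $G$---yields the uniform bound $cN$ on the number of orbits.

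The main obstacle will be making the cuspidality step rigorous: showing that a uniform-in-$k'$ dimension bound forces support in a finite-type substack of $\Bun_\cG(\bK_S)$. This requires careful estimates on the $k'$-point counts of Harder--Narasimhan strata of unbounded instability, and should follow from results of Behrend, Harder, and Heinloth on the geometry of $\Bun_\cG$; one needs to verify that these results extend to the possibly ramified quasi-split group scheme $\cG(\bK_S)$ considered here.
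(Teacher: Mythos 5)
The cuspidality step of your proposal takes a much harder route than necessary, and the gap you flag there is real. You set out to prove that a dimension bound uniform in $k'$ forces the support to lie in a single finite-type open substack \emph{independent of $k'$}, via point-counting on Harder--Narasimhan strata. This is more than what is needed and is not how the paper argues. The paper's observation is essentially a tautology: the space $C_{\cG,\onat}(k';K_{S},\chi_{S})_{\a}$ is spanned by functions each supported on a single double coset in $Z(\AA_{F'})^{\n}\backslash G(F')\backslash G(\AA_{F'})_{\a}/(\prod_{x\notin S'}\cG(\calO_{x})\times\prod_{x\in S'}K_{x})$, and each double coset supports at most a one-dimensional $\chi_{S'}$-eigenspace (a nonzero eigenfunction on a coset is determined by its value at one point, and exists iff the evaluated character is trivial on the stabilizer). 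Therefore $\dim C_{\cG,\onat}(k';K_{S},\chi_{S})_{\a}$ \emph{equals} the number of supporting double cosets. The bound $\leq N$ then gives finitely many such cosets for each fixed $k'$, i.e.\ support compact modulo center over $k'$, with no geometric input from $\Bun_{\cG}$ at all. Uniformity in $k'$ of the final count follows simply from uniformity of the hypothesis $\dim\leq N$, not from any statement about a fixed finite-type substack. Replacing your HN argument by this observation closes the gap and shortens the proof considerably.

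For the orbit bound, your outline is workable but again does more work than needed. Once compact support modulo $Z(\AA_{F'})$ is established and a full central character $\om'$ extending $\onat{}'$ is chosen, the decomposition $C_{\cG,\om'}(K_{S'},\chi_{S'})=\bigoplus_{\pi'}(\bigotimes_{x}\pi'^{(K_x,\chi_x)}_x)$ in Lemma \ref{l:cusp} already implies that the number of $(\om',K_{S'},\chi_{S'})$-typical $\pi'$ is bounded by $\dim C_{\cG,\om'}(K_{S'},\chi_{S'})$, which the compact-support argument bounds in terms of $N$. Since every $\onat{}'$-typical representation is an unramified twist of an $\om'$-typical one, the number of $\onat{}'$-typical representations up to unramified twist is bounded by the same quantity. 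You do not need to argue separately about linear independence of projections onto a fixed Kottwitz component or about joint Hecke eigenvalues; the decomposition in Lemma \ref{l:cusp} packages that for you. Your observation that unramified twists act by a scalar on each Kottwitz fiber is correct but is only used implicitly in the paper.
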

\begin{proof}
Let $G(\AA_{F})_{0}$ be the preimage of $0\in\xch(Z\dG)_{I_{F}}$ under the Kottwitz homomorphism. The dimension of $C_{\cG, \onat}(K_{S}, \chi_{S})_{0}$ is the number of double cosets in $Z(\AA_{F})^{\n}\AG_{0}/\prod_{x\notin S}\cG(\calO_{x})\times\prod_{x\in S}K_{x}$ that support functions with prescribed eigenproperties under the action of $\AZ^{\n}$ and $K_{S}$. Since $\dim C_{\cG, \onat}(K_{S}, \chi_{S})_{0}\leq N$, all functions in $C_{\cG, \onat}(K_{S}, \chi_{S})_{0}$ are supported on finitely many such double cosets, hence they are all supported on some compact-modulo-$Z(\AA_{F})^{\n}$ subset of $G(\AA_{F})_{0}$. Extending $\onat$ to a central character $\omega$, then functions in  $C_{\cG, \om}(K_{S}, \chi_{S})$ are all supported on compact-modulo-$Z(\AA_{F})$ subset of $G(\AA_{F})$. We then apply Lemma \ref{l:cusp} to conclude that all $(\om,K_{S},\chi_{S})$-typical automorphic representations are cuspidal, and there are at most $N$ of them. Since every $(\onat,K_{S}, \chi_{S})$-typical automorphic representation can be made  $(\om,K_{S}, \chi_{S})$-typical by an unramified twist, they are also cuspidal, and the number of them up to unramified twists are bounded by $\dim C_{\cG,\onat}(K_{S}, \chi_{S})_{0}\leq N$. Replacing $k$ with a finite extension $k'$, the same estimate holds. 
\end{proof}

\subsubsection{Relevant points}\label{sss:relevant}
One can talk about the stabilizer of $\Bun^{\n}_{\cZ}(\bK_{Z,S})$ at $\calE$: it is the Picard groupoid $A_{\calE}$ whose objects are pairs $(b,\beta)$ where $b\in\Bun^{\n}_{\cZ}(\bK_{Z,S})$ and $\beta\in\Isom_{\cG,\bK_{S}}(b\cdot\calE,\calE)$, with the obvious definition of composition of and isomorphisms between such pairs. Since the automorphism group of points in $\Bun^{\n}_{\cZ}(\bK_{Z,S})$ is a subgroup of the automorphism group of every point in $\Bun_{\cG}(\bK_{S})$,  it turns out that $A_{\calE}$ is a group scheme. Choosing a smaller level $\bK^{+}_{S}$ as in \S\ref{sss:Kplus},  a preimage $\calE^{+}\in\Bun_{\cG}(\bK^{+}_{S})$ of $\calE$ gives an isomorphism $i_{\calE^{+}}: A_{\calE}\cong\bM_{S,\calE^{+}}$, the latter being the stabilizer of $\calE^{+}$ under $\bM_{S}$. There is a canonical homomorphism $\bM_{S,\calE^{+}}\to\bM_{S}$ which may not be an embedding. Consider the composition
\begin{equation}\label{evK+}
\ev_{\bK^{+}_{S},\calE^{+}}: A_{\calE}\xrightarrow{i_{\calE^{+}}}\bM_{S,\calE^{+}}\to\bM_{S}.
\end{equation}
We then define $\cK_{\calE}\in\cCS(A_{\calE})$ to be the pullback of $\cK_{S,\Om}$ along $\ev_{\bK^{+}_{S},\calE^{+}}$. The object $\cK_{\calE}$ a priori depends on the choices of  $\bK^{+}_{S}$ and $\calE^{+}$, but we have

\begin{lemma}\label{l:well defined A}
\begin{enumerate}
\item The isomorphism type of $\calK_{\calE}\in\cCS(A_{\calE})$ is independent of the choice of $\bK^{+}_{S}$ and $\calE^{+}$.
\item For $\calE_{1}, \calE_{2}\in\Bun_{\cG}(\bK_{S})(\kbar)$ in the same $\Bun_{\cZ}(\bK_{Z,S})$-orbit, the pairs $(A_{\calE_{1}},\cK_{\calE_{1}})$ and $(A_{\calE_{2}},\cK_{\calE_{2}})$ are (non-canonically) isomorphic.
\end{enumerate}
\end{lemma}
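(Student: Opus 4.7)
The plan is to realize $(A_\calE, \cK_\calE)$ as the stalk at $\calE$ of a canonically defined rank one character sheaf on a quotient stack intrinsic to the geometric automorphic datum $(\Om, \bK_S, \cK_S, \iota_S)$. Concretely, I will set $\calQ := [\Bun_\cG(\bK^+_S)/\bM_S]$ and let $\widetilde \cK$ be the rank one local system on $\calQ$ obtained by descending the $(\bM_S,\cK_{S,\Om})$-equivariant trivial local system on $\Bun_\cG(\bK^+_S)$, equivalently, the pullback of $\cK_{S,\Om}$ along $\calQ \to \BB \bM_S$. The paper has already observed, when defining $D_{\cG,\Om}(\bK_S,\cK_S)$ as a colimit, that shrinking $\bK^+_S$ to $\bK^{++}_S$ gives a canonical equivalence $\calQ^+ \isom \calQ^{++}$, and the same comparison identifies $\widetilde \cK$ with $\widetilde{\wt\cK}$, so the pair $(\calQ,\widetilde \cK)$ is canonical.

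For part (1), any $\kbar$-point $\calE \in \Bun_\cG(\bK_S)(\kbar)$ admits a lift $\calE^+ \in \Bun_\cG(\bK^+_S)(\kbar)$, and any two such lifts differ by the action of $\bL_S \subset \bM_S$, so they induce the same $\kbar$-point $[\calE] \in \calQ$. The band of automorphisms of $[\calE]$ is canonically $A_\calE$: the identification $i_{\calE^+}: A_\calE \isom \bM_{S,\calE^+}$ used in \eqref{evK+} is precisely the presentation of this band coming from the chart $\calE^+$. Consequently the stalk of $\widetilde \cK$ at $[\calE]$ is a rank one character sheaf on $A_\calE$, and by construction it coincides with $\cK_\calE$. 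Independence of $\calE^+$ is then immediate, and independence of $\bK^+_S$ is the already-noted compatibility of $\calQ^+ \isom \calQ^{++}$ with the character sheaves.

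For part (2), write $\calE_2 = z \cdot \calE_1$ with $z \in \Bun_\cZ(\bK_{Z,S})(\kbar)$. Shrinking $\bK^+_S$ if necessary, $z$ lifts to $\widetilde z \in \Bun_\cZ(\bK^+_{Z,S})(\kbar)$, because the level-lowering morphism of Picard stacks is smooth surjective. The twisting action of $\widetilde z$ on $\Bun_\cG(\bK^+_S)$ commutes with the $\bM_S$-action; this splits into commutation with the central twists by $\Bun^{\n}_\cZ(\bK^+_{Z,S}) \subset \bM_S$ (automatic since the ambient Picard stack is abelian) and commutation with the local-level action of $\bL_S$ (which is supported at the formal disks at $S$, while $\widetilde z$ twists globally by a central $\cZ$-torsor). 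Granted this, set $\calE^+_2 := \widetilde z \cdot \calE^+_1$ for a chosen preimage $\calE^+_1$ of $\calE_1$: commutation forces $\bM_{S,\calE^+_2} = \bM_{S,\calE^+_1}$ as subgroup schemes of $\bM_S$, so the two evaluation maps in \eqref{evK+} coincide under the induced isomorphism $A_{\calE_1} \isom A_{\calE_2}$. Pulling back $\cK_{S,\Om}$ then yields $\cK_{\calE_1} \cong \cK_{\calE_2}$. Note that $z$ may move $\calE_1$ between Kottwitz components, but this causes no trouble since $\bM_S$-stabilizers and $\cK_{S,\Om}$ are defined uniformly in the component.

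The hard part will be the commutativity asserted in (2), specifically the commutation of the $\widetilde z$-twist with the $\bL_S$-action. Via the uniformization of $\Bun_\cG(\bK^+_S)$ by loop groups at $S$, this reduces to the statement that global twisting by a central $\cZ$-torsor preserves level structures up to the canonical identification coming from the centrality of $\cZ$ in $\cG$; once this is verified, the remainder of the argument is formal descent along the $\bM_S$-torsor $\Bun_\cG(\bK^+_S) \to \calQ$.
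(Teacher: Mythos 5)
Your proof is correct and takes a genuinely different, more conceptual route than the paper's. The paper argues directly: for (1), changing $\calE^{+}$ alters $\ev_{\bK^{+}_{S},\calE^{+}}$ by an inner automorphism of $\bM_{S}$, which acts trivially on $\CS(\bM_{S})$, and shrinking $\bK^{+}_{S}$ is handled by a compatibility diagram with the surjection $\wt\bM_{S}\surj\bM_{S}$; for (2), the preimages of $\calE_{1},\calE_{2}$ lie in one $\Bun_{\cZ}(\bK^{+}_{Z,S})\times\bL_{S}$-orbit, so the stabilizer maps to $\bM_{S}$ are conjugate, and one argues as in (1). Your version packages everything into the intrinsic pair $(\calQ,\widetilde\cK)$ and reads off both parts from the stalk of $\widetilde\cK$ at $[\calE]$; this has the merit of making the canonicity of $\cK_{\calE}$ manifest, and for (2) it produces an isomorphism of stabilizers over $\bM_{S}$ rather than merely a conjugacy (though the conjugacy would suffice, since inner automorphisms act trivially on character sheaves anyway — so the sharper conclusion doesn't buy much). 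What you call the "hard part" is also present implicitly in the paper's claim that the stabilizers are $\bM_{S}$-conjugate; the commutativity of the central $\cZ$-twist with the $\bL_{S}$-level action is the common ingredient. One genuine slip: your "equivalently" between the two descriptions of $\widetilde\cK$ is false. The constant sheaf on $\Bun_{\cG}(\bK^{+}_{S})$ admits a $(\bM_{S},\cK_{S,\Om})$-equivariant structure in the sense of \S\ref{sss:equiv sh} only if $\cK_{S,\Om}$ is trivial (the condition $a^{*}\Qlbar\cong\cK_{S,\Om}\boxtimes\Qlbar$ forces this), so "descending the equivariant trivial local system" is not available in general. The second formulation — pulling $\cK_{S,\Om}$ back along $\calQ\to\BB\bM_{S}$ — is the correct one, and since that is what your argument actually uses, the proof goes through.
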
  
\begin{proof} 
(1) We denote the $\calK_{\calE}$ defined in \eqref{evK+} by $\cK_{\bK^{+}_{S},\calE^{+}}$ to emphasize its a priori dependence on auxiliary choices. For fixed choice of $\bK^{+}_{S}$, changing $\calE^{+}$ will change the map $\ev_{\bK^{+}_{S},\calE^{+}}$ by an inner automorphism of $\bM_{S}$. Since inner automorphisms act trivially on $\CS(\bM_{S})$, the isomorphism type of $\calK_{\bK^{+}_{S},\calE^{+}}$ is independent of  $\calE^{+}$.

Changing $\bK^{+}_{S}$ to an even smaller level $\bK^{++}_{S}$, and choosing a preimage $\calE^{++}$ of $\calE^{+}$ in $\Bun_{\cG}(\bK^{++}_{S})$, we get a surjection $\wt{\bM}_{S}\surj\bM_{S}$ (here $\wt\bM_{S}$ is the counterpart of $\bM_{S}$ for $\bK^{++}_{S}$), and a similar isomorphism $i_{\calE^{++}}:A_{\calE}\cong\wt\bM_{S,\calE^{++}}$ making a commutative diagram
\begin{equation*}
\xymatrix{A_{\calE}\ar[r]^{i_{\calE^{++}}}  \ar[dr]_{i_{\calE^{+}}} &\wt\bM_{S,\calE^{++}}\ar[r] & \wt\bM_{S}\ar[d]\\
& \bM_{S,\calE^{+}}\ar[r] & \bM_{S}}
\end{equation*}
Since $\cK_{S,\wt\Om}$ (the counterpart of $\cK_{S,\Om}$ for $\bK^{++}_{S}$)  is the pullback of $\cK_{S,\Om}$ along $\wt\bM_{S}\to\bM_{S}$, we have $\cK_{\bK^{+}_{S},\calE^{+}}=\ev^{*}_{\bK^{+}_{S}, \calE^{+}}\cK_{S,\Om}\cong\ev^{*}_{\bK^{++}_{S}, \calE^{++}}\cK_{S,\wt\Om}=\cK_{\bK^{++}_{S}, \calE^{++}}$. Therefore $\cK_{\calE}$ does not change under shrinking $\bK^{+}_{S}$ either.

(2) Fix a level $\bK^{+}_{S}$ as in \S\ref{sss:Kplus}. For $\calE_{1},\calE_{2}$ in the same $\Bun_{\cZ}(\bK_{Z, S})$-orbit, their arbitrary preimages $\calE^{+}_{1}$ and $\calE^{+}_{2}$ in $\Bun_{\cG}(\bK^{+}_{S})$ are in the same $\Bun_{\cZ}(\bK^{+}_{Z,S})\times\bL_{S}$-orbit. Therefore their stabilizers $\bM_{S,\calE_{1}}$ and $\bM_{S,\calE_{2}}$ are conjugate to each other in $\bM_{S}$. We then argue as in the first part of the proof of (1).
\end{proof}

\begin{defn}\label{def:relevant} A point $\calE\in\Bun_{\cG}(\bK_{S})(\kbar)$ is called {\em $(\Om, \cK_{S})$-relevant} if the restriction of  $\cK_{\calE}$ to the neutral component of $A_{\calE}$ is isomorphic to the constant sheaf. Otherwise $\cE$ is called {\em $(\Om, \cK_{S})$-irrelevant}.
\end{defn}

By Lemma \ref{l:well defined A}(2), a $\Bun_{\cZ}(\bK_{Z,S})$-orbit on $\Bun_{\cG}(\bK_{S})(\kbar)$ either consists entirely of $(\Om, \cK_{S})$-relevant points or entirely of $(\Om, \cK_{S})$-irrelevant points. Therefore we may talk about $(\Om, \cK_{S})$-relevant and irrelevant $\Bun_{\cZ}(\bK_{Z,S})$-orbits.

In practice we introduce a weaker relevance condition by ignoring the central character. Let $\calE\in\Bun_{\cG}(\bK_{S})(\kbar)$. Then we have an affine algebraic group $\Aut_{\cG, \bK_{S}}(\calE)$ over $\kbar$, the automorphism group of $\calE$ preserving the $\bK_{x}$-level structures. For each $x\in S(\kbar)$, the restriction $\calE|_{\Spec\calO_{x}}$ can be viewed as a $\bK_{x}$-torsor. Let $\Aut_{x}(\calE)$ be the pro-algebraic group of automorphisms of $\calE|_{\Spec\calO_{x}}$ as a $\bK_{x}$-torsor. Choosing a trivialization of $\calE|_{\Spec\calO_{x}}$ as a $\bK_{x}$-torsor, we get an isomorphism $\ep_{x}:\Aut_{x}(\calE)\cong\bK_{x}$; changing the trivialization changes the isomorphism $\ep_{x}$ by an inner automorphism of $\bK_{x}$. For each $x\in S(\kbar)$, restricting an automorphism of $\calE$ to $\Spec\calO_{x}$ gives a homomorphism of pro-algebraic groups
\begin{equation*}
\ev_{S,\calE}:\Aut_{\cG, \bK_{S}}(\calE)\to \prod_{x\in S(\kbar)} \Aut_{x}(\calE)\cong\bK_{S}\otimes_{k}\kbar
\end{equation*}
which is well-defined up to conjugacy. We can pullback the character sheaf $\cK_{S}$ to $\Aut(\calE)$ via $\ev_{S,\calE}$ and get a rank one character sheaf
\begin{equation*}
\cK_{S,\calE}:=\ev^{*}_{S,\calE}\cK_{S}\in\cCS(\Aut_{\cG,\bK_{S}}(\calE)).
\end{equation*}
This is well-defined because inner automorphisms act trivially on the isomorphism classes of rank one character sheaves. 

\begin{defn} A point $\calE\in \Bun_{\cG}(\bK_{S})(\kbar)$ is called {\em $\cK_{S}$-relevant} if the restriction of $\cK_{S,\calE}$ to the neutral component of $\Aut_{\cG,\bK_{S}}(\calE)$ is isomorphic to the constant sheaf.  Otherwise the point $\calE$ is called {\em $\cK_{S}$-irrelevant}. Again $\cK_{S}$-relevance is a property of a $\Bun_{\cZ}(\bK_{Z,S})$-orbit.
\end{defn}

\begin{lemma}\label{l:two rel} If a point $\calE\in \Bun_{\cG}(\bK_{S})(\kbar)$ is $(\Om,\cK_{S})$-relevant, then it is also $\cK_{S}$-relevant. Moreover, if $\bA_{Z,S}$ is connected, then a point $\calE\in \Bun_{\cG}(\bK_{S})(\kbar)$ is $\cK_{S}$-relevant if and only if it is $(\Om,\cK_{S})$-relevant.
\end{lemma}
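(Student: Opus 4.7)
The strategy is to establish a key compatibility between the two character sheaves on the relevant stabilizer groups, deduce (1) by restriction, and then use the connectedness hypothesis in (2) to go the other direction.

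\medskip

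\emph{Step 1 (compatibility).} There is a natural morphism of group schemes $j:\Aut_{\cG,\bK_{S}}(\calE)\to A_{\calE}$ sending an automorphism $\phi$ to the class of the pair $(1,\phi)$, where $1$ denotes the identity of $\Bun^{\n}_{\cZ}(\bK_{Z,S})$. I would first check that $j^{*}\cK_{\calE}\cong\cK_{S,\calE}$ in $\cCS(\Aut_{\cG,\bK_{S}}(\calE))$. After choosing a lift $\calE^{+}$ as in \S\ref{sss:Kplus} and identifying $A_{\calE}\cong\bM_{S,\calE^{+}}$, the composite $\ev_{\bK^{+}_{S},\calE^{+}}\circ j:\Aut_{\cG,\bK_{S}}(\calE)\to\bM_{S}$ factors through the canonical embedding $\bL_{S}\hookrightarrow\bM_{S}$, $l\mapsto [1,l]$, induced from the description $\bM_{S}=\Bun^{\n}_{\cZ}(\bK^{+}_{Z,S})\twtimes{\bL_{Z,S}}\bL_{S}$. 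Since $\cK_{S,\Om}$ was obtained by descent of $\Om\boxtimes\cK_{S}$, its pullback along $l\mapsto [1,l]$ is $\Om|_{1}\otimes\cK_{S}\cong\cK_{S}$ using the canonical trivialization of the character sheaf $\Om$ at the identity. This matches the definition $\cK_{S,\calE}=\ev^{*}_{S,\calE}\cK_{S}$ after a short diagram chase.

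\medskip

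\emph{Step 2 (proof of the first assertion).} Since $j$ is a morphism of algebraic groups, $j(\Aut_{\cG,\bK_{S}}(\calE)^{0})\subset A^{0}_{\calE}$. If $\calE$ is $(\Om,\cK_{S})$-relevant, i.e., $\cK_{\calE}|_{A^{0}_{\calE}}$ is trivial, then restricting along $j$ and invoking Step 1 gives
\[
\cK_{S,\calE}|_{\Aut_{\cG,\bK_{S}}(\calE)^{0}}\cong j^{*}\cK_{\calE}|_{\Aut_{\cG,\bK_{S}}(\calE)^{0}}\cong\Qlbar,
\]
so $\calE$ is $\cK_{S}$-relevant.

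\medskip

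\emph{Step 3 (proof of the second assertion).} Assume $\bA_{Z,S}$ is connected and $\calE$ is $\cK_{S}$-relevant. I analyze the short exact sequence of group schemes
\[
1\to\Aut_{\cG,\bK_{S}}(\calE)\xrightarrow{j} A_{\calE}\to C_{\calE}\to 1,
\]
where $C_{\calE}\subset\Bun^{\n}_{\cZ}(\bK_{Z,S})$ is the set-theoretic stabilizer of $\calE$. Parallel to this, at the level of $\bK^{+}_{S}$ one has
\[
1\to \bL_{S}/\bA_{Z,S}\to\bM_{S}\to\Bun^{\n}_{\cZ}(\bK_{Z,S})\to 1,
\]
and $\cK_{S,\Om}$ is compatibly built from $\cK_{S}$ on the kernel and $\Om$ on the cokernel. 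Pulling back along $\ev_{\bK^{+}_{S},\calE^{+}}$, the character sheaf $\cK_{\calE}|_{A^{0}_{\calE}}$ splits into a $\cK_{S}$-contribution coming from $j(\Aut_{\cG,\bK_{S}}(\calE))\cap A^{0}_{\calE}$ and an $\Om$-contribution coming from its image $C^{0}_{\calE}$. The $\cK_{S}$-contribution is trivialized by the hypothesis together with Step 1; the $\Om$-contribution is controlled by the canonical trivialization of $\Om|_{\bA_{Z,S}}$ (the very ingredient used to build the secondary central character $\tau$ in \S\ref{sss:sec central}), where the connectedness of $\bA_{Z,S}$ ensures that $\bA_{Z,S}\subset\Aut_{\cG,\bK_{S}}(\calE)^{0}$ via the central embedding.

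\medskip

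\emph{Main obstacle.} The hard part of Step 3 is the structural claim that, when $\bA_{Z,S}$ is connected, the quotient $A^{0}_{\calE}/j(\Aut_{\cG,\bK_{S}}(\calE)^{0})$ receives only contributions on which $\Om$ is trivialized. Concretely, this requires ruling out $\pi_{0}$-contributions arising from automorphisms of the identity in the Picard stack $\Bun^{\n}_{\cZ}(\bK_{Z,S})$ which, in the presence of disconnected $\bA_{Z,S}$, can genuinely produce new components of $A_{\calE}$ not coming from $\Aut_{\cG,\bK_{S}}(\calE)^{0}$. Connectedness of $\bA_{Z,S}$ is precisely what makes this phenomenon vanish, and a careful bookkeeping via the two exact sequences above identifies $\cK_{\calE}|_{A^{0}_{\calE}}$ with the pullback of a character sheaf that is already known to be trivial on both pieces.
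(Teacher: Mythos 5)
Your Step 1 and Step 2 track the paper's argument for the first assertion closely: the map you call $j$ is the paper's $\l:\Aut_{\cG,\bK_{S}}(\calE)\cong\bL_{S,\calE^{+}}\to\bM_{S,\calE^{+}}\cong A_{\calE}$, and the compatibility $\l^{*}\cK_{\calE}\cong\cK_{S,\calE}$ immediately gives $(\Om,\cK_{S})$-relevant $\Rightarrow$ $\cK_{S}$-relevant. That part is fine.

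Step 3, however, contains a genuine error: your short exact sequence
\[
1\to\Aut_{\cG,\bK_{S}}(\calE)\xrightarrow{j} A_{\calE}\to C_{\calE}\to 1
\]
is wrong because $j$ is not injective. By construction $A_{\calE}$ is the Picard groupoid of pairs $(b,\beta)$ with $b\in\Bun^{\n}_{\cZ}(\bK_{Z,S})$; for $\phi\in\bA_{Z,S}$ (the automorphism group of the identity object of this Picard stack), the pair $(1,\phi)$ is isomorphic to the identity pair $(1,\id)$ via the $2$-morphism $\phi$ itself. Equivalently, $\bA_{Z,S}$ is exactly the kernel of $\bL_{S}\to\bM_{S}$, and hence the kernel of $\l$. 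The paper's correct sequence is
\[
1\to\bA_{Z,S}\to\Aut_{\cG,\bK_{S}}(\calE)\xrightarrow{\l}A_{\calE},
\]
and the whole point of the second assertion is to show that $\l$ has finite cokernel, so that $\Aut^{\circ}_{\cG,\bK_{S}}(\calE)/\bA_{Z,S}=A^{\circ}_{\calE}$ (using connectedness of $\bA_{Z,S}$ to pass triviality of $\cK_{S,\calE}$ on $\Aut^{\circ}$ down to the quotient by $\bA_{Z,S}$). Your "Main obstacle" paragraph gestures at something in this direction but does not actually carry out the required finiteness argument — which in the paper is done by mapping to $\Bun_{\calD}(\bK_{D,S})$ for the maximal torus quotient $D$ of $G$, using that $Z\to D$ is an isogeny to factor through multiplication by $n$, and concluding that the cokernel of $\l$ (the coarse moduli of the stabilizer of $\calE$ under $\Bun^{\n}_{\cZ}(\bK_{Z,S})$) sits inside a finite group. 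As written, your Step 3 both starts from an incorrect exact sequence and omits the essential finiteness computation.
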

\begin{proof}
If $\calE^{+}\in\Bun_{\cG}(\bK^{+}_{S})$ is a lifting of $\calE$, then $\bL_{S,\calE^{+}}$ is canonically identified with $\Aut_{\cG,\bK_{S}}(\calE)$, hence a homomorphism $\l:\Aut_{\cG,\bK_{S}}(\calE)\cong\bL_{S,\calE^{+}}\to\bM_{S,\calE^{+}}\cong A_{\cE}$. We have $\l^{*}\cK_{\cE}\cong \cK_{S,\cE}$, hence $(\Om,\cK_{S})$-relevance implies $\cK_{S}$-relevance.

Next we assume $\bA_{Z,S}$ is connected and deduce that $\cK_{S}$-relevance implies $(\Om,\cK_{S})$-relevance. Note that $\bA_{Z,S}\subset\bL_{Z,S}\subset\bL_{S}$ is the kernel of the homomorphism $\bL_{S}\to\bM_{S}$, and is always contained in $\Aut_{\cG, \bK_{S}}(\calE)$. Therefore, we have an exact sequence
\begin{equation*}
1\to\bA_{Z,S}\to\Aut_{\cG,\bK_{S}}(\calE)\xrightarrow{\l}A_{\calE}.
\end{equation*}
Since $\bA_{Z,S}$ is connected, the fact that $\cK_{S,\cE}$ is trivial on $\Aut^{\circ}_{\cG,\bK_{S}}(\calE)$ implies that $\cK_{\cE}$ is trivial when restricted to $\Aut^{\circ}_{\cG,\bK_{S}}(\calE)/\bA_{Z,S}$. To show that $\cK_{\cE}|_{A^{\circ}_{\cE}}$ is trivial, it suffices to show that $\Aut^{\circ}_{\cG,\bK_{S}}(\calE)/\bA_{Z,S}=A^{\circ}_{\calE}$, or to show that $\coker(\l)$ is finite. 

Since $\coker(\bL_{S}\to\bM_{S})$ is the coarse moduli space of $\Bun^{\n}_{\cZ}(\bK_{Z,S})$, $\coker(\l)$ is the coarse moduli space of the stabilizer of $\calE$ under $\Bun^{\n}_{\cZ}(\bK_{Z,S})$. Recall $D$ is the maximal torus quotient of $G$. Let $\bK_{D, S}=\prod_{x\in S}\bK_{D,x}$ where $\bK_{D,x}$ is the  image of $\bK_{S}$ in $L_{x}D$. By our assumption, $\bK_{x}=\bK_{Z,x}\cdot(\bK_{x}\cap L_{x}G^{\der})$, hence $\bK_{D,x}$ is also the image of $\bK_{Z,x}$ under $L_{x}Z\to L_{x}D$. We have maps $\Bun_{\cZ}(\bK_{Z,S})\to\Bun_{\cG}(\bK_{S})\to\Bun_{\calD}(\bK_{D,S})$. For $b\in \Bun_{\cZ}(\bK_{Z,S})$ and $\cE\in\Bun_{\cG}(\bK_{S})$, we denote their images in $\Bun_{\calD}(\bK_{D,S})$ by $b_{D}$ and $\cE_{D}$. Suppose $b\cdot\calE\cong\calE$, then  $b_{D}\calE_{D}\cong\calE_{D}$ in $\Bun_{\calD}(\bK_{D,S})$, which implies that $b$ lies in the kernel of $\zeta: \Bun_{\cZ}(\bK_{Z, S})\to\Bun_{\calD}(\bK_{D,S})$. Since $Z\to D$ is an isogeny, we have a factorization $[n]: Z\to D\to Z$ for some positive integer $n$. This induces a factorization $[n]: \Bun_{\cZ}(\bK_{Z, S})\xrightarrow{\zeta}\Bun_{\calD}(\bK_{D,S})\to\Bun_{\cZ}(\bK_{Z, S})$. Therefore $\ker(\zeta)^{c}\subset\ker([n])^{c}$ (where $(-)^{c}$ means taking the coarse moduli space). Hence $\coker(\l)\subset \ker([n])^{c}$, which is finite. This completes the proof.
\end{proof}

\begin{lemma}\label{l:irr}
\begin{enumerate}
\item Let $\calE\in \Bun_{\calG}(\bK_{S})(\kbar)$ be a $(\Om,\cK_{S})$-irrelevant point. Then for any object $\calF\in D_{\calG,\Om}(\bK_{S},\calK_{S})$, $i^{*}_{\calE}\calF=0$ and $i^{!}_{\calE}\calF=0$. Here $i_{\calE}$ denotes the inclusion map of the fiber of $\calE$ in $\Bun_{\calG}(\bK^{+}_{S})$.
\item Let $[g]\in\AG/\prod_{x\notin S}\cG(\calO_{x})\times\prod_{x\in S}K_{x}$ be a double coset whose corresponding point $\calE\in \Bun_{\calG}(\bK_{S})(k)$ is $\cK_{S}$-irrelevant. Then any function $f\in C_{\calG}(K_{S},\chi_{S})$ vanishes on the double coset $G(F)g(\prod_{x\notin S}\cG(\calO_{x})\times\prod_{x\in S}K_{x})$. Similar statement holds when $k$ is replaced by a finite extension $k'$.
\end{enumerate}
\end{lemma}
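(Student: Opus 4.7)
The plan is to treat the two parts separately, though both ultimately reduce to the same principle: the equivariance structure carried by a typical automorphic sheaf (respectively, an element of $C_{\cG}(K_{S},\chi_{S})$) forces vanishing at any point whose stabilizer does not admit an equivariant vector for the pulled-back character sheaf (respectively, character).

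For part (1), I would fix an auxiliary level $\bK^{+}_{S}$ as in \S\ref{sss:Kplus} and pick a lift $\calE^{+}\in\Bun_{\cG}(\bK^{+}_{S})(\kbar)$ of $\calE$. The fiber of $\calE$ in $\Bun_{\cG}(\bK^{+}_{S})$ is a single $\bL_{S}$-orbit, and $\cF$ restricted to it is determined by the stalk at $\calE^{+}$ together with the induced equivariance under the stabilizer $\bM_{S,\calE^{+}}$. Since $\cF\in D^{b}_{(\bM_{S},\cK_{S,\Om})}(\Bun_{\cG}(\bK^{+}_{S}))$, the stalk $i^{*}_{\calE^{+}}\cF$ is acted on by $\bM_{S,\calE^{+}}$ through the pullback of $\cK_{S,\Om}$. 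By Lemma \ref{l:well defined A}, the isomorphism $i_{\calE^{+}}: A_{\calE}\isom \bM_{S,\calE^{+}}$ carries this pullback to $\cK_{\calE}$. The $(\Om,\cK_{S})$-irrelevance assumption then says that $\cK_{\calE}|_{A^{\circ}_{\calE}}$ is a non-trivial rank one character sheaf on the connected group $A^{\circ}_{\calE}$, and a non-trivial equivariance forces $i^{*}_{\calE^{+}}\cF=0$. Hence $i^{*}_{\calE}\cF=0$; the Verdier-dual argument, using $!$-stalks, gives $i^{!}_{\calE}\cF=0$.

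For part (2), I would argue directly from the equivariance of $f$. For any $\gamma\in H(k):=\Aut_{\cG,\bK_{S}}(\calE_{g})(k)$, we have $\gamma g = g\cdot k_{\gamma}$ with $k_{\gamma}\in\prod_{x\notin S}\cG(\calO_{x})\times\prod_{x\in S}K_{x}$; the assignment $\gamma\mapsto (k_{\gamma,x})_{x\in S}$ is exactly $\ev_{S,\calE}$ on $k$-points. Combining left $G(F)$-invariance, right $\prod_{x\notin S}\cG(\calO_{x})$-invariance, and right $(K_{S},\chi_{S})$-equivariance of $f$ yields
\begin{equation*}
f(g)=f(\gamma g)=f(g\cdot k_{\gamma})=\chi_{S}\bigl((k_{\gamma,x})_{x\in S}\bigr)f(g)=\chi_{S,\calE}(\gamma)f(g),
\end{equation*}
where $\chi_{S,\calE}=\chi_{S}\c\ev_{S,\calE}$ is precisely the trace function of the character sheaf $\cK_{S,\calE}$ on $H(k)$. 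Hence if $\chi_{S,\calE}$ is non-trivial on $H(k)$, then $f(g)=0$, and $G(F)$-invariance together with right $(K_{S},\chi_{S})$-equivariance propagates this vanishing to the entire double coset.

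The one step that is not purely formal is extracting arithmetic non-triviality of $\chi_{S,\calE}$ on $H(k)$ from the geometric hypothesis that $\cK_{S,\calE}|_{H^{\circ}}$ is a non-trivial local system; this will be the main obstacle. Since $\calE$ is a $k$-point, $H$ has a natural $k$-structure and $\cK_{S,\calE}$ inherits a canonical Frobenius structure from $\cK_{S}$, so I expect this implication to follow from the sheaf-to-function dictionary for rank one character sheaves developed in Appendix \ref{a:ch} applied to the algebraic group $H/k$. The same argument, applied mutatis mutandis to the base change $\calE_{k'}$ after replacing the entire setup by its pullback along $\Spec k'\to\Spec k$, takes care of the last sentence of the lemma.
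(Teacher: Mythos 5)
Your proposal follows the same route as the paper: part (1) reduces, after lifting to $\Bun_{\cG}(\bK^{+}_{S})$, to the vanishing of the equivariant category at a point with nontrivial character sheaf restriction, which is Lemma \ref{l:Ac}(1); and part (2) is the equivariance computation you write down together with the geometric-to-arithmetic nontriviality implication. The step you flag as the main obstacle is precisely Theorem \ref{th:char sh}(1) from the appendix — injectivity of $f_{L}:\CS(L)\to\Hom(L(k),\Qlbar^{\times})$ for a \emph{connected} algebraic group $L$ over a finite field — applied to $L=H^{\circ}$, which gives nontriviality of $\chi_{S,\calE}$ already on $H^{\circ}(k)$ and hence on $H(k)$.
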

\begin{proof}
(1) We work over $\kbar$ without changing notation. Choose a smaller level $\bK^{+}_{S}$ as in \S\ref{sss:Kplus} and a preimage $\calE^{+}\in\Bun_{\cG}(\bK^{+}_{S})(\kbar)$ of $\calE$. Let $i_{\cE^{+}}:\Spec \kbar\to\Bun_{\cG}(\bK^{+}_{S})$ be the inclusion of $\calE^{+}$. It suffices to prove that $i^{*}_{\cE^{+}}\calF=0$ and $i^{!}_{\cE^{+}}=0$. Since $D_{\calG,\Om}(\bK_{S},\calK_{S})\cong D_{(\bM_{S},\cK_{S,\Om})}(\Bun_{\cG}(\bK^{+}_{S}))$, the stalks $i^{*}_{\calE^{+}}\calF$ and $i^{!}_{\calE^{+}}\calF$ are $\bM_{S,\calE^{+}}$-equivariant complexes over $\Spec\kbar$. By Lemma \ref{l:Ac}, this category is zero if $\calK_{S,\Om}|_{\bM^{\circ}_{S,\calE^{+}}}$ is not the constant sheaf, i.e., if $\calE$ is a $(\Om,\cK_{S})$-irrelevant point of $\Bun_{\cG}(\bK_{S})(\kbar)$.

(2) Let $A:=\Aut_{\cG,\bK_{S}}(\calE)$. Then $A(k)$ is the automorphism group of the double coset $[g]$ if we view $\AG/\prod_{x\notin S}\cG(\calO_{x})\times\prod_{x\in S}K_{x}$ as a groupoid. There is a function $f\in C_{\calG}(K_{S},\chi_{S})$ nonzero on the double coset represented by $\calE$ if and only if the character $\chi_{S, [g]}:A(k)\to \prod_{x\in S}K_{x}\xrightarrow{\prod\chi_{x}}\Qlbar^{\times}$ is trivial. Under the sheaf-to-function correspondence, $\chi_{S,[g]}$ corresponds to the rank one character sheaf $\calK_{S,\calE}$ on $A$. Since $\calE$ is irrelevant, $\calK_{S,\calE}|_{A^{\circ}}$ is nontrivial.  By Theorem \ref{th:char sh}, the character $\chi_{S,[g]}|_{A^{\circ}(k)}$ is also nontrivial, and therefore $f$ must vanish on $[g]$.
\end{proof}

\begin{theorem}\label{th:wrigid} Let $(\Om, \bK_{S},\cK_{S},\iota_{S})$ be a geometric automorphic datum. Consider the following statements.
\begin{enumerate}
\item\label{Kfin} $\Bun_{\cG}(\bK_{S})$ has only finitely many $\cK_{S}$-relevant $\Bun_{\cZ}(\bK_{Z,S})$-orbits over $\kbar$. 
\item\label{Omfin} $\Bun_{\cG}(\bK_{S})$ has only finitely many $(\Om, \cK_{S})$-relevant $\Bun_{\cZ}(\bK_{Z,S})$-orbits over $\kbar$. 
\item\label{Pervfin} For each $\a\in\xch(Z\dG)_{I_{F}}$, $D_{\cG,\Om}(\kbar; \bK_{S}, \cK_{S})_{\a}$ (see \S\ref{sss:DCa} for notation) contains only finitely many irreducible perverse sheaves up to isomorphism.
\item\label{weakrig} The geometric automorphic datum $(\Om, \bK_{S},\cK_{S},\iota_{S})$ is weakly rigid.
\end{enumerate}
Then $\eqref{Kfin}\Rightarrow\eqref{weakrig}\Rightarrow\eqref{Omfin}\Leftrightarrow\eqref{Pervfin}$. When $\bA_{Z,S}$ is connected, all the statements are equivalent.
\end{theorem}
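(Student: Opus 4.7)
My plan is to establish the implication cycle $\eqref{Kfin} \Rightarrow \eqref{weakrig} \Rightarrow \eqref{Pervfin} \Leftrightarrow \eqref{Omfin}$, supplemented by $\eqref{Omfin} \Rightarrow \eqref{Kfin}$ under the connectedness hypothesis on $\bA_{Z,S}$.

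First I would prove $\eqref{Omfin} \Leftrightarrow \eqref{Pervfin}$ by classifying the irreducible perverse sheaves in the equivariant derived category. Using the equivalence $D_{\cG,\Om}(\bK_{S},\cK_{S}) \simeq D^{b}_{(\bM_{S},\cK_{S,\Om})}(\Bun_{\cG}(\bK_{S}^{+}))$, standard equivariant sheaf theory describes each irreducible perverse sheaf as the IC extension of a rank-one $(\bM_{S},\cK_{S,\Om})$-equivariant local system on a single $\bM_{S}$-orbit, which by the diagram \eqref{action Bun} is the same as a $\Bun_{\cZ}(\bK_{Z,S})$-orbit on $\Bun_{\cG}(\bK_{S})$. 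By Lemma \ref{l:irr}(1) such an orbit must be $(\Om,\cK_{S})$-relevant; conversely each relevant orbit supports a bounded number (controlled by $\pi_{0}$ of the stabilizer) of such IC sheaves. Restricting to a fixed $\a$-component gives the stated equivalence.

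Next, for $\eqref{Kfin} \Rightarrow \eqref{weakrig}$: by Lemma \ref{l:two rel} the $(\Om,\cK_{S})$-relevant orbits are contained in the $\cK_{S}$-relevant ones, hence finite in number, say $O_{1},\dots,O_{r}$. For any finite extension $k'/k$ and any $f \in C_{\cG,\onat}(k';K_{S},\chi_{S})_{\a}$, Lemma \ref{l:irr}(2) forces the support of $f$ to lie in $\bigsqcup_{i}O_{i}(k')$. Writing each $O_{i}$ as a homogeneous space under $\bM_{S}$ with stabilizer $A_{i}$, the number of $\bM_{S}(k')$-orbits on $O_{i}(k')$ is controlled by $|\pi_{0}(A_{i})|$ uniformly in $k'$ (via Lang's theorem applied to the connected group $A_{i}^{\circ}$), while on each single $\bM_{S}(k')$-orbit the space of $(\bM_{S}(k'),\cK_{S,\Om})$-equivariant functions is at most one-dimensional. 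Summing over $i$ yields a uniform bound $N$ independent of both $k'$ and $\a$.

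The main step is $\eqref{weakrig} \Rightarrow \eqref{Pervfin}$, which I prove by contrapositive. If some $D_{\cG,\Om}(\kbar;\bK_{S},\cK_{S})_{\a_{0}}$ contains infinitely many irreducible perverse sheaves $\calA_{1},\calA_{2},\dots$, supported on pairwise distinct orbits, then for any $N$ we may pass to a finite extension $k'/k$ over which $\calA_{1},\dots,\calA_{N}$ and their supporting orbits all become $\Frob_{k'}$-stable. Each $\calA_{i}$ then admits a Weil structure over $k'$, unique up to a Galois character which can be absorbed into the fixed central character $\onat$; the sheaf-to-function correspondence produces nonzero $f_{i} \in C_{\cG,\onat}(k';K_{S},\chi_{S})_{\a_{0}}$ with pairwise disjoint supports, hence linearly independent. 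Since $N$ was arbitrary this violates weak rigidity. Finally, $\eqref{Omfin} \Rightarrow \eqref{Kfin}$ under the connectedness hypothesis on $\bA_{Z,S}$ is immediate from Lemma \ref{l:two rel}, which identifies the two notions of relevance. The most delicate point will be the Frobenius bookkeeping in the contrapositive step: one has to normalize the Weil structures on the $\calA_{i}$ so that the resulting functions all transform under the single prescribed restricted central character $\onat$, which is feasible precisely because the $\calA_{i}$ lie in the fixed component indexed by $\a_{0}$, so the twist ambiguity is bounded and the violation survives in that single component.
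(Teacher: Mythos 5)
Your overall architecture matches the paper's, and your arguments for $\eqref{Omfin}\Leftrightarrow\eqref{Pervfin}$ and $\eqref{weakrig}\Rightarrow\eqref{Pervfin}$ are essentially the ones used there (modulo the slip that the equivariant local systems on a relevant orbit are classified by $\Rep_{\xi}(\pi_0(A_{\calE}))$ and need not be rank one, and modulo the observation that in the contrapositive step you implicitly use $(3)\Rightarrow(2)$ to know the $\calA_i$ can be chosen with distinct supports).

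There is, however, a genuine gap in your argument for $\eqref{Kfin}\Rightarrow\eqref{weakrig}$. You bound the number of $\bM_S(k')$-orbits on $O_i(k')$ by $|\pi_0(A_i)|$ and then count one dimension per $\bM_S(k')$-orbit. But the space $C_{\cG,\onat}(k';K_S,\chi_S)_\a$ is not a space of $\bM_S(k')$-equivariant functions on $\Bun^+_\a(k')$. Two corrections are needed, both of which the paper carries out explicitly and which are the real technical content of this implication. First, only part of $\Bun^+_\a(k')$ corresponds to the adelic double coset $\AG_\a/\prod K^+_x$: the Weil equivalence \eqref{Weil Bun general} has a disjoint union over $\upH^1_{\cG,\bK^+_S}(F\otimes k',G)$, and one must restrict to the trivial class (the locus the paper calls $O(k')^{\heartsuit}$). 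Second, the equivariance in the adelic picture is under $\AZ^{\n}\times L_S$, not under $\bM_S(k')=P\times L_S$ with $P=\Bun^\n_\cZ(\bK^+_{Z,S})(k')$; the map $\AZ^{\n}\to P$ is in general neither surjective onto $P$ nor even onto the relevant subgroup $P_0$, so there may be more eigenfunctions under $\AZ^{\n}\times L_S$ than under $P\times L_S$, and the excess must be bounded. The paper does this by bounding the cokernel of $\AZ^\n\to P_0$ inside $\ker_{S,\bK^+_S}(Z,G)\subset\ker_S(Z,G)$, and then bounds $\#\ker_S(Z,G)$ by a Galois-cohomological computation (reducing to $\upH^1_{\Sram}(F',Z^{\fin})^\Gamma$ and $\cohog{1}{\Gamma,C(F')}$) that is uniform in $k'$. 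Without some such argument, your claimed bound $N$ is not actually a bound on $\dim C_{\cG,\onat}(k';K_S,\chi_S)_\a$, only on a related but different quantity, and the uniformity in $k'$ that weak rigidity demands is not established. The same $\upH^1$ issue also needs to be acknowledged (though it is less serious there) in your contrapositive step for $\eqref{weakrig}\Rightarrow\eqref{Pervfin}$: before applying sheaf-to-function, you must enlarge $k'$ so that the classes $\zeta_i\in\cohog{1}{F\otimes k',G}$ attached to the supporting orbits vanish, so that those orbits genuinely represent adelic double cosets.
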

\begin{proof} \eqref{Omfin}$\Rightarrow$\eqref{Pervfin} We base change all stacks to $\kbar$ without changing notation. Clearly \eqref{Omfin} is equivalent to that $\Bun_{\cG}(\bK^{+}_{S})_{\a}$ has only finitely many $(\Om, \cK_{S})$-relevant $\bM_{S}$-orbits over $\kbar$ for each $\a$. If this is the case, let $\calE^{+}_{1},\cdots, \calE^{+}_{N}$ be representatives of the relevant $\bM_{S}$-orbits in $\Bun_{\cG}(\bK^{+}_{S})(\kbar)_{\a}$, whose image in $\Bun_{\cG}(\bK_{S})$ we denote by $\calE_{1},\cdots, \calE_{N}$. By Lemma \ref{l:irr}, any object $\calF\in D_{\cG,\Om}(\bK_{S},\cK_{S})_{\a}\cong D^{b}_{(\bM_{S},\cK_{S,\Om})}(\Bun_{\cG}(\bK^{+}_{S})_{\a})$ can only have nonzero stalk and costalk along the $\bM_{S}$-orbits of $\calE^{+}_{i}$. If such an $\calF$ is an irreducible $(\bM_{S}, \cK_{S,\Om})$-equivariant perverse sheaf, it is the middle extension of some $(\bM_{S}, \cK_{S,\Om})$-equivariant local system on the $\bM_{S}$-orbit $O_{i}:=\bM_{S}\cdot\calE^{+}_{i}$. By Lemma \ref{l:Ac}, the category of $(\bM_{S}, \cK_{S,\Om})$-equivariant local system on $O_{i}\cong\bM_{S}/A_{\calE_{i}}$ is equivalent to the category of twisted representations of $\pi_{0}(A_{\calE_{i}})$ under some cocycle in $\cohog{2}{\pi_{0}(A_{\calE_{i}}),\Qlbar^{\times}}$ determined by $\cK_{\calE_{i}}$, which only has finitely many irreducible objects. Therefore there are only finitely many irreducible perverse sheaves in   $D_{\cG, \Om}(\kbar; \bK_{S}, \cK_{S})$ up to isomorphism.

\eqref{Pervfin}$\Rightarrow$\eqref{Omfin} For each $(\Om,\cK_{S})$-relevant $\bM_{S}$-orbit $O\cong\bM_{S}/A_{\calE}\subset\Bun_{\cG}(\bK^{+}_{S})_{\a}$, the category $\Loc_{(\bM_{S},\cK_{S, \Om})}(O)\cong\Rep_{\xi}(\pi_{0}(A_{\calE}))$ for some cocycle $\xi\in\cohog{2}{\pi_{0}(A_{\calE}),\Qlbar^{\times}}$. One can find a central extension $1\to C\to \tilA_{\calE}\to A_{\calE}\to1$ and a character $\chi_{C}:C\to\Qlbar^{\times}$ such that $\Rep_{\xi}(\pi_{0}(A_{\calE}))$ is equivalent to the category of finite-dimensional representations of $\tilA_{\calE}$ on which $C$ acts through $\chi_{C}$. For the argument see \S\ref{sss:centext}. In particular, $\Rep_{\xi}(\pi_{0}(A_{\calE}))$  is never the zero category. Take any nonzero irreducible object therein, we get an irreducible $(\bM_{S},\cK_{S,\Om})$-equivariant local system $\calL$ on $O$, and the middle extension of a suitable shift of $\calL$ is an irreducible perverse sheaf $\calF$. For different $(\Om,\cK_{S})$-relevant $\bM_{S}$-orbits we obtain different irreducible perverse sheaves in this way. Therefore, if there are only finitely many irreducible perverse sheaves in $D_{\cG,\Om}(\kbar; \bK_{S}, \cK_{S})_{\a}$, $\Bun_{\cG}(\bK^{+}_{S})_{\a}$ can only have finitely many $(\Om,\cK_{S})$-relevant $\bM_{S}$-orbits, i.e., \eqref{Omfin} holds.

\eqref{Kfin}$\Rightarrow$\eqref{weakrig} We will show that  $\dim C_{\calG,\onat}(K_{S},\chi_{S})_{\a}$ is bounded by a constant independent of the base field $k$ and $\a\in\xch(Z\dG)_{I_{F}}$. Choose the level $\bK^{+}_{S}$ small enough as in \S\ref{sss:Kplus} so that $\bK^{+}_{S}$ is pro-unipotent, hence $K_{S}/K^{+}_{S}=L_{S}:=\bK_{S}(k)$. Let $G(\AA_{F})_{\a}$ be the preimage of $\a$ under the Kottwitz homomorphism $\kappa_{G,F}$. Then $\dim C_{\calG,\onat}(K_{S},\chi_{S})_{\a}$ is the number of $\AZ^{\n}\times L_{S}$-orbits on $\AG_{\a}/\prod_{x\in S}K^{+}_{x}\times\prod_{x\notin S}\cG(\calO_{x})$ that can support $(\AZ^{\n}, \onat)$ and $(L_{S},\chi_{S})$-eigenfunctions. By Lemma \ref{l:irr}(2), only those cosets that are $\cK_{S}$-relevant can support such functions. By \eqref{Kfin}, there are only a finite number of $\Bun^{\n}_{\cZ}(\bK_{Z,S})$-orbits in $\Bun_{\cG}(\bK_{S})_{\a}$ over $\kbar$ (any this bound can be chosen to be independent of $\a$). Take such an orbit and denote its preimage in $\Bun_{\cG}(\bK^{+}_{S})_{\a}$ by $O$. Suppose $O$ contains a $k$-point (otherwise it does not contribute to the function space $C_{\calG,\onat}(K_{S},\chi_{S})_{\a}$). We shall bound the number of $\AZ^{\n}\times L_{S}$-orbits on $O(k)$ by a number that is unchanged upon passing $k$ to a finite extension.

Fix $\calE^{+}\in O(k)$, and let $S_{\calE^{+}}$ be the stabilizer of $\calE^{+}$ under $\Bun^{\n}_{\cZ}(\bK^{+}_{Z,S})\times\bL_{S}$. Let $P:=\Bun^{\n}_{\cZ}(\bK^{+}_{Z,S})(k)$. Since $O$ is a single orbit under $\Bun^{\n}_{\cZ}(\bK^{+}_{Z,S})\times\bL_{S}$ over $\kbar$, the number $P\times L_{S}$-orbits in $O(k)$ is bounded by $\#\cohog{1}{k,S_{\calE^{+}}}\leq\#\pi_{0}(S_{\calE^{+}})$.

Note that $\Bun_{\cG}(\bK^{+}_{S})_{\a}(k)$ is decomposed according to classes $\zeta\in\upH^{1}_{\cG,\bK^{+}_{S}}(F,G)$ as in \eqref{Weil Bun general}, and only the trivial class corresponds to $\AG_{\a}/\prod_{x\in S}K^{+}_{x}\times\prod_{x\notin S}\cG(\calO_{x})$, which is all we care about. Let $O(k)^{\heartsuit}$ be the subset of $O(k)$ that belongs to $\AG_{\a}/\prod_{x\in S}K^{+}_{x}\times\prod_{x\notin S}\cG(\calO_{x})$. We also have a map $P\to \upH^{1}_{\cZ, \bK^{+}_{Z,S}}(F,Z)$, the latter is a subgroup of $\cohog{1}{F,Z}$ and it acts on $\upH^{1}_{\cG,\bK^{+}_{S}}(F,G)$ compatibly with the action of $P$ on $\Bun_{\cG}(\bK^{+}_{S})_{\a}(k)$. Let $P_{0}\subset P$ be the subset which maps to $\ker_{S,\bK^{+}_{S}}(Z,G):=\ker(\upH^{1}_{\cZ, \bK^{+}_{Z,S}}(F,Z)\to \upH^{1}_{\cG,\bK^{+}_{S}}(F,G))$. Then each $P\times L_{S}$-orbit on $O(k)$ intersects $O(k)^{\heartsuit}$ in a single $P_{0}\times L_{S}$-orbit. Therefore the number of $P_{0}\times L_{S}$-orbits on $O(k)^{\heartsuit}$ is also bounded by $\#\pi_{0}(S_{\calE^{+}})$. 

Finally we bound the number of $\AZ^{\n}\times L_{S}$-orbits on $O(k)^{\heartsuit}$. For this we only need to bound the cokernel of $\AZ^{\n}\to P_{0}$, which is a subgroup of $\ker_{S,\bK^{+}_{S}}(Z, G)$. Therefore it suffices to bound $\#\ker_{S,\bK^{+}_{S}}(Z,G)$. For a group $H$ over $F$ we denote $\upH^{1}_{\Sram}(F,H)$ to be the kernel of $\upH^{1}(F,H)\mapsto\prod_{x\in |U|}\cohog{1}{F^{\ur}_{x}, H}$. Then $\ker_{S,\bK^{+}_{S}}(Z,G)\subset \ker(\upH^{1}_{\Sram}(F,Z)\to\upH^{1}_{\Sram}(F,G))=:\ker_{S}(Z,G)$. It suffices to bound the size of $\ker_{S}(Z,G)$. 

Over the finite Galois extension $F'/F$, $Z$ become constant diagonalizable groups (i.e., $\Gal(F^{s}/F')$ acts trivially on their character groups), and we may write $Z\otimes_{F}F'$ as $\Gm^{r}\times Z^{\fin}$ for some finite constant diagonalizable group over $F'$. Let $\Gamma=\Gal(F'/F)$, then we have exact sequences
\begin{equation*}
\xymatrix{1\ar[r] & \cohog{1}{\Gamma, Z(F')}\ar[r]\ar[d] & \cohog{1}{F,Z}\ar[r]\ar[d] & \cohog{1}{F',Z}^{\Gamma}\ar[d]\\
1\ar[r] & \cohog{1}{\Gamma, G(F')}\ar[r] & \cohog{1}{F,G}\ar[r] & \cohog{1}{F',G}^{\Gamma}}
\end{equation*}
The image of $\ker_{S}(Z,G)$ in $\cohog{1}{F',Z}^{\Gamma}$ certainly lies in $\upH^{1}_{\Sram}(F',Z)^{\Gamma}$.  Since $Z\otimes_{F}F'\cong\Gm^{r}\times Z^{\fin}$, $\upH^{1}_{\Sram}(F',Z)^{\Gamma}\subset\upH^{1}_{\Sram}(F',Z^{\fin})$. On the other hand, the kernel of  $\ker_{S}(Z,G)\to\cohog{1}{F',Z}^{\Gamma}$ lies in the kernel of $\cohog{1}{\Gamma, Z(F')}\to \cohog{1}{\Gamma, G(F')}$, which is surjected by $\cohog{1}{\Gamma, C(F')}$ where $C=\ker(Z\to G)$ is a finite diagonalizable group over $F$. The conclusion is that
\begin{equation*}
\#\ker_{S}(Z,G)\leq\#\upH^{1}_{\Sram}(F',Z^{\fin})^{\Gamma}\cdot\#\cohog{1}{\Gamma, C(F')}.
\end{equation*}
Both groups on the right side are finite and their cardinalities are bounded independent of extensions of $k$. This gives a bound on $\#\ker_{S,\bK^{+}_{S}}(Z, G)\leq\#\ker_{S}(Z,G)$, and hence on the number of $\AZ^{\n}\times L_{S}$-orbits on $O^{\heartsuit}(k)$ for any $\cK_{S}$-relevant $\Bun^{\n}_{\cZ}(\bK^{+}_{Z,S})$-orbit on $\Bun_{\cG}(\bK^{+}_{S})_{\a}$ for any $\a$. Therefore the dimension of $C_{\cG,\onat}(K_{S},\chi_{S})_{\a}$ is bounded independent of $\a$ and the field extension $k'/k$.

\eqref{weakrig}$\Rightarrow$\eqref{Omfin} Suppose $C_{\cG,\onat}(k';K_{S},\chi_{S})_{\a}\leq N$ for all finite extensions $k'/k$, we shall show that $\Bun_{\cG}(\bK^{+}_{S})$ contains at most $N$ $(\Om,\cK_{S})$-relevant $\bM_{S}$-orbits over $\kbar$. If not, let $O_{1},\cdots, O_{N+1}$ be distinct $(\Om,\cK_{S})$-relevant $\bM_{S}$-orbits, and let $\cE^{+}_{i}\in O_{i}(\kbar)$. Replacing $k$ by a large enough finite extension, we may assume that  all $\cE^{+}_{i}$ are defined over $k$. However, recall from \eqref{Weil Bun general} that not all $k$-points of $\Bun_{\cG}(\bK^{+}_{S})$ correspond to points in the double coset $\AG/\prod_{x\in S}K^{+}_{x}\times\prod_{x\notin S}\cG(\calO_{x})$. For each point $\cE^{+}_{i}$ there is a class $\zeta_{i}\in\cohog{1}{F,G}$ recording the isomorphism class of $\cE^{+}_{i}$ at the generic point of $X$. Since $\cohog{1}{F\otimes_{k}\kbar,G}$ vanishes, by enlarging $k$ to a finite extension, we may kill the classes $\zeta_{i}$, and hence make sure that $\cE^{+}_{i}$ do  represent certain double cosets $[g_{i}]\in\AG/\prod_{x\in S}K^{+}_{x}\times\prod_{x\notin S}\cG(\calO_{x})$ (which are necessarily distinct). By the argument of \eqref{Pervfin}$\Rightarrow$\eqref{Omfin}, there is a nonzero local system $\calL_{i}\in \Loc_{(\bM_{S},\cK_{S,\Om})}(O_{i})$ with support on the closure of the orbit $\bM_{S}\cdot \calE^{+}_{i}$ (enlarge $k$ if necessarily to make sure that $\calL_{i}$ is also defined over $k$). Then for large enough finite extensions $k'/k$, the function $f_{\calL_{i}, k'}$ attached to $\calL_{i}$ via the sheaf-to-function correspondence lies in $C_{\cG,\onat}(k';K_{S},\chi_{S})_{\a}$, and is nonzero at $[g_{i}]$ and vanishing outside the $\bM_{S}(k')$-orbit of $[g_{i}]$. Then $\{f_{\calL_{i}, k'}\}_{i=1,\cdots, N+1}$ gives at least $(N+1)$ linearly independent functions in  $C_{\cG,\onat}(k';K_{S},\chi_{S})_{\a}$, contradicting our original assumption.

Finally, when $\bA_{Z,S}$ is connected, we know from Lemma \ref{l:two rel} that \eqref{Kfin}$\Rightarrow$\eqref{Omfin}, hence all four statements are equivalent to each other.
\end{proof}

\subsubsection{Numerical condition for weak rigidity}\label{sss:numrig}
Let $(\Om,\bK_{S}, \cK_{S}, \iota_{S})$ be a geometric automorphic datum. For each $x\in S$ we define an integer $d(\bK^{\ad}_{x})$ as follows. Let $\bK^{\ad}_{x}=\bK_{x}/\bK_{Z,x}\subset L_{x}G^{\ad}$. The Lie algebra $\frk^{\ad}_{x}$ of $\bK^{\ad}_{x}$ is an $\calO_{x}$-lattice in $\frg^{\ad}(F_{x})$. On the other hand we have the special parahoric subgroup $L^{+}_{x}\cG^{\ad}$ whose Lie algebra is another lattice $\frg^{\ad}(\calO_{x})\subset \frg^{\ad}(F_{x})$. It makes sense to consider the relative dimension $\dim_{k}(\frg^{\ad}(\calO_{x}):\frk^{\ad}_{x})$ between  the two lattices $\frg^{\ad}(\calO_{x})$ and $\frk^{\ad}_{x}$: take another lattice $\L$ contained in both of them and define $\dim_{k}(\frg^{\ad}(\calO_{x}):\frk^{\ad}_{x})$ as $\dim_{k}(\frg^{\ad}(\calO_{x})/\L)-\dim_{k}(\frk^{\ad}_{x}/\L)$. Note that the inertia group $I_{x}$ acts on $\GG^{\ad}$ (as restricted from the homomorphism $\Gamma_{F}\xrightarrow{\theta}\Aut^{\dagger}(\GG)\to\Aut^{\dagger}(\GG^{\ad})$). Define
\begin{equation*}
d(\bK^{\ad}_{x}):=\dim\GG^{\ad}-\dim\GG^{\ad,I_{x}}+2\dim_{k}(\frg^{\ad}(\calO_{x}):\frk^{\ad}_{x}).
\end{equation*}

\begin{lemma}
\begin{enumerate}
\item For any choice of $\bK^{+}_{S}$ as in \S\ref{sss:Kplus}, the stack $[\bM_{S}\backslash \Bun_{\cG}(\bK^{+}_{S})_{\a}]$ is smooth of pure dimension $(g_{X}-1)\dim\GG^{\ad}+\frac{1}{2}\sum_{x\in S}d(\bK^{\ad}_{x})$.
\item Suppose there exists a point on $\Bun_{\cG}(\bK_{S})$ whose  stabilizer under $\Bun_{\cZ}(\bK_{Z,S})$ is finite. If $(\Om,\bK_{S}, \cK_{S}, \iota_{S})$ is weakly rigid, then
\begin{equation}\label{cond K}
\frac{1}{2}\sum_{x\in S}d(\bK^{\ad}_{x})=(1-g_{X})\dim\GG^{\ad}.
\end{equation}
\end{enumerate}
\end{lemma}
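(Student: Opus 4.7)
The plan for (1) is to identify $[\bM_S\backslash\Bun_\cG(\bK^+_S)_\a]$, up to \'etale covers by finite group schemes, with an open substack of $\Bun_{\cG^{\ad}}(\bK^{\ad}_S)$, where $\cG^{\ad}:=\cG/\cZ$ and $\bK^{\ad}_x$ is the image of $\bK_x$ in $L_xG^{\ad}$. Quotienting $\Bun_\cG(\bK^+_S)$ by $\bL_S$ recovers $\Bun_\cG(\bK_S)$, and the further Picard-stack action of $\Bun^{\n}_\cZ(\bK_{Z,S})$ corresponds, up to isogeny, to twisting by the center, which is killed upon passing to $\cG^{\ad}$-bundles; the restriction to a Kottwitz fiber $\alpha$ cuts out the appropriate open substack. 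Smoothness and pure-dimensionality then follow from those of $\Bun_{\cG^{\ad}}(\bK^{\ad}_S)$. The dimension is then $-\chi(X,\ad_{\bK^{\ad}_S}(\calE^{\ad}))$, where $\ad_{\bK^{\ad}_S}(\calE^{\ad})$ denotes the sheaf of sections of the adjoint bundle preserving the $\bK^{\ad}_S$-level structure. By Riemann--Roch this equals $(g_X-1)\dim\GG^{\ad}-\deg\ad_{\bK^{\ad}_S}(\calE^{\ad})$.

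The degree splits into three local contributions. On the unramified reductive locus the degree is $0$, because $\ad$ is self-dual via the Killing form. At each $x\in S_\theta$, Bruhat--Tits theory for the special parahoric of a tamely ramified quasi-split group yields a contribution of $-\tfrac{1}{2}(\dim\GG^{\ad}-\dim\GG^{\ad,I_x})$; the factor $\tfrac{1}{2}$ arises from pushforward of the self-dual adjoint bundle along the tame cyclic cover $X'\to X$ (which is \'etale near $x$) together with the resulting $I_x$-invariant/anti-invariant splitting. Finally, at each $x\in S$, shrinking from the special parahoric to $\bK^{\ad}_x$ decreases the degree by $\dim_k(\frg^{\ad}(\calO_x):\frk^{\ad}_x)$. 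Summing the three and substituting yields $(g_X-1)\dim\GG^{\ad}+\tfrac{1}{2}\sum_{x\in S}d(\bK^{\ad}_x)$. I expect the delicate step to be the half-integer tame-ramification contribution; here I would adapt the parahoric dimension calculations in the spirit of Heinloth's work.

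For (2), choose any preimage $\calE^+\in\Bun_\cG(\bK^+_S)$ of the given $\calE$. Since $\bL_S$ acts freely on the fiber of $\Bun_\cG(\bK^+_S)\to\Bun_\cG(\bK_S)$ above $\calE$, the $\bM_S$-stabilizer of $\calE^+$ is finite whenever the $\Bun_\cZ(\bK_{Z,S})$-stabilizer of $\calE$ is, so the group scheme $A_\calE$ is finite. Its neutral component is then trivial, so $\calE^+$ is automatically $(\Om,\cK_S)$-relevant in the sense of Definition \ref{def:relevant}. By upper semicontinuity of stabilizer dimension, a Zariski open neighborhood $U\subset\Bun_\cG(\bK^+_S)_\a$ of $\calE^+$ consists of points with finite $\bM_S$-stabilizer, hence lying in $(\Om,\cK_S)$-relevant $\bM_S$-orbits. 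If $\dim[\bM_S\backslash\Bun_\cG(\bK^+_S)_\a]>0$, then $U$ contains an infinite family of pairwise distinct such orbits, contradicting the implication $\eqref{weakrig}\Rightarrow\eqref{Omfin}$ of Theorem \ref{th:wrigid}. Hence this dimension is $0$, and substituting into part (1) yields $\tfrac{1}{2}\sum_{x\in S}d(\bK^{\ad}_x)=(1-g_X)\dim\GG^{\ad}$.
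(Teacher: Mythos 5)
Your argument for part (2) is essentially the paper's: finite stabilizer forces relevance, upper semicontinuity gives an open locus of such points, and weak rigidity via Theorem \ref{th:wrigid} caps the number of relevant orbits, forcing the quotient to have dimension zero; this is fine. For part (1) you take a genuinely different reduction. The paper never passes to the adjoint group: it simply writes $\dim[\bM_{S}\backslash\Bun_{\cG}(\bK^{+}_{S})_{\a}]=\dim\Bun_{\cG}(\bK_{S})-\dim\Bun_{\cZ}(\bK_{Z,S})$ and computes each term by the same Riemann--Roch plus local-degree analysis you use (reference lattice $\frg(\calO_{x})$, a contribution $-\tfrac{1}{2}(\dim\GG-\dim\GG^{I_{x}})$ at ramification points of $\theta_{X}$, and $-\dim_{k}(\frg(\calO_{x}):\frk_{x})$ from shrinking the level), then subtracts using $d(\bK_{x})-d(\bK_{Z,x})=d(\bK^{\ad}_{x})$. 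Your route instead identifies the quotient, up to zero-dimensional discrepancies, with an open substack of $\Bun_{\cG^{\ad}}(\bK^{\ad}_{S})$ and computes there. That is geometrically appealing and does give the right number, but note that the identification is the step that actually needs an argument, not the half-integer tame contribution you flag (that local computation is required on either route and is standard): you must know that the image of $\Bun_{\cG}(\bK_{S})\to\Bun_{\cG^{\ad}}(\bK^{\ad}_{S})$ is a union of connected components, that the fibers over the image are torsors under the Picard stack $\Bun_{\cZ}(\bK_{Z,S})$ (a lifting/obstruction statement for the central isogeny, with level structures), and you must control the finite discrepancy between $\Bun_{\cZ}$ and $\Bun^{\n}_{\cZ}$ and the Kottwitz components. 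Moreover ``\'etale covers by finite group schemes'' is not quite accurate when $\chk$ divides the order of the center: the discrepancy can involve infinitesimal group schemes or gerbes, which still costs nothing in dimension but makes the transfer of smoothness from $\Bun_{\cG^{\ad}}(\bK^{\ad}_{S})$ to the quotient less immediate. What the paper's subtraction buys is precisely that it avoids all of this (including the delicate issue of quotienting a stack by a Picard stack, which the author explicitly sidesteps); what your route buys is a more conceptual explanation of why only $\GG^{\ad}$ and $d(\bK^{\ad}_{x})$ appear in the answer.
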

\begin{proof}
(1) Since $\dim[\bM_{S}\backslash \Bun_{\cG}(\bK^{+}_{S})_{\a}]=\dim\Bun_{\cG}(\bK^{+}_{S})_{\a}-\dim\bM_{S}=\dim\Bun_{\cG}(\bK_{S})-\dim\Bun_{\cZ}(\bK_{Z,S})$, we shall compute the dimension of $\Bun_{\cG}(\bK_{S})$ and $\Bun_{\cZ}(\bK_{Z,S})$ separately. One can define $d(\bK_{x})$ and $d(\bK_{Z,x})$ in a way analogous to $d(\bK^{\ad}_{x})$, with the group $G^{\ad}$ changed to $G$ or $Z$ (correspondingly $\GG^{\ad}$ changes to $\GG$ or $\ZZ\GG$). Clearly $d(\bK_{x})-d(\bK_{Z,x})=d(\bK^{\ad}_{x})$. We shall prove that
\begin{eqnarray}
\label{dimBunG}\Bun_{\cG}(\bK_{S})=(g_{X}-1)\dim\GG+\frac{1}{2}\sum_{x\in S}d(\bK_{x}),\\
\label{dimBunZ}\Bun_{\cZ}(\bK_{Z,S})=(g_{X}-1)\dim\ZZ\GG+\frac{1}{2}\sum_{x\in S}d(\bK_{Z,x}).
\end{eqnarray}
Taking the difference of these equations gives (1).

It is well-know that $\Bun_{\cG}(\bK_{S})$ is smooth of dimension $-\chi(X,\Lie\cG_{\bK_{S}})$, where $\Lie\cG_{\bK_{S}}$ is the coherent sheaf on $X$ obtained from $\Lie\cG$ by changing its local sections over $\Spec\calO_{x}$ from $\frg(\calO_{x})$ to $\frk_{x}=\Lie\bK_{x}$. A local calculation shows that $\deg(\Lie\cG)=-\frac{1}{2}\sum_{x\in S}\dim\GG/\dim\GG^{I_{x}}$. Moreover, $\deg\Lie\cG-\deg\Lie\cG_{\bK_{S}}=\sum_{x\in S}\dim_{k}(\frg(\calO_{x}):\frk_{x})$. Therefore $\deg\Lie\cG_{\bK_{S}}=-\sum_{x\in S}(\frac{1}{2}\dim\GG/\dim\GG^{I_{x}}+\dim_{k}(\frg(\calO_{x}):\frk_{x})=-\frac{1}{2}\sum_{x\in S}d(\bK_{x})$. By Riemann-Roch theorem, $-\chi(X,\Lie\cG_{\bK_{S}})=-\deg\Lie\cG_{\bK_{S}}+(g_{X}-1)\rank(\Lie\cG_{\bK_{S}})=\frac{1}{2}\sum_{x\in S}d(\bK_{x})+(g_{X}-1)\dim\GG$. This proves \eqref{dimBunG}. The proof of \eqref{dimBunZ} is similar. This finishes the proof of (1).

(2) The stabilizer of a point $\calE\in\Bun_{\cG}(\bK_{S})$ under $\Bun_{\cZ}(\bK_{Z,S})$ is the same as the stabilizer of any of its preimage $\cE^{+}\in\Bun_{\cG}(\bK^{+}_{S})$ under $\bM_{S}$. Let $\calU\subset\Bun_{\cG}(\bK^{+}_{S})_{\a}$ be the locus where the stabilizers under $\bM_{S}$ are finite. The given condition ensures that $\calU$ is nonempty for some $\a$. Also $\calU$ is stable under $\bM_{S}$ and consists of $(\Om,\cK_{S})$-relevant points (because the condition for relevance only has to do with the neutral component of stabilizers). Moreover $\calU$ is open because the dimension of the stabilizers under $\bM_{S}$ is upper semicontinuous. By Theorem \ref{th:wrigid}, $(\Om,\bK_{S}, \cK_{S}, \iota_{S})$ being weakly rigid implies that $\calU$ consists of finitely many $\bM_{S}$-orbits. Therefore $\dim[\bM_{S}\backslash \Bun_{\cG}(\bK^{+}_{S})_{\a}]=\dim[\bM_{S}\backslash \calU]=0$, and we get \eqref{cond K} from (1).
\end{proof}

\subsection{Rigid automorphic data for $\GL_{2}$}\label{ss:GL2} 
Let $k$ be a finite field.
Consider the split group $G=\GL_{2}$ over $F=k(t)$, the function field of $X=\PP^{1}_{k}$. Since $G$ is split, we take the constant group scheme over $X$ as the integral model $\cG$, and denote $\cG$ also by $G$.

\subsubsection{$S$ consists of three points}\label{sss:GL2S3} Let $S=\{0,1,\infty\}\subset|\PP^{1}_{k}|=|X|$. For $x\in S$,  let $\bI_{x}$ be the Iwahori subgroup of $L_{x}G$ such that
\begin{equation*}
\bI_{x}(k_{x})=\left\{\left(\begin{array}{cc} a & b \\ c & d\end{array}\right)|c\in\frm_{x}, a,b,d\in\calO_{x}\right\}.
\end{equation*}
The reductive quotient of $\bI_{x}$ is $\Gm^{2}$ given by sending $\left(\begin{array}{cc} a & b \\ c & d\end{array}\right)$ to $a\mod\frm_{x}$ and $d\mod\frm_{x}$. For each $x\in S$, we choose a character $\chi_{x}=(\chi_{x}^{(1)},\chi_{x}^{(2)}):k^{\times}\times k^{\times}\to \Qlbar^{\times}$, which corresponds to a rank one character sheaf $\cK_{x}=\cK^{(1)}_{x}\boxtimes\cK^{(2)}_{x}$ on $\Gm^{2}$. We may view $\cK_{x}$ as a rank one character sheaf on $\bI_{x}$ by pullback along $\bI_{x}\surj \Gm^{2}$. We temporarily use $I_{x}$ to denote $\bI_{x}(k_{x})$ (not to be confused with the inertia group $I_{x}$ in \S\ref{ss:X}). Then $(I_{S}, \chi_{S})$ comes from $(\bI_{S}, \cK_{S})$ via the sheaf-to-function correspondence.

\begin{prop}\label{p:GL2S3}
\begin{enumerate}
\item There is an $(I_{S},\chi_{S})$-typical automorphic representation of $G(\AA_{F})$ only if
\begin{equation}\label{det GL2}
\prod_{x\in S}\chi_{x}^{(1)}\chi_{x}^{(2)}=1.
\end{equation}
In this case there is unique way (up to isomorphism) to extend $(\bI_{S}, \cK_{S})$ into a geometric automorphic datum $(\Om, \bI_{S}, \cK_{S}, \iota_{S})$.
\item If, moreover, for any map $\ep:S\to\{1,2\} $ we have 
\begin{equation*}
\prod_{x\in S}\chi^{(\ep(x))}_{x}\neq1 (\textup{ the trivial character}),
\end{equation*}
then the geometric automorphic datum $(\Om, \bI_{S}, \cK_{S}, \iota_{S})$ is strongly rigid.
\end{enumerate}
\end{prop}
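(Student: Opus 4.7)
The plan is to apply Theorem~\ref{th:wrigid} via an explicit $\Pic_{\PP^1}$-orbit analysis on $\Bun_{\GL_2}(\bI_S)$. First, since $Z = \Gm$ extends as the constant center of $\cG = \GL_2 \times X$ over $X = \PP^1$, we have $\Bun_\cZ \cong \Pic_{\PP^1} \cong \ZZ \times \BB\Gm$, its degree-zero part is $\Bun_\cZ^\n \cong \BB\Gm$, and $\bA_{Z,S} = \Gm$ is connected. The inclusion $Z \incl \bI_x$ as scalar matrices lands in the diagonal of the reductive quotient $\Gm \times \Gm$, on which $\cK_x$ restricts to $\chi_x^{(1)}\chi_x^{(2)}$; hence $\cK_S|_{\bA_{Z,S}}$ is trivial precisely when \eqref{det GL2} holds. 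For part (1), if \eqref{det GL2} fails then $\bA_{Z,S}$ is contained in the neutral component of every stabilizer, rendering every point of $\Bun_\cG(\bI_S)$ $\cK_S$-irrelevant and forcing $C_{\cG,\omega}(I_S,\chi_S) = 0$ by Lemma~\ref{l:irr}, which rules out any typical representation. Conversely, if \eqref{det GL2} holds, the $\Bun_\cZ^\n$-analogue of Lemma~\ref{l:unique Om} (which applies since $\Bun_\cZ^\n \cong \BB\bA_{Z,S}$ with $\bA_{Z,S}$ connected) identifies extensions $(\Om,\iota_S)$ with descents of $\cK_S$ along $\bL_S \surj \bL_S/\bA_{Z,S}$, producing a unique geometric automorphic datum.

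For part (2), since $\bA_{Z,S}$ is connected, Lemma~\ref{l:two rel} identifies $\cK_S$-relevance with $(\Om,\cK_S)$-relevance. I aim to show that each Kottwitz component $\Bun_\cG(\bI_S)_\alpha$ contains a unique $\cK_S$-relevant $\Bun_\cZ$-orbit with connected stabilizer; this forces $\dim C_{\cG,\omega^\n}(I_S,\chi_S)_\alpha \leq 1$ for every $\alpha$, and the resulting at-most-one typical representation per central-character extension gives uniqueness up to unramified twist. I would stratify $\Bun_{\GL_2}(\bI_S)(\kbar)$ by the Grothendieck type $\calV \cong \calO(a) \oplus \calO(b)$ (with $a \geq b$) and, at each $x \in S$, by whether $\ell_x$ equals the image $m_x \subset \calV_x$ of the distinguished subbundle $\calO(a) \subset \calV$.

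The generic stratum --- $\calV$ balanced with $a - b \in \{0,1\}$ and the three flags $\ell_x$ pairwise distinct and avoiding $m_x$ --- is a single $\Bun_\cZ$-orbit by the triple transitivity of $\PGL_2$ on $\PP^1$, and a direct calculation shows that its stabilizer in $\Aut(\calV) = \bigl\{(s,t,f) : s,t \in \Gm,\ f \in \upH^0(\calO(a-b))\bigr\}$ reduces to the diagonal center $\Gm$: the fixed-flag condition reads $f(x) = (t-s)\,a_x/b_x$ at each $x \in S$, and the injectivity of the evaluation map $\upH^0(\calO(a-b)) \incl k^3$, valid for $a - b \leq 1$, forces $s = t$ and $f = 0$. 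On this $\Gm$ the character $\cK_S$ is $\prod_x \chi_x^{(1)}\chi_x^{(2)} = 1$ by \eqref{det GL2}, so this orbit is relevant. For every other stratum --- coincident flags on the balanced bundle, all three flags equal to $m_x$, or $\calV$ unbalanced with $a - b \geq 2$ --- the stabilizer contains a two-dimensional torus: in the unbalanced case because the evaluation map $\upH^0(\calO(a-b)) \surj k^3$ is surjective, so every $(s,t) \in \Gm^2$ lifts to a stabilizer element; in the degenerate-flag cases because the $\GL_2$-stabilizer of the configuration is the torus preserving the two distinguished line directions. On this torus, $\cK_S$ restricts to a character of the form $\bigl(\prod_x \chi_x^{(\epsilon(x))}\bigr)(s) \cdot \bigl(\prod_x \chi_x^{(3-\epsilon(x))}\bigr)(t)$ for some $\epsilon : S \to \{1,2\}$ (recording at each $x$ which of $\ell_x$ or the complementary line is the $s$-eigenvector), and the hypothesis makes it nontrivial. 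Hence all these orbits are irrelevant.

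The upshot is that each Kottwitz class has exactly one $\cK_S$-relevant $\Bun_\cZ$-orbit, with connected stabilizer $\Gm$, giving $\dim C_{\cG,\omega^\n}(I_S,\chi_S)_\alpha = 1$ for every $\alpha$ and unique $(\omega, I_S, \chi_S)$-typical representation up to unramified twist. Since the analysis is purely geometric, it is preserved under any base change $k'/k$, so the geometric automorphic datum is strongly rigid. The main obstacle is the torus-stabilizer computation in the unbalanced case $a - b \geq 2$: one must verify that although the unipotent radical of $\Aut(\calV)$ is large ($\dim \upH^0(\calO(a-b)) = a - b + 1$), it acts trivially on the Iwahori reductive quotients, so the relevance test is governed by the full two-dimensional torus projection of the stabilizer, which is full thanks to the surjectivity of $\upH^0(\calO(a-b)) \surj k^3$ at three distinct points (valid precisely for $a - b \geq 2$).
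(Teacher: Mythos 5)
Your geometric analysis of part (2) — stratification by Grothendieck type and flag configuration, identification of a unique $\cK_S$-relevant $\Bun_\cZ$-orbit per Kottwitz class via the evaluation-map injectivity/surjectivity dichotomy at $a-b\leq 1$ vs.\ $a-b\geq 2$ — is essentially the paper's argument repackaged, and your part (1) route (using irrelevance of every point via $\cK_S|_{\bA_{Z,S}}\neq 1$ and Lemma~\ref{l:irr} instead of evaluating central-character compatibility directly) is a valid alternative. But there is a genuine gap at the final step. You write that $\dim C_{\cG,\omega^\n}(I_S,\chi_S)_\alpha \leq 1$ for every $\alpha$ gives ``at-most-one typical representation per central-character extension,'' which then gives uniqueness up to unramified twist. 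This is incorrect: for a fixed extension $\omega$ of $\omega^\n$ there are \emph{two} Kottwitz components contributing (the two parities of $\deg\det$), so the bound is $\dim C_{\cG,\omega}(I_S,\chi_S)\leq 2$, allowing \emph{two} $(\omega,I_S,\chi_S)$-typical representations $\pi,\pi'$. Uniqueness up to unramified twist requires showing $\pi' \cong \pi\otimes\eta$, where $\eta$ is the quadratic unramified character $\GL_2(\AA_F)\to\{\pm1\}$; this does not follow from the dimension count alone.

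The failure mode is that $\pi\otimes\eta\cong\pi$ could a priori hold (a dihedral-type self-twist), in which case $\pi$ and $\pi'$ would \emph{not} be unramified twists of each other, since the only unramified characters preserving the central character $\omega$ are $1$ and $\eta$. The paper rules this out with a nontrivial argument: if $\pi\otimes\eta\cong\pi$, then by the global Langlands correspondence for $\GL_2$ the attached Galois representation $\rho$ satisfies $\rho\otimes\chi_\eta\cong\rho$, forcing $\rho\cong\Ind^{W_F}_{W_E}\zeta$ for $E=F\otimes_k k'$ the unramified quadratic extension; hence $\rho|_{W_E}$ is reducible, so the base change $\textup{BC}_{E/F}(\pi)$ is noncuspidal, contradicting the cuspidality forced by rigidity over the base-changed datum. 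You need to supply this argument (or an equivalent one) to close the proof; the purely geometric orbit count gives only a two-representation bound per $\omega$, not strong rigidity.
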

\begin{proof} 
(1) Let $\pi$ be an $(I_{S},\chi_{S})$-typical automorphic representation of $G(\AA_{F})$. Then its central character of $\om: F^{\times}\backslash \AA^{\times}_{F}\to\Qlbar^{\times}$ is compatible with $(I_{S},\chi_{S})$: $\omega|_{\calO^{\times}_{x}}$ is trivial for $x\notin S$ and $\omega_{x}|_{\calO^{\times}_{x}}$ is equal to $\calO^{\times}_{x}\surj k^{\times}_{x}\xrightarrow{\chi_{x}^{(1)}\chi_{x}^{(2)}}\Qlbar^{\times}$ for $x\in S$. Since $\otimes\omega_{x}$ has to be trivial on $F^{\times}$, and in particular $k^{\times}$, we must have  \eqref{det GL2}. When \eqref{det GL2} is satisfied, the central character $\omega$ is also unique up to an unramified twist, and in particular the restricted part $\onat$ is unique determined by  the characters $\{\chi_{x}\}_{x\in S}$.

Since in our case $\Bun^{\n}_{\cZ}(\bI_{Z,S})=\Pic^{0}(X)\cong\BB\Gm$, Lemma \ref{l:unique Om} implies that when \eqref{det GL2} holds, there is a unique pair $(\Om, \iota_{S})$ making $(\Om, \bI_{S}, \cK_{S},\iota_{S})$ into a geometric automorphic datum.

(2) We shall show that $(I_{S},\chi_{S})$-typical automorphic representations are unique up to unramified twists. For this we may fix a central character $\omega$ compatible with $(I_{S},\chi_{S})$, and argue that $(\omega, I_{S},\chi_{S})$-typical automorphic representations are unique up to unramified twists.

By \eqref{Weil e}, we have an equivalence of groupoids (note that $\cohog{1}{F,\GL_{2}}$ is trivial)
\begin{equation*}
G(F)\backslash G(\AA_{F})/\prod_{x\notin S}G(\calO_{x})\times\prod_{x\in S}I_{x}\isom \Bun_{2}(\bI_{S})(k).
\end{equation*}
Here $\Bun_{2}(\bI_{S})$ is the moduli stack classifying $(\calV,\ell_{S})$, where $\calV$ is a vector bundle of rank two on $X$ and $\ell_{S}$ is a collection of lines $\ell_{x}$ in the two-dimensional fiber $\calV_{x}$, one for each $x\in S$. The Kottwitz map in this case is $\kappa: \Bun_{2}(\bI_{S})\to\ZZ$ sending $(\calV,\ell_{S})$ to the degree of $\calV$. We denote $\kappa^{-1}(d)$ by $\Bun^{d}_{2}(\bI_{S})$. We shall determine the $\cK_{S}$-relevant points on each $\Bun^{d}_{2}(\bI_{S})$.

Evaluating an automorphisms of $(\calV,\ell_{S})\in\Bun_{2}(\bI_{S})(\kbar)$ at $x\in S$ gives a map
\begin{equation*}
\alpha_{x}=(\alpha^{(1)}_{x},\alpha^{(2)}_{x}):\Aut(\calV,\ell_{S})\to \Gm^{2}.
\end{equation*}
which sends an automorphism $\varphi\in\Aut(\calV,\ell_{S})$ to the scalars by which $\varphi$ acts on $\ell_{x}$ and on $\calV_{x}/\ell_{x}$. Then $(\calV,\ell_{S})$ is $\cK_{S}$-relevant if and only if $\cK_{S,\calV}:=\bigotimes_{x\in S}\alpha^{(1),*}_{x}\cK^{(1)}_{x}\otimes\alpha^{(2),*}_{x}\cK^{(2)}_{x}$ is trivial on the neutral component of $\Aut(\calV,\ell_{S})$. 

There is one situation in which we may immediately conclude that $(\calV, \ell_{S})$ is irrelevant, that is when $\calV=\calL'\oplus\calL''$ is a sum of two line bundles such that for each $x\in S$, either $\ell_{x}=\calL'_{x}$ or $\ell_{x}=\calL''$. We call such a point $(\calV, \ell_{S})$ {\em decomposable}. In this case, consider the subgroup $\Gm'\incl \Aut(\calV,\ell_{S})$ given by scaling only the direct summand $\calL'$. The map $\alpha_{x}|_{\Gm}$ is given by the inclusion of $\Gm$ into the first or second factor of $\Gm^{2}$. Therefore the sheaf $\cK_{S,\calV}|_{\Gm'}$ takes the form
\begin{equation*}
\cK_{S,\calV}|_{\Gm'}\cong\otimes_{x\in S}\cK^{(\ep(x))}_{x}
\end{equation*}
where $\ep(x)=1$ if $\ell_{x}=\calL'_{x}$ and $\ep(x)=2$ if $\ell_{x}=\calL''_{x}$. Under the sheaf-to-function correspondence, $\cK_{S,\calV}|_{\Gm'}$ goes to the character $\prod_{x\in S}\chi^{(\ep(x))}_{x}:k^{\times}\to\Qlbar^{\times}$, which is nontrivial by our assumption. Therefore, $\cK_{S,\calV}|_{\Gm'}$ is nontrivial, and $(\calV,\ell_{S})$ is irrelevant.

To determine the relevant points, we need to look for indecomposable $(\calV, \ell_{S})$'s. Any rank two vector bundles $\calV$ over $X=\PP^{1}$ is of the form $\calL_{1}\oplus\calL_{2}$ where $\calL_{1}\cong\calO(a), \calL_{2}\cong\calO(b)$ for integers $a\geq b$. If $a>b$, the automorphism group of $\calV$ consists of $2\times2$ matrices $\left(\begin{array}{cc} \l & \phi \\ 0 & \mu\end{array}\right)$ where $\phi\in\Hom(\calL_{2},\calL_{1})$. If $\ell_{x}\neq\calL_{1,x}$ then there is a unique value $n_{x}\in \Hom(\calL_{2,x},\calL_{1,x})$ such that $\left(\begin{array}{cc} 1 & n_{x} \\ 0 & 1\end{array}\right)\cdot\calL_{2,x}=\ell_{x}$. 

If $a-b\geq2$, there exists $\phi\in\Hom(\calL_{2},\calL_{1})\cong\upH^{0}(\PP^{1},\calO(a-b))$ such that $\phi(x)=n_{x}$ for all $x\in S$ whenever $\ell_{x}\neq\calL_{1,x}$ (since $\deg S=3$). Therefore, letting $\calL'=\calL_{1}$ and $\calL''=\left(\begin{array}{cc} 1 & \phi \\ 0 & 1\end{array}\right)\cdot\calL_{2}$, we get a new decomposition $\calV=\calL'\oplus\calL''$ such that for each $x\in S$, $\ell_{x}$ is either $\calL'_{x}$ or $\calL''_{x}$, i.e., $(\calV,\ell_{S})$ is decomposable hence $\cK_{S}$-irrelevant. 

The remaining possibilities are
\begin{enumerate}
\item If $a-b=1$, for $(\calV,\ell_{S})$ to be indecomposable, none of the $\ell_{x}$ should be contained in $\calL_{1}$ by the above discussion. When this is the case, we may use an automorphism of $\calV$ as above to obtain another decomposition $\calV=\calL'\oplus\calL''$ such that $\ell_{0}$ and $\ell_{\infty}$ are contained in the fibers of $\calL''$ while $\ell_{1}$ is spanned by $(1,1)$ under trivializations of $\calL'$ and $\calL''$ at $1$. This gives a unique $\cK_{S}$-relevant point $\star_{2a-1}\in\Bun^{2a-1}_{2}(\bI_{S})$ (which is also a $k$-point).
\item If $a=b$, trivializing $\PP(\calV)$ we may view $\ell_{x}$ as a point in $\PP^{1}$. The point $(\calV,\ell_{S}\in(\PP^{1})^{S})$ is indecomposable if and only if the three points $\ell_{x}\in \PP^{1}$ are distinct. Since $\Aut(\calV)=\GL_{2}$, all such $(\calV, \ell_{S})$ give the same point $\star_{2a}\in \Bun^{2a}_{2}(\bI_{S})$. This is  a $k$-point, and is the unique $\cK_{S}$-relevant point on $\Bun^{2a}_{2}(\bI_{S})$.
\end{enumerate}

To conclude, for each $d\in\ZZ$, $\Bun^{d}_{2}(\bI_{S})$ contains a unique $\cK_{S}$-relevant point $\star_{d}$, which is defined over $k$. For each $d\in\ZZ$, let $G(\AA_{F})_{d}$ be the subset consisting of $g=(g_{x})$ with $\deg\det(g)=d$, and let $C_{G}(I_{S},\chi_{S})_{d}=C\left(\AG_{d}/\prod_{x\notin S}G(\calO_{x})\times\prod_{x\in S}(I_{x},\chi_{x})\right)$. The above discussion shows that $\dim C_{G}(I_{S},\chi_{S})_{d}=1$, in which all functions are supported on the double coset given by $\star_{d}$. 

The action of the center $F^{\times}\backslash \AA^{\times}_{F}$ on $\AG$ identifies $\AG_{d}$ for all $d$ with the same parity. Therefore $\dim C_{G,\omega}(I_{S},\chi_{S})=2$, and the restriction map $C_{G,\omega}(I_{S},\chi_{S})\to C_{G}(I_{S},\chi_{S})_{0}\oplus C_{G}(I_{S},\chi_{S})_{1}$ is an isomorphism. Formula \eqref{Csum} implies that there are at most two $(\omega,I_{S},\chi_{S})$-typical automorphic representations, and they are cuspidal. All statements so far hold when $k$ is replaced by any finite extension $k'$ (with respect to the base-changed automorphic datum).

If there is only one $(\omega,I_{S},\chi_{S})$-typical automorphic representation $\pi$, there is nothing else to show. Suppose there are two $(\omega,I_{S},\chi_{S})$-typical automorphic representations $\pi$ and $\pi'$. By multiplicity one for $G=\GL_{2}$, $\pi$ is not isomorphic to $\pi'$ as $G(\AA_{F})$-modules. Consider the unramified quadratic character $\eta:G(F)\backslash G(\AA)/\prod_{x}G(\calO_{x})\xrightarrow{\deg\circ\det} \ZZ\surj\{\pm1\}$. Twisting by $\eta$ sends one $(\omega,I_{S},\chi_{S})$-typical automorphic representation to another. We claim that $\pi\otimes\eta=\pi'$. If not, then $\pi\otimes\eta=\pi$. By the global Langlands correspondence for $\GL_{2}$, there is a Galois representation $\rho: W_{F}\to\GL_{2}(\Qlbar)$ attached to $\pi$, and it satisfies $\rho\otimes\chi_{\eta}\cong\rho$, where $\chi_{\eta}$ is the unramified character $W_{F}\to\ZZ\surj\{\pm1\}$. This implies that $\rho=\Ind^{W_{F}}_{W_{E}}\zeta$ for some character $\zeta: W_{E}\to\Qlbar^{\times}$, where $E=F\otimes_{k}k'$ and $k'$ is the quadratic extension of $k$. In particular $\rho|_{W_{E}}$ is reducible. This means that the base change $\textup{BC}_{E/F}(\pi)$, as a $(\omega', I_{S'}, \chi_{S'})$-typical automorphic  representation of $G(\AA_{E})$, is not cuspidal, which is a contradiction. Therefore up to unramified twists, there is only one $(\omega,I_{S},\chi_{S})$-typical automorphic representation of $G(\AA_{F})$. The same argument applies when $k$ is replaced by any finite extension $k'$. Therefore $(\Om,\bI_{S},\cK_{S},\iota_{S})$ is strongly rigid. 
\end{proof}

\subsubsection{$S$ consists of two points--Case I}\label{sss:GL2S2I} Let $S=\{0,\infty\}$ and $\chk\neq2$. We consider the following geometric automorphic datum. At $x=0$, let $\bK_{0}$ be the Iwahori subgroup $\bI_{0}$. Let $\chi_{0}:\TT(k)=k^{\times}\times k^{\times}\to\Qlbar^{\times}$ be a character, viewed as a character of $I_{0}=\bI_{0}(k)$ via $I_{0}\to \TT(k)$. Let $\cK_{0}\in\cCS(\TT)$ be the corresponding character sheaf, also viewed as a character sheaf on $\bK_{0}=\bI_{0}$.

On the other hand, fix a nontrivial additive character $\psi:k\to\Qlbar^{\times}$. At $x=\infty$, let $\bK_{\infty}=\bI^{+}_{\infty}$, the pro-unipotent radical of the Iwahori subgroup at $\infty$, i.e.,
\begin{equation*}
\bK_{\infty}(k)=\left\{\left(\begin{array}{cc} a & b \\ c & d\end{array}\right)|c\in t^{-1}k[[t^{-1}]], a,b,d\in k[[t^{-1}]], a,b\equiv1\mod t^{-1}\right\}.
\end{equation*}
Consider the homomorphism
\begin{eqnarray}\label{GL2 I+}
\bK_{\infty}&\to&\Ga^{2}\\
\notag\left(\begin{array}{cc} a & b \\ c & d\end{array}\right) &\mapsto& (b_{0}, c_{-1}).
\end{eqnarray}
Here $b_{0}$ is the constant term of $b$ and $c_{-1}$ is the coefficient of $t^{-1}$ in $c$. Let $\phi:k^{2}\to k$ be a linear function. Then the character $\psi\phi:\Ga^{2}(k)=k\times k\to\Qlbar^{\times}$ corresponds to an Artin-Schreier sheaf $\AS_{\phi}$. Let $\cK_{\infty}=\AS_{\phi}$, viewed as a rank one character sheaf on $\bK_{\infty}$ via the map \eqref{GL2 I+}. Since $\Bun_{\cZ}(\bK_{Z,S})$ is a point in this case, by Lemma \ref{l:unique Om} there is a unique way (up to isomorphism) to extend $(\bK_{S}, \cK_{S})$ into a geometric automorphic datum $(\Om,\bK_{S}, \cK_{S},\iota_{S})$.

\begin{prop}\label{p:GL2S2I}  Suppose the linear function $\phi:k^{2}\to k$ is nontrivial on each coordinate, then the geometric automorphic datum $(\Om,\bK_{S}, \cK_{S},\iota_{S})$ is strongly rigid.
\end{prop}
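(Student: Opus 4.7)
The plan is to adapt the argument of Proposition~\ref{p:GL2S3}: first verify condition~\eqref{Kfin} of Theorem~\ref{th:wrigid} by an explicit count on $\Bun_2(\bK_S)$, and then upgrade weak to strong rigidity using the global Langlands correspondence for $\GL_2$. Because the automorphism group $\bA_{Z,S}$ of the identity in $\Bun_{\cZ}(\bK_{Z,S})$ is trivial (hence connected) in this setting, Lemma~\ref{l:unique Om} already produces a unique pair $(\Om,\iota_S)$ extending $(\bK_S,\cK_S)$ to a geometric automorphic datum. By Weil's equivalence a $\kbar$-point of $\Bun_2(\bK_S)$ is a triple $(\calV,\ell_0,\eta_\infty)$ with $\calV\cong\calO(a)\oplus\calO(b)$, $a\geq b$, $\ell_0\subset\calV_0$ a line, and $\eta_\infty$ the $\bK_\infty$-level at $\infty$ (an Iwahori line $\ell_\infty\subset\calV_\infty$ together with the finite-dimensional trivialisation data accounting for $\bK_\infty/\bK^+_\infty$).

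The relevance analysis is the technical core. Decompose the evaluation map into $\ev_0:\Aut(\calV,\ell_0,\eta_\infty)\to\bL_0=\Gm^2$ (acting diagonally on $\ell_0$ and $\calV_0/\ell_0$) and $\ev_\infty:\Aut(\calV,\ell_0,\eta_\infty)\to\bL_\infty=\Ga^2$ (giving $(b_0,c_{-1})$). The positive-dimensional unipotent part of $\Aut(\calV)$ is parametrised by $\Hom(\calO(b),\calO(a))=\upH^0(\PP^1,\calO(a-b))$ when $a>b$, and by a pair of $\Ga$'s when $a=b$; preserving $\ell_0$ and the $\infty$-trivialisations cuts this unipotent space to an affine subspace, but the restriction of $\ev_\infty$ to this subspace is an affine-linear map to $\Ga^2$. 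Because $\phi$ is non-trivial on both coordinates, any non-zero unipotent motion surviving in $\Aut^\circ$ pulls $\AS_\phi$ back to a non-trivial character sheaf, making the corresponding triple $\cK_S$-irrelevant. Running through the linear algebra in analogy with cases (1)--(2) of Proposition~\ref{p:GL2S3} is expected to yield a unique $\cK_S$-relevant $k$-point $\star_d\in\Bun^d_2(\bK_S)$ on each Kottwitz component $d\in\ZZ$. Theorem~\ref{th:wrigid}~(\eqref{Kfin}$\Rightarrow$\eqref{weakrig}) then delivers weak rigidity, and Lemma~\ref{l:cusp} forces all typical representations to be cuspidal.

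To upgrade to strong rigidity, I would rerun the Langlands-theoretic argument at the end of Proposition~\ref{p:GL2S3}. If two non-isomorphic typical representations $\pi,\pi'$ of $G(\AA_F)$ existed, the unramified quadratic character $\eta=(-1)^{\deg\circ\det}$ would exchange the Kottwitz components modulo $2$ and multiplicity one for $\GL_2$ would force $\pi'\cong\pi\otimes\eta$. The attached Galois parameter $\rho_\pi$ would then satisfy $\rho_\pi\otimes\chi_\eta\cong\rho_\pi$, hence $\rho_\pi\cong\Ind_{W_E}^{W_F}\zeta$ for $E=F\otimes_k k'$ the unramified quadratic constant field extension. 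For such an induction from an unramified extension the local Swan conductor at $\infty$ equals twice that of $\zeta$ at the unique place of $E$ above $\infty$, hence is an even integer. However $\pi_\infty$ is a simple supercuspidal (its $\bI^+_\infty$-eigencharacter is non-trivial on both the $b_0$ and $c_{-1}$ coordinates), and local Langlands assigns it a parameter with Swan conductor exactly $1$ at $\infty$. This contradiction yields uniqueness of $\pi$ up to unramified twist; applying the same argument verbatim over every finite extension $k'/k$ gives strong rigidity. The hard part will be the explicit relevance calculation of the second paragraph, where one must track how automorphisms preserving the $\ell_0$ and $\eta_\infty$ data restrict to $\Ga^2$ via $(b_0,c_{-1})$ and use the non-vanishing of both coordinate projections of $\phi$ to rule out a second relevant orbit.
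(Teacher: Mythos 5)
Your relevance analysis sketch follows the paper's strategy correctly (the paper explicitly identifies the relevant points via the homomorphism $f=(f_{1},f_{2}):A\to\Ga^{2}$ given by first-order jets at $\infty$), but you omit the actual case-by-case work: what the paper does, for $\calV\cong\calO(a)\oplus\calO(b)$ with $a>b$, is exhibit a $\Ga$ inside $A^{\circ}$ that $f$ maps isomorphically onto a specific coordinate factor of $\Ga^{2}$, and then invoke nontriviality of $\phi$ on that coordinate; merely saying that ``any non-zero unipotent motion pulls $\AS_{\phi}$ back nontrivially'' is not accurate, since a unipotent automorphism can land in the kernel of one coordinate of $f$, and one must choose the $\Ga$ carefully to hit the right coordinate. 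That is precisely where both coordinates of $\phi$ being nontrivial gets used, once for each of the two inclusion patterns of $v^{(1)}_{\infty}$. This is the part you flag as hard, and it is indeed the crux; as written, your argument does not establish uniqueness of the relevant point.

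On the upgrade from weak to strong rigidity: the paper, deferring to the end of the proof of Proposition~\ref{p:GL2S3}, concludes by contradiction from $\pi\otimes\eta\cong\pi$ via base-change cuspidality. Your Swan-conductor-parity argument is a genuinely different route and, I think, a cleaner one here, because the base-change argument carried over naively from the tame case is delicate when the local condition at $\infty$ is affine-generic: if $\rho_{\pi}|_{W_{E}}$ is reducible, then $\mathrm{BC}_{E/F}(\pi)_{\infty}$ is a principal series rather than a simple supercuspidal, so it is no longer obviously typical for the base-changed datum, and the cuspidality contradiction does not fall out immediately. Your observation that $\Sw_{\infty}(\Ind_{W_{E}}^{W_{F}}\zeta)=2\Sw_{\infty}(\zeta)$ is even (using that $E_{\infty}/F_{\infty}$ is unramified, so the discriminant term vanishes), while the simple supercuspidal parameter has $\Sw_{\infty}=1$, is correct and local at $\infty$. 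An equivalent slick version: since $E_{\infty}/F_{\infty}$ is unramified, $I_{F_{\infty}}=I_{E_{\infty}}$; a simple supercuspidal parameter is irreducible as a representation of $I_{F_{\infty}}$ (it is totally wild of the form $\Ind_{J_{\infty}}^{I_{\infty}}\chi$ with $\chi\neq\chi^{\sigma}$), but $\Ind_{W_{E}}^{W_{F}}\zeta$ restricted to $I_{E_{\infty}}$ is a sum of two characters.

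However, there is a logical slip in how you set this up. You write that if $\pi,\pi'$ are non-isomorphic typical representations then multiplicity one forces $\pi'\cong\pi\otimes\eta$, and \emph{from this} deduce $\rho_{\pi}\otimes\chi_{\eta}\cong\rho_{\pi}$. That implication is false: $\pi'\cong\pi\otimes\eta$ gives $\rho_{\pi'}\cong\rho_{\pi}\otimes\chi_{\eta}$, not a self-twist of $\rho_{\pi}$. What the argument must establish is that the bad alternative $\pi\otimes\eta\cong\pi$ does not occur (for if it did, and there were a second typical $\pi'$, then $\pi'\otimes\eta\cong\pi'$ as well, giving two distinct unramified-twist orbits). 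Your Swan-conductor contradiction does exactly rule out $\pi\otimes\eta\cong\pi$; you should apply it to that case, rather than to the case $\pi'\cong\pi\otimes\eta$, where there is nothing to prove since uniqueness up to unramified twist already holds.
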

\begin{proof}  Let $(\onat, K_{S},\chi_{S})$ be the restricted automorphic datum attached to $(\Om,\bK_{S}, \cK_{S},\iota_{S})$. To show strong rigidity, we need to show that there is a unique $(\onat, K_{S},\chi_{S})$-typical automorphic representation of $G(\AA_{F})$ up to unramified twists (and the argument will work when $k$ is replaced with any finite extension $k'$). 

Let $\pi$ be a $(\onat, K_{S},\chi_{S})$-typical automorphic representation. The moduli stack we should consider is $\Bun_{G}(\bK_{S})=\Bun_{2}(\bI_{0}, \bI^{+}_{\infty})$. It classifies quadruples $(\calV, \ell_{0}, v^{(1)}_{\infty}, v^{(2)}_{\infty})$, where $\calV$ is a rank two vector bundle over $X=\PP^{1}$, $\ell_{0}$ is a line  in the fiber $\calV_{0}$, $v^{(1)}_{\infty}$ is a nonzero vector in $\calV_{\infty}$ and $v^{(2)}_{\infty}$ is a nonzero vector in the quotient line $\calV_{\infty}/\jiao{v^{(1)}_{\infty}}$. Again the Kottwitz map gives a decomposition of $\Bun_{2}(\bI_{0}, \bI^{+}_{\infty})$ into $\Bun^{d}_{2}(\bI_{0}, \bI^{+}_{\infty})$ according to the degree of $\calV$. We shall show that each $\Bun^{d}_{2}(\bI_{0}, \bI^{+}_{\infty})$ contains a unique $\cK_{S}$-relevant point, and the rest of the argument is similar to that of Proposition \ref{p:GL2S3}.

Fix a $\kbar$-point $(\calV, \ell_{0}, v^{(1)}_{\infty}, v^{(2)}_{\infty})$ of $\Bun_{2}(\bI_{0}, \bI^{+}_{\infty})$. Let $A=\Aut(\calV, \ell_{0}, v^{(1)}_{\infty}, v^{(2)}_{\infty})$. For an automorphism $\varphi\in A$, we may evaluate it at $\infty$ to get $\varphi_{\infty}\in\GL(\calV_{\infty})$, which fixes $v^{(1)}_{\infty}$ and preserves $v^{(2)}_{\infty}$ up to a multiple of $v^{(1)}_{\infty}$. Then $\varphi_{\infty}(v^{(2)}_{\infty})-v^{(2)}_{\infty}=f_1(\varphi)v^{(1)}_{\infty}$ for some $f_{1}(\varphi)\in \kbar$. On the other hand, extending $v^{(1)}_{\infty}$ to a section $\tilv^{(1)}_{\infty}$ of $\calV$ on the first infinitesimal neighborhood of $\infty$, then $\varphi(\tilv^{(1)}_{\infty})-\tilv^{(1)}_{\infty}=f_{2}(\varphi)t^{-1}v^{(2)}_{\infty}$ modulo the span of $t^{-1}v^{(1)}_{\infty}$, for some $f_{2}(\varphi)\in \kbar$ independent of the choice of $\tilv^{(1)}_{\infty}$ lifting $v^{(1)}_{\infty}$. This way we get a homomorphism
\begin{equation*}
f=(f_{1},f_{2}): A\to\Ga^{2}.
\end{equation*}
The point $(\calV, \ell_{0}, v^{(1)}_{\infty}, v^{(2)}_{\infty})$ is $\cK_{S}$-relevant only if $f^{*}\cK_{\infty}$ is trivial on $A^{\circ}$ (in fact $A$ is connected). 

The vector bundle $\calV$ is isomorphic to $\calL'\oplus\calL''$, where $\calL'\cong\calO(a), \calL''\cong\calO(b)$ for integers $a\geq b$. When $a>b$,  as in the proof of Proposition \ref{p:GL2S3}, we may adjust the decomposition by an automorphism of $\calV$ so that $\ell_{0}$ is either $\calL'_{0}$ or $\calL''_{0}$, and at the same time either $v^{(1)}_{\infty}\in\calL'_{\infty}$ or $v^{(1)}_{\infty}\in\calL''_{\infty}$. 

If  $v^{(1)}_{\infty}\in\calL'_{\infty}$, there is an automorphism $\left(\begin{array}{cc} 1 & \phi \\ 0 & 1\end{array}\right)\in A$ where $\phi\in\Hom_{X}(\calL'',\calL')=\upH^{0}(\PP^{1},\calO(a-b))$ such that $\phi_{0}=0$ and $\phi_{\infty}\neq0$. This means that there is a subgroup $\Ga\subset A$ such that $f|_{\Ga}$ identifies it with the first factor of $\Ga^{2}$. Since $\cK_{\infty}$ is nontrivial on  the first factor of $\Ga^{2}$, $f^{*}\cK_{\infty}|_{\Ga}$ is nontrivial, and $(\calV, \ell_{0}, v^{(1)}_{\infty}, v^{(2)}_{\infty})$ is $\cK_{S}$-irrelevant. 

If $v^{(1)}_{\infty}\in\calL''_{\infty}$ and $a-b\geq2$, then we may consider the similar automorphism with $\phi_{0}=0, \phi_{\infty}=0$ but the coefficient of $t^{-1}$ in the local expansion of $\phi$ at $\infty$ is nonzero. This way we get a subgroup $\Ga\subset A$ such that $f|_{\Ga}$ identifies it with the second factor of $\Ga^{2}$, and since $\cK_{\infty}$ is nontrivial on the second factor of $\Ga^{2}$, we again conclude that $(\calV, \ell_{0}, v^{(1)}_{\infty}, v^{(2)}_{\infty})$ is $\cK_{S}$-irrelevant. When $a-b=1$ and $\ell_{0}=\calL'_{0}$ we may drop the requirement $\phi_{0}=0$ and the above argument still works.

The remaining cases are
\begin{enumerate}
\item If $a-b=1$, $\ell_{0}=\calL''_{0}$ and $v^{(1)}_{\infty}\in\calL''_{\infty}$. This is in fact a single point in $\Bun^{2a-1}_{2}(\bI_{0}, \bI^{+}_{\infty})$ defined over $k$ with trivial automorphism, and it is the unique $\cK_{S}$-relevant point therein.
\item If $a=b$, after twisting by $\calO(-a)$ we may reduce to the case $a=0$ and we have $\calV=V\otimes_{\kbar}\calO_{X}$ for the two-dimensional vector space $V=\cohog{0}{X,\calV}$. The lines $\ell_{0}$ and $\jiao{v^{(1)}_{\infty}}$ determine two points in $\PP(V)$. If these two points are the same, then $A$ is the unipotent radical $U$ of the Borel subgroup of $\GL(V)$ preserving $\ell_{0}$, which maps isomorphically to the first factor of $\Ga^{2}$ under $f$. Therefore in this case $(\calV, \ell_{0}, v^{(1)}_{\infty}, v^{(2)}_{\infty})$ is $\cK_{S}$-irrelevant. The only possibility for $(\calV, \ell_{0}, v^{(1)}_{\infty}, v^{(2)}_{\infty})$ to be relevant is when the lines $\ell_{0}$ and $\jiao{v^{(1)}_{\infty}}$  are distinct. This is in fact a single point in $\Bun^{2a}_{2}(\bI_{0}, \bI^{+}_{\infty})$ defined over $k$ with trivial automorphism, and it is the unique $\cK_{S}$-relevant point therein.
\end{enumerate}
\end{proof}

\subsubsection{$S$ consists of two points--Case II}\label{sss:GL2S2II} Let $S=\{0,\infty\}$ and $\chk\neq2$. We consider the following geometric automorphic datum. At $x=0$, let $\bK_{0}$ be the Iwahori subgroup $\bI_{0}$ with reductive quotient $\TT\cong\Gm^{2}$. Choose a characters $\chi_{0}=(\chi^{(1)}_{0}, \chi^{(2)}_{0}): \TT(k)\to\Qlbar^{\times}$, which correspond to character sheaves $\cK^{(1)}_{0}$ and $\cK^{(2)}_{0}$ on $\Gm$. Let $\cK_{0}=\cK^{(1)}_{0}\boxtimes\cK^{(2)}_{0}$  be the character sheaf on $\TT$, viewed also as a character sheaf on $\bI_{0}$. 

On the other hand, fix a nontrivial additive character $\psi:k\to\Qlbar^{\times}$. Let $\bK^{1}_{\infty}=\ker(L_{\infty}^{+}G\xrightarrow{\mod\frm_{\infty}} \GG)$ be the first congruence subgroup of $L^{+}_{\infty}G$, and let $\bK_{\infty}=\bK^{1}_{\infty}\cdot \TT$. There is a natural projection $\bK^{1}_{\infty}\to \frg$ (where $\frg=\Lie\GG$) given by reduction modulo $\frm^{2}_{\infty}$.  Let $\frt=\Lie\TT$. Choose $\phi=(\phi^{(1)}, \phi^{(2)})\in k^{2}$, which gives an additive character $\psi\phi:\frt\to \Qlbar^{\times}$ sending $\diag(x,y)$ to $\psi(\phi^{(1)}x+\phi^{(2)}y)$. We have the corresponding Artin-Schreier sheaf $\AS_{\phi}$ on $\frt$. Also choose characters $(\chi^{(1)}_{\infty}, \chi^{(2)}_{\infty}): \TT(k)\to\Qlbar^{\times}$, which correspond to a character sheaf $\cK^{(1)}_{\infty}\boxtimes\cK^{(2)}_{\infty}$ on $\TT$. Let $\cK_{\infty}=\cK^{(1)}_{\infty}\boxtimes\cK^{(2)}_{\infty}\boxtimes\AS_{\phi}\in\cCS(\TT\times\frt)$. Via the projection $\bK_{\infty}\to\TT\times\frg\to\TT\times\frt$ (the last projection $\frg\to\frt$ is the projection onto the diagonal matrices), we view $\cK_{\infty}$ as a character sheaf on $\bK_{\infty}$. The corresponding character of $K_{\infty}=\bK_{\infty}(k)$ is
\begin{equation*}
\chi_{\infty}: K_{\infty}\surj  \TT(k)\times \textup{Mat}_{2}(k)\surj k^{\times}\times k^{\times}\times k\times k\xrightarrow{(\chi^{(1)}_{\infty}, \chi^{(2)}_{\infty}, \psi\phi^{(1)}, \psi\phi^{(2)})}\Qlbar^{\times}.
\end{equation*}

\begin{prop}\begin{enumerate}
\item There is a $(K_{S},\chi_{S})$-typical automorphic representation of $G(\AA_{F})$ only if
\begin{equation*}
\prod_{x\in S}\chi_{x}^{(1)}\chi_{x}^{(2)}=1.
\end{equation*}
In this case, there is a unique way (up to isomorphism) to extend $(\bK_{S},\cK_{S})$ into a geometric automorphic datum $(\Om, \bK_{S}, \cK_{S}, \iota_{S})$.
\item If the following extra conditions are satisfied
\begin{itemize}
\item For any map $\ep:S\to\{1,2\} $ we have
\begin{equation}\label{S2 chigen}
\prod_{x\in S}\chi^{(\ep(x))}_{x}\neq1;
\end{equation}
\item $\phi^{(1)}\neq\phi^{(2)}$.
\end{itemize}
Then the geometric automorphic datum $(\Om, \bK_{S}, \cK_{S},\iota_{S})$ is strongly rigid.
\end{enumerate}
\end{prop}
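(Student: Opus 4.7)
For Part (1), I will follow the template of Proposition \ref{p:GL2S3}(1): the central character $\omega$ of any $(K_{S}, \chi_{S})$-typical representation must restrict to $\chi_{x}|_{Z(F_{x}) \cap K_{x}}$ at each $x \in S$, and triviality on the diagonal copy of $k^{\times} \subset F^{\times}$ (on which the Artin--Schreier part at $\infty$ evaluates trivially) yields the necessary condition $\prod_{x \in S} \chi_{x}^{(1)}\chi_{x}^{(2)} = 1$. Given this, any compatible Hecke character is unique up to unramified twist. Because $\bK_{Z,x} = \calO_{x}^{\times}$ at both $x \in S$, we have $\Bun_{\cZ}(\bK_{Z,S}) \cong \Pic_{\PP^{1}}$, its neutral component is $\BB\Gm$, and $\bA_{Z,S} = \Gm$ is connected; Lemma \ref{l:unique Om} then produces the unique $(\Om, \iota_{S})$ upgrading $(\bK_{S}, \cK_{S})$ to a geometric automorphic datum.

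For Part (2), the plan is to enumerate $\cK_{S}$-relevant points on each Kottwitz component $\Bun^{d}_{2}(\bI_{0}, \bK_{\infty})$ and then run the cuspidality-and-twist argument. Since $L^{+}_{\infty}G / \bK_{\infty} = \GG / \TT$, the moduli $\Bun_{2}(\bI_{0}, \bK_{\infty})$ classifies $(\calV, \ell_{0}, \ell_{\infty}^{(1)}, \ell_{\infty}^{(2)})$ with the two lines at $\infty$ an ordered pair of distinct spanning lines in $\calV_{\infty}$. Writing $\calV \cong \calO(a) \oplus \calO(b)$ with $a \geq b$ and $a + b = d$, I classify $\Aut(\calV)$-orbits on the level data by the discrete invariants of whether $\ell_{0} = \calO(a)_{0}$ and whether $\calO(a)_{\infty}$ equals one of the two lines at $\infty$. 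When the level data meets the canonical $\calO(a)$-summand at some place, the stabilizer contains a one-parameter subgroup $\Gm_{1}$ scaling only that summand, and $\cK_{S}|_{\Gm_{1}}$ reduces to $\prod_{x \in S} \chi_{x}^{(\ep(x))}$ for some $\ep \colon S \to \{1, 2\}$ (the Artin--Schreier contribution being trivial on such constant automorphisms); condition \eqref{S2 chigen} then renders the orbit $\cK_{S}$-irrelevant, as in Proposition \ref{p:GL2S3}.

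The heart of the argument concerns the ``generic'' orbit with $\ell_{0} \neq \calO(a)_{0}$ and neither $\ell_{\infty}^{(i)} = \calO(a)_{\infty}$. Preserving the two distinct lines at $\infty$ forces $\mu = \lambda$ and $\phi(\infty) = 0$ in the Borel parameters $(\lambda, \mu, \phi) \in \Gm^{2} \times H^{0}(\calO(a-b))$, and preserving $\ell_{0}$ then forces $\phi(0) = 0$, so the stabilizer becomes $\Gm \times \Ga^{a-b-1}$ with $\Gm$ the scalar center. The key step is evaluating $\cK_{S}$ on the unipotent part: elements $\varphi_{\phi} = I + \phi E_{12}$ must be expanded at $\infty$ in the frame adapted to $\ell_{\infty}^{(1)}, \ell_{\infty}^{(2)}$ via a change-of-basis matrix $P$ whose columns parametrize the two lines, and a direct computation gives
\begin{equation*}
P^{-1} E_{12} P = \frac{1}{x_{1} - x_{2}} \left(\begin{array}{cc} 1 & 1 \\ -1 & -1 \end{array}\right),
\end{equation*}
whose projection to $\frt$ is $\frac{1}{x_{1} - x_{2}} \diag(1, -1)$. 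The first-order expansion of $\varphi_{\phi}$ at $\infty$ in the level frame therefore contributes $\psi\!\left(\frac{\phi'(\infty)(\phi^{(1)} - \phi^{(2)})}{x_{1} - x_{2}}\right)$ to $\AS_{\phi}$. Since the subspace $\{\phi \in H^{0}(\calO(a-b)) : \phi(0) = \phi(\infty) = 0\}$ contains sections with $\phi'(\infty) \neq 0$ iff $a - b \geq 2$, the hypothesis $\phi^{(1)} \neq \phi^{(2)}$ renders these generic orbits irrelevant precisely for $a - b \geq 2$. For $a - b \in \{0, 1\}$ the unipotent stabilizer degenerates and the scalar center is trivial for $\cK_{S}$ by Part (1), so the generic orbit is relevant. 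Each $\Bun^{d}_{2}(\bI_{0}, \bK_{\infty})$ thus contains a unique $\cK_{S}$-relevant point, defined over $k$.

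The remainder mirrors Propositions \ref{p:GL2S3}(2) and \ref{p:GL2S2I}: Lemma \ref{l:irr} yields $\dim C_{G}(K_{S}, \chi_{S})_{d} \leq 1$ per degree, so $\dim C_{G, \omega}(K_{S}, \chi_{S}) \leq 2$ for any compatible $\omega$; Lemma \ref{l:cusp} then gives at most two $(\omega, K_{S}, \chi_{S})$-typical representations, all cuspidal, exchanged by the unramified quadratic twist $\eta = \textup{sgn} \circ \deg \circ \det$. If any such representation were $\eta$-invariant, its Galois parameter under $\GL_{2}$-Langlands would be induced from the quadratic extension $E = F \otimes_{k} k'$, forcing the base change to $E$ to be non-cuspidal; but the enumeration above applies verbatim over any finite extension of $k$, so the base-changed typical representations remain cuspidal, a contradiction. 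Hence the typical representation is unique up to unramified twist over every finite extension, proving strong rigidity. The main obstacle is the change-of-basis computation at $\infty$: in the summand basis $E_{12}$ has zero diagonal, and a naïve analysis would find $\AS_{\phi}$ trivial on the unipotent stabilizer for every $a - b$, spuriously yielding infinitely many relevant orbits; only after conjugating into the level-adapted frame does the diagonal piece appear, so that the regularity hypothesis $\phi^{(1)} \neq \phi^{(2)}$ is exactly the right condition to cut down to a unique relevant orbit per degree.
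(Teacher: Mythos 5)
Your Part (1) is correct and matches the paper. For Part (2), the overall strategy (enumerate $\cK_S$-relevant points per Kottwitz component, then run the cuspidality-and-unramified-twist argument) is exactly the paper's, and your change-of-basis computation $P^{-1}E_{12}P = \frac{1}{x_1-x_2}\bigl(\begin{smallmatrix}1&1\\-1&-1\end{smallmatrix}\bigr)$ correctly captures the mechanism by which the Artin--Schreier condition $\phi^{(1)}\neq\phi^{(2)}$ forces irrelevance. However, there is a gap in your case analysis. You classify $\Aut(\calV)$-orbits by whether the level data meets the canonical $\calO(a)$-summand, and claim that whenever it does, the stabilizer contains a one-parameter subgroup $\Gm_1$ ``scaling only that summand.'' This is false in the case $\ell_0 = \calO(a)_0$ but neither $\ell_\infty^{(i)} = \calO(a)_\infty$: preserving two generic lines at $\infty$ already forces $\lambda=\mu$, so the only multiplicative part of the stabilizer is the scalar center, on which $\cK_S$ is trivial by Part (1). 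Your $\Gm_1$ argument therefore proves nothing here, and this orbit does not fall under your ``generic'' case either (you impose $\ell_0\neq\calO(a)_0$ there).

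The consequences are concrete. For this orbit, the constraint on the unipotent part is only $\phi(\infty)=0$ (since $\ell_0=\calL'_0$ is preserved by every automorphism, it imposes nothing on $\phi(0)$), so sections with $\phi'(\infty)\neq 0$ exist already at $a-b=1$, not just $a-b\geq 2$. That is exactly how the paper dispatches this case: the paper normalizes one of $\ell_\infty^{(1)},\ell_\infty^{(2)}$ into $\calL''_\infty$ by an automorphism of $\calV$, reducing to a short list where the residual subcase ``$a-b=1$ and $\ell_0\in\calL'_0$'' is explicitly treated by the wild character on the extra $\Ga$. Without this your enumeration cannot conclude that $\Bun_2^d(\bI_0,\bK_\infty)$ has a unique relevant point for odd $d$ (which corresponds to $a-b=1$), and then the dimension bound $\dim C_{G,\omega}(K_S,\chi_S)\leq 2$ and the rest of the rigidity argument fail. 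A similar imprecision affects $a=b$ (no canonical $\calO(a)$-summand exists; you need to rule out $\ell_0=\ell_\infty^{(i)}$ by the $\cK_\alpha$ argument on the corresponding maximal torus). The fix is straightforward — either normalize as in the paper, or redo the orbit with $\ell_0=\calL'_0$ correctly — but as written the proof has a hole at the very degree parity where the relevant point lives.
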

\begin{proof}[Sketch of proof]
The basic idea of the proof is similar to that of Proposition \ref{p:GL2S3}. The only nontrivial part in proving (2) is to show that each $\Bun^{d}_{2}(\bK_{S})$ contains a unique $\cK_{S}$-relevant point. Let us sketch the argument.

The moduli stack $\Bun_{2}(\bK_{S})=\Bun_{2}(\bI_{0}, \bK_{\infty})$ in this situation classifies the data $(\calV, \ell_{0}, \ell^{(1)}_{\infty},\ell^{(2)}_{\infty})$ where $\calV$ is a rank two vector bundle over $X=\PP^{1}$, $\ell_{0}$ is a line in $\calV_{0}$, and $\ell^{(1)}_{\infty}$ and $\ell^{(2)}_{\infty}$ are independent lines in $\calV_{\infty}$. Fix a point $(\calV, \ell_{0}, \ell^{(1)}_{\infty},\ell^{(2)}_{\infty})\in\Bun_{2}(\bI_{0}, \bK_{\infty})(\kbar)$ and let $A=\Aut(\calV, \ell_{0}, \ell^{(1)}_{\infty},\ell^{(2)}_{\infty})$. Evaluating an automorphism $\varphi\in A$ at $0$ we get
\begin{equation*}
\a_{0}=(\a^{(1)}_{0}, \a^{(2)}_{0}): \Aut(\calV, \ell_{0}, \ell^{(1)}_{\infty},\ell^{(2)}_{\infty})\to\Gm^{2}=\TT
\end{equation*}
where the two coordinates are the scalars by which $\varphi_{0}$ acts on $\ell_{0}$ and on $\calV_{0}/\ell_{0}$. Let $v^{(i)}\in\ell^{(i)}_{\infty}-\{0\}$ for $i=1,2$. Expanding $\varphi$ near $\infty$ to the first order, we may write $\varphi(v^{(1)})=\a^{(1)}_{\infty}(\varphi)v^{(1)}+t^{-1}\b^{(1)}(\varphi)v^{(1)}$ modulo $t^{-1}v^{(2)}$ and $t^{-2}$; similarly  $\varphi(v^{(2)})=\a^{(2)}_{\infty}(\varphi)v^{(2)}+t^{-1}\b^{(2)}(\varphi)v^{(2)}$ modulo $t^{-1}v^{(1)}$ and $t^{-2}$. The functions $\a^{(1)}_{\infty}, \a^{(2)}_{\infty}, \b^{(1)}, \b^{(2)}$ are well-defined (independent of the choices of $v^{(1)}$ and $v^{(2)}$). They define homomorphisms
\begin{eqnarray*}
\a_{\infty}=(\a^{(1)}_{\infty}, \a^{(2)}_{\infty})&:& A\to\Gm^{2}=\TT\\
\b=(\b^{(1)}, \b^{(2)})&:& A\to\AA^{2}=\frt.\end{eqnarray*}
The point $(\calV, \ell_{0}, \ell^{(1)}_{\infty},\ell^{(2)}_{\infty})$ is $\cK_{S}$-relevant if and only if both character sheaves
\begin{equation*}
\cK_{\a}:=\bigotimes_{x\in S}\a^{(1),*}_{x}\cK^{(1)}_{x}\otimes\a^{(2),*}_{x}\cK^{(2)}_{x}; \textup{ and } \cK_{\b}:=\b^{*}\AS_{\phi}
\end{equation*}
are trivial on the neutral component $A^{\circ}$.

We write $\calV=\calL'\oplus\calL''$ where $\calL'\cong\calO(a)$, $\calL''\cong\calO(b)$ with $a\geq b$. If $a>b$, we may use automorphisms of $\calV$ to arrange so that one of the following happens
\begin{itemize}
\item $\ell^{(1)}_{\infty}=\calL'_{\infty}$, $\ell^{(2)}_{\infty}=\calL''_{\infty}$, $\ell_{0}=\calL'_{0}$. Then the diagonal torus $\diag(\l,1)$ belongs to $A$, and $\bK_{\a}$ is nontrivial on this torus by assumption \eqref{S2 chigen}. Therefore $(\calV, \ell_{0}, \ell^{(1)}_{\infty},\ell^{(2)}_{\infty})$ is $\cK_{S}$-irrelevant.
\item $\ell^{(1)}_{\infty}=\calL'_{\infty}$, $\ell^{(2)}_{\infty}=\calL''_{\infty}$, $\ell_{0}=\calL''_{0}$. Similar argument for the torus $\diag(1,\l)$ shows that $(\calV, \ell_{0}, \ell^{(1)}_{\infty},\ell^{(2)}_{\infty})$ is $\cK_{S}$-irrelevant.
\item $\ell^{(1)}_{\infty}=\calL''_{\infty}$, $\ell^{(2)}_{\infty}=\calL'_{\infty}$, $\ell_{0}=\calL'_{0}$ or $\calL''_{0}$. Similar argument as in the previous cases shows that $(\calV, \ell_{0}, \ell^{(1)}_{\infty},\ell^{(2)}_{\infty})$ is $\cK_{S}$-irrelevant.
\item $\ell^{(1)}_{\infty}=\calL''_{\infty}$, $\ell^{(2)}_{\infty}\neq\calL'_{\infty}$  or $\calL''_{\infty}$. If $a\geq b+2$, then there is an additive subgroup $\varphi:\Ga\incl A$ which is the identity when evaluated at $0$ (so that $\ell_{0}$ is preserved) and takes the form $\varphi(u)=\left(\begin{array}{cc} 1 & ut^{-1} \\ 0 & 1\end{array}\right)$ modulo $t^{-2}$, $u\in\Ga$. Calculation shows that $\b(\varphi(u))=(us,-us)$ for some $s\neq0$. Therefore $\cK_{\b}|_{\Ga}\cong m^{*}_{s}\AS_{\phi^{(1)}-\phi^{(2)}}$ (where $m_{s}:\Ga\to\Ga$ is the multiplication by $s$ map). Since $\phi^{(1)}-\phi^{(2)}\neq0$, $\cK_{\b}$ is nontrivial on $\varphi(\Ga)\subset A$, hence $(\calV, \ell_{0}, \ell^{(1)}_{\infty},\ell^{(2)}_{\infty})$ is $\cK_{S}$-irrelevant. If $a-b=1$ and $\ell_{0}\in\calL'_{0}$, then $\ell_{0}$ is preserved by any automorphism of $\calV$, and we still have an additive subgroup $\varphi:\Ga\incl A$ given by $\varphi(u)=\left(\begin{array}{cc} 1 & ut^{-1} \\ 0 & 1\end{array}\right)$. The argument above still works to show that $(\calV, \ell_{0}, \ell^{(1)}_{\infty},\ell^{(2)}_{\infty})$ is $\cK_{S}$-irrelevant.
\end{itemize}

The remaining cases are
\begin{itemize}
\item $a=b$. Up to twisting by $\calO(-a)$ we may assume $\calV\cong V\otimes_{\kbar}\calO_{X}$ for some two-dimensional vector space $V$. Then we may view $\ell_{0},\ell^{(1)}_{\infty}, \ell^{(2)}_{\infty}$ as lines in $V$.  To avoid possible $\Gm$ in $A$ on which $\cK_{\a}$ is nontrivial, the three lines $\ell_{0},\ell^{(1)}_{\infty}, \ell^{(2)}_{\infty}$ must be distinct. This defines the unique $\cK_{S}$-relevant point in $\Bun^{2a}_{2}(\bK_{S})$, which is defined over $k$ with automorphism group equal to the central $\Gm$.
\item $a=b+1$ and none of the three lines $\ell_{0}, \ell^{(1)}_{\infty}$ and $\ell^{(2)}_{\infty}$ are in $\calL'_{\infty}$. This defines the unique $\cK_{S}$-relevant point in $\Bun^{2a-1}_{2}(\bK_{S})$, which is defined over $k$ with automorphism group equal to the central $\Gm$.
\end{itemize}
The rest of the argument is similar to that of Proposition \ref{p:GL2S3}. 
\end{proof}

We will see in \S\ref{ss:rigidlocGL2} that the three types of rigid automorphic data we considered in this subsection for $\GL_{2}$ correspond, under the Langlands correspondence, to the three types of hypergeometric local systems of rank two over punctured $\PP^{1}$.

\section{Rigidity for local systems}\label{s:ls} 

In this section, we will review some basic facts about local systems in \'etale topology in \S\ref{ss:ls}, and introduce the notion of rigidity for them in \S\ref{ss:rls}, following Katz \cite{Katz}. Rigid local systems of rank two are studied in details in \S\ref{ss:rigidlocGL2}. We also review the notion of rigid tuples in Inverse Galois Theory in \S\ref{ss:InvGal} for comparison.

\subsection{Local systems}\label{ss:ls} 
Let $k$ be any field.
\subsubsection{Local systems in \'etale topology} Let $U$ be a scheme of finite type over $k$.  We recall some definitions from \cite[\S1.2, \S1.4.2, \S1.4.3]{SGA5VI}. A $\Zl$-local system on $U$ is a projective system $(\calF_{n})_{n\geq1}$ of locally constant locally free $\ZZ/\ell^{n}\ZZ$-sheaves $\calF_{n}$ of finite rank on $X$ under the \'etale topology such that the natural map $\calF_{n}\otimes\ZZ/\ell^{n-1}\ZZ\to\calF_{n-1}$ is an isomorphism for all $n$. Denote the category of $\Zl$-local systems on $U$ by $\Loc(U,\Zl)$, which is a $\Zl$-linear abelian category. The category of $\Ql$-local systems is by definition the abelian category $\Loc(U,\Zl)\otimes\Ql$ obtained by inverting $\ell$ in the Hom groups in $\Loc(U,\Zl)$. Similar definition gives $\Loc(U,\calO_{L})$ and $\Loc(U,L)$ for any finite extension $L$ of $\Ql$. Finally define $\Loc(U):=\Loc(U,\Qlbar)$ to be the colimit $\varinjlim_{L}\Loc(U,L)$ over all finite extensions $L$ of $\Ql$.

In the sequel we assume $U$ is normal and connected. Fix a geometric point $u\in U$. Let $\calF$ be a $\Qlbar$-local system on $U$ of rank $n$. The stalk $\calF_{u}$ is a $\Qlbar$-vector space of dimension $n$ which carries an action of the \'etale fundamental group $\pi_{1}(U,u)$ defined in \cite[V,\S7]{SGA1}. Thus $\calF$ determines a {\em continuous} homomorphism
\begin{equation*}
\rho_{\calF}: \pi_{1}(U,u)\to \Aut_{\Qlbar}(\calF_{u})\cong\GL_{n}(\Qlbar),
\end{equation*}
where  $\Qlbar$ is topologized as the colimit of finite extensions $E$ of $\Ql$, each with the $\ell$-adic topology. The last isomorphism above depends upon the choice of a basis of $\calF_{u}$. This way we get a functor
\begin{equation}\label{fiber u}
\omega_{u}:\Loc(U)\to\Rep_{\cont}(\pi_{1}(U,u))
\end{equation}
where $\Rep_{\cont}(\pi_{1}(U,u))$ is the category of continuous representation of $\pi_{1}(U,u)$ on finite-dimensional $\Qlbar$-vector spaces. Both sides of \eqref{fiber u} carry tensor structures and $\omega_{u}$ is in fact an equivalence of tensor categories (\cite[Proposition 1.2.5]{SGA5VI}).

Let $\pi^{\geom}_{1}(U,u):=\pi_{1}(U\otimes_{k}\kbar,u)$, and call it the 
{\em geometric fundamental group} of $U$ (with respect to the base point $u$). This is a normal subgroup of $\pi_{1}(U,u)$ which fits into an exact sequence (\cite[V, Proposition 6.13]{SGA1})
\begin{equation*}
1\to \pi^{\geom}_{1}(U,u)\to\pi_{1}(U,u)\to\Gal(k^{s}/k)\to1.
\end{equation*}

\subsubsection{$H$-local systems} Let $H$ be an affine algebraic group over $\Qlbar$. We may define the notion of $H$-local system on a connected normal scheme $U$ over $k$. There are two ways to do this.

First definition. Fix a geometric point $u\in U$. An $H$-local system on $U$ is a continuous homomorphism
\begin{equation}\label{loc H}
\rho:\pi_{1}(U,u)\to H(\Qlbar).
\end{equation}
Such homomorphisms form a category $\Loc_{H}(U)$, in which isomorphisms are given by $H(\Qlbar)$-conjugacy of representations. For example, $\Loc_{\GL_{n}}(U)$ is equivalent to the full subcategory of $\Loc(U)$ consisting of local systems of rank $n$.

Second (and more canonical) definition. Let $\Rep(H)$ be the tensor category of algebraic representations of $H$ on finite-dimensional $\Qlbar$-vector spaces.  We define an $H$-local system to be a tensor functor $\calF: \Rep(H)\to\Loc(U)$ (notation: $V\mapsto\calF_{V}$). We form the category of $H$-local system on $U$ by taking the category of such tensor functors
\begin{equation*}
\Loc_{H}(U):=\Fun^{\otimes}(\Rep(H),\Loc(U)).
\end{equation*}

The two notions of $H$-local systems are equivalent. Given a representation $\rho$ as in \eqref{loc H} and for $V\in\Rep(H)$,  the composition 
\begin{equation*}
\rho_{V}:\pi_{1}(U,u)\xrightarrow{\rho}H(\Qlbar)\to\GL(V)
\end{equation*}
is an object in $\Loc(U)$ of rank equal to $\dim V$. The assignment $V\mapsto \rho_{V}$ gives a tensor functor $\calF:\Rep(H)\to\Loc(U)$. Conversely, given a tensor functor $\calF:\Rep(H)\to\Loc(U)$, using the equivalence \eqref{fiber u}, it can be viewed as a tensor functor $\Rep(H)\to \Rep_{\cont}(\pi_{1}(U,u))$. The Tannakian formalism \cite{DM} then implies that such a tensor functor comes from a group homomorphism $\rho$ as in \eqref{loc H}, well-defined up to conjugacy.

\begin{defn}\label{define global mono} Let $\calF\in\Loc_{H}(U)$ be given by a continuous homomorphism $\rho:\pi_{1}(U,u)\to H(\Qlbar)$.
The {\em global geometric monodromy group} $H^{\geom}_{\calF}$ of $\calF$ is the Zariski closure of $\rho(\pi^{\geom}_{1}(U,u))$ in $H$. 
\end{defn}

Recall the following fundamental result of Deligne on the nature of the global geometric monodromy groups of local systems when $k$ is a finite field.

\begin{theorem}[Deligne]\label{th:Del ss} Let $k$ be a finite field and $\calF$ be an $H$-local system on $U$ (defined over $k$). Suppose that for some faithful representation $H\incl \GL(V)$, the associated local system $\calF_{V}$ is $\iota$-pure with respect to an embedding $\iota:\Qlbar\incl\CC$. Then the neutral component of $H^{\geom}_{\calF}$ is semisimple.
\end{theorem}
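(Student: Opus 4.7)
The plan is to reduce the statement to the classical semisimplicity theorem of Deligne in Weil~II.

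First, I would reduce to the case $H=\GL(V)$. Since $H\hookrightarrow\GL(V)$ is faithful, the induced map $H^{\geom}_{\calF}\to\GL(V)$ is a closed immersion, and its image equals the Zariski closure $G^{\geom}_{V}$ of $\rho_{V}(\pi^{\geom}_{1}(U,u))$, where $\rho_{V}$ is the monodromy representation attached to $\calF_{V}$. Hence $H^{\geom,\c}_{\calF}\cong G^{\geom,\c}_{V}$, and it suffices to prove that $G^{\geom,\c}_{V}$ is semisimple for the rank-$n$ $\iota$-pure local system $\calF_{V}$.

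Second, I would establish that $G^{\geom,\c}_{V}$ is reductive. By Deligne's semisimplicity theorem for pure sheaves (\cite[Théorème 3.4.1(iii)]{WeilII}), the $\iota$-purity of $\calF_{V}$ forces the pullback $\calF_{V}|_{U\otimes_{k}\kbar}$ to be semisimple. Equivalently, $V$ is a semisimple representation of $\pi^{\geom}_{1}(U,u)$, hence also of its Zariski closure $G^{\geom}_{V}$. Since $G^{\geom,\c}_{V}\lhd G^{\geom}_{V}$ is a normal subgroup of finite index and we are in characteristic zero, the restriction $V|_{G^{\geom,\c}_{V}}$ remains semisimple. A linear algebraic group in characteristic zero admitting a faithful completely reducible representation must be reductive, so $G^{\geom,\c}_{V}$ is reductive.

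Third, I would upgrade ``reductive'' to ``semisimple'' by showing that the connected center $Z:=Z(G^{\geom,\c}_{V})^{\c}$ is trivial. If $Z$ were a nontrivial torus, then $V$ would carry a nontrivial $Z$-weight, and choosing an irreducible $G^{\geom,\c}_{V}$-constituent $V_{i}\subset V$ on which $Z$ acts by a nontrivial character $\chi$, one can---after passing to a connected finite \'etale cover of $U$ trivializing the component group $G^{\geom}_{V}/G^{\geom,\c}_{V}$ and the permutation of isotypic components by arithmetic Frobenius---extract a rank-one $\iota$-pure local system on that cover whose geometric monodromy is Zariski dense in $\Gm$. This contradicts Deligne's weight analysis: the geometric monodromy of an $\iota$-pure rank-one local system must be finite after a Tate twist, by the argument in the proof of \cite[Corollaire 1.3.9]{WeilII}. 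Therefore $Z=1$ and $G^{\geom,\c}_{V}$ is semisimple.

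The main obstacle is the third step: going from reductivity to semisimplicity requires one to apply purity not to $\calF_{V}$ itself but to a rank-one subquotient extracted after a finite \'etale pullback, which in turn requires keeping track of how weights behave under such pullbacks and how the arithmetic quotient $G^{\arith}_{V}/G^{\geom}_{V}\cong\widehat{\ZZ}$ permutes the irreducible constituents of $V|_{G^{\geom,\c}_{V}}$. Once this bookkeeping is done, the contradiction is forced by the standard purity bounds on Frobenius eigenvalues of rank-one pure sheaves, which is precisely the engine of Deligne's argument.
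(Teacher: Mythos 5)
Your proof is correct and takes essentially the same route as the paper, which makes the (implicit) reduction to $H=\GL(V)$, observes that purity gives geometric semisimplicity of $\calF_V$, and then cites \cite[Corollaire~1.3.9]{WeilII} for the conclusion. You reprove that corollary in outline rather than cite it; your three-step skeleton (reduce to $\GL_n$; semisimple implies reductive by Clifford theory; kill the connected center via the rank-one purity bound) is exactly the internal structure of Deligne's argument, and the bookkeeping you flag in the third step is precisely what the paper sidesteps by invoking the result directly.
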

In fact, purity of $\calF_{V}$ implies that $\calF_{V}$ is semisimple over $U_{\kbar}$, and we then apply \cite[Corollaire 1.3.9]{WeilII} to get the above result. For the definition of $\iota$-purity, see \cite[\S1.2.6]{WeilII}.

\subsubsection{Local monodromy} Let $X$ be a projective, smooth and geometrically connected curve over a perfect field $k$. We shall use the notation from \S\ref{ss:X}. Fix a finite set of closed points $S\subset |X|$ and let $U=X-S$. An algebraic closure $\Fbar_{x}$ of $F_{x}$ gives a geometric generic point $\eta_{x}\in X$.  The morphism $\Spec F_{x}\to U$ then induces an injective homomorphism of fundamental groups
\begin{equation}\label{incl local Gal}
\Gamma_{x}=\Gal(F^{s}_{x}/F_{x})\incl \pi_{1}(U, \eta_{x})\cong\pi_{1}(U,u),
\end{equation}
where the second map is well-defined up to conjugacy. Since $F_{x}$ is a complete discrete valuation field with perfect residue field $k_{x}$, we have an exact sequence
\begin{equation*}
1\to I_{x}\to \Gal(F^{s}_{x}/F_{x})\to\Gal(\kbar/k_{x})\to 1. 
\end{equation*}
Under \eqref{incl local Gal}, $I_{x}$ is contained in the normal subgroup $\pi_{1}^{\geom}(U, u)\lhd\pi_{1}(U,u)$. 

When $\textup{char}(k)=p>0$, there is a normal subgroup $I^{w}_{x}\lhd I_{x}$ called the {\em wild inertia group} such that the quotient $I^{t}_{x}:=I_{x}/I^{w}_{x}$ is the maximal prime-to-$p$ quotient of $I_{x}$, called the {\em tame inertia group}. We have a canonical isomorphism of $\Gal(\kbar/k_{x})$-modules
\begin{equation*}
I^{t}_{x}\isom\varprojlim_{(n,p)=1}\mu_{n}(\kbar).
\end{equation*}

Let $\rho:\pi_{1}(U,u)\to H(\Qlbar)$ be an $H$-local system. The {\em local monodromy of $\rho$ at $x\in S$} is the homomorphism $\rho_{x}:=\rho|_{I_{x}}:I_{x}\to H(\Qlbar)$. The local system $\rho$ is said to be {\em tame at $x\in S$} if $\rho_{x}$ factors through the tame inertia group $I^{t}_{x}$. 

\subsubsection{Conductor}\label{sss:cond} For a continuous linear representation of the inertia group on a $\Qlbar$-vector space $V$
\begin{equation*}
\sigma:I_{x}\to \GL(V)
\end{equation*}
one can define its Swan conductor and Artin conductor (see \cite[Chapter 1]{Katz-Gauss}).  We recall their definitions in the case $\sigma(I_{x})$ is finite (see \cite[\S VI.2]{Serre-local}, \cite[\S2]{GR}). Let $D=\sigma(I_{x})=\Gal(L/F^{\ur}_{x})$ for some finite Galois representation $L/F^{\ur}_{x}$ inside $F^{s}_{x}$. There is a filtration of $D$
\begin{equation*}
D=D_{0}\rhd D_{1}\rhd D_{2}\rhd\cdots.
\end{equation*}
For $i\geq0$, $D_{i}$ is the subgroup of $D$ which acts trivially on $\calO_{L}/\frm_{L}^{i}$ ($\frm_{L}$ is the maximal ideal of $\calO_{L}$). This is called the {\em lower numbering filtration} on $D$. In particular, $D_{1}=\sigma^{ss}(I^{w}_{x})=\sigma(I^{w}_{x})$. The {\em Swan conductor} of the representation $\sigma$ is
\begin{equation*}
\Sw(\sigma):=\sum_{i\geq1}\frac{\dim(V/V^{D_{i}})}{[D:D_{i}]}.
\end{equation*}
This turns out to be an integer.
The Swan conductor of $\sigma$ is zero if and only if $\sigma$ is tame.  The {\em Artin conductor} of the representation $\sigma$ is
\begin{equation*}
a(\sigma):=\dim(V/V^{I_{x}})+\Sw(\sigma).
\end{equation*}

If $\sigma=\rho_{x}:I_{x}\to \GL(V)$ is the local monodromy of a local system $\calF\in\Loc(U)$, we also denote $\Sw(\sigma)$ by $\Sw_{x}(\calF)$ and $a(\sigma)$ by $a_{x}(\calF)$.

\subsubsection{The twisted situation}\label{sss:twist loc} Let $\theta_{H}: \pi_{1}(U,u)\to \Aut(H)$ be a homomorphism that factors through a finite quotient $\Gamma=\Gal(U'/U)$ for some finite \'etale Galois cover $U'\to U$. We define a {\em $\theta_{H}$-twisted $H$-local system on $U$} to be a pair $(\calF,\delta)$ where
\begin{itemize}
\item $\calF$ is an $H$-local system on $U'$, viewed as a tensor functor $\Rep(H)\to\Loc(U')$;
\item $\delta$ is a collection of isomorphisms $\delta_{\gamma, V}: \calF_{V^{\gamma}}\cong\gamma^{*}\calF_{V}$, one for each $\gamma\in\Gamma, V\in\Rep(H)$, where $V^{\gamma}$ is the representation of $H$ given by the composition $H\xrightarrow{\gamma}H\to\GL(V)$. The isomorphisms $\{\delta_{\gamma,V}\}$ are required to satisfy the usual cocycle relations with respect to the multiplication in $\Gamma$, and to be compatible with the tensor structure of  $\calF$. 
\end{itemize} 
This definition a priori depends on the choice of $\Gamma$ through which $\theta_{H}$ factors. However, enlarging $\Gamma$ gives an equivalent notion of $\theta_{H}$-twisted $H$-local systems, therefore the definition is independent of the choice of the finite quotient $\Gamma$ of $\pi_{1}(U,u)$. We denote the category of $\theta_{H}$-twisted $H$-local systems on $U$ by $\Loc_{H,\theta_{H}}(U)$.

It is easy to check that there is an equivalence of groupoids
\begin{equation}\label{cont cocycle}
\Loc_{H,\theta_{H}}(U)\cong Z^{1}_{\cont}(\pi_{1}(U,u), H(\Qlbar))/H(\Qlbar)
\end{equation}
where $Z^{1}_{\cont}(\pi_{1}(U,u), H(\Qlbar))$ is the set of continuous 1-cocycles on $\pi_{1}(U,u)$ with values in $H(\Qlbar)$ which carries an action of $\pi_{1}(U,u)$ via $\theta_{H}$. Note that $Z^{1}_{\cont}(\pi_{1}(U,u), H(\Qlbar))$ is in bijection with liftings of $\pi_{1}(U,u)\surj\Gamma$ to $\pi_{1}(U,u)\to H(\Qlbar)\rtimes\Gamma$, and $H(\Qlbar)$ acts on such liftings by conjugation. In particular, isomorphism classes in $\Loc_{H,\theta_{H}}(U)$ are in bijection with $\upH^{1}_{\cont}(\pi_{1}(U,u), H(\Qlbar))$. 

In the twisted situation, the local geometric monodromy of $\calF\in\Loc_{H,\theta_{H}}(U)$ at a point $x\in S$ is a cohomology class $\upH^{1}_{\cont}(I_{x}, H(\Qlbar))$, where $I_{x}$ acts on $H(\Qlbar)$ through $I_{x}\incl \pi_{1}(U,u)\xrightarrow{\theta_{H}}\Aut(H)$.  Moreover, the above construction is functorial with respect to $\pi_{1}(U,u)$-equivariant homomorphisms $H\to H'$.

\subsection{Rigidity for local systems}\label{ss:rls} 
Rigidity of a local system is a geometric property, therefore we assume the base field $k$ to be  algebraically closed in this subsection. Let $X$ be a complete smooth connected algebraic curve over $k$. Let $X$ be a smooth, projective and connected curve over $k$. Fix an open subset $U\subset X$ with finite complement $S$. 

Let $H$ be an algebraic group over $\Qlbar$ with an action $\theta_{H}:\pi_{1}(U,u)\to \Aut(H)$ that factors through a finite quotient $\Gamma=\Gal(U'/U)$.

\begin{defn}[extending Katz {\cite[\S1.0.3]{Katz}}]\label{phy rig} Let $\calF\in\Loc_{H,\theta_{H}}(U)$. Then $\calF$ is {\em physically rigid} if,  for any other object $\calF'\in\Loc_{H,\theta_{H}}(U)$ such that for each $x\in S$, the local geometric monodromy of $\calF'$ is isomorphic to that of $\calF$ (i.e., the two local systems give the same class in $\upH^{1}_{\cont}(I_{x}, H(\Qlbar))$), we have $\calF\cong\calF'$ as objects in $\Loc_{H,\theta_{H}}(U)$.
\end{defn}
Although the definition uses $U$ as an input, the notion of physical rigidity is in fact independent of the open subset $U$: for any nonempty open subset $V\subset U$, $\calF$ is rigid over $U$ if and only if $\calF|_{V}$ is rigid over $V$. Therefore, physical rigidity of an $H$-local system is a property of the Galois representation $\Gal(F^{s}/F)\to H(\Qlbar)\rtimes\Gamma$ obtained by restricting $\rho$ to a geometric generic point of the $X$.

\subsubsection{Motivation for cohomological rigidity} Another notion of rigidity can be thought of as an infinitesimal version of physical rigidity. We motivate the definition by considering local systems in the complex analytic topology. Suppose the algebraic curve $X$ is defined over $\CC$. Let $U=X-S\subset X$ be a Zariski open dense subset and let $U^{\an}$ denote the Riemann surface structure on $U(\CC)$. Let $H$ be a connected reductive group over $\Qlbar$. An $H$-local system $\calF$ in this case is simply a group homomorphism 
\begin{equation*}
\rho_{\calF}:\Gamma^{\top}:=\pi_{1}^{\top}(U^{\an},u)\to H.
\end{equation*}
without continuity conditions. For $x\in S$, the local monodromy of $\calF$ around $x$ is the image of a counterclockwise loop around $x$ (with starts and ends at $u$) in $H$.

In this case, the objects of the category $\Loc_{H}(U^{\an})$ are the $\Qlbar$-points of an algebraic stack $\cLoc_{H}(U^{\an})$ defined over $\Qlbar$. In fact, for simplicity we consider the case where $S\neq\varnothing$, then $\Gamma^{\top}$ is simply a free group $F_{r}$ with $r$ generators. Then $\cLoc_{H}(U^{\an})$ is isomorphic to the quotient stack $[H^{r}/H]$ where $H$ acts diagonally on  $H^{r}$ by conjugation. From this description we readily see that $\cLoc_{H}(U^{\an})$ is a smooth algebraic stack. 

It is convenient to work with a Riemann surface with boundary that is homotopy equivalent to $U^{\an}$. There is a canonical such Riemann surface: the real blow-up $\tilX^{\an}$ of $X^{\an}$ along the points in $S$. The preimage of $x\in S$ in $\tilX^{\an}$ is a circle $\partial_{x}$ which is identified with the circle of unit tangent vectors in $T_{x}X$. The boundary $\partial\tilX^{\an}$ of $\tilX^{\an}$ is the disjoint union of $\partial_{x}$ for $x\in S$. The moduli stack $\cLoc_{H}(\tilX^{\an})$ which is canonically isomorphic to $\cLoc_{H}(U^{\an})$. We also have the moduli stack $\cLoc_{H}(\partial \tilX^{\an})$: it is simply isomorphic to $[H/H]^{S}$, one factor of the adjoint quotient $[H/H]$ for each boundary component $\partial_{x}$. 
Let $H^{\ab}$ be the quotient torus of $H$. We can similarly define $\cLoc_{H^{\ab}}(\tilX^{\an})$ and $\cLoc_{H^{\ab}}(\partial \tilX^{\an})$. We have a commutative diagram relating these moduli stacks
\begin{equation}\label{mor loc}
\xymatrix{\cLoc(\tilX^{\an})\ar[r]^{\res}\ar[d] & \cLoc(\partial\tilX^{\an})\ar[d]\\
\cLoc_{H^{\ab}}(\tilX^{\an})\ar[r] & \cLoc_{H^{\ab}}(\partial \tilX^{\an})}
\end{equation}
where the horizontal maps are given by restricting local systems to the boundary, and vertical maps are induced from the projection $H\to H^{\ab}$. 

Now we fix a monodromy datum $(\calA, C_{S})$:
\begin{itemize}
\item An object $\calA\in\cLoc_{H^{\ab}}(\tilX^{\an})$.
\item $C_{S}=\{C_{x}\}_{x\in S}$, where $C_{x}$ is a conjugacy class in $H$ for each $x\in S$, such that the local monodromy of $\calA|_{\partial_{x}}$ is the image $C^{\ab}_{x}$ of $C_{x}$ in $H^{\ab}$.
\end{itemize}
Define a substack $\calC_{S}=\prod_{x\in S}[C_{x}/H]$ of $\cLoc_{H}(\partial \tilX^{\an})$, and similarly $\calC^{\ab}_{S}=\prod_{x}[C^{\ab}_{x}/H^{\ab}]\subset\cLoc_{H^{\ab}}(\partial \tilX^{\an})$. We define
\begin{eqnarray*}
\cLoc_{H}(\tilX^{\an}, C_{S}):=\calC_{S}\times_{\cLoc_{H}(\partial \tilX^{\an})}\cLoc_{H}(\tilX^{\an})\\
\cLoc_{H^{\ab}}(\tilX^{\an}, C^{\ab}_{S}):=\calC^{\ab}_{S}\times_{\cLoc_{H^{\ab}}(\partial \tilX^{\an})}\cLoc_{H^{\ab}}(\tilX^{\an}).
\end{eqnarray*}
The object $\calA$ lies in $\cLoc_{H^{\ab}}(\tilX^{\an}, C^{\ab}_{S})$. We form the fiber
\begin{equation*}
\cLoc_{H}(\tilX^{\an}, \calA, C_{S}):=\cLoc_{H}(\tilX^{\an}, C_{S})\times_{\cLoc_{H^{\ab}}(\tilX^{\an}, C^{\ab}_{S})}\{\calA\}.
\end{equation*}
Since $\tilX^{\an}$ is homotopy equivalent to $U^{\an}$, the stack $\cLoc_{H}(\tilX^{\an}, C_{S})$ classifies $H$-local systems $\calF$ on $U^{\an}$ with monodromy around $x$ lying in the conjugacy class $C_{x}$ and an isomorphism $\calF^{\ab}\cong\calA$. 

We are interested in the infinitesimal deformations of a point $\calF\in\cLoc_{H}(\tilX^{\an}, \calA, C_{S})$. The tangent complex $T_{\calF}\cLoc_{H}(\tilX^{\an})$ of $\cLoc_{H}(\tilX^{\an})$ at $\calF$ is given by
\begin{equation*}
T_{\calF}\cLoc_{H}(\tilX^{\an})\cong\cohog{*}{\tilX^{\an}, \Ad(\calF)}[1]
\end{equation*}
where the local system $\Ad(\calF)$ is the tensor functor $\calF$ evaluated at the adjoint representation of $H$ on its Lie algebra $\frh$. Let $\calF_{\partial}:=\calF|_{\partial\tilX^{\an}}\in\cLoc_{H}(\partial\tilX^{\an})$. The same calculation shows
\begin{equation*}
T_{\calF_{\partial}}\cLoc_{H}(\partial\tilX^{\an})\cong\cohog{*}{\partial\tilX^{\an}, \Ad(\calF_{\partial})}[1].
\end{equation*}
The tangent complex of $\calC_{S}$ is concentrated in degree -1 and is equal to $\cohog{0}{\partial\tilX^{\an}, \Ad(\calF_{\partial})}[1]$. Therefore we have a distinguished triangle
\begin{equation}\label{partial T}
T_{\calF_{\partial}}\calC_{S}\to T_{\calF_{\partial}}\cLoc_{H}(\partial \tilX^{\an})\to\cohog{1}{\partial\tilX^{\an}, \Ad(\calF_{\partial})}\to
\end{equation}
Since $\cLoc_{H}(\tilX^{\an}, C_{S})$ is defined as the preimage of $\calC_{S}$ under the morphism $\res$ in \eqref{mor loc}, the map $T_{\calF}\cLoc_{H}(\tilX^{\an}, C_{S})\to T_{\calF}\cLoc_{H}(\tilX^{\an})$ has isomorphic cone with the map $T_{\calF_{\partial}}\calC_{S}\to T_{\calF_{\partial}}\cLoc_{H}(\partial \tilX^{\an})$. By \eqref{partial T}, we get a distinguished triangle
\begin{equation}\label{TangF}
T_{\calF}\cLoc_{H}(\tilX^{\an}, C_{S})\to \cohog{*}{\tilX^{\an},\Ad(\calF)}[1]\to\cohog{1}{\partial\tilX^{\an}, \Ad(\calF_{\partial})}\to
\end{equation}
We may express $T_{\calF}\cLoc_{H}(\tilX^{\an}, C_{S})$ in more complex-geometric terms, without appealing to the boundary $\partial\tilX^{\an}$. Let $j:U\incl X$ and $i_{x}:\{x\}\incl X$ ($x\in S$) be the inclusions. We define $j_{!*}\Ad(\calF)$ to be the {\em non-derived} direct image of $\Ad(\calF)$ along $j$. Concretely, the stalk of $j_{!*}\Ad(\calF)$ at $x\in S$ is $(\frh)^{I_{x}}$ where $I_{x}$ is the inertia group at $x$. We have a distinguished triangle of complexes of sheaves on $X^{\an}$
\begin{equation*}
j_{!*}\Ad(\calF)\to j_{*}\Ad(\calF)\to \oplus_{x\in S}i_{x,*}\cohog{1}{\partial_{x},\Ad(\calF_{\partial})}[-1]\to
\end{equation*}
which then gives a distinguished triangle after taking cohomology
\begin{equation*}
\cohog{*}{X^{\an},j_{!*}\Ad(\calF)}[1]\to \cohog{*}{\tilX^{\an}, \Ad(\calF)}[1]\to \cohog{1}{\partial\tilX^{\an}, \Ad(\calF_{\partial})}\to
\end{equation*}
Comparing with \eqref{TangF}, we get a quasi-isomorphism
\begin{equation}\label{Tj!*}
T_{\calF}\cLoc_{H}(\tilX^{\an},C_{S})\cong\cohog{*}{X^{\an}, j_{!*}\Ad(\calF)}[1].
\end{equation}
We use the notation $j_{!*}$ because in this case $j_{!*}\Ad(\calF)[1]$ is also the middle extension of the perverse sheaf $\Ad(\calF)[1]$ from $U$ to $X$. 

Similarly, 
\begin{equation}\label{Tabj}
T_{\calA}\cLoc_{H^{\ab}}(\tilX^{\an},C^{\ab}_{S})\cong\cohog{*}{X^{\an}, j_{!*}\Ad(\calA)}[1].
\end{equation}
Since $H^{\ab}$ is abelian, $\Ad(\calA)$ is the constant sheaf $\frh^{\ab}=\Lie H^{\ab}$, and $j_{!*}\Ad(\calA)$ is the constant sheaf $\frh^{\ab}$ on $X^{\an}$.
Since $\cLoc_{H}(\tilX^{\an}, \calA, C_{S})$ is defined as the fiber of $\cLoc_{H}(\tilX^{\an},C_{S})$ over $\calA$, we have a distinguished triangle
\begin{equation*}
T_{\calF}\cLoc_{H}(\tilX^{\an},\calA, C_{S})\to T_{\calF}\cLoc_{H}(\tilX^{\an},C_{S})\to T_{\calA}\cLoc_{H^{\ab}}(\tilX^{\an},C^{\ab}_{S})\to
\end{equation*}
Comparing \eqref{Tj!*} and $\eqref{Tabj}$ we conclude that
\begin{equation*}
T_{\calF}\cLoc_{H}(\tilX^{\an},\calA, C_{S})\cong\cohog{*}{X^{\an}, j_{!*}\bAd(\calF)}[1]
\end{equation*}
where $\bAd(\calF)=\ker(\Ad(\calF)\to\Ad(\calA))$ is the local system obtained by evaluating the functor $\calF$ on the adjoint representation of $H$ on $\frh^{\der}=\ker(\frh\to\frh^{\ab})$.

As usual, classes in $\cohog{2}{X^{\an}, j_{!*}\bAd(\calF)}$ are obstructions to infinitesimally deform $\calF$ with prescribed local monodromy around $S$ and abelianization $\calF^{\ab}=\calA$. When $\cohog{2}{X^{\an}, j_{!*}\bAd(\calF)}=0$, the moduli stack $\cLoc_{H}(\tilX^{\an}, \calA, C_{S})$ is smooth at $\calF$ and $\cohog{1}{X^{\an}, j_{!*}\bAd(\calF)}$ is its Zariski tangent space at $\calF$. 

Since $\frh^{\der}$ carries an $\Ad(H)$-invariant symmetric bilinear form, $j_{!*}\bAd(\calF)$ is Verdier self-dual and $\cohog{1}{X^{\an}, j_{!*}\bAd(\calF)}$ is a symplectic space. Moreover, the vanishing of $\cohog{2}{X^{\an}, j_{!*}\bAd(\calF)}$ is equivalent to the vanishing of $\cohog{0}{X^{\an}, j_{!*}\bAd(\calF)}$, i.e., if $\calF$ does not have infinitesimal automorphisms then there is no obstruction to its infinitesimal deformation. 

\subsubsection{Cohomological rigidity}\label{sss:cr} Now back to $\ell$-adic local systems. We keep the notations from the beginning of this subsection.  Let $H$ be a connected reductive group over $\Qlbar$ and $\theta_{H}:\pi_{1}(U,u)\to\Aut(H)$ be a homomorphism that factors through a finite quotient $\Gamma$. Let $\calF\in\Loc_{H,\theta_{H}}(U)$, which correspond to a homomorphism $\rho:\pi_{1}(U,u)\to H(\Qlbar)\rtimes\Gamma$. Let $\frh^{\der}=\Lie H^{\der}$. Consider the composition
\begin{equation*}
\bAd(\rho):\pi_{1}(U,u)\xrightarrow{\rho} H(\Qlbar)\rtimes\Gamma\xrightarrow{\xi}\GL(\frh^{\der})
\end{equation*}
where $\xi(h,\gamma)=\Ad(h)\circ\theta_{H}(\gamma)$. This defines a local system on $U$ of rank equal to $\dim H^{\der}$, which we denote by $\bAd(\calF)$. Let $j:U\incl X$ be the open inclusion. The middle extension $j_{!*}\bAd(\calF)$ still makes sense in the $\ell$-adic setting. We arrive at the following definition.
\begin{defn}[extending Katz {\cite[\S5.0.1]{Katz}}]\label{coho rig}
An object $\calF\in\Loc_{H,\theta_{H}}(U)$ is called {\em cohomogically rigid}, if
\begin{equation*}
\Rig(\calF):=\cohog{1}{X,j_{!*}\bAd(\calF)}=0.
\end{equation*}
\end{defn}

\begin{remark} \begin{enumerate}
\item The space $\Rig(\calF)$ only depends on the generic stalk of $\calF$: if we shrink $U$ we get the same middle extension $j_{!*}\bAd(\calF)$ and hence the same vector space $\Rig(\calF)$.
\item The Killing form on $\frh^{\der}$ induces a symplectic form on $\Rig(\calF)$ with values  in $\cohog{2}{X,\Qlbar}\cong \Qlbar(-1)$. In particular, the dimension of $\Rig(\calF)$ is always even. 
\item According to the discussion preceding \S\ref{sss:cr}, assuming $\cohog{2}{X,j_{!*}\bAd(\calF)}$ vanishes as well, we should think of the vanishing of $\Rig(\calF)$ as saying that there are no nontrivial infinitesimal deformation of $\calF$ with prescribed local monodromy around $S$ and prescribed abelianization $\calF^{\ab}$. However, defining the moduli stack of $\ell$-adic local systems is much subtler, and this interpretation only serves as a heuristic.
\end{enumerate}
\end{remark}

The following lemma is easily verified, using the natural transformations $j_{!}\to j_{!*}\to j_{*}$.
\begin{lemma}\label{l:6term}
For any local system $\calL$ on $U$, we have an exact sequence
\begin{equation*}
0\to \cohog{0}{U,\calL}\to \oplus_{x\in S}(\calL_{x})^{I_{x}}\to\cohoc{1}{U,\calL}\to\cohog{1}{U,\calL}\to\oplus_{x\in S}(\calL_{x})_{I_{x}}(-1)\to\cohoc{2}{U,\calL}\to0.
\end{equation*}
Moreover, we have canonical isomorphisms
\begin{eqnarray*}
\cohog{0}{X,j_{!*}\calL}\cong\cohog{0}{U,\calL}\cong(\calL_{u})^{\pi_{1}(U,u)}\\
\cohog{1}{X,j_{!*}\calL}\cong\Im(\cohoc{1}{U,\calL}\to\cohog{1}{U,\calL})\\
\cohog{2}{X,j_{!*}\calL}\cong\cohoc{2}{U,\calL}\cong(\calL_{u})_{\pi_{1}(U,u)}(-1).
\end{eqnarray*}
\end{lemma}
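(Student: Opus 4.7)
The plan is to deduce both the six term sequence and the three isomorphisms from two basic distinguished triangles on $X$, together with the standard cohomological input about the inertia groups $I_{x}$.

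Let $i:S\hookrightarrow X$ denote the closed embedding complementary to $j$. The first tool is the distinguished triangle
\begin{equation*}
j_{!}\calL\to Rj_{*}\calL\to i_{*}i^{*}Rj_{*}\calL\to
\end{equation*}
on $X$. Applying $R\Gamma(X,-)$ gives a long exact sequence relating $\cohoc{*}{U,\calL}$, $\cohog{*}{U,\calL}$, and $\oplus_{x\in S}\upH^{*}(I_{x},\calL_{x})$, since the stalk of $R^{k}j_{*}\calL$ at $x\in S$ is $\upH^{k}(I_{x},\calL_{x})$ (via a henselization argument at $x$, cf.\ SGA~4, XIX). The key computational input is that for $\ell\neq\chk$, the inertia $I_{x}$ has $\ell$-cohomological dimension one: its wild part is pro-$p$ (giving invariants only), and the tame quotient $I^{t}_{x}\cong\varprojlim_{(n,p)=1}\mu_{n}(\kbar)$ has cohomological dimension one with $\upH^{1}(I^{t}_{x},\Qlbar)\cong\Qlbar(-1)$. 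This yields
\begin{equation*}
\upH^{0}(I_{x},\calL_{x})=(\calL_{x})^{I_{x}},\quad \upH^{1}(I_{x},\calL_{x})=(\calL_{x})_{I_{x}}(-1),\quad \upH^{\geq2}(I_{x},\calL_{x})=0.
\end{equation*}
Combined with Artin vanishing $\cohog{2}{U,\calL}=0$ (as $U$ is a smooth affine curve) and the vanishing of $\cohoc{0}{U,\calL}=0$ (since $U$ is connected and non-proper, so the constant-coefficient argument adapts), the long exact sequence truncates to the desired six-term sequence.

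For the identifications involving $j_{!*}\calL$, I use the two truncation-type short exact sequences of sheaves on $X$:
\begin{equation*}
0\to j_{!}\calL\to j_{!*}\calL\to i_{*}(\oplus_{x\in S}(\calL_{x})^{I_{x}})\to0,
\end{equation*}
and, from $j_{!*}\calL=\tau_{\leq0}Rj_{*}\calL$, the distinguished triangle
\begin{equation*}
j_{!*}\calL\to Rj_{*}\calL\to (R^{1}j_{*}\calL)[-1]\to,
\end{equation*}
where $R^{1}j_{*}\calL$ is the skyscraper $\oplus_{x\in S}(\calL_{x})_{I_{x}}(-1)$ at $S$. Taking $R\Gamma(X,-)$ of the first triangle shows $\cohog{0}{X,j_{!*}\calL}=\cohoc{0}{U,\calL}\hookrightarrow\cohog{0}{U,\calL}$ (in fact equality, since the latter has no nontrivial map to the skyscraper stalks injectively—concretely, $\cohog{0}{U,\calL}=(\calL_{u})^{\pi_{1}(U,u)}$ injects into each $(\calL_{x})^{I_{x}}$, so the connecting map from $\cohog{0}{U,\calL}$ is injective and $\cohoc{0}{U,\calL}=0$), and that $\cohog{1}{X,j_{!*}\calL}$ is the cokernel of $\oplus_{x}(\calL_{x})^{I_{x}}\to\cohoc{1}{U,\calL}$. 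Taking $R\Gamma(X,-)$ of the second triangle shows $\cohog{1}{X,j_{!*}\calL}$ is the kernel of $\cohog{1}{U,\calL}\to\oplus_{x}(\calL_{x})_{I_{x}}(-1)$. Matching these two descriptions against the six term sequence identifies $\cohog{1}{X,j_{!*}\calL}$ with the image of $\cohoc{1}{U,\calL}\to\cohog{1}{U,\calL}$. The same argument at the top gives $\cohog{2}{X,j_{!*}\calL}\cong\cohoc{2}{U,\calL}\cong(\calL_{u})_{\pi_{1}(U,u)}(-1)$, where the last isomorphism is Poincar\'e duality (or directly from the $\pi_{1}$-coinvariant description of $\upH^{2}_{c}$ of a smooth affine curve).

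There is no real obstacle here; the only subtlety is the cohomological dimension statement for $I_{x}$, which must be invoked carefully because the wild inertia $I^{w}_{x}$ is pro-$p$ and $p$-coefficient cohomology of $I^{w}_{x}$ would be unbounded, but the $\Qlbar$-cohomology vanishes in positive degree for $I^{w}_{x}$ since $\ell\neq p$. Everything else is a mechanical assembly of two long exact sequences.
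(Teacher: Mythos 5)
Your proof is correct and follows exactly the route the paper gestures at: the paper's hint ``using the natural transformations $j_!\to j_{!*}\to j_*$'' amounts precisely to your two triangles (the cone of $j_!\calL\to j_{!*}\calL$ being the skyscraper with stalk $(\calL_x)^{I_x}$, and the cone of $j_{!*}\calL\to Rj_*\calL$ being $(R^1j_*\calL)[-1]$), and the six-term sequence comes from their composite $j_!\to Rj_*$ together with the $\ell$-cohomological dimension one computation for $I_x$. The only small stylistic remark is that the identification $\cohog{0}{X,j_{!*}\calL}\cong\cohog{0}{U,\calL}$ drops out immediately from the second triangle (as you note), so the parenthetical injectivity argument attached to the first triangle is unnecessary; otherwise the assembly is clean.
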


We now give a numerical criterion for cohomological rigidity. 
 
\begin{prop}\label{p:numerics} Let $\calF\in\Loc_{H, \theta_{H}}(U)$. Then $\calF$ is cohomologically rigid if and only if
\begin{equation}\label{num rigid}
\frac{1}{2}\sum_{x\in S}a_{x}(\bAd(\calF))=(1-g_{X})\dim\frh^{\ad}-\dim\cohog{0}{U, \bAd(\calF)}.
\end{equation}
Here $a_{x}(\bAd(\calF))$ is the Artin conductor of $\bAd(\calF)$ at $x$ (see \S\ref{sss:cond}) and  $g_{X}$ is the genus of $X$.
\end{prop}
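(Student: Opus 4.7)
The plan is a straightforward Euler characteristic computation, organized so that Lemma \ref{l:6term}, Grothendieck--Ogg--Shafarevich, and the self-duality of $\bAd(\calF)$ each play a clearly delimited role. Write $\cL = \bAd(\calF)$ and $r = \dim\frh^{\ad} = \dim\frh^{\der}$ (these agree because $\frh$ is reductive). The strategy is to compute $\chi(X, j_{!*}\cL)$ in two different ways: once via the term-by-term description of its cohomology, and once via a short exact sequence comparing $j_{!*}\cL$ with $j_!\cL$, and then match the two.

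First I would record the short exact sequence of sheaves on $X$
\begin{equation*}
0 \to j_!\cL \to j_{!*}\cL \to \bigoplus_{x\in S} i_{x,*}(\cL^{I_x}) \to 0,
\end{equation*}
which follows directly from the description of the stalk of $j_{!*}\cL$ at $x$ as $\cL^{I_x}$. Taking Euler characteristics gives $\chi(X, j_{!*}\cL) = \chi_c(U,\cL) + \sum_{x\in S}\dim\cL^{I_x}$. Now the Grothendieck--Ogg--Shafarevich formula reads $\chi_c(U,\cL) = r(2-2g_X-|S|) - \sum_{x\in S}\Sw_x(\cL)$, and the definition of the Artin conductor in \S\ref{sss:cond} gives $a_x(\cL) = r - \dim\cL^{I_x} + \Sw_x(\cL)$. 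Combining these two identities produces the clean formula
\begin{equation*}
\chi(X, j_{!*}\cL) = r(2-2g_X) - \sum_{x\in S} a_x(\cL).
\end{equation*}

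On the other hand, Lemma \ref{l:6term} identifies $\cohog{0}{X, j_{!*}\cL} = \cohog{0}{U, \cL}$ and $\cohog{2}{X, j_{!*}\cL} = \cohoc{2}{U,\cL}$. Here I use the key input that $\cL$ is Verdier self-dual: the $\Ad$-invariant non-degenerate symmetric bilinear form (Killing form) on $\frh^{\der}$ is preserved by the $\theta_H$-twisted action of $\pi_1(U,u)$ on $\frh^{\der}$, so Poincar\'e duality gives $\dim\cohoc{2}{U,\cL} = \dim\cohog{0}{U,\cL}$. Therefore
\begin{equation*}
\chi(X, j_{!*}\cL) = 2\dim\cohog{0}{U, \cL} - \dim\Rig(\calF).
\end{equation*}
Equating the two expressions for $\chi(X, j_{!*}\cL)$ yields
\begin{equation*}
\dim\Rig(\calF) = 2\dim\cohog{0}{U,\cL} + \sum_{x\in S}a_x(\cL) - (2-2g_X)r,
\end{equation*}
and the condition $\Rig(\calF) = 0$ is exactly \eqref{num rigid} after dividing by $2$. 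No step is a real obstacle: the only subtlety is remembering that the self-duality of $\cL$ used in Poincar\'e duality comes from the Killing form on $\frh^{\der}$ rather than on all of $\frh$, which is why the statement is phrased in terms of $\bAd$ (the derived-part adjoint representation) and why $\dim\frh^{\ad}$ appears on the right-hand side.
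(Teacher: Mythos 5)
Your proof is correct and follows essentially the same route as the paper: both compute via the Grothendieck--Ogg--Shafarevich formula, the relation between $j_!\cL$ and $j_{!*}\cL$, the identification of the outer cohomology groups of $j_{!*}\cL$ from Lemma~\ref{l:6term}, and self-duality of $\bAd(\calF)$ via the Killing form. The only difference is cosmetic: you organize the argument around the Euler characteristic $\chi(X,j_{!*}\cL)$ computed two ways, whereas the paper tracks $\dim\cohog{1}{X,j_{!*}\cL}$ directly through the six-term exact sequence; the arithmetic is the same.
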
 
\begin{proof} Let $\calL=\bAd(\calF)$. Recall the Grothendieck-Ogg-Shafarevich formula
\begin{equation*}
\chi_{c}(U,\calL):=\sum_{i=0}^{2}(-1)^{i}\dim \cohoc{i}{U,\calL}=\chi_{c}(U)\rank(\calL)-\sum_{x\in S}\Sw_{x}(\calL).
\end{equation*}
where $\chi_{c}(U)=-2g_{X}+2-\#S$. On the other hand, by the exact sequence in Lemma \ref{l:6term} we have
\begin{equation*}
\dim\cohoc{1}{X,j_{!*}\calL}=\dim\cohoc{1}{U,\calL}-\sum_{x\in S}\dim(\calL_{x})^{I_{x}}+\dim\cohog{0}{U,\calL}.
\end{equation*}
Combining these facts we get
\begin{equation*}
\dim\cohoc{1}{X,j_{!*}\calL}=\sum_{x\in S}\left(\dim\calL_{x}/\calL_{x}^{I_{x}}+\Sw_{x}(\calL)\right)+(2g_{X}-2)\rank(\calL)+\dim\cohoc{2}{U,\calL}+\dim\cohog{0}{U,\calL}.
\end{equation*}
Using the relation between Swan and Artin conductors, and using the self-duality of $\calL=\bAd(\calF)$, we get
\begin{equation*}
\dim\cohoc{1}{X,j_{!*}\calL}=\sum_{x\in S}a_{x}(\calL)+(2g_{X}-2)\rank(\calL)+2\dim\cohog{0}{U,\calL}.
\end{equation*}
Therefore the vanishing of $\cohoc{1}{X,j_{!*}\calL}$ is equivalent to the equality \eqref{num rigid}.
\end{proof}

From \eqref{num rigid} we see that cohomologically rigid $H$-local systems exist only when $g_{X}\leq1$. When $g_{X}=1$ and $\calF\in\Loc_{H,\theta_{H}}(U)$ is cohomologically rigid, $\Ad(\calF)$ must be everywhere unramified. There are very few such examples (see \cite[\S1.4]{Katz}). Most examples of rigid local systems are over open subsets of $\PP^{1}$. 

When $H=\GL_{n}$, the two notions of rigidity are related by the following theorem.

\begin{theorem}[Katz {\cite[Theorem 5.0.2]{Katz}}] For $X=\PP^{1}$, cohomological rigidity $\GL_{n}$-local systems (i.e., rank $n$ local systems) are also physical rigidity.
\end{theorem}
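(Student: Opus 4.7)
The plan is to follow Katz's algorithm via middle convolution, which exploits the explicit classification of cohomologically rigid local systems on open subsets of $\PP^{1}$. First I would reduce to the case where $\calF$ is irreducible: using the numerical criterion in Proposition \ref{p:numerics}, one checks that a direct sum $\calF_{1}\oplus\calF_{2}$ of $\GL_{n_{i}}$-local systems can only be cohomologically rigid if each summand is itself rigid and the summands interact trivially. More care is needed for non-semisimple extensions, but by Verdier self-duality of $j_{!*}\End^{\c}(\calF)$ one can semisimplify without changing $\Rig(\calF)$.

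The main tool is the middle convolution functor $MC_{\chi}$, defined for any rank one tame local system $\chi$ on $\Gm$ (after choosing one puncture of $\PP^{1}$ as origin). Its key formal properties are: (i) $MC_{\chi}$ takes irreducible local systems to irreducible (or zero) local systems; (ii) $MC_{\chi^{-1}}\c MC_{\chi}\cong\id$ on the subcategory of irreducibles not of rank one pulled back from $\Gm$; (iii) the local monodromies of $MC_{\chi}(\calF)$ at each puncture are determined explicitly from those of $\calF$ and from $\chi$ (Katz's local monodromy formulas); and (iv) the rank and the Artin conductors of $MC_{\chi}(\calF)$ are determined by those of $\calF$ and by $\chi$ through a combinatorial formula. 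Properties (iii) and (iv) together imply that $MC_{\chi}$ preserves both physical and cohomological rigidity in the sense of Definitions \ref{phy rig} and \ref{coho rig}: the numerical identity \eqref{num rigid} is stable under the transformation, and the local monodromy data of the convolved system is determined by that of the input.

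Next I would show the decisive \emph{rank-reduction} statement: for any irreducible cohomologically rigid $\calF\in\Loc_{\GL_{n}}(\PP^{1}-S)$ with $n\geq 2$, there exists a rank one tame $\chi$ on $\Gm$ such that $\rank MC_{\chi}(\calF)<\rank\calF$. This is the heart of the argument; the proof uses the numerical rigidity formula \eqref{num rigid} applied to both $\calF$ and $MC_{\chi}(\calF)$ together with a pigeonhole estimate on the Jordan blocks of the local monodromies, showing that some eigenvalue at some singularity can be ``canceled'' by a well-chosen $\chi$. This is the step I expect to be the main obstacle, since one must rule out obstructions coming from Belyi-type degenerations and verify that the chosen $\chi$ does not destroy irreducibility — a detailed but elementary combinatorial analysis of local exponents.

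Granting rank reduction, I would induct on the rank: iterating $MC_{\chi_{i}}$ with suitable $\chi_{i}$'s transforms $\calF$ into an irreducible cohomologically rigid local system of rank one on some punctured $\PP^{1}$. Rank one local systems on $\PP^{1}-S'$ are manifestly physically rigid, since by the product formula $\prod_{x\in S'}\det(\rho(\gamma_{x}))=1$ they are determined up to isomorphism by their local monodromies. Applying property (ii) — the invertibility of middle convolution — and property (iii) — the explicit dependence of local monodromies of $MC_{\chi}(\calF)$ on those of $\calF$ — we conclude that $\calF$ itself is determined up to isomorphism by its local monodromies, i.e., $\calF$ is physically rigid. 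This completes the inductive argument.
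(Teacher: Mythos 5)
Your proposal does not follow the route that Katz actually uses to prove Theorem~5.0.2; it also contains a genuine gap at the key step. Katz's proof is a short, direct Euler-characteristic argument that does not involve middle convolution at all. The idea is: let $\calF$ be irreducible and cohomologically rigid on $U=\PP^{1}-S$, and let $\calG$ be another local system with isomorphic local monodromies at each $x\in S$. Set $\calH=\calF^{\vee}\otimes\calG$. Because the local monodromies of $\calF$ and $\calG$ agree, the local invariants $\dim\calH^{I_{x}}$ and $\Sw_{x}(\calH)$ coincide with those of $\End(\calF)$. Combining the Grothendieck--Ogg--Shafarevich formula with Lemma~\ref{l:6term}, one finds that
$\chi(\PP^{1},j_{!*}\calL)=(2-\#S)\rank\calL-\sum_{x\in S}\Sw_{x}(\calL)+\sum_{x\in S}\dim\calL^{I_{x}}$
depends only on the rank and the local data, so $\chi(\PP^{1},j_{!*}\calH)=\chi(\PP^{1},j_{!*}\End(\calF))$. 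Cohomological rigidity plus irreducibility give $h^{0}=h^{2}=1$, $h^{1}=0$ for $j_{!*}\End(\calF)$, hence $\chi=2$. On the other hand, by irreducibility $h^{0}(\PP^{1},j_{!*}\calH)=\dim\Hom_{\pi_{1}}(\calF,\calG)\le1$ and $h^{2}(\PP^{1},j_{!*}\calH)=\dim\Hom_{\pi_{1}}(\calG,\calF)\le1$, so $\chi=h^{0}-h^{1}+h^{2}\le2$, with equality forcing $h^{0}=1$. Thus $\Hom(\calF,\calG)\ne0$, and by irreducibility and equality of ranks this is an isomorphism. That is the whole proof.

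Your approach has two concrete problems. First, the rank-reduction statement you isolate as ``the heart of the argument'' is not proved, and you candidly acknowledge it as the main obstacle. But this statement is itself a deep theorem --- it is (essentially) Katz's Main Theorem~5.2.1 on the classification of irreducible rigid local systems by middle-convolution moves --- and it is proved \emph{only for tame} local systems. The theorem under discussion has no tameness hypothesis, so even if you supplied the rank-reduction argument you would only cover the tame case. Second, your preliminary reduction to the irreducible case is unsound as written: Katz's Theorem~5.0.2 assumes irreducibility, and $\Rig(\calF)=\cohog{1}{\PP^{1},j_{!*}\End^{\c}(\calF)}$ is \emph{not} invariant under replacing $\calF$ by its semisimplification, because the local monodromy of $\End^{\c}(\calF)$, hence the stalks of the middle extension and the Swan conductors, depend on the actual Jordan block structure and not merely on the associated graded. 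In short: the strategy is inverted --- you are trying to use the classification theorem (which is built, in part, on cohomological-implies-physical rigidity) to prove the foundational cohomological-implies-physical rigidity statement, when the latter has an elementary direct proof.
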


\begin{remark} Let $H$ be semisimple. An alternative approach to define the notion of rigidity for a $\theta_{H}$-twisted local system $\calF$ on $U$ over a finite field $k$ is by requiring the adjoint $L$-function of the Galois representation $\rho_{\calF}:\Gamma_{F}\to H(\Qlbar)\rtimes \Gamma$ to be trivial (constant function $1$). This is the approach taken by Gross in \cite{Gross-Trivial L}. When $\cohog{0}{U_{\kbar},\bAd(\calF)}=0$, triviality of the adjoint $L$-function of $\rho_{\calF}$ is equivalent to the cohomological rigidity of $\calF$.  
\end{remark}

\subsection{Rigid local systems of rank two}\label{ss:rigidlocGL2} 
Let $X=\PP^{1}_{k}$ where $k$ is an algebraically closed field with $\chk\neq2$. We shall classify cohomologically rigid $H=\GL_{2}$-local systems (or rank two local systems) over $U=X-S$ for some finite $S\subset|X|$. Let $\calF\in\Loc_{2}(U)$ be {\em irreducible} and cohomologically rigid. We may assume that $S$ is taken to be minimal in the sense that $\calF$ does ramify at all points $x\in S$. We have $\bAd(\calF)=\End^{0}(\calF)$, the local system of traceless endomorphisms of $\calF$. Irreducibility of $\calF$ implies that $\cohog{0}{U,\End^{0}(\calF)}=0$. The formula \eqref{num rigid} then reads
\begin{equation*}
\sum_{x\in S}a_{x}(\End^{0}(\calF))=6.
\end{equation*}

\begin{lemma}\label{l:r2 local} Let $\rho_{x}:I_{x}\to \GL_{2}(\Qlbar)$ be the local monodromy representation of $\calF$ at $x$.
\begin{enumerate}
\item If $\End^{0}(\calF)$ is tame at $x$, then $a_{x}(\End^{0}(\calF))=2$. If moreover $\det(\calF)$ is tame at $x$, then $\calF$ is tame at $x$.
\item If $\End^{0}(\calF)$ is wildly ramified at $x$, then $a_{x}(\End^{0}(\calF))\geq4$. If $a_{x}(\End^{0}(\calF))=4$ and $\det(\calF)$ is tame at $x$, then one of the two cases happens.
\begin{itemize}
\item Both breaks of $\calF$ at $x$ are $1/2$, hence $\Sw_{x}(\calF)=1$. In this case, the local monodromy $\rho_{x}: I_{x}\to \GL_{2}(\Qlbar)$ of $\calF$ is isomorphic to the induction $\Ind_{J_{x}}^{I_{x}}(\chi)$ where $J_{x}\lhd I_{x}$ is the unique subgroup of index two and $\chi:J_{x}\to \Qlbar^{\times}$ is a character.
\item The local monodromy $\rho_{x}:I_{x}\to \GL_{2}(\Qlbar)$ is the direct sum of two characters $\chi_{1},\chi_{2}:I_{x}\to\Qlbar^{\times}$ with $\Sw(\chi_{1})=\Sw(\chi_{2})=1$.
\end{itemize}
\end{enumerate}
\end{lemma}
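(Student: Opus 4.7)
The overall strategy is to analyze the restriction $\rho_x|_P$ to the wild inertia $P = I_x^w$, exploiting that $P$ is a pro-$p$ group with $p = \chk \neq 2$ (so $p \geq 3$ whenever wild ramification is present), while the tame quotient $I_x^t = I_x/P$ is topologically pro-cyclic. The key preliminary observation I would use is that irreducible continuous $\Qlbar$-representations of a pro-$p$ group have dimension a power of $p$, so for $p \geq 3$ the $2$-dimensional $\rho_x|_P$ must decompose as a direct sum of two characters $\chi_1, \chi_2: P \to \Qlbar^\times$.

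For part (1), tameness of $\End^0(\calF)$ says $P$ acts trivially on $\End(V) = V \otimes V^\vee$, so $\rho_x|_P$ acts by scalars through a character $\eta: P \to \Qlbar^\times$. Diagonalizing the tame representation $\rho_x|_{I_x^t}$ as $\tilde\chi_1 \oplus \tilde\chi_2$ (possible since $I_x^t$ is pro-cyclic), I would compute $\End^0(\rho_x) = (\tilde\chi_1/\tilde\chi_2) \oplus \triv \oplus (\tilde\chi_2/\tilde\chi_1)$, so $(\End^0)^{I_x}$ is $1$-dimensional (giving $a_x = 2$) precisely when $\tilde\chi_1 \neq \tilde\chi_2$; the degenerate case $\tilde\chi_1 = \tilde\chi_2$ would make $\End^0(\calF)$ unramified at $x$, which is excluded by the standing assumption that $\calF$ ramifies nontrivially at each point of $S$ (up to a rank-one twist). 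For the \emph{moreover} part, tameness of $\det(V)$ gives $\eta^2 = 1$, and since $\eta$ factors through a finite $p$-group quotient of $P$ with $p$ odd, $\eta$ must be trivial, so $V$ is tame.

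For part (2), wildness of $\End^0(V)$ forces $\chi_1 \neq \chi_2$, and $I_x$ (which normalizes $P$) permutes $\{\chi_1, \chi_2\}$. Let $J = \textup{stab}_{I_x}(\chi_1)$; then $[I_x:J] \in \{1, 2\}$. In the case $J = I_x$ (the second sub-case of the lemma), $V$ decomposes as $\tilde\chi_1 \oplus \tilde\chi_2$ over $I_x$ with $\tilde\chi_i|_P = \chi_i$, and $\End^0 = (\tilde\chi_1/\tilde\chi_2) \oplus \triv \oplus (\tilde\chi_2/\tilde\chi_1)$ gives $a_x = 2 + 2\Sw(\tilde\chi_1/\tilde\chi_2) \geq 4$, with equality forcing $\Sw(\tilde\chi_1/\tilde\chi_2) = 1$; combined with $\det V = \tilde\chi_1\tilde\chi_2$ tame and the fact that squaring is an automorphism on characters of the odd pro-$p$ group $P$, this forces $\Sw(\tilde\chi_i) = 1$ for each $i$. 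In the case $[I_x:J] = 2$ (the first sub-case), $V = \Ind_J^{I_x}\chi_J$ is irreducible, so Schur's lemma gives $(\End^0)^{I_x} = 0$ and $a_x = 3 + \Sw(\End^0)$; a Mackey decomposition yields $\End^0 = \epsilon \oplus \Ind_J^{I_x}\psi$ with $\epsilon$ the tame quadratic character of $I_x/J$ and $\psi = \chi_J/\chi_J^\sigma$, and the conductor-discriminant formula for the tame quadratic extension $K/L$ corresponding to $J \subset I_x$ gives $\Sw(\Ind\psi) = \Sw_K(\psi)$, so $a_x = 3 + \Sw_K(\psi) \geq 4$. When additionally $\det V$ is tame one has $\chi_2 = \chi_1^{-1}$ on $P$, so $\psi|_P = \chi_1^2$ has the same Swan as $\chi_1$; then $a_x = 4$ forces $\Sw_P(\chi_1) = 1$, and a parallel computation shows $\Sw_L(V) = 1$, giving both breaks of $V$ equal to $1/2$.

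The main technical obstacle will be the careful bookkeeping relating Swan conductors in the different upper-numbering filtrations on $I_x$, $J$, and $P$. Because the extension corresponding to $J \subset I_x$ is tame of degree $2$ with residue characteristic $p \geq 3$, the upper-numbering filtrations on $J$ and $P$ agree with that of $I_x$ on the wild part, and the Hasse--Arf theorem combined with the conductor-discriminant formula makes the accounting tractable; nonetheless one must distinguish integer-valued Swan conductors of characters of abelian subgroups from the possibly half-integer breaks of the $2$-dimensional induced representation, which is precisely where the value $1/2$ in the first sub-case ultimately comes from.
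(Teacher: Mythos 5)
Your setup matches the paper's: both begin by observing that $\rho_x(I_x^w)$ is a finite $p$-group with $p$ odd, hence abelian, so $\rho_x|_{I_x^w}=\alpha\oplus\beta$ for characters $\alpha,\beta$. Part (2) of your argument is correct, but part (1) has a genuine gap.

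You claim that ``the tame representation $\rho_x|_{I_x^t}$'' diagonalizes as $\tilde\chi_1\oplus\tilde\chi_2$ because $I_x^t$ is pro-cyclic, and then compute $\End^0\cong(\tilde\chi_1/\tilde\chi_2)\oplus\triv\oplus(\tilde\chi_2/\tilde\chi_1)$ to read off $\dim(\sl_2)^{I_x}$. But a two-dimensional $\ell$-adic representation of a pro-cyclic group need not be semisimple: a topological generator $\zeta$ can go to a nontrivial unipotent matrix, making $\rho_x$ an indecomposable extension of one character by another, and this case actually occurs in the classification (the unipotent pseudo-reflection of Proposition~\ref{p:r2}(1)). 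In that case your decomposition of $\End^0$ does not exist, and its semisimplification would force $\tilde\chi_1=\tilde\chi_2$ --- exactly the ``degenerate case'' you attempt to exclude --- yet $a_x(\End^0(\calF))=2$ is still correct, because $\Ad$ of a regular unipotent acting on $\sl_2$ is a single Jordan block with one-dimensional kernel. The paper avoids the issue by never diagonalizing $\rho_x$: it only notes that $\rho_x(\zeta)$ projects to a non-identity element of $\PGL_2$, whose centralizer in $\sl_2$ is necessarily one-dimensional, so $\dim(\sl_2)^{I_x}=1$ uniformly in the semisimple and unipotent sub-cases. You should replace the diagonalization by that centralizer argument (or split off the unipotent case explicitly). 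Your ``moreover'' clause, using that squaring is bijective on the odd pro-$p$ image, is fine.

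For part (2), your dichotomy via $J=\textup{stab}_{I_x}(\chi_1)$ is the paper's dichotomy via the map $I_x^t\to N_{\GL_2}(T)/T$ in different words. In the induced sub-case you route through the Mackey decomposition $\End^0=\epsilon\oplus\Ind_J^{I_x}\psi$ and the conductor--discriminant identity $\Sw_L(\Ind_J^{I_x}\psi)=\Sw_K(\psi)$ for the tame quadratic subgroup; this is correct, and you rightly flag the filtration bookkeeping as the delicate point. The paper instead exploits that Swan conductors and breaks depend only on $\rho_x|_{I_x^w}$, computing $\Sw_x(\End^0)$ directly as twice the common break of $\alpha\beta^{-1}$ and $\alpha^{-1}\beta$, then deducing the break $1/2$ and the induction structure at the end. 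That sidesteps all comparison of $K$- and $L$-normalized filtrations, which is why the paper's proof is shorter; both routes are valid.
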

\begin{proof} Let $\rho^{\der}_{x}:I_{x}\xrightarrow{\rho_{x}}\GL_{2}(\Qlbar)\to\GL(\sl_{2})$ be the local monodromy representation of $\End^{0}(\calF)$ at $x$. The image of the wild inertia $I^{w}_{x}$ under $\rho_{x}$ is a $p$-group, hence lies in a maximal torus of $\GL_{2}$. Therefore $\rho_{x}|_{I^{w}_{x}}\cong\a\oplus\b$ for characters $\a,\b: I^{w}_{x}\to\Qlbar^{\times}$, and $\rho^{\der}_{x}|_{I^{w}_{x}}\cong\a\b^{-1}\oplus\a^{-1}\b\oplus1$.

(1) If $\rho^{\der}_{x}$ is tame, then $\a=\b$. A topological generator $\zeta\in I^{t}_{x}$ maps to a non-identity element in $\PGL_{2}$, whose centralizer is necessarily one-dimensional. Hence $\dim(\sl_{2})^{I_{x}}=1$ and of course $\Sw_{x}(\End^{0}(\calF))=0$. Therefore $a_{x}(\End^{0}(\calF))=3-\dim(\sl_{2})^{I_{x}}+\Sw_{x}(\End^{0}(\calF))=2$. 

If we assume $\det(\rho_{x})$ is tame, then $\a\b=1$. But we also have $\a=\b$, which forces $\a$ and $\b$ both have order at most two, hence trivial. Therefore $\rho_{x}$ is tame.

(2) If $\rho^{\der}_{x}$ is wildly ramified, then $\a\neq\b$. Therefore there is a unique maximal torus $T\subset\GL_{2}$ containing $\rho_{x}(I^{w}_{x})$. We have $\dim(\sl_{2})^{I^{w}_{x}}=1$. The image of $\rho_{x}(I_{x})$ should normalize $\rho_{x}(I^{w}_{x})$, hence it lies in the normalizer $N_{\GL_{2}}(T)$, which gives a map $I^{t}_{x}\to N_{\GL_{2}}(T)/T=\{\pm1\}$. 

Suppose $I^{t}_{x}$ maps onto $N_{\GL_{2}}(T)/T$. Then $(\sl_{2})^{I_{x}}=0$ and since $\Sw_{x}(\End^{0}(\calF))\geq1$, we have $a_{x}(\End^{0}(\calF))=3-\dim(\sl_{2})^{I_{x}}+\Sw_{x}(\End^{0}(\calF))\geq4$. When equality holds, we have $\Sw_{x}(\End^{0}(\calF))=1$. The action of $I^{w}_{x}$ on $\sl_{2}$ is the sum of a trivial character and $\a\b^{-1}$ and $\a^{-1}\b$. Therefore $\a\b^{-1}$ has break $1/2$. Suppose further that $\det(\calF)$ is tame, then $\a\b=1$. Hence $\a\b^{-1}=\a^{2}$ has break $1/2$, which means $\a$ and $\b$ both have break $1/2$. The unique subgroup $J_{x}\lhd I_{x}$ of index two stabilizes the two eigenlines of $\rho_{x}|_{I^{w}_{x}}$, and therefore $\rho_{x}\cong \Ind^{I_{x}}_{J_{x}}(\chi)$ where $\chi$ is the character of $J_{x}$ by which it acts on the $\a$-eigenline of $I^{w}_{x}$.

Suppose $I^{t}_{x}$ maps trivially to $N_{\GL_{2}}(T)/T$. This means $\rho_{x}(I_{x})\subset T$ hence $\rho_{x}\cong\chi_{1}\oplus\chi_{2}$ for characters $\chi_{1},\chi_{2}:I_{x}\to\Qlbar^{\times}$ extending $\a$ and $\b$. In this case $\dim(\sl_{2})^{I_{x}}=1$, and $\Sw_{x}(\End^{0}(\calF))=\Sw(\chi_{1}\chi_{2}^{-1})+\Sw(\chi_{1}^{-1}\chi_{2})\geq2$ (since $\a\neq\b$). Therefore $a_{x}(\End^{0}(\calF))=3-\dim(\sl_{2})^{I_{x}}+\Sw_{x}(\End^{0}(\calF))\geq4$. When equality holds, $\a\b^{-1}$ has break $1$. If moreover $\det(\calF)$ is tame, we have $\a=\b^{-1}$, therefore the break of $\a^{2}$ is 1, hence the breaks of both $\a$ and $\b$ are 1. 
\end{proof}

\begin{prop}\label{p:r2} Let $\calF\in\Loc_{2}(U)$ be an irreducible cohomologically rigid rank two local systems on $U=\PP^{1}-S$ (and $S$ is chosen minimally). Assume $0,\infty\in S$. Then up to an operation $\calF\mapsto\calF\otimes\calL$ for some rank one local system on $U$, and up to an automorphism of $\Gm=\PP^{1}-\{0,\infty\}$, one of the following situations happens.
\begin{enumerate}
\item $S=\{0,1,\infty\}$, $\calF$ is tamely ramified at each $x\in S$ and its monodromy at $1$ is a pseudo-reflection.
\item $S=\{0,\infty\}$, $\calF$ is tame at $0$ and $\Sw_{\infty}(\calF)=1$ with two breaks equal to $1/2$.
\item $S=\{0,\infty\}$, $\calF$ is tame at $0$ and $\Sw_{\infty}(\calF)=1$ with one break equal to $1$ and another break equal to $0$.
\end{enumerate}
\end{prop}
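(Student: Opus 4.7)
Since $\calF$ is irreducible, $\cohog{0}{U,\End^{0}(\calF)}=0$, so Proposition \ref{p:numerics} with $g_{X}=0$ and $\dim\frh^{\ad}=3$ yields
\[
\sum_{x\in S}a_{x}(\End^{0}(\calF))=6.
\]
Minimality of $S$, combined with a preliminary twist by a rank one local system to kill any point at which $\calF|_{I_{x}}$ acts by scalars, ensures that $\End^{0}(\calF)|_{I_{x}}$ is nontrivial for every $x\in S$. Lemma \ref{l:r2 local} then forces each summand $a_{x}(\End^{0}(\calF))$ to equal $2$ (tame case) or to be $\geq 4$ (wild case), and the only partitions of $6$ compatible with this are $|S|=3$ with all $a_{x}=2$, or $|S|=2$ with $(a_{0},a_{\infty})$ a permutation of $(2,4)$.

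I next arrange $\det(\calF)$ to be tame at every $x\in S$: choose a rank one local system $\calL$ on $U$ with the property that $\det(\calF)\otimes\calL^{\otimes 2}$ is tame at every $x\in S$. Such $\calL$ can be assembled out of Kummer and Artin--Schreier sheaves with prescribed local wild characters, the only global obstruction lying in a group which, in residue characteristic $\neq 2$, is divisible enough to permit the required square root. The twist $\calF\mapsto\calF\otimes\calL$ leaves $\End^{0}(\calF)$ and the numerics unchanged. In the case $|S|=3$, Lemma \ref{l:r2 local}(1) then forces $\calF$ itself to be tame at each point of $S$ with non-scalar semisimple local monodromy. An automorphism of $\Gm$ moves the third point to $1$, so $S=\{0,1,\infty\}$. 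Letting $\lambda$ be one of the two distinct eigenvalues of the monodromy of $\calF$ at $1$, a further twist by a tame rank one local system on $\PP^{1}-\{0,1,\infty\}$ with monodromy $\lambda^{-1}$ at $1$ (these exist as tame rank one local systems are classified by triples of local monodromies with trivial product) renders the monodromy at $1$ a pseudo-reflection, giving case (1).

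In the case $|S|=2$, after the involution $t\mapsto t^{-1}$ of $\Gm$ if necessary, assume $(a_{0},a_{\infty})=(2,4)$. Since $\End^{0}(\calF)$ is tame at $0$ and $\det(\calF)$ is tame at $0$, Lemma \ref{l:r2 local}(1) gives that $\calF$ is tame at $0$. At $\infty$, Lemma \ref{l:r2 local}(2) produces two subcases. If both breaks of $\calF$ at $\infty$ equal $1/2$ with $\Sw_{\infty}(\calF)=1$, we are already in case (2). Otherwise $\calF|_{I_{\infty}}\cong\chi_{1}\oplus\chi_{2}$ with $\Sw(\chi_{1})=\Sw(\chi_{2})=1$; I then twist once more by a rank one local system on $\Gm$ that is tame at $0$ and whose restriction to $I_{\infty}$ is isomorphic to $\chi_{2}^{-1}$, again obtained from the Kummer-times-Artin--Schreier construction. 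After this twist $\calF|_{I_{\infty}}\cong\chi_{1}\chi_{2}^{-1}\oplus\mathbf{1}$. The tameness of $\chi_{1}\chi_{2}$ together with $\chk\neq 2$ implies that $\chi_{1}\chi_{2}^{-1}=\chi_{1}^{2}\cdot(\chi_{1}\chi_{2})^{-1}$ has the same break as $\chi_{1}^{2}$, namely $1$ (squaring is an automorphism of the pro-$p$ character group of wild inertia when $p\neq 2$, so preserves breaks). Hence the breaks of $\calF$ at $\infty$ are $(1,0)$ with $\Sw_{\infty}(\calF)=1$, yielding case (3). The main technical obstacle is the justification of these auxiliary rank one twisting sheaves with prescribed wild monodromy; once their existence is established, the remaining argument is careful bookkeeping of local monodromy data through the two applications of Lemma \ref{l:r2 local}.
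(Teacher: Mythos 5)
Your proof is correct and follows essentially the same route as the paper's: the rigidity count $\sum_{x}a_{x}(\End^{0}(\calF))=6$, a preliminary twist making $\det(\calF)$ tame, then case analysis via Lemma~\ref{l:r2 local}. The two places where your justification is weaker than the paper's are the auxiliary rank one twists: to make $\det(\calF)$ tame, the paper simply takes the unique square root of the $p$-power part of $\det(\calF)$ as a character of $\pi_{1}(U)$ (unique since $p>2$ and the image is pro-$p$), avoiding your local-to-global assembly and the vague ``divisible enough'' claim; and in the split-wild subcase at $\infty$ the paper invokes Katz's canonical extension, which is cleaner than your ``Kummer-times-Artin--Schreier'' construction — the latter does not literally make sense over the algebraically closed $k$ in force in \S\ref{ss:rigidlocGL2}, where there are no Artin--Schreier sheaves because $k$ has no nontrivial additive characters.
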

\begin{proof}
The determinant $\det\calF$ is a continuous character of $\chi:\pi_{1}(U)\to\Qlbar^{\times}$, and we can write it as $\chi_{p}\chi^{p}$ where $\chi_{p}$ is the $p$-power part of $\chi$ and $\chi^{p}$ is tame. Since $p>2$, $\chi_{p}$ has a unique square root which gives a rank one local system $\calL$ over $U$. The local system $\calF'=\calF\otimes\calL^{-1}$ is still irreducible and cohomologically rigid, and now its determinant is tame. Therefore we may assume that $\det(\calF)$ is tame.

By Lemma \ref{l:r2 local}, when $\#S\geq2$, there two cases.

(1) $\#S=3$ and $a_{x}(\End^{0}(\calF))=2$ for all $x\in S$. Up to an automorphism of $\Gm$ we may assume $S=\{0,1,\infty\}$. Since $\det(\calF)$ is tame, Lemma \ref{l:r2 local} implies that $\calF$ is tame. We may find a rank one local system $\calL$ on $\PP^{1}-\{0,1\}$ whose local monodromy at $1$ is one of the eigenvalues of the local monodromy of $\calF$ at $1$. Then $\calF\otimes\calL^{-1}$ is tame and its local monodromy at $1$ is a pseudo-reflection.

(2) $\#S=2$ (hence $S=\{0,\infty\}$). By Lemma \ref{l:r2 local}, up to switching  $0$ and $\infty$, we may assume that $\calF$ is $a_{0}(\End^{0}(\calF))=2$ and $a_{\infty}(\End^{0}(\calF))=4$. By Lemma \ref{l:r2 local}(1),  $\calF$ is tame at $0$. There are two cases for $\calF$ at $\infty$ according to Lemma \ref{l:r2 local}(2):
\begin{itemize}
\item Both breaks of $\calF$ at $\infty$ are $1/2$ and hence $\Sw_{\infty}(\calF)=1$.
\item Both breaks of $\calF$ at $\infty$ are $1$. The action of $I_{\infty}$ on $\Qlbar^{2}$ is a direct sum of two characters $\chi_{1}, \chi_{2}:I_{\infty}\to\Qlbar^{\times}$. The character $\chi_{1}$ extends to a rank one local system $\calL_{1}$ on $\Gm$ with tame local monodromy at $\infty$ (Katz's canonical extension, see \cite{Katz-canon}).  Then $\calF\otimes\calL^{-1}_{1}$ is still tame at $0$, with breaks $0$ and $1$ at $\infty$.
\end{itemize}
\end{proof}

\begin{remark} The three cases in Proposition \ref{p:r2} are the irreducible hypergeometric sheaves of rank two constructed by Katz \cite[\S8.2]{Katz-DE}. In fact, if $\calF$ is one of the local systems in Proposition \ref{p:r2}, it is easy to see that $\chi_{c}(\Gm, j_{!*}\calF[1])=1$ in all three cases (where $j:U\incl\Gm$). By \cite[Theorem 8.5.3]{Katz-DE}, $\calF$ is a hypergeometric sheaf. 

As we shall see in \S\ref{ss:desc eigen}, the three local systems in Proposition \ref{p:r2} are the Langlands parameters (restricted to $I_{F}$) of the automorphic representations considered in \S\ref{sss:GL2S3}, \S\ref{sss:GL2S2I} and \S\ref{sss:GL2S2II} respectively.
\end{remark}

\subsection{Rigidity in Inverse Galois Theory}\label{ss:InvGal} 
It is instructive to compare the notion of rigidity for local systems with the notion of a rigid tuple in inverse Galois theory. We give a quick review following \cite[Chapter 8]{Serre-Galois}. 

\begin{defn} Let $H$ be a finite group with trivial center. A tuple of conjugacy classes $(C_{1}, C_{2}, \cdots, C_{n})$ in $H$ is called {\em (strictly) rigid}, if
\begin{itemize}
\item The equation 
\begin{equation}\label{ggg}
g_{1}g_{2}\cdots g_{n}=1
\end{equation}
has a solution with $g_{i}\in C_{i}$, and the solution is unique up to simultaneous $H$-conjugacy;
\item For any solutions $(g_{1},\cdots, g_{n})$ of \eqref{ggg}, $\{g_{i}\}_{i=1,\cdots, n}$ generate $H$.
\end{itemize}
\end{defn}

The connection between rigid tuples and local systems is given by the following theorem. Let $S=\{P_{1},\cdots, P_{n}\}\subset\PP^{1}(\QQ)$, and let $U=\PP^{1}_{\QQ}-S$.

\begin{theorem}[Belyi, Fried, Matzat, Shih, and Thompson] Let $(C_{1},\cdots, C_{n})$ be a rigid tuple in $H$.   Then up to isomorphism there is a unique connected unramified Galois $H$-cover $\pi: Y\to U\otimes_{\QQ}{\Qbar}$ such that a topological generator of the (tame) inertia group at $P_{i}$ acts on $Y$ as an element in $C_{i}$.

Furthermore, if each $C_{i}$ is rational (i.e., $C_{i}$ takes rational values for all irreducible characters of $H$), then the $H$-cover $Y\to U\otimes_{\QQ}{\Qbar}$ is defined over $\QQ$. 
\end{theorem}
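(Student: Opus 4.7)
The plan is to translate the theorem into a statement about continuous surjections of the \'etale fundamental group $\pi_{1}(U_{\Qbar})$ onto $H$, where $U_{\Qbar}=U\otimes_{\QQ}\Qbar$. By Riemann's existence theorem together with the canonical comparison $\pi_{1}(U_{\Qbar})\cong\widehat{\pi_{1}^{\top}(U(\CC))}$, connected unramified Galois $H$-covers of $U_{\Qbar}$ whose inertia at $P_{i}$ lies in $C_{i}$ correspond bijectively, up to $H$-conjugation, with tuples $(g_{1},\ldots,g_{n})$ satisfying $g_{i}\in C_{i}$, $g_{1}\cdots g_{n}=1$ and $\langle g_{1},\ldots,g_{n}\rangle=H$; here $g_{i}=\phi(\gamma_{i})$ for a surjection $\phi$ and topological generators $\gamma_{i}$ of the tame inertia satisfying the one relation $\gamma_{1}\cdots\gamma_{n}=1$. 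Rigidity of $(C_{1},\ldots,C_{n})$ supplies exactly one such tuple up to simultaneous conjugation, which establishes the existence and uniqueness statement over $\Qbar$.

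For the descent to $\QQ$, I would work with the fundamental exact sequence
\begin{equation*}
1\to\pi_{1}(U_{\Qbar},\bar u)\to\pi_{1}(U,\bar u)\to\GQ\to1.
\end{equation*}
Giving the $H$-cover $Y$ a model over $\QQ$ amounts to extending the surjection $\phi$ above to a continuous surjection $\widetilde\phi:\pi_{1}(U,\bar u)\twoheadrightarrow H$. For each $\sigma\in\GQ$, choose a lift $\tilde\sigma\in\pi_{1}(U,\bar u)$ and consider the twist $\phi^{\tilde\sigma}:=\phi\circ\Ad(\tilde\sigma):\pi_{1}(U_{\Qbar})\to H$. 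Because each $P_{i}\in\PP^{1}(\QQ)$ is Galois-fixed, $\Ad(\tilde\sigma)$ preserves the conjugacy class in $\pi_{1}(U_{\Qbar})$ of the inertia subgroup at $P_{i}$ and acts on this procyclic subgroup by the cyclotomic character, so $\phi^{\tilde\sigma}(\gamma_{i})$ lies in $C_{i}^{\chi(\sigma)}:=\{g^{\chi(\sigma)}:g\in C_{i}\}$. The rationality of $C_{i}$ — equivalent to $C_{i}^{m}=C_{i}$ for every $m$ coprime to $|H|$ — then gives $C_{i}^{\chi(\sigma)}=C_{i}$, so the rigid hypothesis applies equally to $\phi^{\tilde\sigma}$. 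Rigidity therefore yields $\phi^{\tilde\sigma}=\Ad(h(\tilde\sigma))\circ\phi$ for a unique element $h(\tilde\sigma)\in H$, the uniqueness being forced by the assumption $Z(H)=1$. Setting $\widetilde\phi(x\tilde\sigma):=\phi(x)h(\tilde\sigma)$ for $x\in\pi_{1}(U_{\Qbar})$ produces, thanks to the cocycle identity $h(x\tilde\sigma)=\phi(x)h(\tilde\sigma)$ that is built into the construction, a well-defined continuous surjection $\widetilde\phi:\pi_{1}(U,\bar u)\to H$ extending $\phi$, and hence a descent of $Y$ to an $H$-cover over $\QQ$.

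The technical heart of the argument, and the step I expect to be the main obstacle, is pinning down the outer action of $\GQ$ on inertia: one must verify that under the canonical identification of the tame inertia at $P_{i}\in\PP^{1}(\QQ)$ with $\widehat\ZZ(1)$, the induced outer action of $\GQ$ on $\pi_{1}(U_{\Qbar})$ sends a chosen generator $\gamma_{i}$ to an element conjugate to $\gamma_{i}^{\chi(\sigma)}$. Once this is in place, the $\widehat\ZZ^{\times}$-action on $C_{i}$ factors through $(\ZZ/|H|\ZZ)^{\times}$, where the rationality hypothesis is available, and the cocycle $h$ becomes unambiguous; the triviality of $Z(H)$ then upgrades $h$ to a homomorphism in the required sense, and both parts of the theorem follow.
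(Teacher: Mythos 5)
The paper does not supply a proof of this theorem; it is quoted as a classical result of inverse Galois theory with references to Serre's \emph{Topics in Galois theory} and Malle--Matzat, so there is nothing in the source to compare your argument against. Judged on its own, your proof follows the standard route (Riemann existence theorem, branch cycle lemma, Weil descent of coverings) and is essentially correct. Two small points are worth tightening. First, you state the equivariance $h(x\tilde\sigma)=\phi(x)h(\tilde\sigma)$ for $x\in\pi_1(U_{\Qbar})$, but to conclude that $\widetilde\phi$ is a homomorphism on all of $\pi_1(U,\bar u)$ you should also verify multiplicativity $h(\tilde\sigma_1\tilde\sigma_2)=h(\tilde\sigma_1)h(\tilde\sigma_2)$; this follows by comparing $\phi(\tilde\sigma_1\tilde\sigma_2\, x\,\tilde\sigma_2^{-1}\tilde\sigma_1^{-1})$ computed two ways and invoking the uniqueness of $h(\tilde\sigma)$, which in turn is exactly where $Z(H)=1$ and surjectivity of $\phi$ are used. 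Second, you assert continuity of $\widetilde\phi$ but do not check it; this is fine because for each $h_0\in H$ the set $h^{-1}(h_0)$ is an intersection of clopen sets (preimages of points under the continuous maps $\tilde\sigma\mapsto\phi(\tilde\sigma x\tilde\sigma^{-1})$), hence closed, and since $H$ is finite these closed sets form a finite partition and are therefore clopen. You are right that the branch cycle lemma (the computation of the outer $\GQ$-action on tame inertia via the cyclotomic character) is the genuine technical input; you invoke it rather than prove it, which is acceptable for a citation-style result, but it is the one nontrivial geometric fact the whole descent hinges on.
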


From the above theorem we see that the notion of a rigid tuple is an analog of physical rigidity for $H$-local systems when the algebraic group $H$ is a finite group.

Rigid tuples combined with the Hilbert irreducibility theorem solves the inverse Galois problem for $H$.
\begin{cor} Suppose there exists a rational rigid tuple in $H$, then $H$ can be realized as $\Gal(K/\QQ)$ for some Galois number field $K/\QQ$.
\end{cor}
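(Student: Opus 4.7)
The plan is to apply the preceding theorem to the rational rigid tuple $(C_1, \ldots, C_n)$ in order to produce a Galois $H$-cover $\pi\colon Y\to U$ defined over $\QQ$, and then invoke Hilbert's irreducibility theorem to specialize this geometric cover to a Galois number field extension of $\QQ$ with group $H$.

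More precisely, I would first fix an $n$-tuple of distinct points $S=\{P_1,\ldots,P_n\}\subset \PP^1(\QQ)$ and let $U=\PP^1_\QQ-S$. Since the rigid tuple is rational, the theorem of Belyi--Fried--Matzat--Shih--Thompson quoted above produces a connected unramified Galois $H$-cover $\pi\colon Y\to U$, already defined over $\QQ$, such that the local inertia at $P_i$ is in the class $C_i$. In particular, the function field extension $\QQ(Y)/\QQ(t)$ (where $t$ is a coordinate on $\PP^1_\QQ$) is a Galois extension with Galois group $H$.

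Next, I would pass from the function field extension to a number field extension via Hilbert irreducibility. The field $\QQ(t)$ is Hilbertian, so there exists a thin set outside of which, for $t_0\in \QQ$ (in fact for a Zariski dense subset of $\QQ$), the specialization $\pi^{-1}(t_0)$ remains irreducible over $\QQ$. For any such $t_0\in U(\QQ)$, choose a geometric point $y_0$ above $t_0$; then $K:=\QQ(y_0)$ is a number field, $K/\QQ$ is Galois, and the decomposition group at $y_0$ equals the full group $H$. This produces the desired realization of $H$ as $\Gal(K/\QQ)$.

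The main subtlety is not in the argument itself (both inputs are available off the shelf) but in verifying that the Hilbertian specialization really preserves the Galois group: one must check that the cover $Y\to U$ is geometrically connected with Galois group $H$ (which is part of the conclusion of the theorem, since rigidity forces the monodromy to generate $H$), so that the generic fiber of $\pi$ corresponds to an irreducible $H$-Galois extension of $\QQ(t)$ and Hilbertianness applies in its standard form. Everything else is a direct combination of the two inputs.
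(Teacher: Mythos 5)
The paper gives no proof of this corollary — it simply states it and refers the reader to the book of Malle and Matzat — so there is no ``paper proof'' to compare against, but your argument is exactly the standard and intended one: combine the Belyi--Fried--Matzat--Shih--Thompson theorem (which, thanks to rationality of the classes, yields a connected Galois $H$-cover $Y\to U$ defined with its $H$-action over $\QQ$, hence a $\QQ(t)$-Galois extension $\QQ(Y)/\QQ(t)$ with group $H$) with the Galois-group-preserving form of Hilbert irreducibility to specialize at a rational $t_0$ and obtain a number field $K/\QQ$ with $\Gal(K/\QQ)\cong H$. Two small points of precision you could tighten: (i) the relevant form of Hilbert irreducibility is the one asserting that outside a thin set the \emph{Galois group} of the specialized splitting field equals $H$, not merely that a defining polynomial stays irreducible (these are equivalent via standard arguments but it is worth saying which you are invoking); and (ii) ``geometric point $y_0$'' should be ``closed point of the fiber $Y_{t_0}$'' — for $t_0$ outside the thin set the fiber is a single closed point, whose residue field is the desired $K$, and then the decomposition group is all of $H$ as you say.
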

For a comprehensive survey on finite groups that are realized as Galois groups over $\QQ$ using rigid tuples, we refer the readers to the book \cite{MM} by Malle and Matzat.

\section{Calculus of geometric Hecke operators}\label{s:Hk}  

In this section, we will connect the notion of rigidity for automorphic data and the notion of rigidity for local systems together. Guided by the Langlands correspondence,  we formulate a conjectures in \S\ref{ss:conj rigid}. Then we review some basic techniques from the geometric Langlands program in \S\ref{ss:Hk}-\S\ref{ss:Hk eigen} such as the geometric Hecke operators. The main results are Theorem \ref{th:eigen} and Proposition \ref{p:desc eigen} which constructs the Hecke eigen local system under the rigidity assumption. Examples in $\GL_{2}$ from \S\ref{ss:GL2} and \S\ref{ss:rigidlocGL2} are finally connected to one another in \S\ref{ss:finalGL2}. 

\subsection{Langlands correspondence for rigid objects}\label{ss:conj rigid} 
We are in the situation of \S\ref{s:auto}, with $k$ a finite field. 

\subsubsection{The Langlands correspondence for groups over function fields} Let $S\subset |X|$ be a finite set of places containing the ramification locus of $\theta_{X}$, and $U=X-S$. Let $U'$ be the preimage of $U$ in $X'$. Let $W(U,u)\subset\pi_{1}(U,u)$ be the Weil group of $U$ with respect to the base point $u$, i.e., it is the preimage of $\Frob^{\ZZ}$ under the homomorphism $\pi_{1}(U,u)\to\Gk$.

Recall that the Langlands correspondence predicts that to an automorphic representation $\pi$ of $G(\AA_{F})$, unramified outside $S$, one should be able to attach a continuous cocycle $\rho_{\pi}:W(U,u)\to \dG(\Qlbar)$ (with respect to the pinned action $\htheta$ of $\Gamma=\Gal(U'/U)$ on $\dG$) up to $\dG$-conjugacy. Base change to $\kbar$, one should be able attach a $\htheta$-twisted $\dG$-local system $\calF_{\pi}\in\Loc_{\dG,\htheta}(U_{\kbar})$ to $\pi$. The assignment $\pi\mapsto\rho_{\pi}$ should be such that the Satake parameter of $\pi_{x}$ for any $x\notin S$ matches the semisimple part of the conjugacy class of $\rho_{\pi}(\Frob_{x})$ in $\dG$. By the work of V.Lafforgue \cite{VLaff}, when $\pi$ is cuspidal, such a continuous cocycle $\rho_{\pi}$ (hence $\calF_{\pi}$) exists. 

We expect that when $\pi$ is rigid, the local system $\calF_{\pi}$ is also rigid.  More precisely we propose the following conjecture.

\begin{conj}\label{conj:rigid} Let $(\Om, \bK_{S}, \cK_{S},\iota_{S})$ be a weakly rigid geometric automorphic datum, with the corresponding restricted automorphic datum $(\onat,K_{S},\chi_{S})$. Then
\begin{enumerate}
\item  For any $(\onat,K_{S},\chi_{S})$-typical automorphic representation $\pi$, the local system $\calF_{\pi}\in\Loc_{\dG,\htheta}(U_{\kbar})$ attached to it via the Langlands correspondence is cohomologically rigid.
\item If moreover $\bAd(\calF_{\pi})$ (a local system over $U_{\kbar}$ of rank equal to $\dim\dG^{\der}$) does not have nonzero global sections, and that the generic stabilizers of $\Bun_{\cZ}(\bK_{Z,S})$ on $\Bun_{\cG}(\bK_{S})$ are finite, then
\begin{equation}\label{da}
d(\bK^{\ad}_{x})=a_{x}(\bAd(\calF_{\pi})).
\end{equation}
For the definition of $d(\bK^{\ad}_{x})$, see \S\ref{sss:numrig}; for the Artin conductor $a_{x}(\bAd(\calF_{\pi}))$ see \S\ref{sss:cond}.
\end{enumerate}
\end{conj}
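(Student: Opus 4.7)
The plan is to reduce both parts of the conjecture to a local conductor-matching statement at each $x \in S$, and then to combine these local identities with the numerical dimension formula of \S\ref{sss:numrig} and the cohomological rigidity criterion of Proposition \ref{p:numerics}. On the automorphic side, under the finite-generic-stabilizer hypothesis of (2), weak rigidity forces the numerical identity $\tfrac{1}{2}\sum_{x\in S} d(\bK^{\ad}_x) = (1-g_X)\dim\GG^{\ad}$. On the Galois side, since $\dim \GG^{\ad} = \dim \dG^{\ad}$, Proposition \ref{p:numerics} characterizes cohomological rigidity of $\calF_\pi$ by the exactly parallel identity $\tfrac{1}{2}\sum_{x\in S} a_x(\bAd(\calF_\pi)) = (1-g_X)\dim\dG^{\ad} - \dim \cohog{0}{U, \bAd(\calF_\pi)}$. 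So once one knows $a_x(\bAd(\calF_\pi)) = d(\bK^{\ad}_x)$ at every $x$ and $\cohog{0}{U, \bAd(\calF_\pi)} = 0$, both statements drop out simultaneously.

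For part (2), the strategy is to first prove the inequality $a_x(\bAd(\calF_\pi)) \leq d(\bK^{\ad}_x)$ at each $x \in S$ via local Langlands compatibility: the assumption that $\pi_x$ carries a $(\bK_x, \chi_x)$-eigenvector forces the local Langlands parameter $\rho_{\pi,x}: W_{F_x} \to \dG(\Qlbar) \rtimes \Gamma_x$ to have depth bounded by the ``depth'' of $\bK_x$ in the Moy--Prasad sense, and the Artin conductor of its adjoint composition is controlled by the relative dimension $\dim_k(\frg^{\ad}(\calO_x) : \frk^{\ad}_x)$ plus a tame term accounting for the $I_x$-fixed points of $\GG^{\ad}$. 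Summing this inequality over $x \in S$ and comparing with the two numerical identities above forces equality termwise, yielding \eqref{da}.

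Part (1) should follow from the same circle of ideas but requires more care, since the extra hypotheses of (2) are dropped. The idea is to pass to an auxiliary datum for which the finite-stabilizer hypothesis holds (by enlarging $\bK_S$ or twisting $\Om$ by an unramified character, using the flexibility from \S\ref{sss:Kplus}--\S\ref{sss:sec central}) and then deduce cohomological rigidity for the original $\calF_\pi$ via the invariance of $\Rig(\calF_\pi)$ under such modifications. The hypothesis $\cohog{0}{U, \bAd(\calF_\pi)} = 0$, which appears on the right-hand side of Proposition \ref{p:numerics}, should be established independently by observing that typicity under a sufficiently deep $(\bK_S, \cK_S)$ forces the local monodromy of $\bAd(\calF_\pi)$ to have no nonzero invariants at at least one $x \in S$, killing global sections via Lemma \ref{l:6term}.

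The main obstacle — and the reason \ref{conj:rigid} is only a conjecture — is the local equality $a_x(\bAd(\calF_\pi)) = d(\bK^{\ad}_x)$. The inequality $\leq$ is of the expected depth/Moy--Prasad flavor and should be accessible via explicit Bushnell--Kutzko / Reeder--Yu type descriptions of typical representations. The reverse inequality requires that the local parameter is \emph{maximally ramified} relative to the level $\bK_x$, and this is the kind of sharp local Langlands statement that is currently available only in special cases (e.g.\ simple supercuspidals of Gross--Reeder \cite{GR}, the Kloosterman situation of \cite{HNY}, the epipelagic cases of \cite{Y-GenKloo}, and the three-point data of \S\ref{ss:GL2}, all reviewed in \S\ref{s:Kl} and \S\ref{s:3p}). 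A uniform proof would require progress on the local Langlands correspondence for positive-depth representations and compatibility with conductors, and is the essential gap between the techniques of this paper and an unconditional proof.
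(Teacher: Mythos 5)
Your overall assessment is correct and aligned with the paper: the statement is a \emph{conjecture}, the paper offers no proof, and the heart of the matter is precisely the pointwise conductor equality $a_x(\bAd(\calF_\pi)) = d(\bK^{\ad}_x)$, which is known only in special cases (Kloosterman, epipelagic, the $\GL_2$ families of \S\ref{ss:GL2}). Your observation that the conjecture was motivated by the visible parallelism between \eqref{cond K} and \eqref{num rigid} also matches what the paper says explicitly in the remark that follows the conjecture.

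There is, however, a logical slip in how you propose to squeeze the termwise equality out of a one-sided estimate. You write that the termwise inequality $a_x \leq d(\bK^{\ad}_x)$, combined with ``the two numerical identities above,'' forces equality. But one of those ``identities,'' namely the statement of Proposition \ref{p:numerics}, is an if-and-only-if characterization of cohomological rigidity --- it is not an established fact about $\calF_\pi$; it is exactly what (1) asserts. Using it as an input makes the argument circular. The correct ingredient is the unconditional \emph{inequality} coming from $\dim\Rig(\calF_\pi)\geq 0$: from the proof of Proposition \ref{p:numerics} one gets
\begin{equation*}
\sum_{x\in S} a_x(\bAd(\calF_\pi)) \;\geq\; (2-2g_X)\dim\dG^{\ad} - 2\dim\cohog{0}{U_{\kbar},\bAd(\calF_\pi)}.
\end{equation*}
Under the hypothesis $\cohog{0}{U_{\kbar},\bAd(\calF_\pi)}=0$ of part (2) this becomes a lower bound matching the upper bound $\sum_x d(\bK^{\ad}_x) = (2-2g_X)\dim\GG^{\ad}$ from \eqref{cond K}, and your termwise $\leq$ then gives both termwise equality and $\dim\Rig(\calF_\pi)=0$ at once. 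You should say this explicitly, because without it the squeeze does not close.

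Your suggested reduction of (1) to (2) by passing to an auxiliary datum is not substantiated and is in fact the more delicate half of the conjecture. Part (1) is asserted without the hypotheses of (2); in particular when $\cohog{0}{U_{\kbar},\bAd(\calF_\pi)}\neq 0$ the local system $\calF_\pi$ factors through a proper reductive subgroup, the clean numerology degrades by the correction term $-2\dim\cohog{0}$, and the argument above no longer pins down $\sum_x a_x$. You would need a genuinely separate mechanism to handle this case, and merely noting that $\Rig(\calF)$ depends only on the generic stalk (which lets you shrink $U$) does not let you change $\bK_S$ or $\Om$ without potentially changing $\pi$ and hence $\calF_\pi$ itself. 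As written this step would not survive scrutiny.
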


\begin{remark} In the situation of Conjecture \ref{conj:rigid}(2), the equality \eqref{da} at all $x\in S$ implies the cohomological rigidity of $\calF_{\pi}$ by the numerical criterion Proposition \ref{p:numerics}. In fact, Conjecture \ref{conj:rigid}(2) was motivated by the similarity between the equations \eqref{cond K} and \eqref{num rigid}, the first one being a numerical condition for the weak rigidity of a geometric automorphic datum and the second a numerical criterion for the cohomological rigidity of a local system. For many known examples of weakly rigid geometric automorphic data, Conjecture \ref{conj:rigid}(2) can be proved for those points $x\in S$ at which $\calF_{\pi}$ is tamely ramified, using the techniques in \cite[\S4]{Y-GenKloo}. See also \cite[\S9]{Y-GenKloo}, where Conjecture \ref{conj:rigid} is fully verified for the local systems constructed in \cite{Y-motive}. When $\calF_{\pi}$ is wildly ramified at $x$, \cite[\S5]{HNY} also verifies \eqref{da} in the examples known as Kloosterman sheaves. 
\end{remark}

\subsection{Geometric Hecke operators}\label{ss:Hk}  
From now on until \S\ref{sss:rat}, $k$ is an algebraically closed field. 

\subsubsection{The geometric Satake equivalence}\label{ss:Sat} Let $L\GG$ be the loop group of $\GG$: this is an ind-scheme representing the functor $R\mapsto \GG(R[[t]])$. Let $L^{+}\GG$ the positive loops of $\GG$: this is a scheme (not of finite type) representing the functor $R\mapsto \GG(R[[t]])$. The fppf quotient $\Gr=L\GG/L^{+}\GG$ is called the {\em affine Grassmannian} of $\GG$.  Then $L^{+}\GG$ acts on $\Gr$ via left translation. The $L^{+}\GG$-orbits on $\Gr$ are indexed by  dominant coweights $\l\in\xcoch(\TT)^+$. The orbit containing the element $t^{\l}\in \TT(k((t)))$ is denoted by $\Gr_{\l}$ and its closure is denoted by $\Gr_{\leq\l}$. We have $\dim\Gr_{\l}=\jiao{2\rho,\l}$, where $2\rho$ denotes the sum of positive roots in $\GG$. The reduced scheme structure on $\Gr_{\leq\l}$ is a projective variety called the affine Schubert variety attached to $\l$. We denote the intersection complex of $\Gr_{\leq\l}$ by $\IC_{\l}$: this is the middle extension of the shifted constant sheaf $\Qlbar[\jiao{2\rho,\l}]$ on $\Gr_{\l}$.

The {\em Satake category} $\Sat=\Perv_{L^{+}G}(\Gr, \Qlbar)$ is the category of $L^{+}G$-equivariant perverse $\Qlbar$-sheaves on $\Gr$ supported on $\Gr_{\leq\l}$ for some $\l$. In \cite{Lu}, \cite{Ginz} and \cite{MV}, it is shown that $\Sat$ carries a natural tensor structure such that the global cohomology functor $h=\cohog{*}{\Gr,-}:\Sat^\geom\to\Vect$ is a fiber functor. It is also shown that the Tannaka dual group of the tensor category $\Sat$ is isomorphic to the Langlands dual group $\dG$. The Tannakian formalism gives the {\em geometric Satake equivalence} of tensor categories
\begin{equation*}
\Sat\cong\Rep(\dG).
\end{equation*}

\subsubsection{Hecke correspondence} We are in the situation of \S\ref{ss:geom data}. Recall an integral model  $\cG$ of $G$ was fixed as in \S\ref{sss:cG}. Let $S\subset|X|$ be a finite set containing the ramification locus $S_{\theta}$ of $\theta_{X}$. Let $U=X-S$ and $U'=\theta^{-1}_{X}(U)$. Then $U'\to U$ is a finite \'etale Galois cover with Galois group $\Gamma$. The base change $\cG\times_{U}U'$ is the constant group scheme $\GG\times U'$. 

Fix a geometric automorphic datum $(\Om, \bK_{S}, \cK_{S},\iota_{S})$.  Let $\bK^{+}_{S}$ be a smaller level as in \S\ref{sss:Kplus}. To alleviate notation, we write
\begin{equation*}
\Bun:=\Bun_{\bG}(\bK_{S}); \quad \Bun^{+}:=\Bun_{\bG}(\bK^{+}_{S}).
\end{equation*}
Consider the following diagram
\begin{equation}\label{Hk diagram}
\xymatrix{& \Hk_{U'}\ar[dl]_{\oll{h}}\ar[dr]^{\orr{h}}\ar[rr]^{\pi} & & U' \\
\Bun^{+} & & \Bun^{+}}
\end{equation}
We explain the meaning of $\Hk_{U'}$ and the various morphisms. The stack $\Hk_{U'}$ classifies the data $(x', \calE_{1},\calE_{2}, \tau)$ where $x'\in U'$ has image $x\in U$, $\calE_{1},\calE_{2}\in\Bun^{+}$ and $\tau:\calE_{1}|_{X-\{x\}}\isom\calE_{2}|_{X-\{x\}}$ is an isomorphism of $\cG$-torsors over $X-\{x\}$ preserving the $\bK^{+}_{y}$-level structures at $y\in S$. The morphisms $\oll{h}$, $\orr{h}$ and $\pi'$ send $(x',\calE_{1},\calE_{2},\tau)$ to $\calE_{1},\calE_{2}$ and $x'$ respectively.

Let $R$ be a $k$-algebra and $x'\in U'(R)$ with image $x\in U(R)$. Let $\calO_{x'}$ be the formal completion of $X'\otimes_{k}R$ along the graph of $x'$. After localizing $R$ we may choose a continuous isomorphism $\alpha: \calO_{x'}\cong R[[t]]$.  Denote the  preimage of $x'$ under $\pi'$ by $\Hk_{x'}$, which in fact only depends on $x$. For a point $(x',\calE_{1},\calE_{2},\tau)\in\Hk_{x'}$, if we fix trivializations of $\calE_{1}$ and $\calE_{2}$ over $\Spec\calO_{x'}\cong\Spec R[[t]]$, the isomorphisms $\tau$ restricted to $\Spec R((t))$ is an isomorphism between the trivial $\GG$-torsors over $\Spec R((t))$, hence given by a point $g_{\tau}\in G(R((t)))$. Changing the trivializations of $\calE_{1}|_{\Spec\calO_{x}}$ and $\calE_{2}|_{\Spec\calO_{x}}$ will result in left and right multiplication of $g_{\tau}$ by an element in $\GG(R[[t]])$ (here we use the fact that $\cG$ becomes a constant group scheme over $U'$). Thus we have an evaluation morphism
\begin{equation}\label{evx}
\ev_{x'}: \Hk_{x'}\to (L^{+}\GG\backslash L\GG/L^{+}\GG)\otimes_{k}R.
\end{equation}
Changing the isomorphism $\alpha$ will change the morphism $\ev_{x'}$ by composing with the action of an element in $\Aut(R[[t]])$ on the target. This construction gives a morphism
\begin{equation}\label{ev Hk}
\ev: \Hk_{U'}\to \left[\frac{L^{+}\GG\backslash L\GG/L^{+}\GG}{\Aut_{k[[t]]}}\right].
\end{equation}
Here $\Aut_{k[[t]]}$ is the pro-algebraic group over $k$ whose $R$-points is the group of continuous $R$-linear ring automorphisms of $R[[t]]$ (with the $t$-adic topology), and it acts on $L\GG$ and $L^{+}\GG$. 

\subsubsection{The geometric Hecke operators}
For each object $V\in\Rep(\dG)$, the corresponding object $\IC_{V}\in\Sat$ under the geometric Satake equivalence is automatically $\Aut_{k[[t]]}$-equivariant, and it defines a complex on the quotient stack $\left[\frac{L^{+}\GG\backslash L\GG/L^{+}\GG}{\Aut_{k[[t]]}}\right]$ which we still denote by $\IC_{V}$.  Let $\IC^{\Hk}_{V}$ be the pullback of $\IC_{V}$ to $\Hk_{U'}$ along the morphism $\ev$ in \eqref{ev Hk}. We define a functor
\begin{eqnarray*}
\frT_{V}: D^{b}_{(\bM_{S},\cK_{S,\Om})}(\Bun^{+})&\to& D^{b}_{(\bM_{S},\calK_{S,\Om})}(U'\times\Bun^{+})\\
\calA &\mapsto& (\pi\times\orr{h})_{!}(\oll{h}^{*}\calA\otimes\IC^{\Hk}_{V}).
\end{eqnarray*}

The formation $V\mapsto \frT_{V}$ is also $\Gamma:=\Gal(U'/U)$-equivariant in the following sense. The group $\Gamma$ acts on $\Hk_{U'}$ because $\Hk_{U'}$ descends to $U$. It also acts on $\Rep(\dG)$:  $\gamma\in\Gamma$ sends a representation $\rho:\dG\to\GL(V)$ to the representation $\dG\xrightarrow{\htheta(\gamma)}\dG\xrightarrow{\rho}\GL(V)$ which we denote by $V^{\gamma}$. Then there is a natural isomorphism of functors
\begin{equation*}
\alpha_{\gamma,V}: \frT_{V^{\gamma}}\cong (\id_{\Bun^{+}}\times\gamma)^{*}\circ\frT_{V}.
\end{equation*}
For $\gamma_{1},\gamma_{2}\in\Gamma$, we have
\begin{equation*}
\alpha_{\gamma_{1}\circ\gamma_{2},V}=(\id_{\Bun^{+}}\times\gamma_{2})^{*}\alpha_{\gamma_{1},V}\circ \alpha_{\gamma_{2},V^{\gamma_{1}}}.
\end{equation*}
The isomorphisms $\alpha_{\gamma,V}$ are obtained from the fact that the morphism $\ev$ in \eqref{ev Hk} is $\Gamma$-equivariant, and the fact that the Satake equivalence $\Sat\cong\Rep(\dG)$ is also $\Gamma$-equivariant (where $\Gamma$ acts on both $\GG$ and $\dG$ by pinned automorphisms). 

To spell out how $\frT_{V}$ is compatible with the tensor structure on $\Rep(\dG)$, it is best to consider more general Hecke operators.

\subsubsection{Iterated Hecke operators}\label{sss:iterated} We refer to \cite[\S2.4-2.7]{GaDJ} for details. For each finite set $I$ we may consider the stack $\Hk_{U'^{I}}$ over $\Bun^{+}\times \Bun^{+}\times (U')^{I}$ classifying the data $(x,\calE_{1},\calE_{2},\tau)$ where $x: I\to U'$ with graph $x_{I}\subset U$, $\calE_{1},\calE_{2}\in\Bun^{+}$ and $\tau:\calE_{1}|_{X-x_{I}}\cong\cE_{2}|_{X-x_{I}}$ is an isomorphism of $\cG$-torsors  preserving level structures. Let $\oll{h}_{I}, \orr{h}_{I}:\Hk_{U'^{I}}\to\Bun^{+}$ and $\pi_{I}:\Hk_{U'^{I}}\to U'^{I}$ be the projections. For any representation $V_{I}\in\Rep(\dG^{I})$, we have an object $\IC^{\Hk}_{V_{I}}$ on $\Hk_{U'^{I}}$ defined using evaluation maps similar to \eqref{ev Hk} indexed by $I$. For any scheme $Y$ over $k$, we may introduce the functor
\begin{eqnarray*}
\frT_{V_{I}, Y}: D^{b}_{(\bM_{S},\cK_{S,\Om})}(Y\times \Bun^{+})&\to& D^{b}_{(\bM_{S},\calK_{S,\Om})}(U'^{I}\times Y\times \Bun^{+})\\
\calA&\mapsto&(\pi_{I}\times p_{Y}\times \orr{h}_{I})_{!}((p_{Y}\times\oll{h}_{I})^{*}\calA\otimes\IC^{\Hk}_{V_{I}})
\end{eqnarray*}
where $p_{Y}:\Hk_{U'^{I}}\times Y\to Y$ is the projection. These functors are functorial in $V_{I}$, and satisfy the following factorization properties.
\begin{enumerate}
\item\label{Tunit} For the trivial representation $\triv\in\Rep(\dG^{I})$, there is a canonical isomorphism $\frT_{\triv, Y}(\calA)\cong\calA\boxtimes\Qlbar$.
\item\label{Tass} For two finite sets $I,J$ and $V_{I}\in\Rep(\dG^{I}), V_{J}\in\Rep(\dG^{J})$, we have a natural isomorphism of functors
\begin{equation}\label{composeTV}
\frT_{V_{I}\boxtimes V_{J},Y}=\frT_{V_{I}, U'^{J}\times Y}\circ \frT_{V_{J}, Y}
\end{equation}
satisfying obvious associativity and unit conditions.
\item\label{Tfactor} For any surjection $\varphi: J\surj I$ of finite sets, we have a corresponding diagonal map $\Delta(\varphi): U'^{I}\incl U'^{J}$. We also have the diagonal map $\Delta(\varphi):\dG^{I}\incl\dG^{J}$ which allows us to view $V_{J}$ as a representation $\Delta(\varphi)^{*}V_{J}\in\Rep(\dG^{I})$. Then there is a natural isomorphism of functors
\begin{equation}\label{factorizeTV}
(\id_{\Bun^{+}}\times \Delta(\varphi)\times\id_{Y})^{*}\frT_{V_{J},Y}\cong\frT_{\Delta(\varphi)^{*}V_{J}, Y}
\end{equation}
compatible with the composition of surjections. 
\end{enumerate}
The first property above is almost a tautology and the second one uses the definition of the convolution product in the Satake category. There are also compatibilities among \eqref{Tunit}-\eqref{Tfactor}, which we do not spell out.

Moreover, the functors $V_{I}\mapsto\frT_{V_{I},Y}$ are $\Gamma$-equivariant as in the case of $\frT_{V}$.

When $Y$ is a point, we write $\frT_{V_{I}, Y}$ as $\frT_{V_{I}}$. Taking $Y$ to be a point, $J=\{1,2\}\surj \{1\}$, letting $V_{J}=V_{1}\boxtimes V_{2}\in\Rep(\dG^{2})$, and combining the isomorphisms \eqref{composeTV} and \eqref{factorizeTV}, we get a canonical isomorphism
\begin{equation}\label{compose TT}
(\frT_{V_{1}, U'}\circ\frT_{V_{2}})|_{\Bun^{+}\times\Delta(U')}\cong \frT_{V_{1}\otimes V_{2}}: D^{b}_{(\bM_{S},\cK_{S,\Om})}(\Bun^{+})\to D^{b}_{(\bM_{S},\calK_{S,\Om})}(\Bun^{+}\times U')
\end{equation}
which is compatible with the associativity of the tensor product $V_{1}\otimes V_{2}\otimes V_{3}$ in $\Rep(\dG)$ in the obvious sense.

\subsubsection{Compatibility with the Kottwitz homomorphism} For $\alpha\in\xch(Z\dG)_{I_{F}}$, let $\Bun_{\alpha}$ (resp. $\Bun^{+}_{\alpha}$) be the preimage of $\alpha$ under the Kottwitz morphism $\kappa:\Bun\to\xch(Z\dG)_{I_{F}}$ (resp. $\kappa^{+}:\Bun^{+}\to\xch(Z\dG)_{I_{F}}$).  In the definition of $D_{\cG,\Om}(\bK_{S}, \cK_{S})$ in terms of $D^{b}_{(\bM_{S}, \cK_{S,\Om})}(\Bun^{+})$, we may work with $\Bun^{+}_{\alpha}$ instead of the whole $\Bun^{+}$ (because $\bM_{S}$ preserves each $\Bun^{+}_{\alpha}$), and define
\begin{equation*}
D_{\alpha}:=D^{b}_{(\bM_{S}, \cK_{S,\Om})}(\Bun^{+}_{\alpha}).
\end{equation*}
Then $D_{\cG,\Om}(\bK_{S}, \cK_{S})$ is the Cartesian product of $D_{\alpha}$ over $\alpha\in\xch(Z\dG)_{I_{F}}$.
For any scheme $Y$ over $k$, we also define
\begin{equation*}
D_{\a}(Y):=D^{b}_{(\bM_{S}, \cK_{S,\Om})}(Y\times \Bun^{+}_{\alpha}).
\end{equation*}
where $\bM_{S}$ acts trivially on $Y$.

For each $x\in X$, we have the local Kottwitz homomorphism $\kappa_{x}: L_{x}G\to \xch(Z\dG)_{I_{x}}$ as in \eqref{local Kott}. In particular, when $x\in U$, $I_{x}$ acts trivially on $\xch(Z\dG)$, and the map $\kappa_{x}$ factors as $L^{+}_{x}G\backslash L_{x}G/L^{+}_{x}G\to\xch(Z\dG)$. However this map is canonical only after choosing a point $x'\in U'$ over $x$. Therefore there is a well-defined map  $\kappa_{\Hk}: \Hk_{U'}\to \xch(Z\dG)$ (while for $\Hk_{U}$ only the map to $\xch(Z\dG)_{I_{F}}$ is well-defined). Similarly, for the iterated Hecke correspondence, we have a map $\kappa_{\Hk,I}:\Hk_{U'^{I}}\to\xch(Z\dG)^{I}\to\xch(Z\dG)$, the last map being the addition. For $\nu\in\xch(Z\dG)$, let $\Hk_{U'^{I},\nu}$ be the preimage of $\nu$ under $\kappa_{\Hk,I}$.

For each $\nu\in\xch(Z\dG)$, let $\Rep(\dG;\nu)$ be the full subcategory of $\Rep(\dG)$ consisting of those $V\in\Rep(\dG)$ with central character $\nu\in\xch(Z\dG)$. Similarly we define $\Rep(\dG^{I}; \nu)$ by looking at how the diagonal copy of $Z\dG$ acts. For $V\in\Rep(\dG;\nu)$, the support of $\IC_{V}\in\Sat$ is on the preimage of $\nu$ under $L^{+}\GG\backslash L\GG/L^{+}\GG\to\xch(Z\dG)$. From this we conclude the following.

\begin{lemma} For $V_{I}\in\Rep(\dG^{I};\nu)$, $\IC^{\Hk}_{V_{I}}$ is supported on $\Hk_{U'^{I},\nu}$, and the geometric Hecke operators restricts to functors
\begin{equation}\label{TVa}
\frT_{V_{I},\a, Y}:D_{\alpha}(Y)\to D_{\alpha+\overline{\nu}}(U'^{I}\times Y) \quad \forall \alpha\in\xch(Z\dG)_{I_{F}}
\end{equation}
where $\overline{\nu}$ is the image of $\nu$ in the quotient $\xch(Z\dG)_{I_{F}}$.
\end{lemma}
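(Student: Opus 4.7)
The plan is to reduce the lemma to two essentially independent compatibilities: (a) the geometric Satake side --- identifying the support of $\IC_{V_I}$ on the affine Grassmannian with the central character of $V_I$; and (b) the Kottwitz side --- showing that a Hecke modification changes the global Kottwitz invariant by the (image of the) relative position.

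For (a), I would first recall the standard fact that the connected components of $L^+\GG\backslash L\GG/L^+\GG$ are canonically indexed by $\pi_1(\GG)\cong \xch(Z\dG)$ via the local Kottwitz map, and that under the geometric Satake equivalence this decomposition matches the central character decomposition $\Rep(\dG)=\bigoplus_\nu\Rep(\dG;\nu)$; hence for $V\in\Rep(\dG;\nu)$, $\IC_V$ is supported on the $\nu$-component. Pulling back via the evaluation map $\ev$ of \eqref{ev Hk} then shows $\IC^{\Hk}_V$ is supported on $\Hk_{U',\nu}$ in the case $I=\{*\}$. For a general finite set $I$ and $V_I\in\Rep(\dG^I;\nu)$, I would first assume $V_I=V_1\boxtimes\cdots\boxtimes V_n$ is a pure tensor (general $V_I$ follows by linearity) with $V_i\in\Rep(\dG;\nu_i)$ and $\sum\nu_i=\nu$. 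Applying the factorization isomorphisms \eqref{composeTV}–\eqref{factorizeTV} from \S\ref{sss:iterated} reduces the support statement for $\IC^{\Hk}_{V_I}$ to the single-point case applied at each $x_i$, and the relative positions add up in $\xch(Z\dG)$ to $\sum\nu_i=\nu$; this places the total support in $\Hk_{U'^I,\nu}$.

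For (b), I would show the following local-to-global compatibility: for a point $(x_I, \calE_1,\calE_2,\tau)$ of $\Hk_{U'^I,\nu}$, one has $\kappa(\calE_2)=\kappa(\calE_1)+\overline{\nu}$ in $\xch(Z\dG)_{I_F}$. Via Weil's description \eqref{Weil Bun general}, passing to $\kbar$-points, the modification $\tau$ realizes $\calE_2$ as obtained from $\calE_1$ by right-multiplication by an adelic element whose components at $x\in U$ have local Kottwitz invariant equal to the relative position $\nu_x$, viewed in $\xch(Z\dG)$ (the lift from $\xch(Z\dG)_{I_x}$ to $\xch(Z\dG)$ is supplied by the chosen lift $x'\in U'$ over $x$, which trivializes the $I_x$-action on $Z\dG$). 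The compatibility of the local Kottwitz maps $\kappa_{G,F_x}$ with the global $\kappa_{G,F}$ from \S\ref{sss:Kott}, adapted to the geometric setting of \S\ref{sss:comp Bun}, then yields $\kappa(\calE_2)-\kappa(\calE_1)=\sum_x\overline{\nu_x}=\overline{\nu}$ in $\xch(Z\dG)_{I_F}$.

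Combining (a) and (b) finishes the proof: if $\calA\in D_\alpha(Y)$, then $(\oll h_I\times p_Y)^*\calA$ is supported on the preimage of $\Bun^+_\alpha$, and tensoring with $\IC^{\Hk}_{V_I}$ further restricts the support to $\Hk_{U'^I,\nu}$ above $\Bun^+_\alpha$; pushing forward via $\orr h_I\times\pi_I\times p_Y$ lands in $U'^I\times Y\times\Bun^+_{\alpha+\overline{\nu}}$ by (b), giving \eqref{TVa}. The main subtlety, rather than any substantial difficulty, is bookkeeping: carefully tracking the various identifications $\pi_1(\GG)\cong\xch(Z\dG)$ and $\xch(Z\dG)\to\xch(Z\dG)_{I_x}\to\xch(Z\dG)_{I_F}$, and making sure the choice of $x'\in U'$ above $x\in U$ is used consistently to lift local Kottwitz invariants from $I_x$-coinvariants to $\xch(Z\dG)$ itself (so that the map $\kappa_{\Hk,I}:\Hk_{U'^I}\to\xch(Z\dG)$ is indeed defined, not merely up to $I_F$-coinvariants).
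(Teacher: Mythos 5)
Your proof is correct and matches the paper's approach, which is stated very tersely: the paper simply records that the support of $\IC_V$ for $V\in\Rep(\dG;\nu)$ lies over the $\nu$-component of $L^+\GG\backslash L\GG/L^+\GG$ and declares the lemma a consequence. You spell out exactly the two ingredients the paper leaves implicit — the support statement from geometric Satake (extended to $\dG^I$ via the decomposition into external tensor products) and the additivity of the global Kottwitz invariant under a local modification — so this is a fleshed-out version of the same argument rather than a different route.
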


\subsection{Hecke eigen properties}\label{ss:Hk eigen}  
\begin{defn}\label{def:eigen} A {\em Hecke eigensheaf} for the geometric automorphic datum $(\Om,\bK_{S}, \cK_{S}, \iota_{S})$ is a triple $(\calA, \calF, \varphi)$ where
\begin{itemize}
\item $\calA\in D_{\cG,\Om}(\bK_{S}, \calK_{S})$.
\item $\calF\in \Loc_{\dG,\htheta}(U)$ called the {\em Hecke eigen local system}. We think of $\calF$ as a $\dG$-local system over $U'$ together with a descent datum $\{\delta_{\gamma, V}: \calF_{V^{\gamma}}\isom\gamma^{*}\calF_{V}\}_{\gamma\in\Gamma, V\in\Rep(\dG, \Qlbar)}$ as in \S\ref{sss:twist loc}.
\item $\varphi$ is a collection of isomorphisms, one for each $V\in\Rep(\dG)$ and functorial in $V$
\begin{equation*}
\varphi_{V}: \frT_{V}(\calA)\cong\calF_{V}\boxtimes\calA.
\end{equation*}
\end{itemize}
The triple $(\calA, \calF, \varphi)$ must satisfy the following conditions.
\begin{enumerate}
\item\label{Etriv} For the trivial representation $\triv=\Qlbar$ of $\dG$, we have the canonical isomorphism $\frT_{\triv}(\calA)\cong \Qlbar\boxtimes\calA$ from \S\ref{sss:iterated}(1) and also $\calF_{\triv}=\Qlbar$. We require that $\varphi_{V}$ be the identity map of $\Qlbar\boxtimes\calA$.
\item For $V_{1}, V_{2}\in\Rep(\dG)$, $\varphi_{V_{1}\otimes V_{2}}$ is equal to the composition
\begin{eqnarray*}
&\frT_{V_{1}\otimes V_{2}}(\calA)&\xrightarrow{\eqref{compose TT}}(\frT_{V_{1},U'}\circ\frT_{V_{2}}(\calA))|_{\Bun^{+}\times \Delta(U')}
\xrightarrow{\frT_{V_{2}, U'}(\varphi_{V_{2}})}\frT_{V_{1},U'}(\calF_{V_{2}}\boxtimes\calA )|_{\Bun^{+}\times \Delta(U')}\\
&\xrightarrow{\id_{\cF_{V_{2}}}\boxtimes\varphi_{V_{1}}}& (\calF_{V_{1}}\boxtimes\calF_{V_{2}}\boxtimes\calA)|_{\Bun^{+}\times \Delta(U')}=(\calF_{V_{1}}\otimes\calF_{V_{2}})\boxtimes\calA\cong\calF_{V_{1}\otimes V_{2}}\boxtimes\calA.
\end{eqnarray*}
The last step uses the tensor structure on the functor $\calF$.
\item\label{Equasi} For any $\gamma\in\Gamma=\Gal(U'/U)$ and $V\in\Rep(\dG)$, we have a commutative diagram
\begin{equation}\label{Gamma Hk}
\xymatrix{\frT_{V^{\gamma}}(\calA)\ar[d]^{\a_{\gamma,V}}\ar[rr]^{\varphi_{V^{\gamma}}} && \calF_{V^{\gamma}}\boxtimes\calA\ar[d]^{\delta_{\gamma, V}\otimes\id_{\calA}}\\
(\id_{\Bun^{+}}\times\gamma)^{*}\frT_{V}(\calA)\ar[rr]^{(\id\times\gamma)^{*}\varphi_{V}} && \gamma^{*}\calF_{V}\boxtimes\calA}
\end{equation}
\end{enumerate}
\end{defn}

These properties imply that $\frT_{V_{1}\boxtimes\cdots\boxtimes V_{n}}(\calA)\cong\calF_{V_{1}}\boxtimes\cdots\boxtimes\calF_{V_{n}}\boxtimes\calA$, compatible with the factorization structures in \S\ref{sss:iterated}.

We also axiomatize a weak version of Definition \ref{def:eigen}. For this we introduce a variant of the notion of $\dG$-local systems.

\begin{defn}\label{def:weak loc} Let $A$ be a set with an action of $\xch(Z\dG)$ (denoted by $\nu:\a \mapsto \nu\cdot \a$, for $\nu\in\xch(Z\dG)$ and $\a\in A$). A $(\dG, A)$-weak local system over $U'$ is the following data. 
\begin{enumerate}
\item For each finite set $I$ and $\a\in A$, there is an additive functor
\begin{eqnarray*}
\cF_{(-),\a}:\Rep(\dG^{I})&\to&\Loc(U'^{I})\\
V_{I}&\mapsto& \cF_{V_{I},\a}.
\end{eqnarray*}
\item\label{unit} For $\triv\in\Rep(\dG^{I})$ the trivial representation, there is a canonical isomorphism $\cF_{\triv,\a}\cong\Qlbar$ (the constant sheaf on $U'^{I}$).
\item\label{Eass} For finite sets $I$ and $J$, $V_{I}\in\Rep(\dG^{I};\mu)$ and $V_{J}\in\Rep(\dG^{J};\nu)$, there is an isomorphism
\begin{equation*}
\cF_{V_{I},\nu\cdot\a}\boxtimes\cF_{V_{J}, \a}\cong \cF_{V_{I}\boxtimes V_{J}, \a}.
\end{equation*}
satisfying obvious associativity for three finite sets $I,J,K$ and for the unit constraint in \eqref{unit}.
\item\label{Efactor} For a surjection $\varphi: J\surj I$ of finite sets, and $V_{J}\in\Rep(\dG^{J};\nu)$ restricting to $\Delta(\varphi)^{*}V_{J}\in\Rep(\dG^{I};\nu)$ via the diagonal $\Delta(\varphi):\dG^{I}\incl\dG^{J}$, there is an isomorphism
\begin{equation*}
\Delta(\varphi)^{*}\cF_{V_{J}, \a}\cong\cF_{\Delta(\varphi)^{*}V_{J},\a}
\end{equation*}
Here $\Delta(\varphi)$ also denotes the diagonal map $U'^{I}\incl U'^{J}$. These isomorphisms should be compatible with compositions of surjections and with the isomorphisms given in \eqref{unit} and \eqref{Eass}.
\end{enumerate}
If,  moreover, the action of $\xch(Z\dG)$ on $A$ factors through the quotient $\xch(Z\dG)_{I_{F}}$, and the functor $\calF_{(-),\a}$ is equipped with a descent datum $\{\delta_{\gamma,V_{I}}\}_{\gamma\in\Gamma, V_{I}\in\Rep(\dG^{I})}$ as in \S\ref{sss:twist loc}, which is compatible with the other isomorphisms we introduced above, $\{\calF_{(-),\a}\}$ is called a {\em $\htheta$-twisted $(\dG,A)$-weak local system}.
\end{defn}

The following lemma records some easy consequences of the definition above.
\begin{lemma}\label{l:weak loc}
Let $\{\calF_{(-),\a}\}$ be a $(\dG,A)$-weak local system on $U'$. Then
\begin{enumerate}
\item\label{EVA1} For $V_{i}\in\Rep(\dG;\nu_{i})$, $i=1,\cdots, n$, we have a functorial isomorphism
\begin{equation}\label{V12}
\calF_{V_{1}\boxtimes\cdots\boxtimes V_{n},\a}\cong\calF_{V_{1},(\nu_{2}+\cdots+\nu_{n})\cdot\a}\boxtimes\cdots\boxtimes\calF_{V_{n-1},\nu_{n}\cdot\a}\boxtimes\calF_{V_{n}, \a}.
\end{equation}
\item\label{EVAext} For any $V_{1}\in\Rep(\dG; \mu), V_{2}\in\Rep(\dG; \nu)$, we have a functorial isomorphism
\begin{equation}\label{extV12}
\calF_{V_{1},\a}\boxtimes\calF_{V_{2},\mu\cdot\a}\cong \calF_{V_{1},\nu\cdot\a}\boxtimes\calF_{V_{2}, \a}.
\end{equation}
\item\label{EVAtensor} For any $V_{1}\in\Rep(\dG; \mu), V_{2}\in\Rep(\dG; \nu)$, we have functorial isomorphisms
\begin{equation*}
\calF_{V_{1},\a}\otimes\calF_{V_{2}, \mu\cdot\a}\cong \calF_{V_{1}\otimes V_{2},\a}\cong \calF_{V_{2}\otimes V_{1}, \a}\cong\calF_{V_{2}, \a}\otimes\calF_{V_{1},\nu\cdot\a}.
\end{equation*}
\item\label{EVAdual} For $V\in\Rep(\dG; \nu)$ with dual $V^{*}\in\Rep(\dG;-\nu)$ and any $\a\in A$, $\calF_{V,\a}$ and $\calF_{V^{*},\nu\cdot\a}$ are dual local systems in a canonical way.
\item\label{EVArank} For $V\in\Rep(\dG)$ and any $\a\in A$, the rank of $\calF_{V,\a}$ is $\dim V$.
\item\label{EVAad} Let $(Z\dG)'\subset Z\dG$ be the subgroup with character group equal to the image of $\xch(Z\dG)\to\Aut(A)$. Let $\dG'=\dG/(Z\dG)'$. Then for each $\a\in A$, the functor $\calF_{(-),\a}$ restricted to $\Rep(\dG')$ gives a $\dG'$-local system over $U'$. Same holds in the $\htheta$-twisted situation.
\end{enumerate}
\end{lemma}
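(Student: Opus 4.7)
The lemma is essentially a bookkeeping exercise: each assertion follows from iterated application of the factorization axioms (Eass) and (Efactor) in Definition \ref{def:weak loc}, together with standard Tannakian formalism. The plan is to dispatch the six parts in turn, with most labor devoted to the first two, from which the remaining ones follow quickly.

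For \eqref{EVA1}, I would proceed by induction on $n$. The base case $n=2$ is exactly axiom (Eass) with $I=\{1\}$, $J=\{2\}$, $V_I=V_1$ and $V_J=V_2$. For the inductive step, apply (Eass) with $I=\{1\}$ and $J=\{2,\dots,n\}$: since the diagonal copy of $Z\dG$ acts on $V_2\boxtimes\cdots\boxtimes V_n$ with character $\nu_2+\cdots+\nu_n$, one gets
\[
\calF_{V_1\boxtimes\cdots\boxtimes V_n,\a}\cong\calF_{V_1,(\nu_2+\cdots+\nu_n)\cdot\a}\boxtimes\calF_{V_2\boxtimes\cdots\boxtimes V_n,\a},
\]
and the inductive hypothesis finishes the job. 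The only thing to check is that the associativity condition in (Eass) guarantees that the answer is independent of how one brackets the iteration.

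Part \eqref{EVAext} is the only step where real content enters: one must exploit the symmetry of $V_1\boxtimes V_2$ under the swap of factors in $\dG^{\{1,2\}}=\dG\times\dG$. Apply (Eass) to $V_1\boxtimes V_2\in\Rep(\dG^{\{1,2\}})$ using the splitting $I=\{1\}$, $J=\{2\}$ to obtain $\calF_{V_1\boxtimes V_2,\a}\cong\calF_{V_1,\nu\cdot\a}\boxtimes\calF_{V_2,\a}$. Similarly, apply (Eass) to $V_2\boxtimes V_1\in\Rep(\dG^{\{1,2\}})$ with $V_I=V_2$, $V_J=V_1$ to get $\calF_{V_2\boxtimes V_1,\a}\cong\calF_{V_2,\mu\cdot\a}\boxtimes\calF_{V_1,\a}$. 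The functoriality of $\calF_{(-),\a}$ in the finite set indexing (implicit in a factorization structure: relabeling the indices by a bijection, in particular by the swap $\sigma$ of $\{1,2\}$, commutes with the functors on both sides) identifies $\calF_{V_2\boxtimes V_1,\a}$ with the swap pullback of $\calF_{V_1\boxtimes V_2,\a}$ on $U'\times U'$, which is just the swap of factors. Comparing the two gives the asserted isomorphism after relabeling. This symmetry compatibility is the main technical point; it is the analog for weak local systems of the commutativity constraint in a symmetric monoidal functor, and the hardest part of the proof is to verify it cleanly from the axioms of a factorization structure as opposed to just invoking it.

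Parts \eqref{EVAtensor}--\eqref{EVAad} are then formal. For \eqref{EVAtensor}, apply axiom (Efactor) to the surjection $\{1,2\}\surj\{*\}$: the diagonal $\Delta:U'\incl U'\times U'$ pulls back $\calF_{V_1\boxtimes V_2,\a}$ to $\calF_{V_1\otimes V_2,\a}$, and pulls the right-hand side of (Eass) back to $\calF_{V_1,\nu\cdot\a}\otimes\calF_{V_2,\a}$; combined with \eqref{EVAext}, this yields all four terms of \eqref{EVAtensor}. For \eqref{EVAdual}, apply the functor $\calF_{(-),\a}$ to the coevaluation $\triv\to V\otimes V^{*}$ and evaluation $V^{*}\otimes V\to\triv$ in $\Rep(\dG)$; using \eqref{EVAtensor} and the unit axiom, these become morphisms $\Qlbar\to\calF_{V,\a}\otimes\calF_{V^{*},\nu\cdot\a}$ and $\calF_{V^{*},\nu\cdot\a}\otimes\calF_{V,\a}\to\Qlbar$ satisfying the usual triangle identities, so the two local systems are canonically dual. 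For \eqref{EVAad}, observe that if $V\in\Rep(\dG')$ with central character $\mu\in\xch(Z\dG)$, then $\mu$ lies in the kernel of $\xch(Z\dG)\to\Aut(A)$ by the definition of $(Z\dG)'$, so $\mu\cdot\a=\a$; therefore \eqref{EVAtensor} on the subcategory $\Rep(\dG')$ becomes the honest tensor identity $\calF_{V_1\otimes V_2,\a}\cong\calF_{V_1,\a}\otimes\calF_{V_2,\a}$, which together with \eqref{EVAdual} and the unit axiom exhibits $\calF_{(-),\a}|_{\Rep(\dG')}$ as a $\Qlbar$-linear tensor functor; the Tannakian formalism of \cite{DM} then produces a $\dG'$-local system on $U'$, and the descent data $\{\delta_{\gamma,V_I}\}$ descend to the twisted version. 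Finally, \eqref{EVArank} is deduced from \eqref{EVAad} applied to $V\otimes V^{*}\in\Rep(\dG^{\ad})$: the rank of $\calF_{V\otimes V^{*},\a}$ is $(\dim V)^2$ since $\calF_{(-),\a}|_{\Rep(\dG^{\ad})}$ is a tensor functor, but by \eqref{EVAtensor} and \eqref{EVAdual} this rank also equals $r(V,\a)^2$, where $r(V,\a):=\rank\calF_{V,\a}$; positivity gives $r(V,\a)=\dim V$.
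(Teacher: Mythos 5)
Your proof is correct and takes essentially the same route as the paper: part (2) via the swap of the two factors, part (3) by restricting to the diagonal, part (4) via evaluation/coevaluation plus the triangle identities, part (5) by the rank-squared argument combining (3), (4) and (6), with (1) and (6) treated as routine (the paper omits them as easy). The one remark: the swap compatibility you describe as ``implicit'' and as the hardest point needs no separate verification — it is literally an instance of axiom \eqref{Efactor} of Definition \ref{def:weak loc} applied to the transposition $\sigma:\{1,2\}\to\{1,2\}$ (a bijection is a surjection), which is exactly how the paper obtains $\sigma^{*}\calF_{V_{2}\boxtimes V_{1},\a}\cong\calF_{V_{1}\boxtimes V_{2},\a}$.
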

\begin{proof}
\eqref{EVA1} and \eqref{EVAad} are easy and we omit the proof.

\eqref{EVAext} By \eqref{V12}, $\calF_{V_{1},\nu\cdot\a}\boxtimes\calF_{V_{2}, \a}\cong\calF_{V_{1}\boxtimes V_{2}, \a}$ and $\calF_{V_{2},\mu\cdot\a}\boxtimes\calF_{V_{1}, \a}\cong\calF_{V_{2}\boxtimes V_{1}, \a}$. Applying \eqref{Tfactor} to the transposition $\sigma:\{1,2\}\to\{1,2\}$ we get $\sigma^{*}\calF_{V_{2}\boxtimes V_{1}, \a}\cong \calF_{V_{1}\boxtimes V_{2}, \a}$. Combining these facts we get \eqref{extV12}.

\eqref{EVAtensor} Restricting \eqref{extV12} to the diagonal $U'\incl U'\times U'$.

\eqref{EVAdual} We define $\ev:\calF_{V^{*},\nu\cdot\a}\otimes\calF_{V,\a}\to\Qlbar$ as the composition $\calF_{V^{*},\nu\cdot\a}\otimes\calF_{V,\a}\cong\calF_{V^{*}\otimes V,\a}\to\calF_{\triv,\a}\cong\Qlbar$ where we use the evaluation map $\ev:V^{*}\otimes V\to \triv$. We define $\coev: \Qlbar\to\calF_{V,\a}\otimes\calF_{V^{*},\nu\cdot\a}$ to be the composition $\Qlbar\cong\calF_{\triv,\nu\cdot\a}\to\calF_{V\otimes V^{*},\nu\cdot\a}\cong\calF_{V,\a}\otimes\calF_{V^{*},\nu\cdot\a}$ where we use the coevaluation map $\coev: \triv\to V\otimes V^{*}$. We need to check that these maps exhibit $\calF_{V^{*},\nu\cdot\a}$ as the dual object to $\calF_{V,\a}$ in the tensor category $\Loc(U')$. The map $(\id\otimes\ev)\circ(\coev\otimes\id):\calF_{V,\a}\to\calF_{V,\a}$ is the composition
\begin{equation*}
\Qlbar\otimes \calF_{V,\a}\xrightarrow{\coev\otimes\id}\calF_{V\otimes V^{*},\nu\cdot\a}\otimes\calF_{V,\a}\cong\calF_{V,\a}\otimes\calF_{V^{*},\nu\cdot\a}\otimes\calF_{V,\a}\cong\calF_{V,\a}\otimes\calF_{V^{*}\otimes V,\a}\xrightarrow{\id\otimes\ev}\calF_{V,\a}\otimes\Qlbar
\end{equation*}
which is the identity map as part of the compatibility conditions we alluded to in Definition \ref{def:weak loc}. Similarly one checks that the map $(\ev\otimes\id)\circ(\id\otimes\coev):\calF_{V^{*},\nu\cdot\a}\to\calF_{V^{*},\nu\cdot\a}$ is also the identity.

\eqref{EVArank} We may assume $V\in\Rep(\dG;\nu)$. By \eqref{EVAdual}, $\calF_{V,\a}$ and $\calF_{V^{*},\nu\cdot\a}$ are dual local systems, hence have the same rank. On the other hand, $V^{*}\otimes V\in\Rep(\dG^{\ad})$, and by \eqref{EVAad}, $\calF_{V^{*}\otimes V,\a}$ has rank $\dim(V^{*}\otimes V)=(\dim V)^{2}$. Hence $\calF_{V^{*},\nu\cdot\a}\otimes\calF_{V,\a}\cong\calF_{V^{*}\otimes V,\a}$ has rank $(\dim V)^{2}$, and therefore $\calF_{V,\a}$ has rank $\dim V$.
\end{proof}

\begin{cor}\label{c:weak loc} Let $\calF=(\calF_{(-), \a})_{\a \in A}$ be a $(\dG,A)$-weak local system on $U'$. Then
\begin{enumerate}
\item For any $V\in\Rep(\dG)$, if $\a, \b\in A$ are in the same orbit of $\xch(Z\dG)$, then the semisimplifications of $\calF_{V,\a}$ and $\calF_{V,\b}$ are isomorphic (in a non-canonical way).
\item Suppose all $\calF_{V,\a}$ are semisimple local systems, then for any $V_{1}, V_{2}\in\Rep(\dG)$ and $\a\in A$ there is a (non-canonical) isomorphism of local systems $\calF_{V_{1},\a}\otimes\calF_{V_{2},\a}\cong\calF_{V_{1}\otimes V_{2},\a}$.
\end{enumerate}
\end{cor}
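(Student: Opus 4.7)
The plan is to first establish (1) and then deduce (2) from it. For (2), once (1) is available, I would fix $V_{1}\in\Rep(\dG;\mu_{1})$ and $V_{2}\in\Rep(\dG)$ (reducing to the case where $V_{2}$ also has a single central character via the isotypic decomposition under $Z\dG$). Then \eqref{EVAtensor} provides a canonical isomorphism $\calF_{V_{1},\a}\otimes\calF_{V_{2},\mu_{1}\cdot\a}\cong\calF_{V_{1}\otimes V_{2},\a}$. Part (1) together with the semisimplicity hypothesis yields a non-canonical isomorphism $\calF_{V_{2},\a}\cong\calF_{V_{2},\mu_{1}\cdot\a}$, and tensoring with $\calF_{V_{1},\a}$ completes the argument.

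The substance is therefore in (1). Fixing $V\in\Rep(\dG;\mu)$ and $\nu\in\xch(Z\dG)$, I plan to choose an auxiliary $W\in\Rep(\dG;\nu)$ and apply \eqref{EVAext} to the pair $(V_{1},V_{2})=(V,W)$, producing an isomorphism of local systems on $U'\times U'$:
\begin{equation*}
\calF_{V,\a}\boxtimes\calF_{W,\mu\cdot\a}\ \cong\ \calF_{V,\nu\cdot\a}\boxtimes\calF_{W,\a}.
\end{equation*}
Restricting this isomorphism to the slice $U'\times\{u'\}$ for a chosen base point $u'\in U'$, the $W$-factor on each side collapses to the trivial local system of rank $\dim W$ (the two ranks agree by \eqref{EVArank}), yielding an isomorphism
\begin{equation*}
\calF_{V,\a}^{\oplus\dim W}\ \cong\ \calF_{V,\nu\cdot\a}^{\oplus\dim W}
\end{equation*}
of finite-dimensional $\pi_{1}(U',u')$-representations. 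The Krull--Schmidt theorem then allows cancellation of the common multiplicity $\dim W$, forcing the semisimplifications of $\calF_{V,\a}$ and $\calF_{V,\nu\cdot\a}$ to agree, which is (1).

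The main obstacle is the existence of $W\in\Rep(\dG;\nu)$ for an arbitrary $\nu\in\xch(Z\dG)$, as not every character of $Z\dG$ occurs as the central character of a single representation of $\dG$. To handle this, I would note that since $Z\dG\subset T$ is a subtorus the restriction $\xch(T)\to\xch(Z\dG)$ is surjective, and since dominant weights generate $\xch(T)$ as a group, the submonoid $S\subset\xch(Z\dG)$ of central characters of nonzero $\dG$-representations satisfies $S-S=\xch(Z\dG)$. Writing $\nu=\nu_{1}-\nu_{2}$ with $\nu_{i}\in S$ and choosing $W_{i}\in\Rep(\dG;\nu_{i})$, the representation $W:=W_{1}\otimes W_{2}^{*}$ has central character exactly $\nu$, so the argument of the previous paragraph applies and completes the proof.
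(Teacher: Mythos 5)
Your proposal is correct and is essentially the paper's own proof: part (1) is obtained exactly as in the paper by applying Lemma \ref{l:weak loc}\eqref{EVAext} to an auxiliary $W\in\Rep(\dG;\nu)$, restricting to a slice $U'\times\{x'\}$, and cancelling the multiplicity $\dim W$ (via the rank formula \eqref{EVArank}), while part (2) follows from (1), semisimplicity, and Lemma \ref{l:weak loc}\eqref{EVAtensor}, just as in the paper. The only addition is your verification that $\Rep(\dG;\nu)$ is nonempty for every $\nu\in\xch(Z\dG)$, a point the paper leaves implicit when it says ``pick any $W\in\Rep(\dG;\nu)$''; your argument for it is fine (note only that $Z\dG$ need not be a subtorus, but surjectivity of $\xch(\dT)\to\xch(Z\dG)$ holds for any closed subgroup of a torus, so the conclusion stands).
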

\begin{proof} 
(1 )We may assume $V\in\Rep(\dG;\mu)$, $\b=\nu\cdot\a$ for some $\nu\in\xch(Z\dG)$. Pick any $W\in\Rep(\dG;\nu)$.Then we have $\calF_{V,\a}\boxtimes\calF_{W,\mu\cdot\a}\cong\calF_{V,\b}\boxtimes\calF_{W,\a}$ by Lemma \ref{l:weak loc}\eqref{EVAext}. Restricting to $U'\times \{x'\}$ for some geometric point $x'\in U'$, we get $\calF_{V,\a}^{\oplus\dim W}\cong\calF_{V,\b}^{\oplus\dim W}$ using the rank formula in Lemma \ref{l:weak loc}\eqref{EVArank}. From this we conclude that the semisimplifications of $\calF_{V,\a}$ and $\calF_{V,\b}$ are isomorphic.

(2) Again we may assume $V_{2}\in\Rep(\dG;\nu)$. By (1), $\calF_{V_{1},\nu\cdot\a}\cong\calF_{V_{1},\a}$. Therefore the desired isomorphism follows from Lemma \ref{l:weak loc}\eqref{EVAtensor}.
\end{proof}

\begin{defn}\label{def:weak eigen} A {\em weak Hecke eigensheaf} for the geometric automorphic datum $(\Om,\bK_{S}, \cK_{S}, \iota_{S})$ is a triple $(\calA, \calF, \varphi)$ where
\begin{itemize}
\item $\calA\in D_{\cG,\Om}(\bK_{S}, \calK_{S})$, which means an object $\calA_{\a}\in D_{\a}$ for each $\a\in\xch(Z\dG)_{I_{F}}$.
\item $\calF$ is a $\htheta$-twisted $(\dG,\xch(Z\dG)_{I_{F}})$-weak local system with respect to the obvious action of $\xch(Z\dG)$ on $\xch(Z\dG)_{I_{F}}$ by addition. In other words, $\cF$ is a collection $\{\calF_{V,\a}\}$ of local systems over $U'$, for $V\in\Rep(\dG)$ and $\a\in\xch(Z\dG)_{I_{F}}$, together with the extra structures as in Definition \ref{def:weak loc}.
\item $\varphi$ is a collection of natural isomorphisms of functors (for each $\a\in\xch(Z\dG)_{I_{F}}$ and $\nu\in\xch(Z\dG)$ with image $\overline{\nu}\in\xch(Z\dG)_{I_{F}}$):
\begin{equation*}
\varphi_{(-),\a}: \frT_{(-),\a}(\calA_{\a})\cong\calF_{(-),\a}\boxtimes\calA_{\a+\overline{\nu}} :\Rep(\dG^{I};\nu)\to D_{\a+\overline{\nu}}(U'^{I}).
\end{equation*}
The natural isomorphisms $\varphi_{(-),\a}$ should intertwine the factorization structures \eqref{Tunit}\eqref{Tass}\eqref{Tfactor} in \S\ref{sss:iterated} for the iterated Hecke operators and the factorization structures \eqref{unit}\eqref{Eass}\eqref{Efactor} of $\calF$ in Definition \ref{def:weak loc}.
\end{itemize}
\end{defn}

\subsection{Rigid Hecke eigensheaves} 
The main purpose of this subsection is to establish the existence of Hecke eigensheaves in a special situation. 

\subsubsection{Assumptions}\label{sss:ass} We make the following assumptions on the geometric automorphic datum $(\Om,\bK_{S}, \cK_{S}, \iota_{S})$.
\begin{enumerate}
\item\label{Zfin} The affine part of the coarse moduli space of $\Bun^{\n}_{\cZ}(\bK_{Z,S})$ is finite (recall every commutative algebraic group over $k$ is canonically an extension of an abelian scheme by an affine commutative algebraic group; the latter is called the affine part). For example, this is true if $G$ is semisimple, or more generally $\bK_{Z,x}$ has finite index in $L_{x}^{+}\cZ$ for each $x\in S$.
\item\label{uniquerelevant} We assume that for each $\alpha\in \xch(Z\dG)_{I_{F}}$, $\Bun_{\alpha}$ contains a unique $(\Om, \cK_{S})$-relevant $\Bun^{\n}_{\cZ}(\bK_{Z,S})$-orbit $O_{\alpha}$ (see Definition \ref{def:relevant}).
\item Choose a point $\calE_{\alpha}\in O_{\alpha}$. We assume that the group $A_{\alpha}:=A_{\calE_{\a}}$ (see \S\ref{sss:relevant}) is finite.
\item\label{Wunique} Let $\calK_{\alpha}:=\calK_{\calE_{\alpha}}\in\cCS(A_{\alpha})$ (see \S\ref{sss:relevant}). Then $\calK_{\alpha}$ gives a class $\xi_{\alpha}\in\cohog{2}{A_{\alpha}, \Qlbar^{\times}}$ (see Lemma \ref{l:char sh discrete}). We assume that the category $\Rep_{\xi_{\alpha}}(A_{\alpha})$ (see \S\ref{sss:twisted rep}) contains a unique irreducible object $W_{\a}$ up to isomorphism.
\end{enumerate}

Here is the main result of this section.

\begin{theorem}\label{th:eigen} Under the assumptions in \S\ref{sss:ass}, the category $D_{\alpha}$ contains a unique irreducible perverse sheaf $\calA_{\alpha}$ up to isomorphism. Let $\calA\in D_{\cG,\Om}(\bK_{S}, \calK_{S})$ be an object whose restriction to $\Bun^{+}_{\alpha}$ is $\calA_{\alpha}$. Then $\calA$ can be extended to a {\em weak} Hecke eigensheaf $(\calA,\calF,\varphi)$ in which $\calF_{V,\a}$ is a semisimple local system for all $V\in\Rep(\dG)$ and $\a\in\xch(Z\dG)_{I_{F}}$.
\end{theorem}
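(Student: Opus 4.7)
The plan has three parts: establish the uniqueness of $\calA_{\alpha}$, use Hecke operators to extract the eigen local system, and then verify the compatibilities required by Definition \ref{def:weak eigen}.

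First I would handle uniqueness. The proof of the implication $\eqref{Omfin}\Rightarrow\eqref{Pervfin}$ in Theorem \ref{th:wrigid} identifies irreducible perverse sheaves in $D_{\alpha}$ with pairs $(O,\mathcal{L})$, where $O\subset \Bun^{+}_{\alpha}$ is a $(\Om,\cK_{S})$-relevant $\bM_{S}$-orbit and $\mathcal{L}$ is an irreducible $(\bM_{S},\cK_{S,\Om})$-equivariant local system on $O$; the latter category is equivalent to $\Rep_{\xi_{\alpha}}(A_{\alpha})$ (the assumption that $A_\alpha$ is finite identifies it with $\pi_{0}(A_{\alpha})$). Assumptions \eqref{uniquerelevant} and \eqref{Wunique} then supply a unique orbit $O_{\alpha}$ and a unique irreducible representation $W_{\alpha}$, hence a single irreducible perverse sheaf $\calA_{\alpha}$, which is the middle extension of the shifted $(\bM_{S},\cK_{S,\Om})$-equivariant local system on $O_{\alpha}$ associated to $W_{\alpha}$.

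Next I would construct $\calF_{V,\alpha}$ by applying Hecke operators and using uniqueness to read off the local-system factor. For $V\in\Rep(\dG;\nu)$, set $\calB_{V,\alpha}:=\frT_{V,\alpha}(\calA_{\alpha})\in D_{\alpha+\bar\nu}(U')$. Restricting the Hecke correspondence to $\Hk^{\leq\lambda}_{U'}$ (for $\lambda$ bounding the support of $\IC_{V}$) gives a proper map over $U'\times\Bun^{+}$, and $\IC^{\Hk}_{V}$ is a pure semisimple perverse sheaf. By the decomposition theorem, $\calB_{V,\alpha}[1]$ is a semisimple $(\bM_{S},\cK_{S,\Om})$-equivariant perverse sheaf on $U'\times\Bun^{+}_{\alpha+\bar\nu}$. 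The restriction of $\calB_{V,\alpha}$ to $\{x'\}\times\Bun^{+}_{\alpha+\bar\nu}$ for any geometric point $x'\in U'$ is a semisimple perverse sheaf in $D_{\alpha+\bar\nu}$, which by Step~1 must be a direct sum of copies of $\calA_{\alpha+\bar\nu}$. Using that the Hecke fibration is, Zariski-locally on $U'$, a product of $U'$ with a piece of $L^{+}\GG\backslash L\GG$, the multiplicity is locally constant in $x'$, and one obtains a canonical factorization
\[
\calB_{V,\alpha}\;\cong\;\calF_{V,\alpha}\boxtimes\calA_{\alpha+\bar\nu},
\]
with $\calF_{V,\alpha}$ a semisimple local system on $U'$. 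This defines $\calF$ and the isomorphism $\varphi_{V,\alpha}$.

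The compatibilities in Definition \ref{def:weak eigen} are then pinned down by uniqueness. The associativity constraint \eqref{Eass} of $\calF$ is extracted from \eqref{composeTV}: applying it to $V_{I}\boxtimes V_{J}$ yields two expressions for $\frT_{V_{I}\boxtimes V_{J},\alpha}(\calA_{\alpha})$, and the fact that $\End(\calA_{\alpha+\overline{\mu+\nu}})=\Qlbar$ (Schur's lemma applied to the unique irreducible $\calA_{\alpha+\overline{\mu+\nu}}$) forces the local-system factors $\calF_{V_{I},\nu\cdot\alpha}\boxtimes\calF_{V_{J},\alpha}$ and $\calF_{V_{I}\boxtimes V_{J},\alpha}$ to be canonically isomorphic. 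The unit axiom \eqref{unit} and the diagonal axiom \eqref{Efactor} come directly from \S\ref{sss:iterated}\eqref{Tunit} and \eqref{Tfactor}. The $\htheta$-twisted descent data $\{\delta_{\gamma,V_{I}}\}$ is induced from the $\Gamma$-equivariance isomorphisms $\a_{\gamma,V}$ of the Hecke operators, pulled through the canonical decomposition and pinned down by the same Schur argument; the diagram \eqref{Gamma Hk} then commutes by construction.

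The main obstacle I anticipate is the perversity assertion for $\calB_{V,\alpha}[1]$ on $U'\times\Bun^{+}_{\alpha+\bar\nu}$ together with the local constancy of the multiplicity of $\calA_{\alpha+\bar\nu}$. This requires restricting to an $\bM_{S}$-stable open substack of $\Bun^{+}_{\alpha+\bar\nu}$ that is of finite type modulo $\bM_{S}$ and that contains all Hecke translates of the support of $\calA_{\alpha}$ by modifications bounded by $\lambda$; within such a substack one invokes properness of the Hecke correspondence, purity and semi-smallness of $\IC_{V}$, and universal local acyclicity of $\pi:\Hk_{U'}\to U'$ to conclude both the perversity and the local-system property of the multiplicity. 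Once this geometric input is in place, every other step in the proposal is formal.
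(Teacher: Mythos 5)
Your step 1 (uniqueness of $\calA_{\alpha}$) is correct and essentially matches the paper's Lemma \ref{l:unique irr}; the paper additionally records that the clean extension property $j^{+}_{\alpha,!}\cL_{\alpha}\cong j^{+}_{\alpha,*}\cL_{\alpha}$ holds, which you only implicitly acknowledge via ``middle extension'' but which turns out to be the crucial input for the hard step. Your step 3 (pinning down compatibilities by Schur's lemma, once the factorization $\frT_{V}(\calA_{\alpha})\cong\calF_{V,\alpha}\boxtimes\calA_{\beta}$ with $\calF_{V,\alpha}$ a local system is established) is formal and fine.

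The genuine gap is in step 2, and it is not closed by the ingredients you list. The decomposition theorem applied to the proper pushforward of a pure perverse sheaf gives a \emph{direct sum of shifts} of simple perverse sheaves; it does not give concentration in a single perverse degree. So the assertion ``$\calB_{V,\alpha}[1]$ is a semisimple perverse sheaf on $U'\times\Bun^{+}_{\alpha+\bar\nu}$'' is precisely what needs proof, and neither ``semi-smallness of $\IC_{V}$'' (which is a property of maps, and the relevant semi-smallness statement for the convolution Grassmannian does not bear on this pushforward) nor ULA over $U'$ gives it. Your own caveat acknowledges the issue, but the ingredients named --- properness, purity, ULA --- yield only that each cohomology sheaf of $\calF_{V,\alpha}$ is a local system; they do not force vanishing outside a single degree. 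What the paper does to close exactly this gap is qualitatively different (Lemma \ref{l:Els} together with Lemma \ref{l:affine}): using the cleanness $\calA_{\alpha}=j^{+}_{\alpha,!}\cL_{\alpha}=j^{+}_{\alpha,*}\cL_{\alpha}$, the stalk $\calF_{V,\alpha,x'}\otimes W_{\beta}$ is computed twice, once via $!$-pushforward and once via $*$-pushforward, and identified both with $\cohoc{*}{\oGR_{x',\beta},\calH}$ and with $\cohog{*}{\oGR_{x',\beta},\calH}$ for a perverse sheaf $\calH$ on the ind-affine scheme $\oGR_{x',\beta}$. Ind-affineness (proved in Lemma \ref{l:affine}, using assumption \eqref{Zfin}) plus Artin vanishing forces $\cohoc{*}{}$ into nonnegative degrees and $\cohog{*}{}$ into nonpositive degrees, hence concentration in degree $0$. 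Your proposal contains nothing equivalent to this $(!=\!*)$-plus-affineness argument, and without it the factorization into a local system times $\calA_{\beta}$ is not established.
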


By Corollary \ref{c:weak loc}(2), each functor $\calF_{(-),\a}:\Rep(\dG)\to \Loc(U')$ is weak tensor in the sense that there exist isomorphisms $\calF_{V_{1},\a}\otimes\calF_{V_{2},\a}\cong\calF_{V_{1}\otimes V_{2},\a}$ which do not necessarily satisfy the associativity constraint.

The proof of Theorem \ref{th:eigen} occupies the rest of the subsection. 

Fix a smaller level $\bK^{+}_{S}\lhd\bK_{S}$ as in \S\ref{sss:Kplus}. For each $\alpha\in\xch(Z\dG)_{I_{F}}$, let $O^{+}_{\alpha}$ be the preimage of $O_{\alpha}$ in $\Bun^{+}_{\alpha}$, which is an $\bM_{S}$-orbit. Choose $\calE^{+}_{\alpha}\in O^{+}_{\alpha}$ over $\calE_{\alpha}$. Then the stabilizer $\bM_{S,\calE^{+}_{\alpha}}$ is identified with $A_{\alpha}$.

\begin{lemma}
The embedding $j_{\alpha}:O_{\alpha}\incl\Bun_{\alpha}$ is open.
\end{lemma}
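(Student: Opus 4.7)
Equivalently, I want to show that the $\bM_S$-orbit $O^+_\alpha \subset \Bun^+_\alpha$ is open, since $[O_\alpha^+/\bM_S] = O_\alpha$ and $[\Bun^+_\alpha/\bM_S]$ is the $\bM_S$-quotient of $\Bun^+_\alpha$ (in the sense made precise in \S\ref{sss:Kplus}). The core idea is to use semicontinuity of stabilizer dimensions and the hypothesis that $O_\alpha$ is the \emph{unique} relevant orbit.

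The plan is to consider the locus
\begin{equation*}
U := \{\calE^{+}\in\Bun^{+}_{\alpha} \mid \dim\bM_{S,\calE^{+}} = 0\} \subset \Bun^{+}_{\alpha}.
\end{equation*}
First, I would observe that $U$ is open in $\Bun^{+}_{\alpha}$: the dimension of the stabilizer of a group scheme acting on an algebraic stack is upper semicontinuous on the source, so the locus where the stabilizer dimension drops to its minimum is open. Moreover $U$ is visibly $\bM_{S}$-stable, since conjugate stabilizers have the same dimension. By assumption \ref{sss:ass}(3), $\calE^{+}_{\alpha}\in U$, so $U$ is nonempty.

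Next I would argue that every point $\calE^{+}\in U(\kbar)$ is $(\Om,\calK_{S})$-relevant. Indeed, passing to the image $\calE\in\Bun_{\alpha}(\kbar)$, we have $A_{\calE}\cong \bM_{S,\calE^{+}}$ (Lemma \ref{l:well defined A}), which is finite by the definition of $U$. Hence $A_{\calE}^{\circ}$ is trivial, and therefore $\calK_{\calE}|_{A_{\calE}^{\circ}}$ is automatically the constant sheaf. So $\calE$ is relevant in the sense of Definition \ref{def:relevant}. By the uniqueness assumption \ref{sss:ass}(2), the $\Bun^{\n}_{\cZ}(\bK_{Z,S})$-orbit of $\calE$ equals $O_{\alpha}$, and so $\calE^{+}\in O^{+}_{\alpha}$. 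This shows $U\subseteq O^{+}_{\alpha}$.

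The reverse inclusion $O^{+}_{\alpha}\subseteq U$ is immediate: points in an $\bM_{S}$-orbit have conjugate stabilizers, and $\bM_{S,\calE^{+}_{\alpha}}\cong A_{\alpha}$ is finite by hypothesis. Therefore $O^{+}_{\alpha} = U$ is open in $\Bun^{+}_{\alpha}$, and consequently $O_{\alpha}$ is open in $\Bun_{\alpha}$. There is no real obstacle in this argument; the only subtle point is the use of Lemma \ref{l:well defined A} to identify $A_{\calE}$ with the stabilizer of any lift $\calE^{+}$ under $\bM_{S}$, which legitimizes comparing the two notions of stabilizer and transferring finiteness back and forth between $\Bun_{\alpha}$ and $\Bun^{+}_{\alpha}$.
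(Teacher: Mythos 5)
Your proof is correct and follows essentially the same route as the paper: define the locus where the $\bM_S$-stabilizer is finite, note it is open by upper semicontinuity of stabilizer dimension, and identify it with $O^+_\alpha$ using that relevance is automatic for finite stabilizers together with the uniqueness hypothesis from \S\ref{sss:ass}. You spell out both inclusions and cite Lemma \ref{l:well defined A} explicitly, but the underlying argument is identical.
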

\begin{proof} Equivalently we need to show that $j^{+}_{\alpha}:O^{+}_{\alpha}\incl\Bun^{+}_{\a}$ is open. Let $\calU\subset\Bun^{+}_{\a}$ be the locus where the stabilizers under $\bM_{S}$ are finite. Then $\calU$ is an open substack of $\Bun^{+}_{\a}$ because the dimension of stabilizers is upper semicontinuous. On the other hand, any $\kbar$-point in $\calU$ is $(\Om,\cK_{S})$-relevant because the relevance condition is vacuous there. Since $O^{+}_{\alpha}$ is the only relevant $\bM_{S}$-orbit, we have $\calU=O^{+}_{\alpha}$, hence $O^{+}_{\alpha}$ is open.
\end{proof}

We have 
\begin{equation*}
D^{b}_{(\bM_{S},\cK_{S,\Om})}(O^{+}_{\a})\cong D^{b}_{(A_{\a}, \cK_{\a})}(\pt)\cong D^{b}(\Rep_{\xi_{\a}}(A_{\a})).
\end{equation*}
The first equivalence follows from the fact that $[\bM_{S}\backslash O^{+}_{\a}]\cong A_{\a}\backslash\pt$; the second equivalence follows from Lemma \ref{l:Ac}. The unique irreducible object $W_{\a}\in \Rep_{\xi_{\a}}(A_{\a})$ gives an irreducible $(\bM_{S},\cK_{S,\Om})$-equivariant local system $\cL_{\a}$ over $O^{+}_{\a}$.

\begin{lemma}\label{l:unique irr} 
\begin{enumerate}
\item We have $j^{+}_{\alpha,!}\cL_{\a}\isom j^{+}_{\alpha,*}\cL_{\a}\in D_{\a}$. We denote both objects by $\calA_{\a}$.
\item For any scheme $Y$ over $k$, the functor
\begin{eqnarray}\label{boxF}
D^{b}(Y)&\to& D_{\a}(Y)=D^{b}_{(\bM_{S},\cK_{S,\Om})}(Y\times\Bun^{+}_{\a})\\
\notag \calA' &\mapsto& \calA'\boxtimes \calA_{\a}
\end{eqnarray}
is an equivalence of categories. In particular, when $Y=\Spec k$, a shift of $\calA_{\a}$ is the unique irreducible perverse sheaf in $D_{\alpha}$.
\end{enumerate}
\end{lemma}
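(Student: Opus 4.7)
The plan is to exploit the uniqueness of the relevant orbit to reduce all computations to the open stratum $O^{+}_{\a}$, and then identify the equivariant derived category of that stratum with $D^{b}(Y)$ via the character sheaf $\cK_{\a}$ and the hypothesis that $\Rep_{\xi_{\a}}(A_{\a})$ has a unique irreducible.

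For part (1), I would argue as follows. Since $O_{\a}$ is the only $(\Om,\cK_{S})$-relevant $\Bun^{\n}_{\cZ}(\bK_{Z,S})$-orbit on $\Bun_{\a}$, every $\kbar$-point of the closed complement $\Bun^{+}_{\a}\setminus O^{+}_{\a}$ is $(\Om,\cK_{S})$-irrelevant. The canonical arrow $j^{+}_{\a,!}\cL_{\a}\to j^{+}_{\a,*}\cL_{\a}$ is an isomorphism on $O^{+}_{\a}$, so its cone is an $(\bM_{S},\cK_{S,\Om})$-equivariant complex with support in the closed complement. By Lemma \ref{l:irr}(1), applied pointwise, both the $*$-stalks of $j^{+}_{\a,!}\cL_{\a}$ and the $!$-stalks of $j^{+}_{\a,*}\cL_{\a}$ vanish at every point of the complement; hence the cone is zero, and the two extensions agree.

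For part (2), I would proceed in two steps. First, the same pointwise vanishing argument of Lemma \ref{l:irr}(1), applied to any $\cF\in D_{\a}(Y)$ slice by slice over $Y$, shows that $i^{*}\cF=0$ and $i^{!}\cF=0$ where $i:Y\times(\Bun^{+}_{\a}\setminus O^{+}_{\a})\incl Y\times\Bun^{+}_{\a}$; here one stratifies the closed complement into $\bM_{S}$-orbits (each irrelevant) and uses Lemma \ref{l:Ac} on each stratum. Thus restriction yields an equivalence
\begin{equation*}
D_{\a}(Y)\isom D^{b}_{(\bM_{S},\cK_{S,\Om})}(Y\times O^{+}_{\a}).
\end{equation*}
Second, since $O^{+}_{\a}\cong\bM_{S}/A_{\a}$ as an $\bM_{S}$-homogeneous space, the quotient $[\bM_{S}\backslash(Y\times O^{+}_{\a})]$ is $Y\times BA_{\a}$, and Lemma \ref{l:Ac} identifies $D^{b}_{(\bM_{S},\cK_{S,\Om})}(Y\times O^{+}_{\a})$ with the bounded derived category of $Y$-parametrized objects in $\Rep_{\xi_{\a}}(A_{\a})$. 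By hypothesis \eqref{sss:ass}\eqref{Wunique} and Schur's lemma over the algebraically closed field $\Qlbar$, $\End_{A_{\a},\xi_{\a}}(W_{\a})=\Qlbar$, so $V\mapsto V\otimes_{\Qlbar}W_{\a}$ is an equivalence $\Vect_{\Qlbar}\isom \Rep_{\xi_{\a}}(A_{\a})$, yielding $D^{b}(Y,\Rep_{\xi_{\a}}(A_{\a}))\cong D^{b}(Y)$. Tracing the composite, the equivalence $D^{b}(Y)\isom D_{\a}(Y)$ sends $\cA'$ to the middle extension of $\cA'\boxtimes\cL_{\a}$, which by part (1) is $\cA'\boxtimes\cA_{\a}$. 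The last assertion then follows by taking $Y=\Spec k$: irreducible perverse sheaves in $D_{\a}$ correspond to one-dimensional $\Qlbar$-vector spaces (up to shift), and there is a unique such class.

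The main technical point will be the $Y$-parametrized version of Lemma \ref{l:irr}(1): one must check that irrelevance forces vanishing of stalks not just pointwise in $\Bun^{+}_{\a}(\kbar)$, but on the full product $Y\times(\Bun^{+}_{\a}\setminus O^{+}_{\a})$. This follows by stratifying the closed complement into finitely many $\bM_{S}$-orbits and applying Lemma \ref{l:Ac} to each $[\bM_{S}\backslash Y\times O']\cong Y\times B\bM_{S,\cE'}$, since the condition that $\cK_{S,\Om}|_{\bM^{\circ}_{S,\cE'}}$ is nonconstant is independent of the parameter $Y$. (Some care is needed to handle the fact that $\Bun^{+}_{\a}$ may not be of finite type, but this is already addressed by the colimit convention in the definition of $D_{\cG,\Om}(\bK_{S},\cK_{S})$.) Apart from this, Schur's lemma for twisted representations and the standard yoga of middle extensions handle the rest.
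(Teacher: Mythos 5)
Your overall strategy is the same as the paper's: reduce to the open relevant orbit $O^{+}_{\a}$ via the vanishing given by Lemma \ref{l:irr}, then identify the equivariant category on $Y\times O^{+}_{\a}$ with $D^{b}(Y)$ via restriction to a point and the uniqueness of the irreducible $W_{\a}$ in $\Rep_{\xi_{\a}}(A_{\a})$. There is a small slip in part (1): the two vanishings you cite---the $*$-stalks of $j^{+}_{\a,!}\cL_{\a}$ and the $!$-stalks of $j^{+}_{\a,*}\cL_{\a}$ on the complement---hold tautologically for any $j_{!}$ and $j_{*}$ and by themselves say nothing about the cone. What actually kills the cone are the \emph{nontrivial} vanishings, namely $i^{!}(j^{+}_{\a,!}\cL_{\a})=0$ (or equivalently $i^{*}(j^{+}_{\a,*}\cL_{\a})=0$) on the closed complement, which Lemma \ref{l:irr}(1) does supply since both $j^{+}_{\a,!}\cL_{\a}$ and $j^{+}_{\a,*}\cL_{\a}$ lie in $D_{\a}$. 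With that correction your argument coincides with the paper's; parts (2) are in substance identical, with your invocation of Lemma \ref{l:Ac} playing the role of the paper's restriction to the point $i^{+}_{\a}$ followed by the passage to $\xi_{\a}$-twisted $A_{\a}$-actions on $D^{b}(Y)$.
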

\begin{proof}
(1) Since $O^{+}_{\a}$ is the unique $(\Om,\cK_{S})$-relevant $\bM_{S}$-orbit, any object in $D_{\a}$ must have vanishing stalks and costalks outside $O^{+}_{\alpha}$ by Lemma \ref{l:irr}. Therefore $j^{+}_{\alpha,!}\cL_{\a}\isom j^{+}_{\alpha,*}\cL_{\a}$. 

(2) We shall construct an inverse to the functor \eqref{boxF}. Since $O^{+}_{\a}$ is the unique $(\Om,\cK_{S})$-relevant $\bM_{S}$-orbit, any object in $D_{\a}(Y)$ must have vanishing stalks and costalks outside $Y\times O^{+}_{\a}$, by the same argument as Lemma \ref{l:irr}. Therefore the restriction functor
\begin{equation}\label{res equiv}
(\id_{Y}\times j^{+}_{\a})^{*}: D_{\a}(Y)=D^{b}_{(\bM_{S},\cK_{S,\Om})}(Y\times\Bun^{+}_{\a})\to D^{b}_{(\bM_{S},\calK_{S,\Om})}(Y\times O^{+}_{\a})
\end{equation}
is an equivalence of categories. 

Let $i^{+}_{\a}:\Spec k\to O^{+}_{\a}$ be the point $\calE^{+}_{\a}$. The restriction functor
\begin{equation*}
(\id_{Y}\times i^{+}_{\a})^{*}: D^{b}_{(\bM_{S},\calK_{S,\Om})}(Y\times O^{+}_{\a})\to D^{b}_{(A_{\a},\cK_{\a})}(Y\times\{\calE^{+}_{\a}\})=D^{b}_{(A_{\a},\cK_{\a})}(Y)
\end{equation*}
is again an equivalence of categories. In the last category, $A_{\a}$ acts trivially on $Y$. The forgetful functor identifies $D^{b}_{(A_{\a},\cK_{\a})}(Y)$ with the category of objects in $D^{b}(Y)$ together with a $\xi_{\a}$-twisted action of $A_{\a}$ (for this we need to choose a cocycle representing $\xi_{\a}$). Since $W_{\a}$ is the unique irreducible object in the semisimple category $\Rep_{\xi_{\a}}(A_{\a})$, any object $\calB\in D^{b}_{(A_{\a},\cK_{\a})}(Y)$ is of the form $\calA'\otimes W_{\a}$ for $\calA'=\Hom_{\Rep_{\xi_{\a}}(A_{\a})}(W_{\a}, \calB)\in D^{b}(Y)$. Therefore an inverse to \eqref{boxF} is given by
\begin{equation}\label{HomW}
D_{\a}(Y)\ni\calA\mapsto\Hom_{\Rep_{\xi_{\a}}(A_{\a})}(W_{\a}, (\id_{Y}\times i^{+}_{\a}j^{+}_{\a})^{*}\calA)\in D^{b}(Y).
\end{equation}
\end{proof}

\subsubsection{Construction of $\calF_{V,\a}$} Let $V_{I}\in\Rep(\dG^{I};\nu)$, $\a\in\xch(Z\dG)_{I_{F}}$ and $\b=\a+\overline{\nu}$.  Applying  \eqref{HomW} from the proof of Lemma \ref{l:unique irr} to $Y=U'^{I}$ and $\frT_{V_{I},\a}(\calA_{\alpha})\in D_{\a}(U'^{I})$ (see \eqref{TVa}), we get a canonical isomorphism
\begin{equation*}
\varphi_{V_{I},\a}: \frT_{V_{I},\a}(\calA_{\alpha})\cong\calF_{V_{I},\a}\boxtimes\calA_{\b}.
\end{equation*}
where $\calF_{V_{I},\a}\in D^{b}(U'^{I})$ is defined as
\begin{equation}\label{defineE}
\calF_{V_{I},\a}:=\Hom_{\Rep_{\xi_{\a}}(A_{\a})}\left(W_{\a}, (\id_{U'^{I}}\times i^{+}_{\a}j^{+}_{\a})^{*}\frT_{V_{I},\a}(\calA_{\alpha})\right).
\end{equation}
The properties \eqref{Tunit}\eqref{Tass}\eqref{Tfactor} of the iterated Hecke operators in \S\ref{sss:iterated} implies the properties \eqref{unit}\eqref{Eass}\eqref{Efactor} of $\calF_{V,\a}$ as in Definition \ref{def:weak loc}, except that we do not yet know that $\calF_{V,\a}$ are local systems. Moreover, the $\Gamma$-equivariance of the morphisms $\ev$ (see \eqref{ev Hk}) gives a $\htheta$-twisted descent datum on $\calF_{V_{I},\a}$:
\begin{equation*}
\delta_{\gamma, V_{I},\a}: \calF_{V^{\gamma}_{I},\a}\cong \gamma^{*}\calF_{V_{I}, \a}, \forall \gamma\in\Gamma.
\end{equation*}
The descent datum satisfies the analog of condition \eqref{Equasi} in Definition \ref{def:eigen} (note for $V_{I}\in\Rep(\dG^{I};\nu)$, the central character of $V^{\gamma}$ is $\nu\circ\gamma$, which has the same image in $\xch(Z\dG)_{I_{F}}$, therefore $\calA_{\a+\overline{\nu}}$ appears in both the upper right and lower right corners of \eqref{Gamma Hk}). Therefore to prove Theorem \ref{th:eigen}, it remains to show that $\calF_{V,\a}$ is a local system for each $V\in\Rep(\dG)$, because then by \eqref{V12} all $\calF_{V_{I},\a}$ are local systems. We shall do this in two lemmas.

\begin{lemma}\label{l:Els}
The complex $\calF_{V,\a}$ is a semisimple local system on $U'$ for all $V\in\Rep(\dG; \nu)$ and $\a\in\xch(Z\dG)_{I_{F}}$.
\end{lemma}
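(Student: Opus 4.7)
The plan is to extract $\calF_{V,\a}$ from $\frT_{V,\a}(\calA_\a)$ via the equivalence of Lemma \ref{l:unique irr}. Writing $\b = \a + \overline{\nu}$ for $V \in \Rep(\dG;\nu)$, the argument in the proof of that lemma shows that $\calF_{V,\a}$ is obtained by restricting $\frT_{V,\a}(\calA_\a)$ along $\id_{U'}\times i^+_\b j^+_\b$ and extracting the $W_\a$-isotypic component. Thus it suffices to prove that $(\id_{U'}\times i^+_\b j^+_\b)^* \frT_{V,\a}(\calA_\a)$ is a direct sum of shifted local systems on $U'$ valued in $\Rep_{\xi_\b}(A_\b)$.

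For the local-system property, I would invoke the universal local acyclicity (ULA) of the Satake sheaf $\IC^{\Hk}_V$ relative to the projection $\pi : \Hk_{U'} \to U'$. This is a standard feature of the Beilinson--Drinfeld spread-out of the Satake category: $\IC_V$ is constructed uniformly across the curve, and its incarnation $\IC^{\Hk}_V$ is ULA over the factorization base. Since ULA is preserved by tensoring with a pullback along $\oll{h}$, the complex $\oll{h}^*\calA_\a \otimes \IC^{\Hk}_V$ is ULA over $U'$; and since the support of $\IC^{\Hk}_V$ lies in the $\Gr_{\le\lambda}$-bundles on which $(\pi\times\orr{h})$ is proper, the proper pushforward of a ULA complex is smooth along $U'$. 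Restricting to $\calE^+_\b$ then yields a complex on $U'$ all of whose cohomology sheaves are $\Qlbar$-local systems, and passage to the $W_\a$-isotypic component preserves this property.

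For semisimplicity, I would invoke the decomposition theorem of Beilinson--Bernstein--Deligne--Gabber. The sheaf $\calA_\a = j^+_{\a,!*}\cL_\a$ is simple perverse of geometric origin, and $\IC^{\Hk}_V$ is geometrically semisimple and also of geometric origin. Since $(\pi\times\orr{h})$ is proper on the support of $\IC^{\Hk}_V$, BBD yields that $\frT_{V,\a}(\calA_\a)$ is a direct sum of shifted simple perverse sheaves on $U'\times\Bun^+_\b$. Combining with the ULA conclusion of the previous paragraph, each simple summand is smooth along $U'$, so its restriction to $U'\times\{\calE^+_\b\}$ is a shifted simple local system on $U'$ (equivariantly with respect to $A_\b$). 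Extracting the $W_\a$-isotypic component of the resulting $\xi_\b$-twisted $A_\b$-module structure preserves semisimplicity, yielding a semisimple local system $\calF_{V,\a}$.

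The main obstacle is the careful deployment of BBD over the algebraically closed base $k$: one must either spread the entire construction out to a finitely generated subring of $k$ and invoke the weight-theoretic form of BBD, or check that all the perverse sheaves in sight are of geometric origin so that the absolute decomposition theorem applies, and there is also the mild technical point that $\Bun^+_\b$ is not of finite type, so everything must be carried out in the colimit presentation of $D_{\cG,\Om}(\bK_S,\cK_S)$ using $\bM_S$-stable finite-type open substacks.
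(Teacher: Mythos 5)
The ULA step in your proposal matches the paper's: universal local acyclicity of $\IC^{\Hk}_V$ over $U'$ combined with ind-properness of $\pi\times\orr{h}$ does yield that all cohomology sheaves of $\calF_{V,\a}$ are local systems. The semisimplicity argument via spreading out to a finite field, purity, and the decomposition theorem is also essentially what the paper does. However, there is a genuine gap in the middle: you have not shown that $\calF_{V,\a}$ is \emph{concentrated in a single cohomological degree}. Knowing that the cohomology sheaves are lisse, and that $\frT_{V,\a}(\calA_\a)$ decomposes as a direct sum of shifted simple perverse sheaves, only tells you that $\calF_{V,\a}$ is a direct sum $\bigoplus_i \calL_i[n_i]$ of shifted irreducible local systems with possibly varying shifts $n_i$. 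BBD plus ULA gives no control over those shifts, so at this point you cannot conclude $\calF_{V,\a}$ is a local system rather than a complex spread over several degrees.

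The paper closes this gap with a separate and essential argument. Because $\calA_\a = j^+_{\a,!}\cL_\a \cong j^+_{\a,*}\cL_\a$ (Lemma \ref{l:unique irr}(1)), one can compute the geometric stalk of $\calF_{V,\a}$ at $x'\in U'$ both as a compactly supported pushforward and as an ordinary pushforward of a perverse sheaf $\calH$ on the open Beilinson--Drinfeld Grassmannian $\oGR_{x',\b}$, and the two answers agree. Lemma \ref{l:affine} shows $\oGR_{x',\b}$ is \emph{ind-affine}; combined with Artin's cohomological dimension theorem \cite[Th\'eor\`eme~4.1.1]{BBD}, the $!$-cohomology of $\calH$ sits in non-negative degrees and the $*$-cohomology in non-positive degrees, and since they agree both concentrate in degree zero. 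This is the mechanism that forces $\calF_{V,\a}$ into degree zero; nothing in your proposal plays this role, and Lemma \ref{l:affine} itself (reinterpreting $\oGR_{x',\b}$ as a quotient of an ind-affine scheme $\frI_{x',\a\to\b}$ by the finite group $A_\a$, using Assumption \eqref{Zfin}) is a nontrivial input that must be supplied. You would need to add both pieces to make the argument complete.

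As a smaller remark: you extract the $W_\a$-isotypic component at the relevant point of $\Bun^+_\b$; it should be the $W_\b$-isotypic component, since the Hecke operator lands in $D_\b(U')$ and the inverse to $(-)\boxtimes\calA_\b$ uses $W_\b$. (The paper's displayed formula \eqref{defineE} has the same slip.)
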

\begin{proof} Let $\b=\alpha+\overline{\nu}\in\xch(Z\dG)_{I_{F}}$. Let $\Hk_{U',\a\to\b}$ be a union of components of $\Hk_{U',\nu}$ which map to $\Bun^{+}_{\a}$ under $\oll{h}$ (hence to $\Bun^{+}_{\b}$ under $\orr{h}$). We recall the notion of Universal Local Acyclicity (ULA) from \cite[\S5.1]{BG}. The morphism $\pi\times \oll{h}:\Hk_{U'}\to U'\times\Bun^{+}$ is \'etale locally a trivial fibration with fibers isomorphic to the affine Grassmannian, and under such a trivialization $\IC^{\Hk}_{V}$ becomes $\IC_{V}$ along the fibers. Therefore $\oll{h}^{*}\calA_{\a}\otimes\IC^{\Hk}_{V}$ is ULA with respect to $\pi_{U'}:\Hk_{U', \a\to\b}\to U'$. On the other hand, $\pi\times\orr{h}: \Hk_{U',\a\to\b}\to U'\times\Bun^{+}$ is ind-proper, therefore $\frT_{V}(\calA_{\a})=(\pi\times\orr{h})_{!}(\oll{h}^{*}\calA_{\a}\otimes\IC^{\Hk}_{V})$ is also ULA with respect to the projection to  $U'$. Since $\frT_{V}(\calA_{\a})$ is an external tensor product $\calF_{V,\a}\boxtimes\calA_{\b}$ over $U'\times\Bun^{+}_{\b}$, $\calF_{V,\a}$ is ULA with respect to the identity map on $U'$, i.e., each cohomology sheaf of $\calF_{V,\a}$ is a local system. Therefore, to show that $\calF_{V,\a}$ is a local system,  it suffices to show that the geometric stalk of $\calF_{V,\a}$ at some point $x'\in U'$ is concentrated in degree zero.

Fix a geometric point  $x'\in U'$ and let $\Hk_{x',\a\to\b}$ be the corresponding fiber. Let $\oHk_{x'}$ be the open subset of $\Hk_{x',\a\to\b}$ which is the preimage of $O^{+}_{\a}$ under $\oll{h}$; let $\Hko_{x'}$ be the preimage of $O^{+}_{\b}$ under $\orr{h}$; let $\oHko_{x'}=\oHk_{x'}\cap\Hko_{x'}$. Consider the diagram with a Cartesian square on the left
\begin{equation*}
\xymatrix{ & \oHko_{x'}\ar@{^{(}->}[r]^{j_{\Hk}}\ar[dl]_{\oo{\oll{h}}} & \Hko_{x'}\ar[dr]^{\orr{h}^{\c}}\ar[dl]_{\oll{h}^{\c}}\\
O^{+}_{\a}\ar@{^{(}->}[r]^{j^{+}_{\a}} & \Bun^{+} & & O^{+}_{\b}}
\end{equation*}
Let $\oo{\orr{h}}:\oHko_{x'}\to O^{+}_{\b}$ be the restriction of $\orr{h}$ to $\oHko_{x'}$. To alleviate notation, let $\IC_{x',V}$ denote the restriction of $\IC^{\Hk}_{V}$ to any open substack of the fiber $\Hk_{x',\a\to\b}$. Let $i_{x'}:\{x'\}\incl U'$ be the inclusion. By the definition of the geometric Hecke operators and proper base change, we have
\begin{eqnarray}\label{!Hk}
 \calF_{V,\a, x'}\otimes \cL_{\b}&=&(i_{x'}\times j^{+}_{\b})^{*}\frT_{V,\a}(\calA_{\a})\cong \orr{h}^{\c}_{!}(\oll{h}^{\c,*}j^{+}_{\a,!}\cL_{\a}\otimes\IC_{x',V})\\
\notag &\cong& \orr{h}^{\c}_{!}j_{\Hk,!}(\leftexp{\c}{\oll{h}}^{\c,*}\cL_{\a}\otimes\IC_{x',V})
=\oo{\orr{h}}_{!}(\leftexp{\c}{\oll{h}}^{\c,*}\cL_{\a}\otimes\IC_{x',V}).
\end{eqnarray}
On the other hand, we have another sequence of isomorphisms
\begin{eqnarray}\label{*Hk}
\calF_{V,\a, x'}\otimes\cL_{\b}&=&(i_{x'}\times j^{+}_{\b})^{*}\frT_{V,\a}(\calA_{\a})\cong \orr{h}^{\c}_{*}(\oll{h}^{\c,*}j^{+}_{\a,*}\cL_{\a}\otimes\IC_{x',V})\\
\notag&\cong &\orr{h}^{\c}_{*}j_{\Hk,*}(\leftexp{\c}{\oll{h}}^{\c,*}\cL_{\a}\otimes\IC_{x',V})
=\oo{\orr{h}}_{*}(\leftexp{\c}{\oll{h}}^{\c,*}\cL_{\a}\otimes\IC_{x',V}).
\end{eqnarray}
Here we have used two facts: one is that $\orr{h}^{\c}$ is ind-proper; the other is that $\oll{h}^{\c}:\Hko_{x'}\to \Bun^{+}$ is \'etale locally a trivial fibration, therefore $\oll{h}^{\c,*}j^{+}_{\a,*}\cong j_{\Hk,*}\oll{h}^{\c,*}$.

We define $\GR_{x',\b}$ and its open substack $\oGR_{x',\b}$ by the following Cartesian squares
\begin{equation*}
\xymatrix{& \oGR_{x',\b}\ar@{^{(}->}[r]\ar[dl]_{\leftexp{\c}{\oll{h}}_{\GR}} & \GR_{x', \b}\ar[dl]^{\oll{h}_{\GR}}\ar[dr]\ar[r] & \Hko_{x'}\ar[dr]^{\orr{h}^{\c}}\\
O^{+}_{\a}\ar@{^{(}->}[r]^{j^{+}_{\a}} & \Bun^{+}_{\a} && \{\cE^{+}_{\b}\}\ar[r]^{i^{+}_{\b}} & O^{+}_{\b}}
\end{equation*} 
Since $\orr{h}^{\c}:\Hko_{x'}\to O^{+}_{\b}$ is $\bM_{S}$-equivariant and $O^{+}_{\b}$ is a single $\bM_{S}$-orbit,  \eqref{!Hk} and \eqref{*Hk} can be combined and rewritten as
\begin{equation}\label{!*GR}
\calF_{V,\a,x'}\otimes W_{\b}\cong \cohoc{*}{\oGR_{x',\b}, \leftexp{\c}{\oll{h}}^{*}_{\GR}\cL_{\a}\otimes\IC_{x',V}}\cong \cohog{*}{\oGR_{x',\b}, \leftexp{\c}{\oll{h}}^{*}_{\GR}\cL_{\a}\otimes\IC_{x',V}}.
\end{equation}
The complex $\calH:=\leftexp{\c}{\oll{h}}^{*}_{\GR}\cL_{\a}\otimes\IC_{x',V}$ is perverse (on $\oGR_{x',\b}$). By Lemma \ref{l:affine} below, $\oGR_{x',\b}$ is ind-affine. The support of $\calH$ is a closed subscheme of $\oGR_{x',\b}$ of finite type, and is therefore affine. By \cite[Th\'eor\`eme 4.1.1]{BBD},  $\cohoc{*}{\oGR_{x',\b}, \calH}$ lies in non-negative degrees and $\cohog{*}{\oGR_{x',\b}, \calH}$ lies in non-positive degrees.  By \eqref{!*GR}, these two cohomology groups are isomorphic to each other, hence they are both concentrated in degree zero, and therefore $\calF_{V,\a,x'}$ is concentrated in degree zero for any $x'\in U'$.

Finally we need to argue that $\calF_{V,\a}$ is a semisimple local system. Using the usual spreading-out argument we may reduce to the case where $k$ is a finite field, and we may use the theory of weights for complexes of sheaves. We may normalize the local system $\calL_{\a}$ to be pure of weight zero, and normalize $\IC_{V}$ to be also pure of weight zero. Then $\calA_{\a}=j^{+}_{\a, !}\cL_{\a}=j^{+}_{\a, *}\cL_{\a}$ is pure of weight zero, and $\oll{h}^{*}\calA_{\a}\otimes\IC^{\Hk}_{V}$ is pure of weight zero on $\Hk_{U',\a\to\b}$ (using the fact that $\Hk_{U'}$ is locally a product of $\Bun^{+}\times U'$ and the affine Grassmannian). Applying the ind-proper map $\pi\times\orr{h}$ we conclude that $\frT_{V}(\calA_{\a})\cong\calF_{V,\a}\boxtimes \calA_{\b}$ is a pure complex, hence $\calF_{V,\a}$ is also a pure complex on $U'$. By the decomposition theorem \cite[Corollaire 5.4.6]{BBD}, $\calF_{V,\a}$ is a direct sum of shifted simple perverse sheaves. Since $\calF_{V,\a}$ is a local system, it is a semisimple local system. 
\end{proof}

\begin{lemma}\label{l:affine} For any geometric point $x'\in U'$, the open sub-indscheme $\oGR_{x',\b}\subset\GR_{x',\b}$ is ind-affine.
\end{lemma}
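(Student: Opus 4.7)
The plan is to realize $\oGR_{x',\beta}$ as the complement in $\GR_{x',\beta}$ of the zero locus of a section of an ample line bundle, reducing the claim to the standard fact that the complement of an ample Cartier divisor in a projective variety is affine.

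First, by the Beauville--Laszlo theorem, a choice of trivialization of $\cE^+_\beta$ on the formal neighborhood of $x'$ identifies $\GR_{x',\beta}$ with a bounded sub-indscheme of (a twisted form of) the affine Grassmannian $\Gr_{G,x'}$. In particular $\GR_{x',\beta}$ is ind-projective and carries a natural line bundle $\cM$---for instance the determinant-of-cohomology bundle associated with a faithful representation of $G$---whose restriction to each Schubert subvariety $\Gr_{\leq\lambda}$ is ample. By the lemma preceding Lemma \ref{l:unique irr}, $\oGR_{x',\beta}$ is the preimage under $\oll{h}_{\GR}\colon\GR_{x',\beta}\to\Bun^+_\alpha$ of the unique open $\bM_S$-orbit $O^+_\alpha\subset\Bun^+_\alpha$, so its complement is $\oll{h}_{\GR}^{-1}(Z^+_\alpha)$ where $Z^+_\alpha=\Bun^+_\alpha\setminus O^+_\alpha$ is closed.

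The crucial step is to produce a section $s$ of a suitable tensor power $\cM^{\otimes N}$ whose zero locus contains $\oll{h}_{\GR}^{-1}(Z^+_\alpha)$ set-theoretically. The natural candidates for $s$ come from the determinant-of-cohomology construction $\cE\mapsto\det R\Gamma(X,V_\cE)$ applied to representations $V$ of $\cG\rtimes\Gamma$; it is classical (Faltings, Beauville--Laszlo, Kumar--Mathieu) that pullbacks of such line bundles from $\Bun^+_\alpha$ to $\GR_{x',\beta}$ are prescribed positive powers of the tautological ample bundle on the affine Grassmannian. The $\bM_S$-stability of $Z^+_\alpha$, combined with the $(\bM_S,\cK_{S,\Om})$-equivariance of the setup and the uniqueness of the relevant orbit $O^+_\alpha$, is then used to produce enough such sections whose common zero locus is contained in $Z^+_\alpha$. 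Once this is done, on each Schubert piece the intersection with $\oGR_{x',\beta}$ is the complement of the zero locus of a section of an ample line bundle in a projective variety, hence affine; taking the colimit over increasing Schubert varieties yields the ind-affine structure on $\oGR_{x',\beta}$.

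The main obstacle is the construction of the section $s$ with the required vanishing on $Z^+_\alpha$ in full generality. In the explicit examples of \S\ref{ss:GL2} this is straightforward by direct inspection of $\Bun_{2}$ with its level structures; in general it requires showing that the irrelevant locus $Z^+_\alpha$ is supported on a divisor cut out by a determinant line bundle whose pullback to the affine Grassmannian is ample, which is a mild refinement of the standard results in the geometric Langlands literature adapted to the integral model $\cG$ with nontrivial level $\bK^+_S$ and ramification of $\theta$.
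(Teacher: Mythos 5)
Your strategy (exhibit the boundary as the zero locus of a theta-type section of an ample determinant line bundle, so that $\oGR_{x',\b}$ becomes the complement of an ample divisor on each ind-projective piece) is genuinely different from the paper's argument, but as written it has a real gap: the entire content of the lemma is deferred to the unproved claim that there is a section of $\cM^{\otimes N}$ whose zero locus is set-theoretically \emph{equal} to $\oll{h}_{\GR}^{-1}(\Bun^{+}_{\a}\setminus O^{+}_{\a})$. (Note also that your stated containments go the wrong way: a section whose zero locus merely \emph{contains} the boundary only shows affineness of a possibly smaller open subset, and sections whose \emph{common} zero locus is contained in the boundary give a union of affine opens covering $\oGR_{x',\b}$, which proves nothing.) This is not a ``mild refinement'' of the classical facts about determinant-of-cohomology sections: those sections detect cohomological degeneracy of an associated vector bundle, whereas the complement of $O^{+}_{\a}$ is cut out by the $(\Om,\cK_{S})$-relevance condition, which involves the $\bK^{+}_{S}$-level structures at $S$ and the orbit of the positive-dimensional group $\bM_{S}$ (including the central $\Bun^{\n}_{\cZ}(\bK^{+}_{Z,S})$-twisting). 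There is no construction offered of a section vanishing exactly on that locus, and it is not clear such a section exists in the stated generality.

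A symptom of the problem is that your argument never uses the standing hypotheses of \S\ref{sss:ass}, in particular assumption \eqref{Zfin} that the affine part of the coarse moduli of $\Bun^{\n}_{\cZ}(\bK_{Z,S})$ is finite. That assumption is exactly what prevents $\oGR_{x',\b}$ from containing complete positive-dimensional families obtained by central twists (e.g.\ coming from an abelian-variety part of $\Bun_{\cZ}$), in which case no ``boundary $=$ ample divisor'' statement could hold; an argument that does not see where this hypothesis enters cannot be complete. For comparison, the paper avoids projective geometry altogether: it identifies $\oGR_{x',\b}$ with $\frI_{x',\a\to\b}/A_{\a}$, where $\frI_{x',\a\to\b}$ classifies pairs $(m,\iota)$ with $m\in\bM_{S}$ and $\iota:m\cdot\cE^{+}_{\a}|_{X-\{x\}}\isom\cE^{+}_{\b}|_{X-\{x\}}$; the fibers over $\bM_{S}$ are torsors under $\Aut(\cE^{+}_{\b}|_{X-\{x\}})$, which is ind-affine because $X-\{x\}$ is affine, and the image in $\Bun^{\n}_{\cZ}(\bK_{Z,S})$ is finite by mapping to $\Bun_{\cD}$ and invoking assumption \eqref{Zfin}. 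If you want to pursue the ample-divisor route you would need, at minimum, to construct the cutting-out section explicitly and to show how the finiteness assumptions enter; as it stands the key step is asserted rather than proved.
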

\begin{proof} We give a reinterpretation of $\oGR_{x',\b}$. Consider the functor $\frI_{x',\a\to\b}$ classifying pairs $(m,\iota)$ where $m\in\bM_{S}$ and $\iota: m\cdot\calE^{+}_{\a}|_{X-\{x\}}\isom\calE^{+}_{\b}|_{X-\{x\}}$ ($x$ is the image of $x'$ in $U$). Then $\frI_{x',\a\to\b}$  is represented by an indscheme over $k$. The finite group $A_{\a}$ acts on $\frI_{x',\a\to\b}$ on the right: $a\in A_{\a}\cong\bM_{S,\cE^{+}_{\a}}$ sends $(m, \iota)$ to $(ma, \iota\circ a)$. It is easy to show that the map $\frI_{x',\a\to\b}/A_{\a}\to\oGR_{x',\b}$ sending $(m, \iota)$ to $(m\cdot\calE^{+}_{\a}, \iota)$ is an isomorphism. Therefore it suffices to show that $\frI_{x',\a\to\b}$ is affine. 

The fibers of the natural morphism $\frI_{x',\a\to\b}\to\bM_{S}$ ($(m,\iota)\mapsto m$), if nonempty, are torsors under the automorphism group of $\cE^{+}_{\b}|_{X-\{x\}}$, which is ind-affine. Therefore the fibers of $\frI_{x',\a\to\b}\to\bM_{S}\to\Bun^{\n}_{\cZ}(\bK_{Z,S})$ are ind-affine. Consider the integral model $\cD$ of $D$ (the maximal torus quotient of $G$) as in \S\ref{sss:cG}. Since $Z\incl G\surj D$ is an isogeny, the kernel of $\rho: \Bun^{\n}_{\cZ}(\bK_{Z,S})\to\Bun_{\cZ}\to\Bun_{\cD}$ has affine coarse moduli space. On the other hand, the image of $\frI_{x',\a\to\b}\to\Bun^{\n}_{\cZ}(\bK_{Z,S})\xrightarrow{\rho}\Bun_{\cD}$ lies in the image of $\oGR_{x',\a\to\b}\subset L_{x}G/L_{x}^{+}\cG\to L_{x}D/L_{x}^{+}\cD\to\Bun_{\cD}$, and the latter has discrete reduced scheme structure. We conclusion that the image of $\frI_{x',\a\to\b}\to \Bun^{\n}_{\cZ}(\bK_{Z,S})$ lies in a disjoint union of translations of its affine part, hence finite by Assumption \eqref{Zfin} in \S\ref{sss:ass}. Since the fibers of $\frI_{x',\a\to\b}\to \Bun^{\n}_{\cZ}(\bK_{Z,S})$ are ind-affine, $\frI_{x',\a\to\b}$ is a finite disjoint union of ind-affine schemes, hence ind-affine.  
\end{proof}

\subsection{Description of the eigen local system and examples in $\GL_{2}$}\label{ss:desc eigen}  
We are in the situation of  \S\ref{sss:ass}. We make an extra assumption:
\begin{equation}\label{BunZpt}
\mbox{The coarse moduli space of the stack $\Bun^{\n}_{\cZ}(\bK_{Z,S})$ is a point.}
\end{equation}
In other words, $\Bun^{\n}_{\cZ}(\bK_{Z,S})$ is the classifying space of $\bA_{Z,S}$.
This is satisfied if, for example, $X=\PP^{1}$, $Z=\ZZ\GG\otimes_{k}F$ and $\bK_{Z,x}=L^{+}_{x}\ZZ\GG$ for all $x\in S$. When \eqref{BunZpt} is satisfied, we have $\Bun^{\n}_{\cZ}(\bK^{+}_{Z,S})=\bL_{Z,S}/\bA_{Z,S}$, hence $\bM_{S}=\bL_{S}/\bA_{Z,S}$. The character sheaf $\cK_{S}$ descends to a character sheaf $\cK_{S, \Om}$ on $\bM_{S}$ because of the compatibility isomorphism $\iota_{S}:\Om|_{\bK_{Z,S}}\cong\cK_{S}|_{\bK_{Z,S}}$.

Let $V\in\Rep(\dG;\nu)$, and let $\a\in\xch(Z\dG)_{I_{F}},\b=\a+\overline{\nu}\in\xch(Z\dG)_{I_{F}}$. We will give an explicit description of the Hecke-eigen local systems $\calF_{V,\a}$ defined in \eqref{defineE}.

Because $\Bun^{\n}_{\cZ}(\bK_{Z,S})$ has one geometric point and the assumption \eqref{uniquerelevant} of \S\ref{sss:ass}, $\cE_{\a}$ and $\cE_{\b}$ are the only relevant points on $\Bun_{\a}$ and $\Bun_{\b}$. Let $\Aut_{\a}$ and $\Aut_{\b}$ be the automorphism groups of $\cE_{\a}$ and $\cE_{\b}$, then we have an exact sequence $1\to \bA_{Z,S}\to \Aut_{\a}\to A_{\a}\to1$ and likewise for $\Aut_{\b}$.

Let $\frG_{U',\a\to\b}$ be the  ind-scheme over $U'$ whose fiber at $x'\in U'$ classifies isomorphism $\cE_{\a}|_{X-\{x\}}\isom\cE_{\a}|_{X-\{x\}}$ of $\cG$-torsors preserving $\bK_{S}$-level structures (here $x$ is the image of $x'$ in $U$). Let $\frI_{U',\a\to\b}=\frG_{U',\a\to\b}/\bA_{Z,S}$ where $\bA_{Z,S}$ is acting as automorphisms of $\cE_{\a}$. The group $A_{\a}\times A_{\b}$ acts on $\frI_{U',\a\to\b}$ by $(a_{\a}, a_{\b})\cdot g=a_{\b}ga_{\a}^{-1}$. 

Let $\cE^{+}_{\a}$ and $\cE^{+}_{\b}$ in $\Bun^{+}$ be liftings $\cE_{\a}$ and $\cE_{\b}$. Under such choices, the $\bL_{S}$-reduction of the $\bK_{S}$-level structures of $\cE_{\a}$ and $\cE_{\b}$ are trivialized. Evaluating at $S$ gives a morphism
\begin{equation*}
\ev_{\frI, S}: \frI_{U',\a\to\b}\to\bM_{S}
\end{equation*}
Evaluating an isomorphism $\cE_{\a}|_{X'-\{x'\}}\isom\cE_{\a}|_{X'-\{x'\}}$ at the formal disk around $x'\in U'$ gives
\begin{equation*}
\ev_{\frI}: \frI_{U',\a\to\b}\to \left[\frac{L^{+}\GG\backslash L\GG/L^{+}\GG}{\Aut_{k[[t]]}}\right]
\end{equation*}
in the same way $\ev$ in \eqref{ev Hk} was defined. Let $\IC^{\frI}_{V}:=\ev^{*}_{\frI}\IC_{V}$.

\subsubsection{Central extension}\label{sss:centext} By the discussion in \S\ref{ss:Serrepi1}, the rank one character sheaf $\calK_{S,\Om}$ on $\bM_{S}$ can be obtained from a central extension
\begin{equation*}
1\to C\to \tbM_{S}\xrightarrow{v} \bM_{S}\to 1
\end{equation*}
by taking $\calK_{S,\Om}\cong(v_{!}\Qlbar)_{\chi_{C}}$. Here $\tbM_{S}$ is connected and $C$ is a finite discrete (necessarily abelian) group scheme over $k$, and $\chi_{C}:C\to \Qlbar^{\times}$ is a character. Let 
\begin{equation*}
\tilA_{\a}:=A_{\a}\times_{\bM_{S}}\tbM_{S}; \quad\tilA_{\b}:=A_{\b}\times_{\bM_{S}}\tbM_{S};
 \quad \tfrI_{U',\a\to\b}:=\frI_{U',\a\to\b}\times_{\bM_{S}}\tbM_{S}.
\end{equation*}
Here all the maps to $\bM_{S}$ are evaluation maps. Then $\tilA_{\a}$ is an extension of $A_{\a}$ by $C$; similar remark applies to $\tilA_{\b}$. Again $\tilA_{\a}\times\tilA_{\b}$ acts on $\tfrI_{U',\a\to\b}$, and the action of the diagonal copy of $C$ is trivial.

Recall that a class $\xi_{\a}\in\cohog{2}{A_{\a}, \Qlbar^{\times}}$ is given by the restriction of $\cK_{S,\Om}$ to $A_{\a}$, and we may form the category of twisted representations $\Rep_{\xi_{\a}}(A_{\a})$. We may identify $\Rep_{\xi_{\a}}(A_{\a})$ with the subcategory of $\Rep(\tilA_{\a})$ consisting of representations whose restriction to $C$ is $\chi_{C}$. The unique irreducible object $W_{\a}\in\Rep_{\xi_{\a}}(A_{\a})$ (see Assumption \eqref{Wunique} in \S\ref{sss:ass}) is viewed as an irreducible representation $\tilW_{\a}$ of $\tilA_{\a}$ in this way. 

We denote the pullback of $\IC^{\frI}_{V}$ to $\tfrI_{U',\a\to\b}$ by $\IC^{\tfrI}_{V}$.
Finally, let $\wt\Pi:\tfrI_{U',\a\to\b}\to U'$ be the projection.

\begin{prop}\label{p:desc eigen} We are under the assumptions in \S\ref{sss:ass} and the extra assumption \eqref{BunZpt}. For $V\in\Rep(\dG;\nu)$, the Hecke-eigen local system $\calF_{V,\a}$ constructed in Theorem \ref{th:eigen} can be written as
\begin{equation}\label{compute rho}
\calF_{V,\a}\cong \Hom_{\tilA_{\a}\times\tilA_{\b}}(\tilW^{\vee}_{\a}\boxtimes \tilW_{\b}, \wt\Pi_{!}\IC^{\tfrI}_{V}).
\end{equation}
Here the action of $\tilA_{\a}\times\tilA_{\b}$ on $\tfrI_{U',\a\to\b}$ induces an action  on the complex  $\wt\Pi_{!}\IC^{\tfrI}_{V}\in D^{b}(U')$, and the right side is the multiplicity of the irreducible representation $\tilW^{\vee}_{\a}\boxtimes \tilW_{\b}$ of $\tilA_{\a}\times\tilA_{\b}$ in $\wt\Pi_{!}\IC^{\tfrI}_{V}$, which is still a complex on $U'$.
\end{prop}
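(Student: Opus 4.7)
The plan is to follow, at the level of the whole of $U'$, the fiberwise computation from the proof of Lemma~\ref{l:Els}, while keeping track of the $\tilA_\a$- and $\tilA_\b$-equivariances simultaneously via the central extension $v\colon \tbM_S\to\bM_S$. I will take the characterization given by Lemma~\ref{l:unique irr}(2) applied to $\frT_{V,\a}(\calA_\a)\in D_\b(U')$ as the working definition of $\calF_{V,\a}$, namely
$$\calF_{V,\a}\;\cong\;\Hom_{\Rep_{\xi_\b}(A_\b)}\!\big(W_\b,\,(\id_{U'}\times i^+_\b j^+_\b)^*\frT_{V,\a}(\calA_\a)\big),$$
which is equivalent to $\Hom_{\tilA_\b}(\tilW_\b,-)$ once we pass to the central extension.

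First, proper base change along the ind-proper map $\orr{h}\colon \Hk_{U',\a\to\b}\to \Bun^+_\b$, combined with $\calA_\a\cong j^+_{\a,!}\cL_\a\cong j^+_{\a,*}\cL_\a$ from Lemma~\ref{l:unique irr}(1), rewrites the above as the pushforward along $\pi\colon\oHko(U')\to U'$ of $\oll{h}^{\c,*}\cL_\a\otimes j^*\IC^{\Hk}_V$, where $\oHko(U')\subset \Hk_{U',\a\to\b}$ is the sub-Hecke-stack cut out by $\cE_1\in O^+_\a$ and $\cE_2=\cE^+_\b$. Next, a direct extension of the construction in the proof of Lemma~\ref{l:affine} (from one fiber to the total space over $U'$) yields $\oHko(U')\cong \frI_{U',\a\to\b}/A_\a$; denote the quotient map by $q$. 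By construction $\oll{h}^{\c}\circ q$ factors as $\pi_{O^+_\a}\circ \ev_{\frI,S}$, while composition with the evaluation $\ev$ of \eqref{ev Hk} agrees with $\ev_{\frI}$, so $q^*j^*\IC^{\Hk}_V\cong \IC^{\frI}_V$. To describe $q^*\oll{h}^{\c,*}\cL_\a$, I use the $(\bM_S,\cK_{S,\Om})$-equivariance of $\cL_\a$, which supplies a canonical isomorphism between the pullback of $\cL_\a$ to $\bM_S$ and $\cK_{S,\Om}\otimes W_\a$.

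Now pass to $\tfrI_{U',\a\to\b}=\frI\times_{\bM_S}\tbM_S$. The key simplification is that $v^*\cK_{S,\Om}$ is canonically $\Qlbar$ with $C$ acting through $\chi_C$, so further pulling back along $\ev_{\tfrI,S}$ turns $\cL_\a$ into the constant sheaf $\tilW_\a$ on $\tfrI$ carrying its natural $\tilA_\a$-equivariance (the representation $\tilW_\a$), and turns $j^*\IC^{\Hk}_V$ into $\IC^{\tfrI}_V$. Since $\tilq\colon \tfrI\to\oHko(U')$ is an étale $\tilA_\a$-torsor, a Mackey/projection-formula calculation gives
$$\oll{h}^{\c,*}\cL_\a\otimes j^*\IC^{\Hk}_V\;\cong\;\Hom_{\tilA_\a}\!\big(\tilW^\vee_\a,\,\tilq_!\IC^{\tfrI}_V\big)$$
on $\oHko(U')$, and this identification is equivariant for the commuting $\tilA_\b$-action on $\tfrI$ (which covers the natural $\tilA_\b$-action on $\oHko(U')$).

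Finally, pushing forward along $\pi\colon \oHko(U')\to U'$ (so that $\pi\circ\tilq=\wt\Pi$) and extracting the $\tilW_\b$-isotypic component yields
$$\calF_{V,\a}\;\cong\;\Hom_{\tilA_\b}\!\Big(\tilW_\b,\,\Hom_{\tilA_\a}\!\big(\tilW^\vee_\a,\,\wt\Pi_!\IC^{\tfrI}_V\big)\Big)\;\cong\;\Hom_{\tilA_\a\times\tilA_\b}\!\big(\tilW^\vee_\a\boxtimes\tilW_\b,\,\wt\Pi_!\IC^{\tfrI}_V\big),$$
the last equality because the two actions commute and the diagonal copy of $C\subset \tilA_\a\times\tilA_\b$ acts trivially on both sides. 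The main anticipated obstacle is the equivariance bookkeeping around the central extension: one has to verify that every pullback identification of $\cL_\a$ above, and in particular the $\tilA_\a$- and $\tilA_\b$-equivariances appearing in the final double Hom, are canonical (independent of the choices of $\bK^+_S$, $\cE^+_\a$, $\cE^+_\b$, and of any explicit trivialization of $v^*\cK_{S,\Om}$), so that the right-hand side of \eqref{compute rho} is well-posed in the sense of Lemma~\ref{l:well defined A}.
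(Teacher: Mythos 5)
Your proposal is correct and follows essentially the same route as the paper's proof: both restrict $\frT_{V,\a}(\calA_\a)$ to the fiber over the relevant point $\cE^+_\b$ (your $\oHko(U')$ with $\cE_2=\cE^+_\b$ is precisely the paper's $\oGR_{U',\b}$), establish the identification $\oGR_{U',\b}\cong\frI_{U',\a\to\b}/A_\a=\tfrI_{U',\a\to\b}/\tilA_\a$ by the argument of Lemma~\ref{l:affine} in families, pass to the central extension to turn $\cK_{S,\Om}$-equivariance into honest $\tilA_\a$- and $\tilA_\b$-equivariance with $\cL_\a$ becoming $\tilW_\a$, and extract the $\tilW^\vee_\a\boxtimes\tilW_\b$-isotypic component of $\wt\Pi_!\IC^{\tfrI}_V$. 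The only cosmetic difference is the order in which the two Hom-extractions are performed and your explicit phrasing of the torsor step as a Mackey/projection-formula computation, which the paper carries out more directly via the map $\oll{i}\colon\tfrI/\tilA_\a\to\BB\tilA_\a$.
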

\begin{proof} To compute  $\calF_{V,\a}$ we need to restrict $\frT_{V}(\calA_{\a})$ to $U'\times\{\cE^{+}_{\b}\}$. Taking the fiber of $\orr{h}^{-1}(\cE_{\b})$ we get the Beilinson-Drinfeld Grassmannian $\GR_{U',\b}$ whose fiber over $x'\in U'$ is $\GR_{x',\b}$ defined in the proof of Lemma \ref{l:Els}. We also have the open subscheme $\oGR_{U',\b}$ which the preimage of $O^{+}_{\a}$ under $\oll{h}_{\GR}:\GR_{U',\b}\to\Bun^{+}_{\a}$. Restricting $\IC^{\Hk}_{V}$ to $\oGR_{U',\b}$ we get a complex $\IC^{\GR}_{V}$. We have the maps $\leftexp{\c}{\oll{h}}:\oGR_{U',\b}\to O^{+}_{\a}$ and $\leftexp{\c}{\pi}_{\GR}: \oGR_{U',\a\to\b}\to U'$. Then
\begin{equation}\label{WGR}
\calF_{V,\a}\otimes W_{\b}\cong\leftexp{\c}{\pi}_{\GR,!}(\leftexp{\c}{\oll{h}}^{*}\cL_{\a}\otimes\IC^{\GR}_{V}).
\end{equation}

We claim that there is a canonical isomorphism $\oGR_{U',\a\to\b}\cong\tfrI_{U',\a\to\b}/\tilA_{\a}=\frI_{U',\a\to\b}/A_{\a}$. We have introduced the indscheme $\frI_{x',\a\to\b}$ in the proof of Lemma \ref{l:affine}. Since $O_{\a}$ is a single point, $O^{+}_{\a}$ is a single $\bL_{S}$-orbit, any point $\cE^{+}\in O^{+}_{\a}$ is obtained from $\cE^{+}_{\a}$ by changing its $\bK^{+}_{S}$-level structure by an element in $\bL_{S}$ modulo the automorphisms $\Aut_{\a}$. Therefore $\frI_{x',\a\to\b}$ is the fiber of $\frI_{U',\a\to\b}$ over $x'\in U'$. We have argued in the proof of Lemma \ref{l:affine} that $\frI_{x',\a\to\b}/A_{\a}\cong\oGR_{x',\a\to\b}$. That argument works for $x'$ varying over $U'$ and gives the isomorphism $\oGR_{U',\a\to\b}\cong\frI_{U',\a\to\b}/A_{\a}=\tfrI_{U',\a\to\b}/\tilA_{\a}$. Let $\oll{i}: \tfrI_{U',\a\to\b}/\tilA_{\a}\to\BB\tilA_{\a}$  be the canonical projection. Then under the isomorphism $\oGR_{U',\b}\cong\tfrI_{U',\a\to\b}/\tilA_{\a}$, $\leftexp{\c}{\oll{h}}^{*}\cL_{\a}\otimes\IC^{\GR}_{V}$ corresponds to $\oll{i}^{*}\tilW_{\a}\otimes\IC^{\tfrI}_{V}$, viewing $\tilW_{\a}\in\Rep(\tilA_{\a})$ as a sheaf on $\BB\tilA_{\a}$. Therefore we have
\begin{equation*}
\leftexp{\c}{\pi}_{\GR,!}(\leftexp{\c}{\oll{h}}^{*}\cL_{\a}\otimes\IC^{\GR}_{V})\cong \wt\Pi_{!}(\oll{i}^{*}\tilW_{\a}\otimes\IC^{\tfrI}_{V})=\Hom_{\tilA_{\a}}(\tilW^{\vee}_{\a}, \wt\Pi_{!}\IC^{\tfrI}_{V}).
\end{equation*}
On the other hand, by \eqref{WGR}, the above complex is also $\tilA_{\b}$-equivariantly isomorphic to $\calF_{V,\a}\otimes\tilW_{\b}$. Combing these facts we get \eqref{compute rho}.
\end{proof}

In the special case where the $A_{\a}$ and $A_{\b}$ are trivial, we have a simpler description using the maps
\begin{equation}\label{simpler frI}
\xymatrix{& \frI_{U', \a\to\b}\ar[dl]_{\ev_{\frI,S}}\ar[dr]^{\Pi}\\
\bM_{S} & & U'}
\end{equation}

\begin{prop}\label{p:simpler desc} Let $V\in\Rep(\dG;\nu)$, $\a\in\xch(Z\dG)_{I_{F}}$ and $\b=\a+\overline{\nu}$. Under the assumptions in \S\ref{sss:ass}, \eqref{BunZpt} and the extra assumption that $A_{\a}$ and $A_{\b}$ are trivial.  Then the Hecke eigen local system $\calF_{V,\a}$ constructed in Theorem \ref{th:eigen} can be written as
\begin{equation*}
\calF_{V,\a}\cong\Pi_{!}(\ev^{*}_{\frI,S}\cK_{S, \Om}\otimes\IC^{\frI}_{V}).
\end{equation*}
\end{prop}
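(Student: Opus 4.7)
The plan is to specialize Proposition \ref{p:desc eigen} to the case at hand. Since $A_{\a}$ and $A_{\b}$ are trivial, the cocycles $\xi_{\a}$ and $\xi_{\b}$ are also trivial, and the categories $\Rep_{\xi_{\a}}(A_{\a})$ and $\Rep_{\xi_{\b}}(A_{\b})$ reduce to $\Vect_{\Qlbar}$. Under the identifications $\tilA_{\a}\cong C$ and $\tilA_{\b}\cong C$ coming from the central extension $1\to C\to \tbM_{S}\to\bM_{S}\to 1$, the unique irreducibles $\tilW_{\a}$ and $\tilW_{\b}$ become the one-dimensional representations of $C$ on which $C$ acts via $\chi_{C}$. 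Thus $\tilW_{\a}^{\vee}\boxtimes \tilW_{\b}$ is the character $(\chi_{C}^{-1},\chi_{C})$ of $C\times C$, which is trivial on the diagonal $C_{\Delta}$ and descends to the character $\chi_{C}$ on the quotient $(C\times C)/C_{\Delta}\cong C$ (identified via $(a,b)\mapsto a^{-1}b$).

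Next I would analyze the map $p:\tfrI_{U',\a\to\b}\to\frI_{U',\a\to\b}$ induced by $v:\tbM_{S}\to\bM_{S}$. By construction, $p$ is a $C$-torsor, and the residual action of $(\tilA_{\a}\times\tilA_{\b})/C_{\Delta}\cong C$ on $\tfrI_{U',\a\to\b}$ is precisely the $C$-torsor action making $p$ principal. Using the identity $\wt\Pi=\Pi\circ p$ and the projection formula (together with $\IC^{\tfrI}_{V}=p^{*}\IC^{\frI}_{V}$), I obtain
\begin{equation*}
\wt\Pi_{!}\IC^{\tfrI}_{V}\;\cong\;\Pi_{!}\bigl(\IC^{\frI}_{V}\otimes p_{!}\Qlbar\bigr),
\end{equation*}
an isomorphism that respects the residual $C$-action.

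Taking the $(\chi_{C}^{-1},\chi_{C})$-isotypic component under $\tilA_{\a}\times\tilA_{\b}$ amounts, by the discussion above, to taking the $\chi_{C}$-isotypic component of $p_{!}\Qlbar$ under the $C$-torsor action. Base change along the Cartesian square defining $\tfrI_{U',\a\to\b}=\frI_{U',\a\to\b}\times_{\bM_{S}}\tbM_{S}$ yields $p_{!}\Qlbar\cong \ev^{*}_{\frI,S}(v_{!}\Qlbar)$, and the definition $\cK_{S,\Om}\cong (v_{!}\Qlbar)_{\chi_{C}}$ from \S\ref{sss:centext} identifies $(p_{!}\Qlbar)_{\chi_{C}}$ with $\ev^{*}_{\frI,S}\cK_{S,\Om}$. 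Combining these identifications with the formula in Proposition \ref{p:desc eigen} gives
\begin{equation*}
\calF_{V,\a}\;\cong\;\Hom_{\tilA_{\a}\times\tilA_{\b}}(\tilW^{\vee}_{\a}\boxtimes\tilW_{\b},\,\wt\Pi_{!}\IC^{\tfrI}_{V})\;\cong\;\Pi_{!}(\ev^{*}_{\frI,S}\cK_{S,\Om}\otimes \IC^{\frI}_{V}),
\end{equation*}
as required.

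The main obstacle is bookkeeping: one has to verify carefully that the two $C$-actions coming from $\tilA_{\a}$ and $\tilA_{\b}$ on the pullback $\tfrI_{U',\a\to\b}$ descend, modulo the diagonal $C_{\Delta}$ (which acts trivially by the remark in \S\ref{sss:centext}), to the canonical $C$-torsor action of $p$, and that the identification of characters is consistent with the convention for $\cK_{S,\Om}$. Once this matching of actions is pinned down, the rest is a direct application of projection formula and proper base change.
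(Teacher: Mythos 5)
Your proof is correct. Since $A_\a$ and $A_\b$ are trivial, the fiber products $\tilA_\a = A_\a\times_{\bM_S}\tbM_S$ and $\tilA_\b$ indeed collapse to $C=\ker(v)$, and the unique irreducibles $\tilW_\a,\tilW_\b$ become the character $\chi_C$. Your computation that the $\tilA_\a\times\tilA_\b$-action $(\tilde a_\a,\tilde a_\b)\cdot(g,m)=(g,\tilde a_\b m\tilde a_\a^{-1})$ factors through the antidiagonal quotient $(C\times C)/C_\Delta\cong C$ (by centrality of $C$) and equals the deck-transformation action of the $C$-torsor $p$ is exactly the bookkeeping that needs to be checked, and you carry it out correctly, together with the matching $(\chi_C^{-1},\chi_C)\rightsquigarrow\chi_C$. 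Proper base change along the Cartesian square $\tfrI_{U',\a\to\b}=\frI_{U',\a\to\b}\times_{\bM_S}\tbM_S$ and the projection formula then yield the stated formula.

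Where you differ from the paper's intent: the author says the proof is ``similar to that of Proposition \ref{p:desc eigen}'' and omits it, meaning one would re-run the geometric argument (identifying $\oGR_{U',\b}$ with $\frI_{U',\a\to\b}/A_\a$ and tracing through the equivariance) in the simplified setting. You instead derive Proposition \ref{p:simpler desc} formally as a specialization of the already-proved formula \eqref{compute rho}, pushing everything to a projection-formula/base-change computation. This is a clean and legitimate alternative route, and arguably shorter, since it avoids repeating the identification of $\oGR_{U',\b}$ with the twisted group scheme that Proposition \ref{p:desc eigen} already established.
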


The proof is similar to that of Proposition \ref{p:desc eigen} and we omit it.

\subsection{Examples for $\GL_{2}$}\label{ss:finalGL2} We shall apply Proposition \ref{p:simpler desc} to the examples considered in \S\ref{ss:GL2}, and check that the Hecke eigen local systems we get do coincide with the classification of cohomologically  rigid rank two local systems in Proposition \ref{p:r2}.

\subsubsection{The example from \S\ref{sss:GL2S3}} The unique $\cK_{S}$-relevant point $\star_{0}$ on $\Bun^{0}_{2}(\bI_{S})$ is the trivial bundle $\calV:=\calO e_{1}\oplus\calO e_{2}$ with lines $\ell_{0}=\jiao{e_{1}}$, $\ell_{1}=\jiao{e_{1}+e_{2}}$ and $\ell_{\infty}=\jiao{e_{2}}$. Let us lift this point to $\Bun^{0}_{2}(\bI^{+}_{S})$, i.e., we choose a basis $u_{x}$ for $\ell_{x}$ which is the spanning vector of $\ell_{x}$ as written above, and let $v_{x}\in\calV_{x}/\ell_{x}$ be given by the image of $e_{2}, e_{2}$ and $e_{1}$ at $0,1$ and $\infty$ respectively. The unique relevant point $\star_{1}$ on $\Bun^{1}_{2}(\bI_{S})$ is the bundle $\calV':=\calO(1) e'_{1}\oplus\calO e'_{2}$ with lines $\ell'_{0}=\jiao{e'_{2}}$, $\ell'_{1}=\jiao{e'_{1}+e'_{2}}$ and $\ell'_{\infty}=\jiao{e'_{2}}$. We similarly lift $\star_{1}$ to $\Bun^{1}_{0}(\bI^{+}_{S})$ by choosing $u'_{x}$ to be the spanning vector of $\ell'_{x}$ as written and $v'_{x}\in\calV'_{x}/\ell'_{x}$ given by the image of $e'_{1}, e'_{2}$ and $e'_{1}$ at $0,1$ and $\infty$ respectively. 

Let $V$ be the standard representation of $\dG=\GL_{2}$. The support $\frG_{V}$ of the sheaf $\IC_{V}$ on the group scheme $\frG_{U, 0\to 1}$ can be described as follows: $\frG_{V}$ classifies embeddings of coherent sheaves $\varphi: \calV\to\calV'$ that send $\ell_{x}$ to $\ell'_{x}$ at each $x\in S$, and is an isomorphism at each $x\in S$. We use the affine coordinate $t$ on $\PP^{1}$ and as such, $\{1,t\}$ can be viewed as a basis for $\Gamma(\PP^{1},\calO(1))$. A map $\calV\to\calV'$ can be written under the bases $e_{i}, e'_{i}$ as a matrix $\varphi=\left(\begin{array}{cc} a_{0}+a_{1}t & b_{0}+b_{1}t \\ c & d\end{array}\right)$ for $a_{0},a_{1}, b_{0}, b_{1},c,d\in k$. The conditions that $\varphi$ should send $\ell_{x}$ to $\ell'_{x}$ imply that $a_{0}=0, b_{1}=0$ and $a_{1}+b_{0}=c+d$. Let us write $a:=a_{1}$ and $b:=b_{0}$. The point where $\varphi$ has a zero is $y=\frac{bc}{ad}$. Since $y\notin S$, we must have $a,b,c,d\neq0$ and $ad\neq bc$. Therefore $\frG_{V}$ is an open subset of $\AA^{3}$. The morphism $\pi:\frG_{V}\incl\frG_{U,0\to1}\to U=\PP^{1}-\{0,1,\infty\}$ sends $(a,b,c,d)\mapsto \frac{bc}{ad}$. Evaluating $\varphi$ at $x\in S$, we get nonzero constants $\alpha^{(1)}_{x}(\varphi)$ and $\a^{(2)}_{x}(\varphi)$ such that $\varphi_{x}(u_{x})=\alpha^{(1)}_{x}(\varphi)u'_{x}$ and $\varphi_{x}(v_{x})=\alpha^{(1)}_{x}(\varphi)v'_{x}$. This gives a map $\a_{x}:\frG_{V}\to \Gm^{2}$ for each $x\in S$. These maps are given in coordinates by
\begin{equation*}
\a_{0}(a,b,c,d)=(c,b);\quad\a_{1}(a,b,c,d)=(a+b,d-b);\quad\a_{0}(a,b,c,d)=(d,a).
\end{equation*}

Since $\bA_{Z,S}=\Gm$ in this case, the scheme $\frI_{V}\subset\frI_{U',0\to1}$ (the support of $\IC^{\frI}_{V}$) is the quotient $\frG_{V}/\Gm$ where $\Gm$ acts on $(a,b,c,d)$ by simultaneous scaling. The tensor product characters sheaf $\boxtimes_{x\in S, \ep\in\{1,2\}}\a^{(\ep),*}_{x}\cK^{(\ep)}_{x}$ descends to a character  sheaf $\cK_{S,\Om}$ on $\bM_{S}\cong\Gm^{6}/\Delta(\Gm)$ thanks to the condition \eqref{det GL2}. We may identify $\frI_{V}$ with a subscheme of $\frG_{V}$ by setting $a+b=c+d=1$. Then
\begin{equation*}
\frI_{V}=\{(a,d)\in(\Gm-\{1\})^{2}|a+d\neq1\}.
\end{equation*}
The maps $\a_{x}$ restricted to $\frI_{V}$ become
\begin{equation*}
\a_{0}(a,d)=(1-d,1-a);\quad\a_{1}(a,d)=(1,a+d-1);\quad\a_{\infty}(a,d)=(d,a).
\end{equation*}
The map $\frI_{V}\to U'=\PP^{1}-\{0,1,\infty\}$ is given by $(a,d)\mapsto(1-a)(1-d)/(ad)$. 

Let us consider the special case where $\chi^{(2)}_{1}$ is trivial. In this case, applying the Lefschetz trace formula to the local system $\calF:=\cF_{V,0}$ as described in Proposition \ref{p:simpler desc}, we get the trace of $\Frob_{x}$ on $\cF$ at $x\in k^{\times}-\{1\}$
\begin{equation*}
f_{\cF,k}(x)=-\sum_{(1-a)(1-d)=adx}\chi^{(1)}_{0}(1-d)\chi^{(2)}_{0}(1-a)\chi^{(1)}_{\infty}(d)\chi^{(2)}_{\infty}(a).
\end{equation*}
Changing variables $a=1/(1-v)$ and $d=1/(1-u)$ for $u,v\in k^{\times}-\{1\}$, we rewrite the above sum as
\begin{equation}\label{preHyp}
f_{\cF,k}(x)=-\chi^{(1)}_{\infty}(-1)\chi^{(2)}_{\infty}(-1)\sum_{uv=x}\chi^{(1)}_{0}(u)(\overline{\chi}^{(1)}_{0}\overline{\chi}^{(1)}_{\infty})(u-1)\chi^{(2)}_{0}(v)(\overline{\chi}^{(2)}_{0}\overline{\chi}^{(2)}_{\infty})(v-1).
\end{equation}

On the other hand, Katz's hypergeometric sheaf $\calH:=\calH(!, \psi; \chi^{(1)}_{0}, \chi^{(2)}_{0}; \overline{\chi}^{(1)}_{\infty}, \overline{\chi}^{(2)}_{\infty})$ is defined as an iterated multiplicative convolution \cite[\S8.2]{Katz-DE}. Its trace function is
\begin{equation*}
f_{\calH,k}(x)=\sum_{x_{1}x_{2}=y_{1}y_{2}x}\psi(x_{1}-y_{1})\chi^{(1)}_{0}(x_{1})\chi^{(1)}_{\infty}(y_{1})\psi(x_{2}-y_{2})\chi^{(2)}_{0}(x_{2})\chi^{(2)}_{\infty}(y_{2}).
\end{equation*}
Making a change of variables $x_{1}=uy_{1}$ and $x_{2}=vy_{2}$ we may rewrite the sum as
\begin{equation}\label{hyp}
\sum_{uv=x}\chi^{(1)}_{0}(u)\chi^{(2)}_{0}(v)\sum_{y_{1}\in k^{\times}}\psi((u-1)y_{1})(\chi^{(1)}_{0}\chi^{(1)}_{\infty})(y_{1})\sum_{y_{2}\in k^{\times}}\psi((v-1)y_{2})(\chi^{(2)}_{0}\chi^{(2)}_{\infty})(y_{2}).
\end{equation}
The two inner sums are equal to $(\overline{\chi}^{(1)}_{0}\overline{\chi}^{(1)}_{\infty})(u-1)G(\psi,\chi^{(1)}_{0}\chi^{(1)}_{\infty})$ and $(\overline{\chi}^{(2)}_{0}\overline{\chi}^{(2)}_{\infty})(v-1)G(\psi,\chi^{(2)}_{0}\chi^{(2)}_{\infty})$, where $G(\psi,\chi)$ stands for the Gauss sum. Comparing \eqref{preHyp} and \eqref{hyp} we get
\begin{equation*}
f_{\calF,k}(x)=-\chi^{(1)}_{\infty}(-1)\chi^{(2)}_{\infty}(-1)G(\psi,\chi^{(1)}_{0}\chi^{(1)}_{\infty})^{-1}G(\psi,\chi^{(2)}_{0}\chi^{(2)}_{\infty})^{-1}f_{\calH,k}(x).
\end{equation*}
This being true for all finite extensions $k'/k$ and $x\in k'^{\times}-\{1\}$, we conclude that $\calF=\calF_{V,0}$ and $\calH=\calH(!, \psi; \chi^{(1)}_{0}, \chi^{(2)}_{0}; \overline{\chi}^{(1)}_{\infty}, \overline{\chi}^{(2)}_{\infty})$ are isomorphic over $U_{\kbar}$. 
 
The case where $\chi^{(2)}_{1}$ is nontrivial can be reduced to this case by twisting $\calF_{V,0}$ by a suitable Kummer sheaf.

\subsubsection{The example from \S\ref{sss:GL2S2I}} The unique $\cK_{S}$-relevant point $\star_{0}\in\Bun^{0}_{2}(\bK_{S})$ is the trivial bundle $\calV=\calO e_{1}\oplus\calO e_{2}$ with the line $\ell_{0}=\jiao{e_{1}}$ at $0$, the vectors $v^{(1)}_{\infty}=e_{2}$ and $v^{(2)}_{\infty}=e_{1}$ viewed as a basis of $\calV_{\infty}/\jiao{v^{(1)}_{\infty}}$.  The unique $\cK_{S}$-relevant point $\star_{1}\in\Bun^{1}_{2}(\bK_{S})$ is the vector bundle $\calV'=\calO(1) e'_{1}\oplus\calO e'_{2}$ with the line $\ell'_{0}=\jiao{e'_{2}}$ at $0$, the vectors $v'^{(1)}_{\infty}=e'_{2}$ and $v'^{(2)}_{\infty}=e'_{1}$ viewed as a basis of $\calV'_{\infty}/\jiao{v'^{(1)}_{\infty}}$.

Let $V$ be the standard representation of $\dG=\GL_{2}$. The support $\frG_{V}$ of the sheaf $\IC_{V}$ on the group scheme $\frG_{U, 0\to 1}$ can be described as follows: $\frG_{V}$ classifies embeddings of coherent sheaves $\varphi: \calV\to\calV'$ that send $\ell_{0}$ to $\ell'_{0}$ and is an isomorphism at $0$, is unipotent upper triangular with respect to the basis $\{v^{(1)}_{\infty}, v^{(2)}_{\infty}\}$ of $\calV_{\infty}$ and the basis $\{v'^{(1)}_{\infty}, v'^{(2)}_{\infty}\}$ of $\calV'_{\infty}$. Again we represent such a map $\calV\to\calV'$ as a matrix $\varphi=\left(\begin{array}{cc} a_{0}+a_{1}t & b_{0}+b_{1}t \\ c & d\end{array}\right)$ for $a_{0},a_{1}, b_{0}, b_{1},c,d\in k$. The conditions on $\varphi$ imply that $\varphi=\left(\begin{array}{cc} t & b \\ c & 1\end{array}\right)$ for $b,c\in\Gm$. In other words, $\frG_{V}=\Gm^{2}$. The point where $\varphi$ has a zero is $y=bc$.  Evaluating $\varphi$ at $0$, we get the map $\ev_{\frG,0}:\frG_{V}\to \bL_{0}=\Gm^{2}$ given by $(b,c)\mapsto (c,b)$. Evaluating $\varphi$ at $\infty$, we see that $\varphi(v^{(1)}_{\infty})=v'^{(1)}_{\infty}+bt^{-1}v'^{(2)}_{\infty}$ and $\varphi(v^{(2)}_{\infty})=v'^{(2)}_{\infty}+cv'^{(1)}_{\infty}$. Therefore the map $\ev_{\frG,\infty}:\frG_{V}\to \bL_{\infty}=\Ga^{2}$ is given by $(b,c)\mapsto(b,c)$.

Notice that in this case $\bA_{Z,S}$ is trivial, so $\frI_{U, 0\to1}=\frG_{U 0\to1}$. The diagram \eqref{simpler frI} now takes the form
\begin{equation*}
\xymatrix{& \Gm^{2}\ar[dl]_{(\sigma, i)}\ar[dr]^{m}\\
\Gm^{2}\times \Ga^{2} & & \Gm}
\end{equation*}
where $\sigma$ is the transposition of the factors of $\Gm^{2}$ and $i$ is the natural embedding $\Gm^{2}\incl\Ga^{2}$; $m:\Gm^{2}\to\Gm$ is the multiplication map. By Proposition \ref{p:simpler desc} we get that the eigen local system for the geometric automorphic datum in \S\ref{sss:GL2S2I} is (notice that $\IC_{V}=\Qlbar[1]$)
\begin{equation*}
\calF_{V,0}=m_{!}(\sigma^{*}\cK_{\chi_{0}}\otimes i^{*}\AS_{\phi})[1]
\end{equation*}
Here $\chi_{0}=(\chi^{(1)}_{0},\chi^{(2)}_{0})$ was part of the geometric automorphic datum at $0$ and $\phi:k\times k\to k$ was the linear function defining part of the geometric automorphic datum at $\infty$.
The Frobenius trace function at $x\in k^{\times}$ is given by
\begin{equation*}
f_{\calF_{V,0}, k}(x)=-\sum_{b,c\in k^{\times}, bc=x}\psi(\phi(b,c))\chi^{(2)}_{0}(b)\chi^{(1)}_{0}(c).
\end{equation*}
In the special case $\chi_{0}=1$ and $\phi(b,c)=b+c$, this is the classical Kloosterman sum up to a sign. For generalizations, see \S\ref{s:Kl}.

\subsubsection{The example from \S\ref{sss:GL2S2II}} The unique $\cK_{S}$-relevant $\star_{0}\in\Bun^{0}_{2}(\bK_{S})$ is the trivial bundle $\calV=\calO e_{1}\oplus\calO e_{2}$ with the line $\ell_{0}=\jiao{e_{1}}$ at $0$, the lines $\ell^{(1)}_{\infty}=\jiao{e_{1}+e_{2}}$ and $\ell^{(2)}_{\infty}=\jiao{e_{2}}$ at $\infty$. The unique $\cK_{S}$-relevant point $\star_{1}\in\Bun^{1}_{2}(\bK_{S})$ is the vector bundle $\calV'=\calO(1) e'_{1}\oplus\calO e'_{2}$ with the line $\ell'_{0}=\jiao{e'_{2}}$ at $0$, the vector $\ell'^{(1)}_{\infty}=\jiao{e'_{1}+e'_{2}}$ and $\ell'^{(2)}_{\infty}=\jiao{e'_{2}}$.

Let $V$ be the standard representation of $\dG=\GL_{2}$. The support $\frG_{V}$ of the sheaf $\IC_{V}$ on the group scheme $\frG_{U, 0\to 1}$ can be described as follows: $\frG_{V}$ classifies embeddings of coherent sheaves $\varphi: \calV\to\calV'$ that send the lines $\ell_{0},\ell^{(1)}_{\infty}$ and $\ell^{(2)}_{\infty}$ to $\ell'_{0},\ell'^{(1)}_{\infty}$ and $\ell'^{(2)}_{\infty}$ respectively, and that $\varphi$ is an isomorphism at $0$ and $\infty$. Again we represent such a map $\calV\to\calV'$ as a matrix $\varphi=\left(\begin{array}{cc} a_{0}+a_{1}t & b_{0}+b_{1}t \\ c & d\end{array}\right)$ for $a_{0},a_{1}, b_{0}, b_{1},c,d\in k$. The conditions on $\varphi$ imply that $\varphi=\left(\begin{array}{cc} (c+d)t & b \\ c & d\end{array}\right)$ for $b,c,d\in\Gm$ and $c+d\neq0$. The evaluation map $\ev_{\frG,0}:\frG_{V}\to \bL_{0}\cong\Gm^{2}$ is given by $(b,c,d)\mapsto (c,b)$. The evaluation map $\ev_{\frG,\infty}:\frG_{V}\to \bL_{\infty}\cong\Gm^{2}\times\Ga^{2}$ is given by $(b,c,d)\mapsto (c+d,d, b,-b)$.  The projection $\Pi_{\frG}:\frG_{V}\to U=\Gm$ is given by $(b,c,d)\mapsto \frac{bc}{(c+d)d}$.

The scheme $\frI_{V}=\frG_{V}/\Gm$ where $\Gm$ acts as simultaneous scaling on $b,c$ and $d$. We may identify $\frI_{V}$ as the subscheme of $\frG_{V}$ with $c+d=1$. Then $\frI_{V}=\{(b,d)\in\Gm\times(\Gm-\{1\})\}$. The map $\ev_{\frI,S}:\frI_{V}\mapsto \bM_{S}=\bL_{S}/\Gm\cong(\Gm^{2}\times\Gm^{2}\times\Ga^{2})/\Gm$ is given by $(b,d)\mapsto [1-d, b, 1, d, b, -b]$. The projection $\Pi:\frI_{V}\to U$ is given by $(b,d)\mapsto b(1-d)/d$. Therefore the Frobenius trace function of $\calF=\calF_{V,0}$ is given by
\begin{equation*}
f_{\calF,k}(x)=-\sum_{b(1-d)=dx}\chi^{(1)}_{0}(1-d)\chi^{(2)}_{\infty}(d)\chi^{(2)}_{0}(b)\psi(\phi^{(1)}(b)-\phi^{(2)}(b)).
\end{equation*}
Making a change variables $b=-u$ and $d=1/(1-v)$, we may rewrite the above sum as
\begin{equation}\label{F12}
f_{\calF,k}(x)=-\chi^{(2)}_{0}(-1)\chi^{(2)}_{\infty}(-1)\sum_{uv=x}\chi^{(1)}_{0}(v)(\overline{\chi}^{(1)}_{0}\overline{\chi}^{(2)}_{\infty})(v-1)\chi^{(2)}_{0}(u)\psi'(u).
\end{equation}
where $\psi'(y)=\psi(\phi^{(2)}(y)-\phi^{(1)}(y))$ is another nontrivial character of $k$.

On the other hand, Katz's hypergeometric sheaf $\calH:=\calH(!, \psi', \chi^{(1)}_{0}, \chi^{(2)}_{0}; \overline{\chi}^{(2)}_{\infty})$ has Frobenius trace function
\begin{equation*}
f_{\calH,k}(x)=\sum_{x_{1}x_{2}=y_{1}x}\psi'(x_{1}-y_{1})\chi^{(1)}_{0}(x_{1})\chi^{(2)}_{\infty}(y_{1})\psi'(x_{2})\chi^{(2)}_{0}(x_{2})
\end{equation*}
Making a change of variables $x_{1}=vy_{1}, x_{2}=u$, we get
\begin{eqnarray}\label{H12}
f_{\calH,k}(x)&=&\sum_{uv=x}\chi^{(1)}_{0}(v)\left(\sum_{y_{1}\in k^{\times}}\psi'((v-1)y_{1})\chi^{(1)}_{0}(y_{1})\chi^{(2)}_{\infty}(y_{1})\right)\psi'(u)\chi^{(2)}_{0}(u)\\
\notag &=&G(\psi',\chi^{(1)}_{0}\chi^{(2)}_{\infty})\sum_{uv=x}\chi^{(1)}_{0}(v)(\overline{\chi}^{(1)}_{0}\overline{\chi}^{(2)}_{\infty})(v-1)\chi^{(2)}_{0}(u)\psi'(u).
\end{eqnarray}
Comparing \eqref{F12} and \eqref{H12} we get
\begin{equation*}
f_{\calF,k}(x)=-\chi^{(2)}_{0}(-1)\chi^{(2)}_{\infty}(-1)G(\psi',\chi^{(1)}_{0}\chi^{(2)}_{\infty})^{-1}f_{\calH,k}(x).
\end{equation*}
This implies that $\calF$ and $\calH$ are isomorphic over $U_{\kbar}$.

\begin{remark} In all three examples, Conjecture \ref{conj:rigid} is easily verified, using the knowledge of Artin conductors worked out in \S\ref{ss:rigidlocGL2}.
\end{remark}

\subsection{Variants and questions}\label{ss:rat}  
We are in the situation of \eqref{sss:ass}.

\subsubsection{A situation where Hecke eigensheaf is guaranteed}\label{sss:more sym} Suppose for some $x\in S$ we are given a pro-algebraic subgroup $\bK^{\sharp}_{x}\subset L_{x}G$ such that 
\begin{itemize}
\item $\bK^{\sharp}_{x}$ contains $\bK_{x}$ as a normal subgroup.
\item The character sheaf $\cK_{x}$ extends to $\cK^{\sharp}_{x}\in\cCS(\bK^{\sharp}_{x})$;
\item The quotient $C_{x}:=\bK^{\sharp}_{x}/\bK_{x}$ is a discrete group over $k$, and the local Kottwitz map $L_{x}G\to\xch(Z\dG)_{I_{F}}$ induces an isomorphism $C_{x}\isom\xch(Z\dG)_{I_{F}}$. 
\end{itemize}

\begin{exam} Let $X=\PP^{1}$, $F=k(t)$ and the quasi-split group $G$ becomes split over $k(t^{1/e})$ for some $e$ prime to $\chk$. Then the integral model $\cG$ is reductive away from $\{0,\infty\}$. Suppose $0\in S$. Let $(\Om,\bK_{S},\cK_{S},\iota_{S})$ be a geometric automorphic datum such that $\bK_{0}$ is a parahoric subgroup and that $N_{L_{0}G}(\bK_{0})/\bK_{0}$ maps isomorphically to $\xch(Z\dG)_{I_{F}}=\xch(Z\dG)_{I_{0}}$. For example we may take $\bK_{0}$ to be an Iwahori subgroup. Suppose also that the character sheaf $\cK_{0}$ extends to $N_{L_{0}G}(\bK_{0})$. Then the group $\bK^{\sharp}_{0}=N_{L_{0}G}(\bK_{0})$ satisfies the conditions in \S\ref{sss:more sym}.
\end{exam}

\begin{theorem}\label{th:eigenvar} Assuming we are in the situation of Theorem \ref{th:eigen} and the extra assumption in \S\ref{sss:more sym} is satisfied. Then the object $\calA\in D_{\cG,\Om}(\bK_{S},\cK_{S})$ can be extended to a Hecke eigensheaf $(\calA,\calF,\varphi)$ in which $\calF_{V}$ is a semisimple local system for all $V\in\Rep(\dG)$. 
\end{theorem}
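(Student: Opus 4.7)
The plan is to promote the weak Hecke eigensheaf of Theorem \ref{th:eigen} to a genuine one by using the extra symmetry provided by $\bK^{\sharp}_{x}$ to eliminate the dependence on $\alpha \in \xch(Z\dG)_{I_{F}}$. Set $C := \bK^{\sharp}_{x}/\bK_{x}$. Modifying the $\bK_{x}$-part of the level structure gives an action of $C$ on $\Bun^{+} = \Bun_{\cG}(\bK^{+}_{S})$ which commutes with the $\bM_{S}$-action, and, via the local Kottwitz map, permutes the connected components $\Bun^{+}_{\alpha}$ simply transitively under the given isomorphism $C \cong \xch(Z\dG)_{I_{F}}$. Crucially, this $C$-action commutes with the Hecke correspondence $\Hk_{U'}$, since modifications of $\cG$-torsors at $x \in S$ commute with modifications at points of $U' \subset X - S$.

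Using the extension $\cK^{\sharp}_{x}$ of $\cK_{x}$, I enlarge the equivariance group to $\bM^{\sharp}_{S} := \bM_{S} \times C$ and equip it with a rank one character sheaf $\cK^{\sharp}_{S,\Om}$ whose restriction to $\bM_{S}$ is $\cK_{S,\Om}$ and whose restriction to $C$ is induced from $\cK^{\sharp}_{x}$ (this uses that $\cK^{\sharp}_{x}$ descends to a finite-dimensional quotient of $\bK^{\sharp}_{x}$, after suitably shrinking $\bK^{+}_{x}$ as in \S\ref{sss:Kplus}). Consider the enlarged category
\begin{equation*}
D^{\sharp} := D^{b}_{(\bM^{\sharp}_{S}, \cK^{\sharp}_{S,\Om})}(\Bun^{+}).
\end{equation*}
By the uniqueness of $\calA_{\alpha}$ in Lemma \ref{l:unique irr}, for every $c \in C$ and $\alpha$ there exists an isomorphism $t_{c}^{*}\calA_{\alpha+c} \cong \calA_{\alpha}$; because $C$ permutes components simply transitively, one can choose such isomorphisms coherently to assemble $\{\calA_{\alpha}\}$ into a single irreducible object $\calA \in D^{\sharp}$, well-defined up to twist by a character of $C$. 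The analog of Lemma \ref{l:unique irr}(2) in the enlarged setting then identifies $D^{b}_{(\bM^{\sharp}_{S}, \cK^{\sharp}_{S,\Om})}(Y \times \Bun^{+})$ with $D^{b}(Y)$ via $\calA' \mapsto \calA' \boxtimes \calA$, for any scheme $Y$ over $k$.

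Since the iterated geometric Hecke operators $\frT_{V_{I}}$ are $C$-equivariant, they send $\calA$ to objects of $D^{b}_{(\bM^{\sharp}_{S}, \cK^{\sharp}_{S,\Om})}(U'^{I} \times \Bun^{+})$, which under the above equivalence take the form $\calF_{V_{I}} \boxtimes \calA$ for canonical complexes $\calF_{V_{I}} \in D^{b}(U'^{I})$. Restricting the second factor to a component $\Bun^{+}_{\beta}$ recovers, up to the $C$-identifications, the $\calF_{V_{I},\beta - \overline{\nu}}$ of Theorem \ref{th:eigen}; since those are semisimple local systems, so is each $\calF_{V_{I}}$. The factorization isomorphisms (1)--(3) of \S\ref{sss:iterated}, the tensor compatibility \eqref{compose TT}, and the $\Gamma$-equivariance $\alpha_{\gamma,V}$ all pass through the enlarged equivalence to yield the axioms of Definition \ref{def:eigen}, so $V \mapsto \calF_{V}$ assembles into a $\htheta$-twisted $\dG$-local system $\calF$ on $U$ together with Hecke isomorphisms $\varphi$ making $(\calA, \calF, \varphi)$ a Hecke eigensheaf.

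The main obstacle is checking the tensor associativity condition (2) of Definition \ref{def:eigen} for a single global choice of $\calA \in D^{\sharp}$: in the weak eigensheaf produced by Theorem \ref{th:eigen}, this compatibility held only after matching Kottwitz components via the $\xch(Z\dG)_{I_{F}}$-action, and one must verify that the stronger $(\bM^{\sharp}_{S}, \cK^{\sharp}_{S,\Om})$-equivariance eliminates this slack uniformly in $V_{1}, V_{2}$. This reduces to showing that the isomorphism \eqref{compose TT} between $\frT_{V_{1}, U'} \circ \frT_{V_{2}}$ and $\frT_{V_{1} \otimes V_{2}}$ is compatible with the full $C$-equivariance on both sides, which in turn follows from the fact that the evaluation morphism \eqref{ev Hk} factors through modifications strictly away from $x$ and hence commutes with the $C$-action by $\bK^\sharp_x/\bK_x$. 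The residual freedom of twisting $\calA$ by a character of $C$ corresponds precisely to twisting $\calF$ by an unramified character via $C \cong \xch(Z\dG)_{I_{F}}$, in accordance with the expectation that the Langlands parameter is only determined up to unramified twist.
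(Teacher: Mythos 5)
Your proposal is correct in substance and follows essentially the same route as the paper's proof: enlarge the equivariance datum using $\bK^{\sharp}_{x}$ and the extension $\cK^{\sharp}_{x}$, observe that the enlarged symmetry permutes the Kottwitz components $\Bun^{+}_{\a}$ simply transitively with stabilizer $\bM_{S}$, obtain a single irreducible object $\calA$ in the enlarged equivariant category, use the analog of Lemma \ref{l:unique irr} to get the equivalence $\calA'\mapsto\calA'\boxtimes\calA$, and define $\calF_{V}$ by the eigen-property, identifying it with the $\calF_{V,\a}$ of Theorem \ref{th:eigen} to get semisimplicity and the genuine tensor structure. One technical slip worth correcting: the group $C=\bK^{\sharp}_{x}/\bK_{x}$ does not in general act on $\Bun^{+}=\Bun_{\cG}(\bK^{+}_{S})$ commuting with $\bM_{S}$, since only coset representatives in $\bK^{\sharp}_{x}$ act (after shrinking $\bK^{+}_{x}$ so that it is normal in $\bK^{\sharp}_{x}$), two representatives differ by an element of $\bK_{x}$, and conjugation by $\bK^{\sharp}_{x}$ on $\bL_{x}$ need not be trivial; so the enlarged group is the twisted product $\bM^{\sharp}_{S}=\bM_{S}\twtimes{\bL_{x}}\bL^{\sharp}_{x}$ with $\bL^{\sharp}_{x}=\bK^{\sharp}_{x}/\bK^{+}_{x}$, not the direct product $\bM_{S}\times C$, and the character sheaf $\cK^{\sharp}_{S,\Om}$ is built on this twisted product (using that $\cK^{\sharp}_{x}$ descends to $\bL^{\sharp}_{x}$) rather than factor by factor. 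With this replacement your argument goes through unchanged; note also that the paper gets existence and uniqueness of the global $\calA$ at once from the equivalence $D^{\sharp}\isom D_{0}$ given by restriction to $\Bun^{+}_{0}$, which is cleaner than choosing the translation isomorphisms $t_{c}^{*}\calA_{\a+c}\cong\calA_{\a}$ coherently by hand (though, as you say, simple transitivity on components is indeed why no obstruction arises).
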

\begin{proof}[Sketch of proof] Let $\bL^{\sharp}_{x}=\bK^{\sharp}_{x}/\bK^{+}_{x}$ and $\bM^{\sharp}_{S}=\bM_{S}\twtimes{\bL_{x}}\bL^{\sharp}_{x}$. The stack $\Bun_{\cG}(\bK^{+}_{S})$ admits an action of $\bM^{\sharp}_{S}$. Since the character sheaf $\cK_{x}$ extends to $\bK^{\sharp}_{x}$, the character sheaf $\cK_{S,\Om}$ on $\bM_{S}$ also extends to a character sheaf $\cK^{\sharp}_{S,\Om}$ on $\bM^{\sharp}_{S}$. We may then consider the category $D^{\sharp}:=D^{b}_{(\bM^{\sharp}_{S}, \cK^{\sharp}_{S,\Om})}(\Bun^{+})$. Since $\bM^{\sharp}_{S}$ permutes the open and closed substacks $\Bun^{+}_{\a}$ of $\Bun^{+}$ transitively with $\bM_{S}$ the stabilizer of each $\Bun^{+}_{\a}$ ($\a\in\xch(Z\dG)_{I_{F}}$), restricting to a particular component $\Bun^{+}_{0}$ gives an equivalence of categories $D^{\sharp}\isom D^{b}_{(\bM_{S}, \cK_{S,\Om})}(\Bun^{+}_{0})=D_{0}$. Therefore, by Assumptions in \ref{sss:ass}, there is (up to isomorphism) a unique irreducible perverse sheaf $\calA\in D^{\sharp}$, whose restriction to $\Bun^{+}_{\a}$ has to be isomorphic to $\calA_{\a}$ for all $\a\in\xch(Z\dG)_{I_{F}}$. The analog of Lemma \ref{l:unique irr} then says that for any $Y$, the analogous category $D^{\sharp}(Y)$ (with $\Bun^{+}$ replaced by $Y\times\Bun^{+}$) is equivalent to $D^{b}_{c}(Y)$ via the functor $(-)\boxtimes\calA$. Therefore we may define $\calF_{V}\in D^{b}_{c}(U')$ by requiring $\frT_{V}(\calA)\cong \calF_{V}\boxtimes\calA$. By construction $\calF_{V}$ is canonically identified with $\calF_{V,\a}$ for all $\a\in \xch(Z\dG)_{I_{F}}$. The $(\dG,\xch(Z\dG)_{I_{F}})$-weak local system $\{\calF_{V,\a}\}$, under such a canonical identification, becomes an actual $\dG$-local system $\calF_{V}$.
\end{proof}

Next we discuss several questions that naturally arise from Theorem \ref{th:eigen}.

\subsubsection{Rationality issues}\label{sss:rat}
So far in this section we have have been assuming that $k$ is algebraically closed. At various stages of the argument one can in fact work over a more general field $k$, and work with a smaller coefficient field than $\Qlbar$. In our applications (especially to the inverse Galois problem) it is important that the Hecke eigen local systems $\calF$ are defined over a field that is as small as possible.

Suppose we are in the situation of \S\ref{sss:ass} with the extra assumption \eqref{BunZpt}. The description of $\calF_{V,\a}$ given in Proposition \ref{p:desc eigen} (which works over $\kbar$ as stated) allows us to descend it to a smaller base field and a smaller coefficient field as follows.  We use the notation from \S\ref{ss:desc eigen}. We make the following assumptions:
\begin{itemize}
\item For each $\a\in\xch(Z\dG)_{I_{F}}$, the unique $(\Om,\cK_{S})$-relevant point $\calE_{\a}\in\Bun_{\a}$ is defined over $k$. Therefore $A_{\a}$ and its central extension $\tilA_{\a}$ are finite group schemes over $k$. 
\item The representation $\tilW_{\a}$ of $\tilA_{\a}(\kbar)$ can be extended to $\tilA_{\a}(\kbar)\rtimes\Gk$. We choose such an extension for each $\a$ and denote it by $\tilW^{\heartsuit}_{\a}$. By the uniqueness of $\tilW_{\a}$, it always extends to a {\em projective} representation of $\tilA_{\a}(\kbar)\rtimes\Gk$. Therefore this assumption means the vanishing of certain class in $\cohog{2}{k,\Qlbar^{\times}}$.
\item The representations $\tilW^{\heartsuit}_{\a}$ are defined over some field $L$ where $\Ql\subset L\subset\Qlbar$. 
\end{itemize}
We rewrite the right side of \eqref{compute rho} as
\begin{equation}\label{EV rewritten}
\calF_{V,\a}\cong\Hom_{\tilA_{\a}(\kbar)\times\tilA_{\b}(\kbar)}(\tilW^{\heartsuit, \vee}_{\a}\otimes\tilW^{\heartsuit}_{\b}, \wt\Pi_{!}\IC^{\tfrI}_{V}).
\end{equation}
We understand that both $\tilW^{\heartsuit,\vee}_{\a}\otimes\tilW^{\heartsuit}_{\b}$ and $\wt\Pi_{!}\IC^{\tfrI}_{V}$  have $L$-coefficients and view $\wt\Pi_{!}\IC^{\tfrI}_{V}$ as a complex over $U'_{\kbar}$. The above assumptions give a descent datum of the right side of \eqref{EV rewritten} to $U'$ over $k$. Therefore $\calF_{V,\a}$ is defined over $U'$ over $k$, and has $L$-coefficients.

\subsubsection{Rationality of the geometric Satake equivalence} First of all, to define the Satake category, we only need to work with $\Ql$-sheaves, not $\Qlbar$-sheaves. In the version of the geometric Satake equivalence we reviewed in \S\ref{ss:Sat}, we have normalized all $\IC_{V}$ to be pure of weight zero. This involves choosing a square root of the $\ell$-adic cyclotomic character $\chi_{\ell}:\Gk\to\ZZ^{\times}_{\ell}$, and there is no natural choice of such. Assume $k$ is either a finite field or a number field. Following a suggestion of Deligne, one can modify the Satake category to avoid choosing a square root of $\chi_{\ell}$. One defines a subcategory $\Sat^{\Tt}\subset\Sat$ (the superscript stands for Tate), whose objects are finite direct sums of $\IC_{\l}(n)$ ($n\in\ZZ$), where $(n)$ denotes tensoring with the one-dimensional representation $\chi^{n}_{\ell}$ of $\Gk$. The category $\Sat^{\Tt}$ is also a tensor category with $h=\cohog{*}{\Gr,-}$ as a fiber functor. The Tannakian dual of $\Sat^{\Tt}$ is the reductive group $\dG^{\Tt}$ which is related to $\dG$ by
\begin{equation*}
\dG^{\Tt}\cong (\dG\times\Gm^{\Wt})/\mu_{2}.
\end{equation*}
Here $\Gm^{\Wt}$ is  a one-dimensional torus and $\mu_{2}$ is generated by $((-1)^{2\rho},-1)\in\dG\times\Gm^{\Wt}$, where $2\rho\in\xch(\TT)=\xcoch(\dT)$ is the sum of positive roots of $\GG$, viewed as a cocharacter of $\dT$. For each irreducible representation $V\in\Rep(\dG^{\Tt})$, if the central torus $\Gm^{\Wt}$ acts on $V$ via the $w\nth$ power, then the corresponding object $\IC_{V}$ in $\Sat^{\Tt}$ is pure of weight $w$. The one-dimensional representations of $\dG^{\Tt}$ on which $\dG$ acts trivially and $\Gm^{\Wt}$ acts via the $2n\nth$ power corresponds to the sheaf $\Qlbar(-n)$ supported at the point stratum $\Gr_{0}$. For details of the construction, see \cite[\S2.1]{YunKl}.  For a quick algebraic account, see \cite[\S2]{FG}.

\begin{exam}\label{ex:n odd} Let $n\geq2$ be an integer and $G=\PGL_{n}$. We have $\dG=\SL_{n}$. In this case, $(-1)^{2\rho}=(-1)^{n-1}I_{n}\in\dG$. We have
\begin{equation*}
\dG^{\Tt}\cong\begin{cases}\SL_{n}\times (\Gm^{\Wt}/\mu_{2}) & \mbox{ if $n$ is odd;}\\
(\SL_{n}\times\Gm^{\Wt})/\Delta(\mu_{2}) & \mbox{ if $n$ is even.}\end{cases}
\end{equation*}
We define a ``standard representation'' of dimension $n$ and weight $n-1$:
\begin{eqnarray*}
\St&:&\dG^{\Tt}\to \GL_{n}\\
&&(g,t)\mapsto t^{n-1}g\in\GL_{n} (\textup{where }g\in\SL_{n}, t\in\Gm^{\Wt} ).
\end{eqnarray*}
\end{exam}

With $\dG^{\Tt}$ replacing $\dG$, a Hecke eigen local system $\calF$ should be a twisted $\dG^{\Tt}$-local system over $U$ together with extra compatibility datum with Tate twists. More precisely, $\calF$ is a tensor functor $\Rep(\dG^{\Tt})\to\Loc(U')$ , $V\mapsto\calF_{V}$ (with $\htheta$-twisted descent datum), together with an isomorphism $\calF_{\Ql(1)}\cong\Ql(1)$. Here the $\Ql(1)$ in the subscript means the one-dimensional representation of $\dG^{\Tt}$ on which $\dG$ acts trivially and $\Gm^{\Wt}$ acts through the $(-2)\nth$ power, and the other $\Ql(1)$ is the Tate-twisted constant sheaf on $U'$.

Next we discuss local and global monodromy of Hecke eigen local systems. We are back to the situation where $k$ is algebraically closed.
 
\subsubsection{Local monodromy of $\calF$}
In certain cases we can describe the local monodromy of the Hecke eigensheaf $\calF$ at some point $x\in S$. Suppose $\bK_{x}$ is a parahoric subgroup of $L_{x}G$ with reductive quotient $\bL_{x}$, and that $\cK_{x}$ is a character sheaf on $\bK_{x}$ that descends to $\bL_{x}$. In this case the local system $\calF$ is tame at $x$. For simplicity suppose $G$ is split over $F_{x}$, then the restriction of $\cK_{x}$ to a split maximal torus $\TT\subset \bL_{x}$ gives a character sheaf on $\TT$, which is a homomorphism $\pi^{t}_{1}(\Gm)\otimes\xcoch(\TT)\to \Qlbar^{\times}$, or $\rho'_{x}:\pi^{t}_{1}(\Gm)\to\dT(\Qlbar)$. On the other hand, we may canonically identify $\pi^{t}_{1}(\Gm)$ with the tame inertia group $I_{x}$ because both of them are canonically isomorphic to $\varprojlim_{(n,\chk)=1}\mu_{n}(k)$. Therefore $\rho'_{x}$ induces $\rho''_{x}: I_{x}\to \dT(\Qlbar)\incl\dG(\Qlbar)$, which can be shown to be the {\em semisimple part} of the local monodromy of $\calF$ at $x$. When $G$ is not necessarily split over $F_{x}$, the restriction of $\cK_{x}$ to a maximal split torus gives a section $\rho_{x}: I_{x}\to \dT(\Qlbar)\rtimes I_{x}$ up to $\dT$-conjugacy, which is again the semisimple part of the monodromy of $\calF$ at $x$. 

In \cite[\S4.9]{Y-GenKloo} we give a recipe for the unipotent part of the monodromy using Lusztig's theory of two-sided cells in affine Weyl groups. In some cases this can be proved using the techniques in \cite[\S4.10-4.16]{Y-GenKloo}.  

A special case where we know the local monodromy is when $\bK_{x}$ is an Iwahori subgroup and $\cK_{x}$ is trivial. This is the case where the extra condition in \S\ref{sss:more sym} is often satisfied, for example when $\xch(Z\dG)_{I_{x}}=\xch(Z\dG)_{I_{F}}$. In this case the local monodromy of the Hecke eigen local system $\calF$ at $x$ is a regular unipotent element in $\dG$.

\subsubsection{Global monodromy of $\calF$} When we work over a finite field $k$, the proof of Lemma \ref{l:Els} shows that the local systems $\calF_{V,\a}$ are pure. 
Deligne's theorem \ref{th:Del ss} guarantees that the {\em neutral component } of $\dG^{\geom}_{\calF}$ (the global monodromy of $\calF$) is a connected semisimple subgroup of $\dG$.  When we know a local monodromy of $\calF$ is a regular unipotent element and $\dG$ is almost simple, then either $\dG^{\geom,\c}_{\calF}=\dG$, or $\dG^{\geom,\c}_{\calF}=\dG^{\sigma}$ for some pinned automorphism $\sigma$ of $\dG$, or $\dG^{\geom,\c}$ is the image of a principal homomorphism $\SL_{2}\to\dG$ (see \cite[\S13]{FG}). One can often determine $\dG^{\geom}_{\calF}$ in this case by using information from local monodromy at other points. This method enables us to determine the global monodromy of the examples in the next sections \S\ref{s:Kl} and \S\ref{s:3p}.

\section{Kloosterman sheaves as rigid objects over $\PP^{1}-\{0,\infty\}$}\label{s:Kl} 
In this section we review the work \cite{HNY}, in which we used rigid automorphic representations ramified at two places to construct generalizations of Kloosterman sheaves.

\subsection{Kloosterman automorphic data}\label{ss:Kl auto}   
In this section, the curve $X=\PP^{1}$ with function field $F=k(t)$, where $k$ is a finite field. We assume that {\em $\GG$ is almost simple}. Let $e$ be a positive integer prime to $\chk$ such that $k$ contains $e\nth$ roots of unity. Let  $F'=k(t^{1/e})$ and $\theta: \Gal(F^{s}/F)\surj\Gal(F'/F)\cong\mu_{e}(k)\incl\Aut^{\dagger}(\GG)$ be a homomorphism. As in \S\ref{sss:qsplit}, these data determine a quasi-split form $G$ of $\GG$ over $F$ . We have an integral model $\calG$ over $X$ defined in \S\ref{sss:cG}. 

Let $S=\{0,\infty\}$. We shall define a geometric automorphic datum for $G$ with respect to $S$, to be called the {\em Kloosterman automorphic datum}.  

Let $\bK_{0}=\bI_{0}$ be an Iwahori subgroup of $L_{0}G$ and $\bK_{\infty}=\bI^{+}_{\infty}$, the pro-unipotent radical of an Iwahori $\bI_{\infty}\subset L_{\infty}G$.

The reductive quotient of $\bI_{0}$ is identified with the torus $\SS=\TT^{I_{F},\circ}$. A character $\chi: \SS(k)\to \Qlbar^{\times}$ corresponds to a rank one character sheaves $\cK_{0}=\cK_{\chi}$ on $\SS$, which can also be viewed as a character sheaf on $\bK_{0}=\bI_{0}$. 

Let $\bI^{+}_{\infty}$ be the pro-unipotent radical of $\bI_{\infty}$. It is generated by the root groups $(L_{\infty}G)(\wta)$ of all positive affine roots of the loop group $L_{\infty}G$. Note that $\bI_{\infty}/\bI^{+}_{\infty}\cong\SS$. Let $\bI^{++}_{\infty}\subset\bI^{+}_{\infty}$ be the next step in the Moy-Prasad filtration of $\bI_{\infty}$. There is an $\SS$-equivariant isomorphism
\begin{equation}\label{map to simple roots}
\bI^{+}_{\infty}/\bI^{++}_{\infty}\cong\prod_{i=0}^{r}(L_{\infty}G)(\alpha_{i})\end{equation}
where the product runs over simple affine roots. Each $(L_{\infty}G)(\alpha_{i})$ is isomorphic to $\Ga$ over $k$. A linear function $\phi:(\bI^{+}_{\infty}/\bI^{++}_{\infty})(k)\to k$ is said to be {\em generic} if it does not vanish on any of the factors $(L_{\infty}G)(\alpha_{i})$. Fix such a generic linear function $\phi$ and fix a nontrivial additive character $\psi$ of $k$.  The composition $\psi\circ\phi$ gives a character of $\bI^{+}_{\infty}(k)$, and hence a rank one character sheaf $\cK_{\infty}=\cK_{\phi}$ on $\bI^{+}_{\infty}$. 

Finally, $\Bun_{\cZ}(\bK_{Z,S})$ is a point in this case, and Lemma \ref{l:unique Om}  implies that there is a unique choice of $(\Om, \iota_{S})$ (up to isomorphism) making $(\Om, \bK_{S}, \cK_{S},\iota_{S})$ a geometric automorphic datum for $G$ with respect to $S=\{0,\infty\}$. We call it the {\em Kloosterman automorphic datum}. The situation considered in \S\ref{sss:GL2S2I}, when $\GL_{2}$ is replaced with $\SL_{2}$ or $\PGL_{2}$, is a special case of the situation considered here. 

\begin{remark} The automorphic datum $(\bI^{+}_{\infty},\psi\circ\phi)$ at $\infty$ picks out those representations of $G(F_{\infty})$ that contain nonzero eigenvectors of $\bK_{\infty}$ on which $\bK_{\infty}$ acts through the character $\psi\circ\phi$. These representations are first discovered by Gross and Reeder \cite[\S9.3]{GR}, and they call them {\em simple supercuspidal representations}. Such representations appear as direct summands of the compact induction $\textup{c-Ind}^{G(F_{\infty})}_{\bI^{+}_{\infty}(k)}(\psi\circ\phi)$.
\end{remark}

\begin{theorem}[Gross \cite{Gross-Prescribe}, alternative proof by Heinloth-Ng\^o-Yun \cite{HNY}]\label{th:Kl auto} 
\begin{enumerate}
\item Let $\phi$ be a generic linear functional on $(\bI^{+}_{\infty}/\bI^{++}_{\infty})(k)
$ as above. Then the Kloosterman automorphic datum $(\Om, \bK_{S}, \cK_{S},\iota_{S})$ is strongly rigid.
\item Let $(\onat, K_{S}, \chi_{S})$ be the restricted automorphic datum attached to $(\Om, \bK_{S}, \cK_{S},\iota_{S})$, and let $\pi(\chi,\phi)$ be a $(\onat, K_{S}, \chi_{S})$-typical automorphic representation of $G(\AA_{F})$, which is unique up to an unramified twist. When $\chi=1$, the local component at $0$ of $\pi(1,\phi)$ is, up to an unramified twist, the Steinberg representation of $G(F_{0})$.
\end{enumerate}
\end{theorem}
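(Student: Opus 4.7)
The plan for part (1) is to verify weak rigidity via the criterion of Theorem \ref{th:wrigid} and then bootstrap to strong rigidity. First I would analyze the moduli stack $\Bun_{\cG}(\bI_{0},\bI^{+}_{\infty})$. Its connected components are indexed by $\xch(Z\dG)_{I_{F}}$ through the geometric Kottwitz map, and on each component I would exhibit a natural ``standard'' point $\calE_{\a}$ obtained from the trivial $\cG$-torsor on $\Gm$ equipped with carefully chosen Iwahori and Moy-Prasad level structures at $0$ and $\infty$. The key claim is that this standard point is the unique $\cK_{S}$-relevant $\Bun_{\cZ}(\bK_{Z,S})$-orbit on its component. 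Indeed, for any other point $\calE$ I would show that the automorphism group $\Aut_{\cG,\bK_{S}}(\calE)$ contains a root subgroup whose image under evaluation at $\infty$ composed with the projection \eqref{map to simple roots} is nonzero in at least one simple affine root factor; the genericity of $\phi$ then forces $\cK_{S,\calE}$ to restrict to a nontrivial Artin--Schreier sheaf on this subgroup, rendering $\calE$ irrelevant. This argument generalizes the case analysis carried out for $\GL_{2}$ in the proof of Proposition \ref{p:GL2S2I}, but uses the Bruhat--Tits building of $G$ over $F_{\infty}$ instead of an ad hoc classification of rank-two bundles. With one relevant orbit per component, Theorem \ref{th:wrigid} gives weak rigidity; a direct count of the dimension of $C_{\cG,\onat}(K_{S},\chi_{S})_{\a}$ at the unique relevant double coset (analogous to the final step of Proposition \ref{p:GL2S2I}) shows it is one-dimensional modulo unramified twists, giving strong rigidity after noting the argument is stable under base change $k'/k$.

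For part (2), I would first invoke weak rigidity to conclude $\pi(1,\phi)$ is cuspidal. Since $\chi=1$, the local component $\pi(1,\phi)_{0}$ has a nonzero $\bI_{0}$-fixed vector on which $\bI_{0}$ acts trivially; hence $\pi(1,\phi)_{0}$ is Iwahori-spherical, and by Borel--Casselman theory is an irreducible subquotient of an unramified principal series $\Ind^{G(F_{0})}_{B}(\lambda)$ for some unramified character $\lambda$ of the maximal torus. To identify $\pi(1,\phi)_{0}$ as Steinberg (up to unramified twist), I would combine two ingredients. First, the existence of the Hecke eigen local system $\calF$ produced by Theorem \ref{th:eigen} applied to the Kloosterman datum, whose local monodromy at $0$ is, by the discussion in \S\ref{ss:rat} on local monodromy at points where $\bK_{x}$ is an Iwahori with trivial character, a regular unipotent element of $\dG$. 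Second, the compatibility between local and global Langlands correspondence, which forces the Iwahori-spherical local component corresponding to a regular unipotent Langlands parameter to be an unramified twist of the Steinberg representation. A more direct route avoiding full local Langlands is to argue that among Iwahori-spherical irreducible representations of $G(F_{0})$, only the Steinberg (up to twist) is consistent with the eigenvalue of the Hecke operator $\frT_{V}$ acting on the corresponding automorphic sheaf, which can be computed explicitly from the relevant orbit description.

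The main obstacle I anticipate is the uniqueness analysis of $\cK_{S}$-relevant points for a general quasi-split $G$. While the genericity of $\phi$ suggests the strategy, producing the requisite root subgroup in $\Aut_{\cG,\bK_{S}}(\calE)$ for an arbitrary non-standard $\calE$ requires a careful interplay between the global structure of $\cG$-bundles on $\PP^{1}$ and the affine Bruhat--Tits theory at $\infty$; in particular, one must track how perturbations of the trivial bundle at $0$ propagate into the affine flag variety at $\infty$ to produce unipotent automorphisms with nontrivial image in one of the simple affine root components. This is the technical heart of \cite{HNY} and relies on exploiting both the almost-simplicity of $\GG$ and the specific relative position of the two level structures.
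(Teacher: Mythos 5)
Your plan for part (1) is essentially the paper's own route: generalize the analysis of Proposition \ref{p:GL2S2I} by showing that each connected component of $\Bun_{\cG}(\bI_{0},\bI^{+}_{\infty})$ contains a unique $\cK_{S}$-relevant point (with trivial automorphism group), the irrelevance of all other points being forced by the genericity of $\phi$ through a subgroup of the automorphism group mapping nontrivially to some simple affine root space in \eqref{map to simple roots}; this is exactly the argument indicated after the theorem and carried out in \cite{HNY}. One caveat: your final step ``one-dimensional per component, hence unique up to unramified twist'' is only automatic when $\Bun_{\cG}(\bK_{S})$ is connected (e.g.\ $G$ simply-connected, the case treated by Gross); when there are several components, unramified twists do not move representations between components, and in the $\GL_{2}$ model the paper had to invoke multiplicity one and global Langlands for $\GL_{2}$ to relate the candidates on different components, so this step needs an actual argument rather than a dimension count.

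For part (2) there is a genuine gap. You propose to read off $\pi(1,\phi)_{0}$ from the local monodromy at $0$ of the Hecke eigen local system $\calF$ (regular unipotent) via ``compatibility between local and global Langlands correspondence.'' No such compatibility at ramified places is available for general $G$: the relation between the monodromy of $\calF$ at $x\in S$ and the local components $\pi_{x}$ is precisely the content of conjectural statements such as Conjecture \ref{conj:rigid}, and in \cite{HNY} the regular unipotent monodromy at $0$ is itself proved by a separate geometric argument after the automorphic input is in place, so your deduction runs the logic backwards. Your fallback via the eigenvalues of the geometric Hecke operators $\frT_{V}$ does not repair this, since those operators live over $U=X-S$ and only determine Satake parameters at unramified places. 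The standard argument (Gross, \cite{HNY}) is purely local-at-$0$ and elementary: the one-dimensional space $C_{\cG,\onat}(K_{S},\chi_{S})_{\a}$, spanned by the function supported on the unique relevant coset, is stable under the Iwahori--Hecke algebra of $G(F_{0})$, which therefore acts through a character; an explicit convolution computation on that coset shows each generator $T_{s}$ acts by $-1$ (the sign character), and by the Borel--Casselman classification an irreducible Iwahori-spherical representation whose Iwahori-invariants afford the sign character of the Iwahori--Hecke algebra is an unramified twist of the Steinberg representation. Replacing your local--global step by this computation closes the gap.
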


The argument for Part (1) is a generalization of the argument of Proposition \ref{p:GL2S2I}. In particular, we show that each component of $\Bun_{\cG}(\bK_{S})$ contains a unique $\cK_{S}$-relevant point (with trivial automorphism group in this case). 

\subsection{Kloosterman sheaves}   
Theorem \ref{th:eigenvar} is applicable to the Kloosterman automorphic datum. Therefore we have a Hecke eigen local system for the geometric automorphic datum $(\Om, \bK_{S}, \cK_{S},\iota_{S})$, which we denote by $\Kl_{\dG}(\chi,\phi)$ and call it the {\em Kloosterman sheaf} attached to $(\dG,\htheta)$ and the characters $\chi$ and $\phi$. This is a $\htheta$-twisted $\dG$-local system over $\wt{\Gm}$, the $e\nth$ Kummer covering of $\Gm=\PP^{1}_{k}-\{0,\infty\}$. To explain the namesake, we first recall some facts about the classical Kloosterman sheaf defined by Deligne.

\subsubsection{The classical Kloosterman sheaf}
Recall the definition of Kloosterman sums. Let $p$ be a prime number. Fix a nontrivial additive character $\psi:\FF_p\to\Qlbar^{\times}$. Let $n\geq2$ be an integer. Then the $n$-variable Kloosterman sum over $\FF_p$ is a function on $\FF^{\times}_{p}$ whose value at $a\in\FF_{p}^{\times}$ is
\begin{equation*}
\Kl_n(p;a)=\sum_{x_1,\cdots,x_n\in\FF^\times_p;x_1x_{2}\cdots x_{n}=a}\psi(x_1+\cdots+x_n).
\end{equation*}
These exponential sums arise naturally in the study of automorphic forms for $\GL_{n}$.

Deligne \cite{Deligne-ExpSum} gave a geometric interpretation of the Kloosterman sum. He considered the following diagram of schemes over $\FF_{p}$
\begin{equation*}
\xymatrix{& \Gm^n\ar[dl]_{\pi}\ar[dr]^{\sigma}\\ \Gm & & \AA^1}
\end{equation*}
Here $\pi$ is the morphism of taking the product and $\sigma$ is the morphism of taking the sum. 

\begin{defn}[Deligne \cite{Deligne-ExpSum}] The {\em Kloosterman sheaf} is 
\begin{equation*}
\Kl_n:=\bR^{n-1}\pi_!\sigma^*\AS_\psi,
\end{equation*}
over $\Gm=\PP^{1}_{\FF_{p}}-\{0,\infty\}$. Here $\AS_{\psi}$ is the Artin-Schreier sheaf as in Example \ref{ex:AS}. 
\end{defn}
The relationship between the local system $\Kl_{n}$ and the Kloosterman sum $\Kl_{n}(p;a)$ is explained by the following identity
\begin{equation*}
\Kl_n(p;a)=(-1)^{n-1}\Tr(\Frob_a,(\Kl_{n})_a).
\end{equation*}
Here $\Frob_{a}$ is the geometric Frobenius operator acting on the geometric stalk $(\Kl_{n})_{a}$ of $\Kl_{n}$ at $a\in\Gm(\FF_{p})=\FF_{p}^{\times}$.

In \cite[Th\'eor\`eme 7.4, 7.8]{Deligne-ExpSum}, Deligne proved:
\begin{enumerate}
\item $\Kl_n$ is a local system of rank $n$, pure of weight $n-1$. This implies the Weil-type bound $|\Kl_n(p;a)|\leq np^{(n-1)/2}$.
\item $\Kl_n$ is tamely ramified at $0$, and the monodromy is unipotent with a single Jordan block.
\item $\Kl_n$ is totally wild at $\infty$ (i.e., the wild inertia at $\infty$ has no nonzero fixed vector on the stalk of $\Kl_n$), and the Swan conductor $\Sw_\infty(\Kl_n)=1$. 
\end{enumerate}

\begin{remark} For $\dG=\SL_{n}$ (resp. $\Sp_{2n}$), the Kloosterman sheaf $\Kl_{\dG}(1,\phi)$, evaluated at the standard representation of $\dG$, is the same as $\Kl_{n}$ (resp. $\Kl_{2n}$) of Deligne up to a Tate twist. When $\dG=\SO_{2n+1}$ or $G_{2}$,  $\Kl_{\dG}(\chi,\phi)$ was constructed by Katz in \cite{Katz-DE} by different methods (as special cases of hypergeometric sheaves). Our construction of $\Kl_{\dG}(\chi,\phi)$ using geometric automorphic data treats all $\dG$ uniformly, and gives the first examples of motivic local systems with geometric monodromy group $F_{4}, E_{7}$ and $E_{8}$ (see Theorem \ref{th:HNY}(3) below).
\end{remark}

In \cite{HNY} we prove several results on the local and global monodromy of the Kloosterman sheaves $\Kl_{\dG}(\chi,\phi)$. 
\begin{theorem}[Heinloth-Ng\^o-Yun \cite{HNY}]\label{th:HNY} Assume $G$ is split, then $\Kl_{\dG}(\chi,\phi)$ enjoys the following properties.
\begin{enumerate}
\item\label{tame0} $\Kl_{\dG}(\chi,\phi)$ is tame at $0$. A generator of the tame inertia $I^{t}_{0}$ maps to an element in $\dG$ with semisimple part given by $\chi$, viewed as an element in $\dT$. When $\chi=1$, a generator of $I^{t}_{0}$ maps to a regular unipotent element in $\dG$.
\item The local monodromy of $\Kl_{\dG}(\chi,\phi)$ at $\infty$ is a {\em simple wild parameter} in the sense of Gross and Reeder \cite[\S5]{GR}. In particular, $\Sw_{\infty}(\Ad(\Kl_{\dG}(\chi,\phi)))=r$ (the rank of $\dG$). For more details see \S\ref{sss:Kl infty}.
\item If $\chi=1$, then the global geometric monodromy group of $\Kl_{\dG}(1,\phi)$ is a connected almost simple subgroup of $\dG$ of types given by the following table
\begin{center}
\begin{tabular}{|l|l|l|}
\hline
$\dG$ & $\dG^{\geom}_{\Kl_{\dG}(1,\phi)}$ & $\textup{condition on char}(k)$\\\hline
$A_{2n}$ & $A_{2n}$ & $p>2$\\
$A_{2n-1}, C_n$ &  $C_n$ & $p>2$\\ 
$B_n, D_{n+1}$ $(n\geq4)$ & $B_n$ & $p>2$ \\
$E_7$ & $E_7$ & $p>2$\\
$E_8$ & $E_8$ & $p>2$\\
$E_6, F_4$ & $F_4$ & $p>2$\\
$B_3,D_4, G_2$ & $G_2$ & $p>3$\\
\hline
\end{tabular}
\end{center}
\end{enumerate}
\end{theorem}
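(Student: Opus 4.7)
The plan is to prove the three parts in the listed order, using the structural results established earlier in the paper.

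For part (1), I apply the local-monodromy recipe at the end of \S\ref{ss:rat} to the parahoric $\bK_0 = \bI_0$ with character sheaf $\cK_\chi$ descending to the reductive quotient $\SS$: this forces tameness at $0$ and identifies the semisimple part of the monodromy with the element of $\dT \rtimes I_0$ (up to $\dT$-conjugacy) coming from $\chi$ via the natural pairing between $\xch(\SS)$ and $\xcoch(\dT^{I_F})$. When $\chi = 1$, the pair $(\bK_0^{\sharp}, \cK_0^{\sharp}) = (N_{L_0 G}(\bI_0), \Qlbar)$ satisfies the hypotheses of \S\ref{sss:more sym}, so Theorem \ref{th:eigenvar} produces an honest $\dG$-local system and the last paragraph of \S\ref{ss:rat} yields that the unipotent part of the monodromy at $0$ is a regular unipotent in $\dG$.

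For part (2), the restricted character $\psi \circ \phi$ on $\bI^+_\infty$ is a simple supercuspidal type in the sense of Gross--Reeder \cite{GR}, and its Langlands parameter is by definition a simple wild parameter \cite[\S5]{GR}. The Swan conductor identity $\Sw_\infty(\Ad \Kl_{\dG}(\chi,\phi)) = r$ is then obtained by combining the numerical relation \eqref{cond K} (applied to the Kloosterman datum, which is weakly rigid by Theorem \ref{th:Kl auto}) with the criterion \eqref{num rigid}. A direct computation from the definitions gives $d(\bK_0^{\ad}) = \dim \GG^{\ad} - r$ and $d(\bK_\infty^{\ad}) = \dim \GG^{\ad} + r$; combined with $a_0(\Ad\Kl_{\dG}) = \dim \GG^{\ad} - r$ (from the regular unipotent in part (1)), one reads off $a_\infty(\Ad\Kl_{\dG}) = \dim \GG^{\ad} + r$, which subtracts to $\Sw_\infty = r$ after removing the tame contribution $\dim \GG^{\ad} - \dim(\GG^{\ad})^{I_\infty}$.

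For part (3), let $H$ be the global geometric monodromy group and $H^\circ$ its identity component. Purity of the Hecke eigensheaf (shown by the ULA argument in Lemma \ref{l:Els}) together with Deligne's Theorem \ref{th:Del ss} forces $H^\circ$ to be semisimple. By part (1), $H^\circ$ contains a regular unipotent element of $\dG$, so a theorem of Saxl--Seitz on closed connected semisimple subgroups of a simple algebraic group containing a regular unipotent element limits $H^\circ$ to a short list: $\dG$, the fixed points $\dG^\sigma$ of a pinned involution, the image of a principal $\SL_2$, or the sporadic embeddings $G_2 \hookrightarrow B_3, D_4$ and $F_4 \hookrightarrow E_6$. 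The principal $\SL_2$ image is ruled out because its intersection with any maximal torus has finite image under a simple wild parameter, contradicting part (2) which shows the wild inertia surjects onto a large finite subgroup of a maximal torus of $\dG$.

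The main obstacle is then the case-by-case matching between the remaining possibilities and the simple wild parameter at $\infty$. For each type of $\dG$ in the table, I would argue as follows: when $\dG$ admits no nontrivial pinned involution (i.e. $A_{2n}, E_7, E_8$), $H^\circ = \dG$ automatically except possibly for the sporadic embeddings, which are excluded by comparing the Coxeter number with the structure of the simple wild parameter (a simple wild parameter of $\dG$ cannot land in a proper Levi-or-equal-rank subgroup whose Coxeter number is smaller). When $\dG$ admits a pinned involution (i.e. $A_{2n-1}, B_n, D_{n+1}, E_6$), the simple wild parameter of $\dG$ in fact factors through the corresponding fixed subgroup ($C_n, B_n, B_n, F_4$) because the generic character $\phi$ pairs equally with the two root groups interchanged by the involution; combined with the regular unipotent at $0$, which also factors through the fixed subgroup, this forces $H^\circ$ to be contained in that subgroup, and the Saxl--Seitz list then forces equality. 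The sporadic embeddings $G_2 \hookrightarrow B_3, D_4$ arise for the short-root exceptional type $G_2$ in characteristic $p > 3$ by a similar matching argument on the wild parameter, completing the table.
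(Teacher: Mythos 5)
The most serious problem is in your treatment of part (2). You write that the restricted character $\psi\circ\phi$ on $\bI^{+}_{\infty}$ ``is a simple supercuspidal type... and its Langlands parameter is by definition a simple wild parameter.'' This is circular. Gross--Reeder define the class of \emph{simple wild parameters} and \emph{propose} them as Langlands parameters of simple supercuspidal representations, but whether the geometrically constructed $\Kl_{\dG}(\chi,\phi)$ actually has such local monodromy at $\infty$ is precisely what must be proved. The proof in \cite{HNY} is a direct geometric computation of the restriction of the eigen local system to a formal punctured disk at $\infty$, breaking $\IC_{V}$ along the convolution diagram and analyzing Swan conductors there; none of this is a formal consequence of the automorphic datum. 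Your fallback derivation of $\Sw_{\infty}=r$ from \eqref{cond K} and \eqref{num rigid} has the same flaw: the bridge between those two numerical conditions is exactly Conjecture~\ref{conj:rigid}, which is unproven in this paper (it is explicitly labeled a conjecture), and in fact one of the things \cite{HNY} establishes is the instance of that conjecture you need. Worse, your Artin-to-Swan conversion at $\infty$ assumes $(\frg^{\der})^{I_{\infty}}=0$, i.e.\ total wildness of $\Ad(\Kl_{\dG})$ at $\infty$, which is again part of what ``simple wild parameter'' packages and cannot be assumed.

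Part (1) has a smaller version of the same issue. The recipe at the end of \S\ref{ss:rat} describing the semisimple part of the tame monodromy and the regular unipotent case is stated as a \emph{recipe}, not a theorem: the paper itself points to \cite[\S4.10--4.16]{Y-GenKloo} for the techniques needed to prove it in concrete cases. Citing the recipe is not a proof. Part (3) is the soundest of the three: the route via Deligne's Theorem~\ref{th:Del ss}, then restricting the possible $H^{\circ}$ using the regular unipotent at $0$ (the paper uses the Frenkel--Gross argument \cite[\S13]{FG}; invoking Saxl--Seitz is acceptable as they classify the same subgroups), and then pinning $H^{\circ}$ down using symmetry of the automorphic datum, is the correct strategy. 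But your case analysis for the descent to $C_n$, $B_n$, $F_4$, $G_2$ is hand-waved: the actual mechanism is that when $G$ (not $\dG$) admits an automorphism preserving the Kloosterman automorphic datum (e.g.\ transpose-inverse for $\PGL_n$, triality for $\Spin_8$), the uniqueness from rigidity forces the eigen local system to be self-dual under the corresponding pinned automorphism $\hat\sigma$ of $\dG$, hence its monodromy lies in $\dG^{\hat\sigma}$ up to conjugacy; the sentence ``the generic character $\phi$ pairs equally with the two root groups interchanged by the involution'' gestures at but does not carry out this argument, and the $G_2\subset B_3$ case does not arise from any pinned automorphism of $B_3$, so your phrasing does not cover it. In sum, part (3) is a reasonable outline, part (1) needs the cited techniques actually deployed, and part (2) as written is circular and cannot be completed without the direct local computation at $\infty$ that is the technical heart of \cite{HNY}.
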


\subsubsection{Local monodromy at $\infty$}\label{sss:Kl infty} Let us explain in more detail what a {\em simple wild parameter} looks like when $G$ is split, following Gross and Reeder \cite[Proposition 5.6]{GR}.  Assume $p=\textup{char}(k)$ does not divide $\#W$ ($W\cong\WW$ is the Weyl group of $\dG$). Let $\rho_{\infty}:I_{\infty}\to \dG(\Qlbar)$  be the local monodromy of $\Kl_{\dG}(\chi,\phi)$ at $\infty$. Then we have a commutative diagram
\begin{equation*}
\xymatrix{1\ar[r] & I^{w}_{\infty}\ar[r]\ar[d]^{\rho_{\infty}|_{I^{w}_{\infty}}} & I_{\infty}\ar[r]\ar[d]^{\rho_{\infty}} & I^{t}_{\infty}\ar[r]\ar[d] &1\\
1\ar[r]&\dT\ar[r] & N_{\dG}(\dT)\ar[r] & W\ar[r] & 1}
\end{equation*}
The image of $I^{t}_{\infty}$ in $W$ is the cyclic group generated by a Coxeter element $\Cox\in W$, whose order is the Coxeter number $h$ of $G$.  The image $\rho_{\infty}(I^{w}_{\infty})$ is a $\FF_{p}$-vector space equipped with the action of $\Cox$. In fact $\rho_{\infty}(I^{w}_{\infty})\cong\FF_{p}[\zeta_{h}]$,  the extension of $\FF_{p}$ by adjoining $h\nth$ roots of unity, and the Coxeter element acts by multiplication by a primitive $h\nth$ root of unity.

When $p\mid\#W$, a simple wild parameter can be more complicated. For example, when $\dG=\PGL_{2}$ and $p=2$, the image of $\rho_{\infty}$ is isomorphic to the alternating group $A_{4}$ embedded in $\PGL_{2}(\Qlbar)=\SO_{3}(\Qlbar)$ as the symmetry of a regular tetrahedron.

\subsection{Generalizations}  
In \cite{Y-GenKloo} we give further generalizations of Kloosterman sheaves. We work with the same class of quasisplit $G$ as in \S\ref{ss:Kl auto}. We will replace $\bI_{0}$ and $\bI_{\infty}$ by more general parahoric subgroups.

\subsubsection{Admissible parahoric subgroups and epipelagic representations} In their construction of supercuspidal representations, Reeder and Yu \cite{RY} singled out a class of parahoric subgroups $\bP_{\infty}\subset L_{\infty}G$ as follows. Let $\bP^{++}_{\infty}\subset\bP^{+}_{\infty}\subset\bP_{\infty}$ be the first three steps of the Moy-Prasad filtration on $\bP_{\infty}$. Then the vector group $\bV_{\bP}:=\bP^{+}_{\infty}/\bP^{++}_{\infty}$ is a representation of the reductive group $\bL_{\bP}:=\bP_{\infty}/\bP^{+}_{\infty}$. A geometric point in the dual space $\bV^{*}_{\bP}$ is called {\em stable} if its $\bL_{\bP}$-orbit is closed and its stabilizer under $\bL_{\bP}$ is finite. Let $\bV^{*,\st}_{\bP}$ be the open subset of stable points. A parahoric subgroup $\bP_{\infty}\subset L_{\infty}G$ is called {\em admissible} if $\bV^{*,\st}_{\bP}\neq\varnothing$. When $\chk$ is large, conjugacy classes of admissible parahorics are in bijection with regular elliptic numbers of the pair $(\WW,\theta)$. To each stable point $\phi\in\bV^{*}_{\bP}(k)$, Reeder and Yu construct a class of irreducible supercuspidal representations from the compact induction $\textup{c-Ind}_{\bP^{+}_{\infty}(k)}^{G(F_{\infty})}(\psi\circ\phi)$ ($\psi$ is a fixed nontrivial additive character of $k$), and they call them {\em epipelagic representations}.

\subsubsection{Generalized Kloosterman automorphic datum} Let $\bP_{\infty}\subset L_{\infty}G$ be an admissible parahoric subgroup. Let $\bP_{0}\subset L_{0}G$ be a parahoric of the same type as $\bP_{\infty}$. Fix a stable point $\phi\in\bV_{\bP}^{*,\st}(k)$. The character $\psi\circ\phi$ of $\bV_{\bP}(k)$ corresponds to a rank one character sheaf $\cK_{\phi}$ on $\bV_{\bP}$ (and hence on $\bP^{+}_{\infty}$). Fix another character $\chi:\bL_{\bP}(k)\to\Qlbar^{\times}$, which corresponds to a rank one character sheaf $\cK_{\chi}$ on $\bL_{\bP}$ (and hence on $\bP_{0}$). We consider the geometric automorphic datum with respect to $S=\{0,\infty\}$ consisting of
\begin{equation*}
\bK_{S}=(\bP_{0}, \bP^{+}_{\infty}); \quad \cK_{S}=(\cK_{\chi}, \cK_{\phi})
\end{equation*}
and the unique compatible choice of $(\Om,\iota_{S})$ as in the case of Kloosterman automorphic datum. One can show an analog of Theorem \ref{th:Kl auto} which says that  $(\Om,\bK_{S}, \cK_{S}, \iota_{S})$ is strongly rigid.

\subsubsection{Generalized Kloosterman sheaves} Theorem \ref{th:eigenvar} applies to obtain Hecke eigen local systems for the geometric automorphic datum $(\Om,\bK_{S}, \cK_{S}, \iota_{S})$. We denote the resulting Hecke eigen local system by $\Kl_{\dG, \bP}(\chi,\phi)$, which is a $\htheta$-twisted $\dG$-local system over $\wt\Gm$. One new feature of this generalization is that when $\phi$ varies in $\bV^{*,\st}_{\bP}$, the corresponding local systems $\Kl_{\dG, \bP}(\chi,\phi)$  ``glue'' together to give a $\dG$-local system over $\bV^{*,\st}_{\bP}\times\wt\Gm$. We also have an analog of Theorem \ref{th:HNY}\eqref{tame0}: when $G$ is split, the monodromy of $\Kl_{\dG, \bP}(1,\phi)$ at $0$ is tame and unipotent, and the corresponding unipotent class $u_{\bP}$ in $\dG$ can be described purely in terms of $\bP$. For details we refer to \cite{Y-GenKloo}.

\section{Rigid objects over $\PP^{1}_{\QQ}-\{0,1,\infty\}$ and applications}\label{s:3p} 
In this section, we review the work \cite{Y-motive}, in which we use rigid automorphic representations to construct local systems on $\PP^{1}_{\QQ}-\{0,1,\infty\}$. These local systems are the key objects that lead to the answer to Serre's question and the solution of the inverse Galois problem for certain finite simple groups of exceptional Lie type.

\subsection{The geometric automorphic data} 
Let $k$ be a field with $\chk\neq2$. Let $X=\PP^{1}_{k}$ and $S=\{0,1,\infty\}$. Assume $\GG$ is almost simple and simply-connected in this section. 

When the longest element $w_{0}$ in the Weyl group $\WW$ of $\GG$ acts by $-1$ on $\xcoch(\TT)$, we take $G$ to the split group $\GG\otimes_{k}F$. Otherwise, let $F'=k(t^{1/2})$ and let $\theta:\Gamma_{F}\surj\Gal(F'/F)\to \Aut^{\dagger}(\GG)$ map the nontrivial element in $\Gal(F'/F)$ to the unique pinned involution $\sigma$ that acts by $-w_{0}$ on $\xcoch(\TT)$. We define $G$ to be the quasi-split form of $\GG$ over $F$ using $\theta$ as in \S\ref{sss:qsplit}.

Recall that a Chevalley involution of $\GG$ is an involution $\tau$ such that $\dim G^{\tau}$ has the minimal possible dimension, namely $\#\Phi^{+}$ (the number of positive roots of $G$). All Chevalley involutions are conjugate to each other under $G^{\ad}(\kbar)$. The Chevalley involution for $\GG$ is inner if and only if $w_{0}$ acts by $-1$ on $\xcoch(\TT)$.

\subsubsection{A parahoric subgroup}\label{sss:Chev para} Up to conjugacy, there is a unique parahoric subgroup $\bP_{0}\subset L_{0}G$ such that its reductive quotient $\bL_{0}$ is isomorphic to the fixed point subgroup $\GG^{\tau}$ of a Chevalley involution $\tau$. For example, we can take $\bP_{0}$ to be the parahoric subgroup corresponding to the facet containing the element $\rho^{\vee}/2$ in the $S$-apartment of the building of $L_{0}G$, where $\rho^{\vee}$ is half the sum of positive coroots of $\GG$ and $S$ is the maximal split torus of $G$ with $\xcoch(S)=\xcoch(\TT)^{\Gal(F'/F)}$. 

The Dynkin diagram of the reductive quotient $\bL_{0}\cong G^{\tau}$ of $\bP_{0}$ is obtained by removing one or two nodes from the extended Dynkin diagram of $G$. We tabulate the type of $L_{\bP}$ and the node(s) in the affine Dynkin diagram of $L_{0}G$ to be removed in each case. When we say a node is long or short, we mean its corresponding affine simple root is long or short. In the following table $n\geq1$ (and we think of $A_{1}$ as $C_{1}$).

\begin{tabular}{|c|c|c|c|}
\hline
$\GG$ & $G$ split? & $\bL_{0}\cong G^{\tau}$ & nodes to be removed \\ \hline
$A_{2n}$ & no & $B_{n}$ & longest node \\   \hline
$A_{2n+1}$ & no & $D_{n+1}$ & longest node \\   \hline
$B_{2n}$ & yes & $B_{n}\times D_{n}$ & the $(n+1)\nth$ counting from the short node \\ \hline
$B_{2n+1}$ & yes &$B_{n}\times D_{n+1}$ & the $(n+1)\nth$ counting from the short node \\ \hline
$C_{n}$ & yes & $A_{n-1}\times\Gm $ & the two ends \\ \hline
$D_{2n}$ & yes & $D_{n}\times D_{n}$ & the middle node \\ \hline 
$D_{2n+1}$ & no & $B_{n}\times B_{n}$ & the middle node \\ \hline
$E_{6}$ & no &   $C_{4}$  & the long node on one end \\   \hline
$E_{7}$ & yes & $A_{7}$ & the end of the leg of length 1 \\ \hline
$E_{8}$ & yes & $D_{8}$ & the end of the leg of length 2 \\ \hline
$F_{4}$ & yes & $A_{1}\times C_{3}$ & second from the long node end\\ \hline
$G_{2}$ & yes & $A_{1}\times A_{1}$ & middle node \\ \hline
\end{tabular}

Examining all the cases we get
\begin{lemma} If $\GG$ is not of type $C$, then $\bL^{\sc}_{0}\to \bL_{0}$ is a double cover (i.e., the algebraic fundamental group of $\bL_{0}$ has order two).
\end{lemma}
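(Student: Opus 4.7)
The plan is to establish this lemma by a direct case-by-case inspection based on the table just compiled. Since the table explicitly records $\bL_0$ for each simple type of $\GG$, it suffices to compute the order of the algebraic fundamental group of $\bL_0$ in each of the eleven rows where $\GG$ is not of type $C$, and check that it equals $2$.

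To carry this out, I first need to pin down not just the Dynkin type of $\bL_0$ but its precise isogeny type. This is controlled by the parahoric construction: since $\GG$ is simply-connected, the cocharacter lattice of the maximal torus of $\bL_0$ is all of $\xcoch(\TT)^{\Gal(F'/F)}=\xcoch(S)$, while the coroot lattice of $\bL_0$ is spanned by the simple (affine) coroots corresponding to the nodes that remain after removing those listed in the last column of the table. Thus
\begin{equation*}
\pi_1^{\alg}(\bL_0)\;\cong\;\xcoch(S)\big/\bigl\langle \alpha_i^{\vee}:i\notin J\bigr\rangle,
\end{equation*}
where $J$ is the set of removed nodes. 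Using the affine relation $\sum_i c_i\alpha_i^{\vee}=0$ (with $c_i$ the comarks), the quotient is a finite cyclic or elementary abelian $2$-group whose order is read off from the comarks of the removed nodes together with the embedding of the coroot lattice in $\xcoch(S)$. I would tabulate the answer: e.g.\ for $\GG=E_7$ one removes an end node of comark $2$, giving $\bL_0=\mathrm{SL}_8/\mu_2$; for $\GG=E_8$ one removes a node of comark $2$, giving $\bL_0=\mathrm{Spin}_{16}/\mu_2^{\mathrm{diag}}$; for $A_{2n}$ one gets the full $\mathrm{SO}_{2n+1}$; and so on through each row.

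A conceptually cleaner, parallel way to reach the same conclusion is via the alternative description $\bL_0\cong\GG^{\tau}$ with $\tau$ a Chevalley involution. For $\GG$ simply-connected and simple, a Chevalley involution is outer precisely when $\GG$ is not of type $C$ (which is the hypothesis of the lemma). One can then argue once and for all that the fixed-point subgroup $\GG^{\tau}$ of an outer Chevalley involution on simply-connected $\GG$ has fundamental group of order $2$: the coweight lattice of $\GG^{\tau}$ is $\xcoch(\TT)^{\tau}$, its coroot lattice is generated by $\alpha^{\vee}$ for $\tau$-fixed roots together with $\alpha^{\vee}+\tau(\alpha^{\vee})$ for $\tau$-unstable positive roots, and the quotient is $\ZZ/2$ by a uniform structure-theoretic argument using that $\tau$ acts as $-w_0$.

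The hard part will be bookkeeping: verifying the precise isogeny type of $\bL_0$ in the mixed-type rows ($B_{2n}$, $B_{2n+1}$, $D_{2n}$, $D_{2n+1}$, $F_4$, $G_2$) where $\bL_0$ has two simple factors, and making sure the relevant quotient is a \emph{diagonal} $\mu_2$ rather than a non-diagonal or larger subgroup, so that $\pi_1^{\alg}(\bL_0)$ really has order $2$ and not $4$. I would handle this either by a careful lattice calculation as above, or — preferred for brevity — by invoking the uniform Chevalley-involution description and citing the general fact on fixed points of outer involutions on simply-connected simple groups.
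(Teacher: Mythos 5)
Your primary approach---computing $\pi_1^{\alg}(\bL_0)=\xcoch(S)/\langle\alpha_i^\vee:i\notin J\rangle$ case by case from the table---is exactly what the paper does (its proof is literally ``examining all the cases''), and that part of your plan is sound.

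The alternative ``conceptually cleaner'' route you prefer, however, rests on a false premise. You assert that a Chevalley involution of simply-connected simple $\GG$ is outer precisely when $\GG$ is not of type $C$. In fact a Chevalley involution is inner exactly when $-1$ lies in the Weyl group, i.e.\ when $w_0=-1$, which happens for types $A_1$, $B_n$, $C_n$, $D_{2n}$, $E_7$, $E_8$, $F_4$, $G_2$; it is outer only for $A_n\ (n\geq 2)$, $D_{2n+1}$, $E_6$. This is precisely the split/non-split dichotomy already encoded in the second column of the table, and it does not single out type $C$. So the proposed ``uniform structure-theoretic argument using that $\tau$ acts as $-w_0$'' would at best treat the non-split rows, and you would still be forced back to the case-by-case lattice computation for $B$, $D_{2n}$, $E_7$, $E_8$, $F_4$, $G_2$. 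The actual reason type $C$ is excluded is visible directly from the table: there $\bL_0\cong A_{n-1}\times\Gm$ has a central torus, so the lattice quotient $\xcoch(S)/Q^\vee(\bL_0)$ has a free $\ZZ$-summand and $\bL_0^{\sc}\to\bL_0$ is not even an isogeny. For the remaining rows $\bL_0$ is semisimple and the removed node(s) have comark $2$ (or the two halves of a folded $D$-factor give a diagonal $\mu_2$), yielding order $2$ each time. Stick with the lattice computation; drop the Chevalley-involution shortcut.
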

Even if $\GG$ is of type $C_{n}$, $\bL_{0}\cong\GL_{n}$ still admits a unique nontrivial double cover. In all cases, there is a canonical nontrivial double cover $v:\wt{\bL}_{0}\to \bL_{0}$. In particular,
\begin{equation*}
\calK_{0}:=(v_{!}\Qlbar)_{\textup{sgn}}
\end{equation*}
is a rank one character sheaf on $\bL_{0}$ (here $\textup{sgn}$ denotes the nontrivial character of $\ker(v)=\{\pm1\}$). When $k$ is a finite field, we have an exact sequence
\begin{equation*}
1\to\{\pm1\}\to\wt{\bL}_{0}(k)\to \bL_{0}(k)\to\cohog{1}{k,\mu_{2}}=\{\pm1\}.
\end{equation*}
The character $\chi_{0}$ corresponding to $\calK_{0}$ is given by the last arrow above.

\subsubsection{The automorphic datum} Let $\bP_{0}\subset L_{0}G$ be a parahoric subgroup of the type defined in \S\ref{sss:Chev para}. Let $\bP_{\infty}\subset L_{\infty}G$ be a parahoric subgroup of the same type as $\bP_{0}$ (since $L_{0}G$ and $L_{\infty}G$ are isomorphic).  Let $\bI_{1}\subset L_{1}G$ be an Iwahori subgroup. We consider the geometric automorphic datum given by
\begin{eqnarray*}
(\bK_{0},\calK_{0})&=&(\bP_{0},\calK_{0});\\
(\bK_{1}, \calK_{1})&=&(\bI_{1}, \Qlbar);\\
(\bK_{\infty}, \calK_{\infty})&=&(\bP_{\infty}, \Qlbar).
\end{eqnarray*}

Since $\Bun_{\cZ}(\bK_{Z,S})=\BB(\ZZ\GG[2])$, by Lemma \ref{l:unique Om}, the choices of $(\Om,\iota_{S})$ to complete $(\bK_{S},\cK_{S})$ into a geometric automorphic datum are in bijection with the choices of a descent $\overline{\cK}_{0}$ of $\cK_{0}$ to $\bL_{0}/\ZZ\GG[2]$. For this we require that $\wt{\ZZ\GG[2]}=v^{-1}(\ZZ\GG[2])\subset\wt\bL_{0}$ be discrete and we need to choose a character $c:\wt{\ZZ\GG[2]}(k)\to\Qlbar^{\times}$ extending the sign character on $\mu_{2}=\ker(v)$. Therefore, when $\wt{\ZZ\GG[2]}$ contains a factor $\mu_{4}$, we need to assume that $\sqrt{-1}\in k$.

The main technical result of \cite{Y-motive} is the following.
\begin{theorem}[\cite{Y-motive}]\label{th:3pt eigen} Assume $G$ is simply-connected, split and is either simply-laced or of type $G_{2}$. Assume that $k$ contains $\sqrt{-1}$ when $G$ is of type $A_{1}, D_{4n+2}$ or $E_{7}$. Then the geometric automorphic datum $(\Om,\bK_{S},\cK_{S}, \iota_{S})$ satisfies all the assumptions in \S\ref{sss:ass} (note that $\xch(Z\dG)$ is trivial in this case). Therefore Theorem \ref{th:eigen} applies to give a Hecke eigen $\dG$-local system $\calF$ on $U=\PP^{1}_{k}-\{0,1,\infty\}$ attached to the geometric automorphic datum $(\Om,\bK_{S},\cK_{S}, \iota_{S})$.
\end{theorem}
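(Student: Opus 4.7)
The proof verifies the four assumptions of \S\ref{sss:ass}. Since $G$ is simply-connected, $\dG$ is adjoint, so $\xch(Z\dG)_{I_F} = 0$ and there is a unique index $\a = 0$. The center $Z(G)$ is a finite group scheme and each $\bK_{x}$ is parahoric, so $\Bun^{\n}_{\cZ}(\bK_{Z,S}) = \Bun_{\cZ}(\bK_{Z,S})$ is the classifying stack $\BB(\ZZ\GG[2])$ of a finite group; its coarse moduli space is a point, verifying \eqref{Zfin}, and $\bA_{Z,S} = \ZZ\GG[2]$.

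To verify the remaining assumptions, the plan is to classify relevant orbits via the Birkhoff stratification. Since $G$ is split, every $G$-bundle on $\PP^{1}$ is isomorphic to $\calE_\lambda$ for a dominant coweight $\lambda$, and $\Aut(\calE_\lambda)$ is a specific parabolic subgroup of $\GG$. Hence $\Bun_G(\bK_S)$ decomposes into strata indexed by $\lambda$ together with triples of level structures modulo $\Aut(\calE_\lambda)$. By Lemma \ref{l:two rel}, any $(\Om,\cK_S)$-relevant point is $\cK_S$-relevant, so I will first bound the $\cK_S$-relevant strata; here the key input is that $\cK_0$, being the sign local system of the nontrivial double cover $\wt\bL_0 \to \bL_0$, restricts nontrivially to every positive-dimensional split torus in $\bL_0$. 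Consequently any stratum whose identity component of the automorphism group projects non-trivially to a split torus of $\bL_0$ is $\cK_S$-irrelevant, and I expect this criterion to leave a single stratum $\calE_\star$ with finite automorphism group $A_\star$.

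I anticipate $\calE_\star$ to be characterized by a global symmetry of the bundle covering $t\mapsto 1/t$ on $\PP^{1}$ and intertwining $\bP_0$ with $\bP_\infty$ through the Chevalley involution $\tau$; its stabilizer $A_\star$ is then an extension of $\ZZ\GG$ by a finite component group readable from the table in \S\ref{sss:Chev para}, giving the finiteness required by the third assumption of \S\ref{sss:ass}. Assumption \eqref{Wunique} then reduces to a direct check in each type: $\Rep_{\xi_0}(A_\star)$ is equivalent to the category of representations of the central extension $\tilA_\star \to A_\star$ (induced by $\wt\bL_0 \to \bL_0$) on which $\mu_2 = \ker$ acts by the sign, and in each simply-laced or $G_2$ case exactly one such irreducible $W_0$ exists. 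The $(\Om,\cK_S)$-relevance of $\calE_\star$ is then upgraded from $\cK_S$-relevance using the descent datum $\iota_S$ and the character $c$ on $\wt{\ZZ\GG[2]}$, where the hypothesis $\sqrt{-1}\in k$ in types $A_1$, $D_{4n+2}$, $E_7$ is precisely what is required for $c$ to be defined over $k$. The hard part will be the explicit identification of $\calE_\star$ and the uniform verification that no other stratum is $\cK_S$-relevant; the restriction to simply-laced $G$ and $G_2$ is essential because the principal $\SL_2$-type structures controlling the interaction with $\tau$ behave cleanly in those cases.
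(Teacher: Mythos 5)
Your outline has the right overall shape (check the four assumptions of \S\ref{sss:ass}; note $\xch(Z\dG)=0$; identify a single relevant point with finite stabilizer; reduce assumption \eqref{Wunique} to a twisted-representation statement; observe that $\sqrt{-1}\in k$ is needed to define the character $c$ on $\wt{\ZZ\GG[2]}$), and this matches the facts the paper records from \cite{Y-motive}. But the step that carries essentially all the weight --- that every point of $\Bun_{\cG}(\bK_{S})$ other than one distinguished point is irrelevant --- rests on a false claim. The sign sheaf $\cK_{0}=(v_{!}\Qlbar)_{\textup{sgn}}$ is \emph{not} nontrivial on every positive-dimensional split torus of $\bL_{0}$: its restriction to a subtorus $S'\subset\bL_{0}$ is trivial exactly when $S'$ lifts to the double cover $\wt\bL_{0}$, and such subtori are plentiful. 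For example for $G=E_{8}$ one has $\bL_{0}\cong\Spin_{16}/\mu_{2}$ (half-spin), and the image of any coroot $\alpha^{\vee}:\Gm\to\Spin_{16}\to\bL_{0}$ lifts back isomorphically (since $\alpha^{\vee}(-1)$ is not the half-spin kernel element), so $\cK_{0}$ restricts trivially to that $\Gm$. Moreover, since $\cK_{0}$ has order two and is of multiplicative type it is automatically trivial on unipotent subgroups, so irrelevance can only ever be detected on \emph{non-liftable} tori; proving that the neutral component of the automorphism group of every non-distinguished point contains such a torus (using all three level structures, not just the datum at $0$) is precisely the hard content of \cite{Y-motive}, and your criterion would not deliver it. A smaller slip in the same step: $\Aut(\calE_{\lambda})$ in the Birkhoff--Grothendieck stratification is not a parabolic subgroup of $\GG$; it is a finite-dimensional group with reductive quotient the Levi of $\lambda$ but a larger unipotent part coming from global sections of the associated nilradical bundle.

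There is also a gap at assumption \eqref{Wunique}. You defer it to a case-by-case check, but the actual mechanism (stated in the paper right after the theorem) is uniform and structural: the unique relevant point $\cE$ has $\Aut_{\cG,\bK_{S}}(\cE)\cong\TT[2]$ and $A_{\cE}\cong\TT[2]/\ZZ\GG[2]$, the extension $1\to\wt{\ZZ\GG[2]}\to\tilA_{\cE}\to A_{\cE}\to1$ induced by $\wt\bL_{0}\to\bL_{0}$ is of Heisenberg type with \emph{nondegenerate} commutator pairing valued in $\mu_{2}=\ker(v)$, and $c$ is nontrivial on $\ker(v)$; Stone--von Neumann then gives the unique irreducible $W$. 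The hypothesis that $G$ is simply-laced or of type $G_{2}$ enters through this parity/nondegeneracy structure and through the relevance analysis, not through ``principal $\SL_{2}$-type structures''; and your anticipated description of the distinguished point via a symmetry over $t\mapsto 1/t$ does not match the actual answer, whose automorphism group is $\TT[2]$ for $\TT$ a maximal torus of $\bL_{0}$. So while the skeleton is right, the two pillars --- uniqueness of the relevant point and uniqueness of the irreducible twisted representation --- are not established by the proposal.
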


Note that the conditions put on $G$ in the above theorem limits $G$ to be simply-connected of type $A_{1}, D_{2n}, E_{7}, E_{8}$ and $G_{2}$. 

For the unique $(\Om,\cK_{S})$-relevant point $\cE\in\Bun_{G}(\bK_{S})$, we have $\Aut_{G,\bK_{S}}(\cE)\cong\TT[2]$ and $A_{\cE}\cong \TT[2]/\ZZ\GG[2]$. Here we are identifying $\TT$ with a maximal torus of $\bL_{0}$. Let $\tilA_{\cE}=v^{-1}(\TT[2])\subset\wt\bL_{0}$, then we have an exact sequence
\begin{equation*}
1\to \wt{\ZZ\GG[2]}\to \tilA_{\cE}\to A_{\cE}\to1.
\end{equation*}
The character sheaf $\overline{\cK}_{0}\in\cCS(\bL_{0}/\ZZ\GG[2])$ restricts to a cocycle $\xi\in\cohog{2}{A_{\cE}, \Qlbar^{\times}}$ and gives the category  $\Rep_{\xi}(A_{\cE})$. It turns out that $\Rep_{\xi}(A_{\cE})\cong\Rep(\tilA_{\cE}; c)$, the latter being the category of representations of $\tilA_{\cE}$ (a discrete group) on which $\wt{\ZZ\GG[2]}$ acts through the character $c$. The commutator pairing $A_{\cE}\times A_{\cE}\to \mu_{2}=\ker(v)$ is  non-degenerate, and $c|_{\ker(v)}$ is nontrivial. These facts imply that $\Rep(\tilA_{\cE}; c)$ contains a unique irreducible object up to isomorphism. This is also a situation where Proposition \ref{p:desc eigen} applies.

The following theorem summarizes the local and global monodromy of the Hecke eigen $\dG$-local system $\calF$.
\begin{theorem}[{\cite{Y-motive},\cite[\S9]{Y-GenKloo}}] Let $G$ and $\calF$ be as in Theorem \ref{th:3pt eigen}.
\begin{enumerate}
\item The local system $\calF$ is tame.
\item A topological generator of $I^{t}_{1}$ maps to a regular unipotent element in $\dG$.
\item A topological generator of $I^{t}_{\infty}$ maps to a unipotent element whose centralizer in $\dG$ has dimension equal to $\#\Phi^{+}(\dG)$ (unique up to conjugacy).
\item A topological generator of $I^{t}_{0}$ maps to a Chevalley involution in $\dG$.
\item The local system $\calF$ is cohomologically rigid.
\item When $G$ is of type $A_{1}, E_{7}, E_{8}$ and $G_{2}$, the global geometric monodromy of $\calF$ is Zariski dense in $\dG$. When $G$ is of type $D_{2n}$, the Zariski closure of the global geometric monodromy of $\calF$ contains $\SO_{4n-1}\subset\PSO_{4n}=\dG$ of $n\geq3$, and contains $G_{2}\subset \PSO_{8}=\dG$ if $n=2$.
\end{enumerate}
\end{theorem}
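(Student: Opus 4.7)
The plan is to exploit the general principle -- discussed after Theorem~\ref{th:eigenvar} -- that each parahoric piece of the geometric automorphic datum determines one factor of the local monodromy of the Hecke eigensheaf, then deduce cohomological rigidity from the numerical criterion Proposition~\ref{p:numerics}, and finally cut $\dG^{\geom}_{\cF}$ down to size using the group-theoretic rigidity of regular unipotent elements.

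First I would dispose of the local monodromies (1)--(4) in one stroke. Since each $\bK_x$ is parahoric and each $\cK_x$ descends through its reductive quotient $\bL_x$, the eigensheaf $\cF$ is automatically tame at every $x\in S$, proving (1); and the semisimple part of the local monodromy at $x$ is the restriction of $\cK_x$ to a maximal torus of $\bL_x$ viewed as a cocycle $I_x^{t}\to\dT\rtimes I_x$. The characters at $1$ and $\infty$ are trivial, so those semisimple parts are trivial; at $0$, the semisimple part is the order-two element of $\dT$ coming from the sign character of the double cover $v\colon\wt\bL_0\to\bL_0$ restricted to a maximal torus, and by the construction of $\bP_0$ in \S\ref{sss:Chev para} this is exactly the semisimple part of a Chevalley involution. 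The unipotent parts are then read off via the Lusztig two-sided cell recipe of \cite[\S4.9, 4.10--4.16]{Y-GenKloo}: for an Iwahori with trivial character the recipe yields a regular unipotent, giving (2); for $\bP_\infty$ of the type tabulated in \S\ref{sss:Chev para}, a case-by-case check against that table yields a unipotent class of centralizer dimension exactly $\#\Phi^+(\dG)$, giving (3); and for the same type of parahoric at $0$ the recipe returns the trivial class, so the monodromy at $0$ equals its semisimple part and hence is a Chevalley involution, proving (4).

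For the rigidity statement (5), I would apply Proposition~\ref{p:numerics}. Set $N=\#\Phi^+(\dG)$ and $r=\rank(\dG)$, so $\dim\dG=2N+r$; since $G$ is simply connected $\dG$ is adjoint, and $\bAd(\cF)$ is the full adjoint local system. Tameness collapses each Artin conductor to $a_x(\bAd(\cF))=\dim\dG-\dim\dG^{\rho(I_x^{t})}$, and parts (2)--(4) give centralizer dimensions $r$ at $x=1$ and $N$ at $x=\infty, 0$. Summing,
\begin{equation*}
\tfrac12\sum_{x\in S} a_x(\bAd(\cF)) \;=\; \tfrac12\bigl((2N+r-r)+(2N+r-N)+(2N+r-N)\bigr)\;=\;2N+r\;=\;\dim\dG,
\end{equation*}
which matches $(1-g_X)\dim\dG=\dim\dG$ since $g_X=0$. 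It remains to show $\cohog{0}{U,\bAd(\cF)}=0$, i.e.\ that the three local monodromies have no common invariant in $\dG$; this reduces, via the principal $\sl_2$-decomposition of $\frh$ attached to a regular nilpotent, to a case-by-case check that the $r$-dimensional centralizer Lie algebra of $\rho(\gamma_1)$ meets the intersection of the $+1$-eigenspace of the Chevalley involution with the $\Ad(u_{\bP_\infty})$-invariants only in zero.

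Finally, for (6), I would invoke Deligne's Theorem~\ref{th:Del ss}: purity of $\bAd(\cF)$, inherited from the Hecke construction as in Lemma~\ref{l:Els}, forces the neutral component $H$ of $\dG^{\geom}_{\cF}$ to be semisimple, and by (2) it contains a regular unipotent of $\dG$. The classification of connected semisimple subgroups of a simple group containing a regular unipotent element (Dynkin; Saxl--Seitz; Testerman) restricts $H$ to a short list. In types $E_7, E_8, G_2$ the only options besides $\dG$ itself are the image of a principal $\SL_2$, which is ruled out by (3): the unipotent at $\infty$ has centralizer dimension $N>r$, so it is not regular in $\dG$ and therefore cannot lie in a principal $\SL_2$, inside which every nontrivial unipotent is regular in $\dG$. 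In type $A_1$, $\dG=\PGL_2$ has no proper positive-dimensional semisimple subgroup containing a regular unipotent, so $H=\dG$. For $D_{2n}$, $\dG=\PSO_{4n}$ admits proper semisimple subgroups containing a regular unipotent (notably $\SO_{4n-1}$, and for $n=2$ also $G_2$ via triality), and matching the centralizer dimensions of the monodromies (2)--(4) against the classification forces $H\supseteq \SO_{4n-1}$ (resp.\ $G_2$). The main obstacle in this plan is the case-by-case identification of $u_{\bP_\infty}$ and its centralizer via Lusztig cells, which underlies both (3) and the Artin-conductor computation in (5), and is closely tied to the $D_{2n}$ case of (6) where the answer is a genuine proper subgroup and the classification argument must distinguish $\SO_{4n-1}$, $G_2\subset\PSO_8$, and the full $\PSO_{4n}$.
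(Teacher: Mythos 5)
The paper gives no proof of this theorem; it is stated with citations to \cite{Y-motive} and \cite[\S9]{Y-GenKloo} only. Your sketch is therefore being assessed on its own terms, and at the level of strategy it does match what those references do: local monodromy read off from the pair $(\bK_x,\cK_x)$, rigidity from the Grothendieck--Ogg--Shafarevich count in Proposition~\ref{p:numerics}, and global monodromy pinned down via Deligne's semisimplicity and the presence of a regular unipotent. The arithmetic in your rigidity computation is also correct: with centralizer dimensions $r$, $N$, $N$ at $1,\infty,0$ one indeed gets $\tfrac12\sum a_x=2N+r=\dim\dG$.

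There is, however, one step that is asserted rather than argued and which I do not think holds in the form you state it. You claim that the unipotent part of the monodromy at $0$ is trivial ``because the cell recipe, applied to the same type of parahoric, returns the trivial class.'' That cannot be the reasoning: at $\infty$ the \emph{same} parahoric type $\bP_\infty$, now with \emph{trivial} character, is supposed to give (by the same recipe) a unipotent with centralizer dimension $N$, which is manifestly nontrivial. What changes at $0$ is the character $\cK_0$, whose effect is to produce a nontrivial semisimple part $s$ (a Chevalley involution), after which the unipotent part lives in $Z_{\dG}(s)$ and is governed by a \emph{different} cell datum, namely that of the affine Hecke algebra of $Z_{\dG}(s)$ seen through $\bP_0$. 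The Chevalley involution is not regular semisimple --- $\dim Z_{\dG}(s)=N$, not $r$ --- so triviality of the unipotent part is not automatic and needs an argument; this is one of the genuinely delicate points of \cite{Y-motive}. Relatedly, the claim that the unipotent class at $\infty$ is the \emph{unique} class with centralizer dimension $\#\Phi^+(\dG)$ is a separate case-by-case fact you invoke but do not address. These two gaps also propagate to part~(5), since the Artin-conductor count uses the exact centralizer dimensions, and to the $D_{2n}$ case of part~(6), where the answer is a proper subgroup and the classification must actually distinguish $\SO_{4n-1}$, $G_2\subset\PSO_8$ and $\PSO_{4n}$ using the precise unipotent classes. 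You flag these as ``the main obstacle,'' which is fair --- but they are not obstacles to be deferred; they are the theorem.
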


\subsection{Applications}  
By a descent argument (using rigidity), we have the following strengthening of Theorem \ref{th:3pt eigen}. 
\begin{theorem}[\cite{Y-motive}]\label{th:3pt eigen rational} Let $k$ be a prime field with $\chk\neq2$ (i.e., $\FF_{p}$ for $p$ an odd prime or $\QQ$). Then the eigen local system $\calF$ in Theorem \ref{th:3pt eigen} can be defined over $k$. Moreover, the monodromy of $\calF$ can be conjugated to $\dG(\Ql)$ inside $\dG(\Qlbar)$.
\end{theorem}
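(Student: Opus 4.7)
The plan is to implement the descent procedure of \S\ref{sss:rat}, verifying each of its three hypotheses in the present setting, and then to argue that the coefficient field can be reduced from $\Qlbar$ to $\Ql$. Rigidity, established in Theorem \ref{th:3pt eigen}, is the key input throughout.

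First I would verify that the unique $(\Om,\cK_S)$-relevant point $\cE$ is $k$-rational. Since the datum is defined over $k$, Galois preserves relevance, so $\Frob^*\cE \cong \cE$ by uniqueness; as $\Aut_{G,\bK_S}(\cE)\cong\TT[2]$ is a finite \'etale group scheme over $k$, standard descent yields a $k$-form of $\cE$ in its isomorphism class, making the group schemes $A_\a, \tilA_\a, \wt{\ZZ\GG[2]}$ and the cocycle $\xi_\a$ all defined over $k$. Next I would extend the unique irreducible representation $\tilW_\a$ of $\tilA_\a(\kbar)$ to $\tilA_\a(\kbar)\rtimes\Gk$: uniqueness of $\tilW_\a$ forces $\Gk$ to preserve its isomorphism class, giving a projective representation whose obstruction to linearization lies in $H^2(\Gk,\Qlbar^\times)$. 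For $k=\FF_p$ this obstruction vanishes because $\hat\ZZ$ has cohomological dimension one. For $k=\QQ$ one realizes $\tilW_\a$ explicitly as a Heisenberg representation induced from a character of a Lagrangian subgroup of $A_\a$ (with respect to its non-degenerate commutator pairing) and selects such a Lagrangian $\Gk$-stably; this is where the hypothesis $\sqrt{-1}\in k$ from Theorem \ref{th:3pt eigen} for the types $A_1,D_{4n+2},E_7$ enters. The third hypothesis---that the field of definition $L$ of $\tilW_\a^\heartsuit$ is a finite extension of $\Ql$---is automatic since $\tilA_\a$ is a finite $2$-group of small exponent, so its character values are roots of unity of order dividing a small power of $2$.

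With these three hypotheses verified, the description \eqref{EV rewritten} realizes each $\calF_V$ as a local system on $U$ over $k$ with coefficients in $L$, and the Hecke eigenstructure of Theorem \ref{th:eigen} assembles these into a Weil $\dG$-local system $\calF$ over $U_k$. For the monodromy assertion, rigidity makes $\calF$ isomorphic to each of its $\Gal(L/\Ql)$-conjugates, and a standard descent argument using the Zariski density of the monodromy in a connected reductive subgroup of $\dG$ (from Theorem \ref{th:3pt eigen}) exhibits $\rho_\calF$ as $\dG(\Qlbar)$-conjugate to a representation into $\dG(\Ql)$. The main obstacle will be the Galois-equivariant construction of $\tilW_\a^\heartsuit$ when $k=\QQ$: unlike the finite field case, $H^2(\Gamma_\QQ,\Qlbar^\times)$ does not vanish abstractly, so one must invoke the explicit Heisenberg model of $\tilW_\a$ together with the $\sqrt{-1}$ hypothesis to construct the extension by hand, and verify carefully that the resulting Galois action is compatible with the tensor structure of the eigen local system.
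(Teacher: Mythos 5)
Your overall strategy is the one the paper intends --- verify the three hypotheses listed in \S\ref{sss:rat} and then descend via \eqref{EV rewritten}, using uniqueness of the eigensheaf --- but two of your verifications do not go through as written. For the rationality of the relevant point over $k=\QQ$, Galois-invariance of the isomorphism class of $\cE$ does \emph{not} yield a $k$-point by ``standard descent'': since $\Aut_{G,\bK_{S}}(\cE)\cong\TT[2]$ is nontrivial, one only gets a gerbe banded by $\TT[2]$, whose neutrality is obstructed by a class in $\cohog{2}{\QQ,\TT[2]}$, a group that is nonzero in general (2-torsion Brauer classes); also ``$\Frob^{*}\cE$'' has no meaning over $\QQ$. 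What actually makes the hypothesis true in \cite{Y-motive} is that the unique relevant point is exhibited explicitly (a trivial bundle with explicit level structures, exactly as in the $\GL_{2}$ computations of \S\ref{ss:GL2}), so it is visibly defined over the prime field; the abstract argument is both unnecessary and insufficient. Second, your treatment of the extension of $\tilW_{\a}$ inverts the real difficulties: $\cohog{2}{\GQ,\Qlbar^{\times}}$ \emph{does} vanish (Tate's theorem on lifting projective representations of $\GQ$), so abstract existence of $\tilW^{\heartsuit}_{\a}$ is not the obstacle; and since $\Gk$ acts trivially on $A_{\a}\cong\TT[2]/\ZZ\GG[2]$ in characteristic $\neq 2$, ``choosing a Lagrangian $\Gk$-stably'' is not where $\sqrt{-1}$ enters. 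The genuine issue is that for types $A_{1}$, $D_{4n+2}$, $E_{7}$ the central character $c$ of $\wt{\ZZ\GG[2]}$ takes values in $\mu_{4}$, so the geometric automorphic datum of Theorem \ref{th:3pt eigen} is not even defined over $\QQ$ in those cases; one must work over $\QQ(i)$ (as \cite{Y-motive} does) or argue separately through an explicit model that simultaneously controls the Galois action and the coefficient field, and your proposal does not resolve this.

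The final step also needs more than you give. The input for conjugating the monodromy into $\dG(\Ql)$ is not ``rigidity'' of $\calF$ per se, but the fact that the input character sheaves are quadratic (values $\pm1$), hence fixed by $\Aut(\Qlbar/\Ql)$, so each conjugate ${}^{\sigma}\calA$ is an eigensheaf for the same datum and uniqueness gives ${}^{\sigma}\calF_{V}\cong\calF_{V}$. Turning these isomorphisms into a single conjugation requires the centralizer in $\dG$ of the monodromy image to be trivial --- automatic when the image is Zariski dense, but for type $D_{2n}$ the image is not dense and this must be checked separately --- and one must further show that the resulting Galois descent produces the split form of $\dG$ over $\Ql$ rather than a nontrivial form, so that the representation really lands in $\dG(\Ql)$ after conjugation. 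Neither point is addressed in your sketch.
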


\subsubsection{Application to the construction of motives} Assume $G$ is of type $A_{1}, E_{7}, E_{8}$ or $G_{2}$. Applying the above theorem to $k=\QQ$, we get a $\dG$-local systems $\rho:\pi_{1}(U_{\QQ})\to \dG(\Ql)$ whose geometric monodromy is Zariski dense. For each $\QQ$-point $a\in U(\QQ)=\QQ-\{0,1\}$, restricting $\rho$ to the point $a=\Spec\ \QQ$ gives a continuous Galois representation
\begin{equation}\label{rhoa}
\rho_{a}:\GQ\to\dG(\Ql).
\end{equation}

By Proposition \ref{p:desc eigen}, one sees that for each $V\in\Rep(\dG,\Ql)$, $\rho_{V}$ is obtained as part of the middle dimensional cohomology of some family of varieties over $U$. Using this fact, it can be shown that each $\rho_{a}$ is obtained from motives over $\QQ$ (if $G$ is type $E_{8}$ or $G_{2}$) or $\QQ(i)$ (if $G$ is of type $A_{1}$ or $E_{7}$).

\begin{theorem}[\cite{Y-motive}] Assume $G$ is of type $A_{1}, E_{7}, E_{8}$ or $G_{2}$. There are infinitely many $a\in \QQ-\{0,1\}$ such that the $\rho_{a}$'s are mutually non-isomorphic and all have Zariski dense image in $\dG$. Consequently, there are infinitely many motives over $\QQ$ (if $G$ is type $E_{8}$ or $G_{2}$)  or $\QQ(i)$ (if $G$ is of type $A_{1}$ or $E_{7}$) whose $\ell$-adic motivic Galois group is isomorphic to $\dG$ for any prime $\ell$. 
\end{theorem}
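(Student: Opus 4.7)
The plan has three ingredients, corresponding to the three assertions in the statement: Zariski density of $\rho_a(\GQ)$ for infinitely many $a$, pairwise non-isomorphism, and motivic origin. First, to obtain Zariski density I would invoke Hilbert's irreducibility theorem in the style of \cite{Katz}. Let $\Gamma_a \subset \dG$ be the Zariski closure of $\rho_a(\GQ)$; by Deligne's semisimplicity theorem (Theorem \ref{th:Del ss}) applied to the pure local system $\calF_V$, the group $\Gamma_a^{\c}$ is reductive. Since the geometric monodromy of $\rho$ is all of $\dG$ (Theorem \ref{th:3pt eigen rational}), the Zariski closure of $\rho(\pi_1(U_\QQ))$ in $\dG \rtimes \GQ$ already projects onto $\dG$, up to a finite-index issue controlled by the component group of the arithmetic monodromy. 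For each proper closed reductive subgroup $M \subsetneq \dG$, the inclusion $\rho_a(\GQ) \subset M(\Ql)$ (up to conjugation) factors through a proper finite \'etale cover of $U_\QQ$; by Hilbert's irreducibility theorem the $\QQ$-rational points of such a cover form a thin set. Taking a countable union over conjugacy classes of proper reductive subgroups of $\dG$ still gives a thin set, and since $U(\QQ)$ is not thin, the complement is an infinite (in fact density-one) subset of $U(\QQ)$ on which $\Gamma_a = \dG$.

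Second, for pairwise non-isomorphism, fix a faithful $V \in \Rep(\dG)$; then $\rho_a$ is determined up to semisimplification by the function $p \mapsto \tr(\rho_a(\Frob_p), V)$ on primes $p$ of good reduction. By the sheaf-to-function correspondence this equals the Frobenius trace of $\calF_V$ at the point $a \bmod p \in U(\FF_p)$. Since $\calF_V$ is a nonconstant local system on $U$, its trace function on $U(\FF_p)$ is nonconstant for almost all $p$, and as $a$ varies the collection $\{\tr(\rho_a(\Frob_p), V)\}_{p}$ separates infinitely many $a$'s. Hence for each $a$ only finitely many $a'$ can satisfy $\rho_{a'} \cong \rho_a$, and the sequence produced in the first step contains an infinite pairwise non-isomorphic subsequence. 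Third, Proposition \ref{p:desc eigen} writes
\[
\calF_V \cong \Hom_{\tilA_\a \times \tilA_\b}\bigl(\tilW_\a^\vee \boxtimes \tilW_\b,\; \wt\Pi_!\, \IC^{\tfrI}_V\bigr),
\]
exhibiting $\calF_V$ as the isotypic component, under a finite group action, of $\wt\Pi_!\, \IC^{\tfrI}_V$. Since $\IC^{\tfrI}_V$ is the intermediate extension of a $\Ql$-sheaf arising from the cohomology of an explicit family of smooth projective varieties over $\tfrI$, specializing at $a \in U(\QQ)$ realizes $\rho_{a,V}$ as a direct summand of $\upH^*(Y_a, \Ql)(j)$ for an algebraic variety $Y_a$ defined over $\QQ$, or over $\QQ(i)$ precisely in types $A_1$ and $E_7$ (reflecting the $\sqrt{-1}$ hypothesis on the field of definition of $(\Om,\iota_S)$ in Theorem \ref{th:3pt eigen}). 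Thus $\rho_{a,V}$ is motivic, and when $\Gamma_a = \dG$ the $\ell$-adic motivic Galois group of the resulting motive is $\dG$.

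The main obstacle is the first step. The Hilbert irreducibility argument requires two things: that the arithmetic monodromy of $\rho$ be large enough that each proper-reductive-subgroup condition does cut out a genuine nontrivial finite cover of $U_\QQ$; and that one has enough control over the countable collection of proper reductive subgroups of $\dG$ containing the image of arithmetic Frobenius at suitable primes. Both are manageable using the strong local constraints built into $\calF$: by the local monodromy computations, the image of $\rho_a$ contains conjugates of a regular unipotent element (from the inertia at $1$) and of a Chevalley involution (from the inertia at $0$), and reductive subgroups of an exceptional $\dG$ containing both such elements fall into a very restricted finite list, each of which is ruled out by generic Frobenius traces via a Chebotarev argument. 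Once this Zariski-density step is secured, the non-isomorphism and motivic parts follow essentially directly from the explicit sheaf-theoretic construction.
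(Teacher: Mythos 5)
The main gap is in your Zariski-density step, where you attempt to apply Hilbert's irreducibility theorem directly to algebraic subgroups: ``For each proper closed reductive subgroup $M \subsetneq \dG$, the inclusion $\rho_a(\GQ) \subset M(\Ql)$ (up to conjugation) factors through a proper finite \'etale cover of $U_\QQ$.'' This is not a valid step. Containment of $\rho_a(\GQ)$ in a conjugate of a proper Zariski-closed subgroup $M$ is not a condition cut out by a finite \'etale cover of $U_\QQ$, so Hilbert irreducibility does not directly govern the locus of such $a$. The version of Hilbert irreducibility the paper does invoke (in the inverse Galois discussion) is the correct one: it goes through the {\em finite} group $\dG(\FF_\ell)$, first using Nori/MVW to show the mod-$\ell$ reduction of the topological monodromy surjects onto $\dG(\FF_\ell)$ for $\ell$ large, then applying Hilbert irreducibility to that finite cover. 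Your Deligne-semisimplicity citation is also misapplied: Theorem \ref{th:Del ss} concerns geometric monodromy of a local system on a curve over a finite field, not the Zariski closure of the image of a specialized Galois representation $\rho_a$ of $\GQ$; reductivity of $\Gamma_a^\circ$ instead follows from purity of the motivic Galois representation.

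Your other mechanism --- that $\rho_a(\GQ)$ contains conjugates of the regular unipotent at $1$ and the Chevalley involution at $0$ --- is indeed the heart of the matter, and essentially the route of \cite{Y-motive}, but as stated it is false for a general $a$. For a generic $a \in U(\QQ)$ the image $\rho_a(\GQ)$ bears no a priori relation to $\rho(I_0)$ or $\rho(I_1)$. The relation appears only when $a$ is $p$-adically close to a cusp: if $v_p(a-1)=1$ for a prime $p$ of good reduction, then Grothendieck--Katz specialization of tame local systems shows $\rho_a|_{I_p}$ is conjugate to $\rho|_{I_1}$, producing a regular unipotent in the image; similarly $v_q(a)=1$ produces the Chevalley involution. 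You must therefore restrict to the (still infinite) set of $a$ satisfying such congruences, and only then can the classification of reductive subgroups of $\dG$ containing a regular unipotent together with compatibility with the involution force $\Gamma_a=\dG$. Once this is corrected, the Hilbert-irreducibility prong is superfluous. The non-isomorphism argument via Frobenius traces and the motivic realization via Proposition \ref{p:desc eigen} are in line with the paper's discussion and are fine.
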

This result then gives an affirmative answer to the $\ell$-adic analog of Serre question (see \S\ref{ss:Serre Q}) for motivic Galois groups of type $E_{7}, E_{8}$ and $G_{2}$.  The case of $G_{2}$ was settled earlier by Dettweiler and Reiter \cite{DR}, using Katz's algorithmic construction of rigid local systems. Our local system $\calF$ in the case $G=G_{2}$ is the same as Dettweiler and Reiter's.

\subsubsection{Application to the inverse Galois problem} Let $\ell$ be a prime number. To emphasize on the dependence on $\ell$, we denote  the Galois representation $\rho_{a}$ in \eqref{rhoa} by $\rho_{a,\ell}$. To solve the inverse Galois problem for the groups $\dG(\FF_{\ell})$,  we would like to choose $a\in \QQ-\{0,1\}$ such that $\rho_{a,\ell}$ has image in $\dG(\ZZ_{\ell})$ (which is always true up to conjugation), and its reduction modulo $\ell$ is surjective. This latter condition is hard to satisfy even if we know that the image of $\rho_{a,\ell}$ is Zariski dense in $\dG(\Ql)$. 

To proceed, let us consider the Betti version of  Theorem \ref{th:3pt eigen} and Theorem \ref{th:3pt eigen rational}. Namely we consider the base field $k=\CC$ and talk about sheaves in $\QQ$-vector spaces on the various complex algebraic moduli stacks. The same argument gives a topological local system
\begin{equation*}
\rho^{\top}: \pi^{\top}_{1}(U_{\CC})\to \dG(\QQ)
\end{equation*}
whose image is Zariski dense.  It makes sense to reduce $\rho^{\top}$ for large enough primes $\ell$
\begin{equation*}
\overline{\rho}^{\top}_{\ell}: \pi^{\top}_{1}(U_{\CC})\to \dG(\FF_{\ell}).
\end{equation*}
A deep theorem of Matthews, Vaserstein and Weisfeiler \cite[Theorem in the Introduction]{MVW} (see also Nori \cite[Theorem 5.1]{Nori}) says  that $\overline{\rho}^{\top}_{\ell}$ is surjective for sufficiently large $\ell$, when $\dG$ is simply-connected. This is the case when $G$ is of type $E_{8}$ and $G_{2}$. Using the comparison between Betti cohomology and $\ell$-adic cohomology, we conclude that for general $a\in \QQ-\{0,1\}$ (general in the sense of Hilbert irreducibility), the reduction $\overline\rho_{a,\ell}$ of $\rho_{a,\ell}$ is also onto $\dG(\FF_{\ell})$. This solves the inverse Galois problem for $E_{8}(\FF_{\ell})$ and $G_{2}(\FF_{\ell})$ for sufficiently large primes $\ell$ (without an effective bound). The local monodromy of $\rho$ also suggests a triple in $E_{8}(\FF_{\ell})$ that might be rigid, see \cite[Conjecture 5.16]{Y-motive}. Recent work of Guralnick and Malle \cite{GM} establishes the rigidity of this triple, hence solves the inverse Galois problem for $E_{8}(\FF_{\ell})$ for all primes $\ell\geq7$.

When $G$ is of type $A_{1}$ or $E_{7}$, $\dG$ is the adjoint form. In this case, the result in \cite{MVW} says that  for sufficiently large prime $\ell$,  the image of $\overline{\rho}^{\top}_{\ell}$ contains the image of $\dG^{\sc}(\FF_{\ell})\to \dG(\FF_{\ell})$. We deduce that the same is true for $\rho_{a,\ell}$ for general $a\in\QQ-\{0,1\}$.

\appendix

\section{Rank one character sheaves}\label{a:ch}  
In this appendix, we study rank one local systems on algebraic groups that behave like characters. Most of the results here are well-known to experts, and our proofs are sketchy.

\subsection{Definitions and basic properties} 
In this subsection $k$ is a perfect field. Let $L$ be an algebraic group over $k$ with the multiplication map $m:L\times L\to L$ and the identity element $e:\Spec k\to L$.

\begin{defn}\label{def:char} A {\em rank one  character sheaf} $\calK$ on $L$ is a local system of rank one on $L$ equipped with two isomorphisms
\begin{eqnarray*}
\mu: m^{*}\calK\isom\calK\boxtimes\calK,\\
u: \Qlbar\isom e^{*}\calK.
\end{eqnarray*}
These isomorphisms should be compatible in the sense that
\begin{eqnarray}\label{umu1}
\mu|_{L\times\{e\}}=\id_{\calK}\otimes u: \calK=\calK\otimes_{\Qlbar}\Qlbar\isom \calK\otimes e^{*}\calK,\\
\label{umu2}\mu|_{\{e\}\times L}=u\otimes\id_{\calK}: \calK=\Qlbar\otimes_{\Qlbar}\calK\isom e^{*}\calK\otimes\calK.
\end{eqnarray}
Furthermore, $\mu$ should make the following diagram commutative
\begin{equation}\label{cocycle}
\xymatrix{(m\times\id_{L})^{*}m^{*}\calK\ar[rr]^{(m\times\id_{L})^{*}\mu}\ar@{=}[d] && (m\times\id_{L})^{*}(\calK\boxtimes\calK)\ar[r] & m^{*}\calK\boxtimes\calK\ar[r]^{\mu\boxtimes\id_{\calK}} & \calK\boxtimes\calK\boxtimes\calK\ar@{=}[d]\\
(\id_{L}\times m)^{*}m^{*}\calK\ar[rr]^{(\id_{L}\times m)^{*}\mu}&& (\id_{L}\times m)^{*}(\calK\boxtimes\calK)\ar[r] & \calK\boxtimes m^{*}\calK\ar[r]^{\id_{\calK}\boxtimes\mu} & \calK\boxtimes\calK\boxtimes\calK}
\end{equation}
\end{defn}
There is an obvious notion of an isomorphism between two rank one character sheaves: it is an isomorphism between local systems intertwining the $\mu$'s and $u$'s. Let $\cCS(L)$ be the category (groupoid) whose objects are rank one character sheaves $(\calK,\mu,u)$ on $L$, and whose morphisms are isomorphisms between them. Then $\cCS(L)$ carries a symmetric monoidal structure given by the tensor product of character sheaves with the constant sheaf $\Qlbar$ (equipped with the tautological $\mu$ and $u$) as the unit object. Let $\CS(L)$ be set of isomorphism classes of objects in $\cCS(L)$, which is an abelian group. 

\begin{remark}\label{r:char}
\begin{enumerate}
\item\label{inversion char}  Let $\iota:L\to L$ be the inversion $g\mapsto g^{-1}$. Then any $\calK\in\cCS(L)$ is equipped with a canonical isomorphism $\iota^{*}\calK\cong \calK^{-1}$ obtained by restricting $\mu$ to the anti-diagonally embedded $L$. 
\item\label{char is a prop} When $L$ is connected, the condition \eqref{cocycle} is automatically satisfied. In this case, the two relations \eqref{umu1} and \eqref{umu2} also guarantee that $\mu$ is commutative, i.e., $\mu=s^{*}\circ \mu$ where $s^{*}$ is the pullback map induced by swapping two factors $s:L\times L\to L\times L$. In this case, a local system $\calK$ of rank one being a character sheaf is a property rather than extra structure on $\calK$: $\calK$ is a character sheaf if and only if $e^{*}\calK$ is isomorphic to the constant sheaf on $\Spec k$ and that for any point $g\in L(\kbar)$, the isomorphism type of $\calK|_{L_{\kbar}}$ is invariant under left and right translation by $g$. On such a local system $\calK$ there is a unique pair $(\mu,u)$ up to isomorphism making $(\cK,\mu,u)$ into a rank one character sheaf.
\item\label{char bc} The automorphism group of a triple $(\calK,\mu,u)\in\cCS(L)$ is 
\begin{equation*}
\Aut(\calK,\mu,u)\cong\Hom(\pi_{0}(L_{\kbar})_{\Gk}, \Qlbar^{\times}). 
\end{equation*} 
When $L$ is connected, $\cCS(L)$ is a groupoid with trivial automorphisms, hence is equivalent to the set $\CS(L)$.

\item\label{base change char} Let $\cCS(L/\kbar)$ be the category of rank one character sheaves over $L_{\kbar}$. We may identify $\cCS(L)$ as the category of $\Gk$-equivariant objects of $\cCS(L/\kbar)$ (which may not induce an injection on the isomorphism classes of objects). When $L$ is connected, the base change map $\CS(L)\to \CS(L/\kbar)$ is injective, and identifies $\CS(L)$ with the $\Gk$-invariants of $\CS(L/\kbar)$.
\end{enumerate}
\end{remark}

We record a few functorial properties of rank one character sheaves.
\begin{lemma} Let $k'/k$ be a finite extension. Let $L$ be an algebraic group over $k'$, and let $\Res^{k'}_{k}L$ be the Weil restriction of $L$ to $k$. Then there is a canonical equivalence of symmetric monoidal categories
\begin{equation*}
\cCS(L)\cong \cCS(\Res^{k'}_{k}L).
\end{equation*}
\end{lemma}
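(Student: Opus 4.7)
The plan is to establish the equivalence via Galois descent combined with the decomposition of the Weil restriction after base change to $\bar k$. By Remark \ref{r:char}\eqref{base change char}, both $\cCS(L)$ and $\cCS(\Res^{k'}_{k}L)$ may be identified with the categories of $\Gk$-equivariant objects in $\cCS(L_{\bar k})$ and $\cCS((\Res^{k'}_{k}L)_{\bar k})$ respectively, so it suffices to produce a canonical $\Gk$-equivariant equivalence between the latter two.

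First I would recall that since $k$ is perfect and $k'/k$ is finite separable, $k'\otimes_{k}\bar k \cong \prod_{\sigma\in\Sigma}\bar k_{\sigma}$ with $\Sigma=\Hom_{k}(k',\bar k)$. Applied to the defining functor of Weil restriction, this gives a canonical isomorphism of $\bar k$-group schemes
\[
(\Res^{k'}_{k}L)\times_{k}\bar k \;\cong\; \prod_{\sigma\in\Sigma} L^{\sigma}, \qquad L^{\sigma}:=L\times_{k',\sigma}\bar k.
\]
Next, I would show that external tensor product along with restriction to each factor yields an equivalence $\cCS\bigl(\prod_{\sigma}L^{\sigma}\bigr)\cong \prod_{\sigma}\cCS(L^{\sigma})$. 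This reduces, for the underlying rank one local systems, to a K\"unneth-type decomposition on products of connected schemes; the multiplicativity data $(\mu,u)$ on each side match under this decomposition because the group law on the Weil restriction base-changes to the componentwise group law on $\prod_{\sigma}L^{\sigma}$.

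The action of $\Gk$ on the right hand side combines the permutation of factors via left composition $\gamma:\sigma\mapsto \gamma\circ\sigma$ with its residual action on each $L^{\sigma}$. A $\Gk$-equivariant object in $\prod_{\sigma}\cCS(L^{\sigma})$ is therefore a family $(\calK_{\sigma})_{\sigma\in\Sigma}$ together with isomorphisms $\gamma^{*}\calK_{\gamma\sigma}\cong\calK_{\sigma}$ satisfying the cocycle condition. Because $\Gk$ acts transitively on $\Sigma$ with $\Stab_{\Gk}(\sigma_{0})=\Gal(\bar k/\sigma_{0}(k'))$, such data is equivalent to choosing $\calK_{\sigma_{0}}\in\cCS(L^{\sigma_{0}})$ equipped with a $\Gal(\bar k/\sigma_{0}(k'))$-equivariant structure; by Galois descent along $\sigma_{0}:k'\hookrightarrow\bar k$ this is precisely the datum of an object of $\cCS(L)$. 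Independence of the resulting functor from the choice of $\sigma_{0}$ (and hence canonicity of the equivalence) follows from the transitivity of the $\Gk$-action: transporting along $\gamma\in\Gk$ gives a canonical isomorphism between the descent data associated to $\sigma_{0}$ and to $\gamma\sigma_{0}$.

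The main obstacle will be the careful bookkeeping of structure: one must verify that the multiplicativity isomorphism $\mu$, the unit $u$, and the symmetric monoidal tensor product of character sheaves are all matched under the above correspondence, and that the descent isomorphisms intertwine these data in a coherent way. Once this bookkeeping is done, an inverse functor can be written down explicitly: send $\calK\in\cCS(L)$ to the $\Gk$-equivariant family $(\sigma^{*}\calK)_{\sigma\in\Sigma}$ on $\prod_{\sigma}L^{\sigma}$, which descends canonically to a rank one character sheaf on $\Res^{k'}_{k}L$; tracing through the constructions shows the two functors are mutually quasi-inverse.
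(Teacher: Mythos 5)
Your argument follows the paper's own proof almost step for step: decompose $(\Res^{k'}_{k}L)_{\kbar}$ as the Cartesian power $L_{\kbar}^{\Hom_{k}(k',\kbar)}$, factor a character sheaf as an external product over the embeddings, and use the transitive $\Gk$-permutation of factors (with stabilizer $\Gal(\kbar/\iota_{0}(k'))$) to reduce to a single descent datum that recovers $\cCS(L)$. One small imprecision: the decomposition of a character sheaf on a product is not a ``K\"unneth-type'' fact about local systems (which can fail in positive characteristic), but is forced by the multiplicativity isomorphism $\mu$ itself — restrict $\mu$ along the inclusion of $L_{1}\times\{e\}\times\{e\}\times L_{2}$ into $(L_{1}\times L_{2})^{2}$ to obtain $\cK\cong\cK|_{L_{1}}\boxtimes\cK|_{L_{2}}$ — a point the paper also glosses over.
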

\begin{proof} Let $\Hom_{k}(k',\kbar)$ be the set of $k$-linear embeddings $k'\incl \kbar$.
The base change of $\Res^{k'}_{k}L$ to  $\kbar$ is the Cartesian power $L_{\kbar}^{\Hom_{k}(k', \kbar)}$, and the action of $\Gk$ on it permutes the factors according to its action on $\Hom_{k}(k',\kbar)$. An object $\cK\in\cCS(\Res^{k'}_{k}L)$ is a $\Gk$-equivariant object in $\cCS(L_{\kbar}^{\Hom_{k}(k',\kbar)})$. Any object in $\cCS(L_{\kbar}^{\Hom_{k}(k',\kbar)})$ takes the form $\boxtimes_{\iota\in\Hom_{k}(k',\kbar)}\cK_{\iota}$ where $\cK_{\iota}\in\cCS(L/\kbar)$. The $\Gk$-equivariant structure gives isomorphisms $\cK_{\gamma^{-1}\circ\iota}\cong\gamma^{*}\cK_{\iota}$ for all $\gamma\in\Gk$. Fix an embedding $\iota_{0}\in\Hom_{k}(k',\kbar)$, then the $\Gk$-equivariant structure on $\boxtimes_{\iota\in\Hom_{k}(k',\kbar)}\cK_{\iota}$ is the same as a $\Gal(\kbar/\iota_{0}(k'))$-equivariant structure on $\cK_{\iota_{0}}$, which then gives an object $\cK_{\iota_{0}}\in\cCS(L)$. One can check that this assignment gives the desired equivalence, and is independent of the choice of $\kbar$ and $\iota_{0}$.
\end{proof}

\begin{lemma}\label{l:descent cs} Consider an exact sequence of algebraic groups over $k$:
\begin{equation*}
1\to L_{1}\xrightarrow{i} L_{2}\xrightarrow{\pi} L_{3}\to 1.
\end{equation*}
Let $\cCS(L_{2}; L_{1})$ be the category of pairs $(\cK, \tau)$ where $\cK\in\cCS(L_{2})$ and $\tau: i^{*}\cK\cong\Qlbar$ is an isomorphism in $\cCS(L_{1})$ (here $\Qlbar$ stands for the constant sheaf on $L_{1}$ with the tautological character sheaf structure). Then $\pi^{*}: \cCS(L_{3})\to\cCS(L_{2}; L_{1})$ is an equivalence of symmetric monoidal categories.  
\end{lemma}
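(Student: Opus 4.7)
\medskip

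\noindent\textbf{Proof proposal.} The plan is to construct an explicit quasi-inverse to $\pi^{*}$ using fppf descent along the $L_{1}$-torsor $\pi:L_{2}\to L_{3}$, with the trivialization $\tau$ providing the descent datum. First, I would check that $\pi^{*}$ lands in $\cCS(L_{2};L_{1})$: given $\cK\in\cCS(L_{3})$, the composition $\pi\circ i: L_{1}\to L_{3}$ factors through the identity $e$, so $i^{*}\pi^{*}\cK\cong (\pi i)^{*}\cK\cong (\pi i)^{*}e_{L_{3},*}e_{L_{3}}^{*}\cK$, and the unit structure $u$ of $\cK$ furnishes a canonical trivialization $\tau_{\textup{can}}:i^{*}\pi^{*}\cK\cong\Qlbar$ in $\cCS(L_{1})$. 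Monoidality and $e^{*}\pi^{*}\cK\cong e_{L_{3}}^{*}\cK\cong\Qlbar$ are immediate.

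For the inverse, identify $L_{2}\times_{L_{3}}L_{2}\cong L_{1}\times L_{2}$ via $(a,b)\mapsto(ab^{-1},b)$, so that the two projections correspond to $p_{2}=\textup{pr}_{L_{2}}$ and $p_{1}=m\circ(i\times\id):L_{1}\times L_{2}\to L_{2}$. Given $(\cK,\tau)\in\cCS(L_{2};L_{1})$, I would define a descent isomorphism $\phi:p_{1}^{*}\cK\isom p_{2}^{*}\cK$ as the composition
\begin{equation*}
p_{1}^{*}\cK=(m\circ(i\times\id))^{*}\cK\xrightarrow{(i\times\id)^{*}\mu}i^{*}\cK\boxtimes\cK\xrightarrow{\tau\boxtimes\id}\Qlbar\boxtimes\cK=p_{2}^{*}\cK.
\end{equation*}
Verifying the cocycle condition on $L_{2}\times_{L_{3}}L_{2}\times_{L_{3}}L_{2}\cong L_{1}\times L_{1}\times L_{2}$ is then a diagram chase combining the associativity pentagon \eqref{cocycle} for $\cK$ with the fact that $\tau$ is a morphism in $\cCS(L_{1})$, i.e.\ respects the multiplicative structure of $i^{*}\cK$. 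By fppf descent for local systems there results a rank one local system $\cK_{3}$ on $L_{3}$ together with an isomorphism $\pi^{*}\cK_{3}\cong\cK$ identifying $\phi$ with the canonical descent datum.

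Next I would upgrade $\cK_{3}$ to a character sheaf. Since $\pi\times\pi:L_{2}\times L_{2}\to L_{3}\times L_{3}$ is an $(L_{1}\times L_{1})$-torsor and $\pi\circ m_{L_{2}}=m_{L_{3}}\circ(\pi\times\pi)$, the isomorphism $\mu$ of $\cK$ is a candidate for the pullback of a map $\mu_{3}:m_{L_{3}}^{*}\cK_{3}\isom\cK_{3}\boxtimes\cK_{3}$. Descent of $\mu_{3}$ requires compatibility of $\mu$ with the $(L_{1}\times L_{1})$-descent data on both sides, which unwinds to two instances of the trivialization $\tau$ being a morphism in $\cCS(L_{1})$ and one application of \eqref{cocycle}; the unit $u_{3}$ descends from $u$ via $\tau$ at the identity, and the compatibility conditions \eqref{umu1}, \eqref{umu2} and \eqref{cocycle} for $(\cK_{3},\mu_{3},u_{3})$ follow because they hold after pullback by the faithfully flat map $\pi$ (or $\pi\times\pi$, etc.) and those functors are conservative on the relevant groupoids.

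Finally, I would check that the two functors are mutually quasi-inverse: the natural isomorphism $\pi^{*}(\cK_{3})\cong\cK$ is built into the construction, and conversely, starting from $\cK\in\cCS(L_{3})$, the descent datum produced from $(\pi^{*}\cK,\tau_{\textup{can}})$ tautologically agrees with the canonical one on $\pi^{*}\cK$, so the descended object is canonically $\cK$. Symmetric monoidality of both functors is immediate from compatibility of tensor products with pullback and with $L_{1}\times L_{1}$-descent. The main obstacle is the bookkeeping required to verify the cocycle condition for $\phi$ and the compatibility of $\mu_{3}$, $u_{3}$ with the axioms of Definition \ref{def:char}; these are pure diagram chases, but the number of commuting squares (particularly those ensuring $\tau$ behaves as a character-sheaf morphism) is what deserves the most care.
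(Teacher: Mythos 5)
Your proof is correct and follows the same basic plan as the paper's: the character-sheaf structure $(\mu,u)$ together with the trivialization $\tau$ yields a descent datum for $\cK$ along the $L_1$-torsor $\pi$, the cocycle identity reduces to \eqref{cocycle} plus the multiplicativity of $\tau$ (i.e.\ $\tau$ being a morphism in $\cCS(L_1)$), and the character-sheaf data descend along $\pi\times\pi$ by the same mechanism. The one genuine divergence is in the descent step itself. You descend uniformly by invoking fppf descent for local systems, treating $\pi$ as a faithfully flat morphism of finite presentation regardless of whether $L_1$ has infinitesimal part. The paper instead proves the lemma first for $L_1$ reduced (so $\pi$ is smooth surjective, admits sections \'etale-locally, and descent is elementary), and then handles general $L_1$ by factoring $\pi$ through $\wt{L}_3 = L_2/L_1^{\red}$: the quotient map $\wt{L}_3\to L_3$ by the infinitesimal group $L_1/L_1^{\red}$ is a universal homeomorphism, so it induces an equivalence of small \'etale sites and hence of $\cCS(-)$. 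Your one-step route is cleaner to state but leans on ``fppf descent for $\ell$-adic local systems'' as a black box; the honest justification of that black box (locally constant constructible $\ZZ/\ell^n$-sheaves are represented by finite \'etale covers, and finite \'etale morphisms satisfy fppf descent — or, equivalently, the reduction to the smooth case plus topological invariance of the \'etale topos under universal homeomorphisms) is exactly what the paper's two-case decomposition spells out. What your write-up adds relative to the paper is the explicit formula for the descent isomorphism $\phi$ on $L_2\times_{L_3}L_2 \cong L_1\times L_2$ and a sketch of the cocycle check, which the paper compresses into the single sentence ``the character sheaf structure on $\cK$ combined with $\tau$ gives a descent datum.''
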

\begin{proof} First assume that $L_{1}$ is reduced hence smooth over $k$. Let $(\cK,\tau)\in\cCS(L_{2}; L_{1})$. The character sheaf structure on $\cK$ combined with $\tau$ gives a descent datum of $\cK$ along the smooth morphism $\pi$. By smooth descent of local systems, $(\cK,\tau)$ gives rise to a local system $\overline{\cK}$ on $L_{3}$. It is easy to check that the character sheaf structure of $\cK$ induces one on $\overline{\cK}$. Hence we get a functor $\cCS(L_{2}; L_{1})\to\cCS(L_{3})$ sending $(\cK,\tau)$ to $\overline{\cK}$, and it is straightforward to check that it is inverse to $\pi^{*}$. 

When $L_{1}$ is not necessarily reduced, let $\wt{L}_{3}=L_{2}/L^{\red}_{1}$. We first descend $(\cK,\tau)\in\cCS(L_{2}; L_{1})$ to $\wt{\cK}\in\cCS(\wt{L}_{3})$ by the above argument, then since $\wt{L}_{3}\to L_{3}$ is a homeomorphism for the \'etale topology, $\wt{\cK}$ further descends to $\overline{\cK}\in\cCS(L_{3})$, which gives an inverse functor to $\pi^{*}$.
\end{proof}

\subsection{Relation with Serre's $\pi_{1}$}\label{ss:Serrepi1} 
Let $L$ be a {\em connected} algebraic group over $k$.
Suppose $\calK\in\cCS(L)$ has finite order $n$, then it corresponds to a $\mu_{n}(\Qlbar)$-torsor $L'\to L$ with $L'$ connected. We shall call  $L'$ the associated cover of $(L,\calK)$.

\begin{lemma}
The scheme $L'$ carries a canonical algebraic group structure such that the projection $\pi: L'\to L$ is a group homomorphism such that $\ker(\pi)$ is central in $L'$.
\end{lemma}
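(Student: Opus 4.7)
The plan is to transport the multiplicative structure on the character sheaf $\calK$ into a group structure on the torsor $L'$. Because $\calK$ has order $n$ and rank one, it corresponds to the $\mu_n$-torsor $\pi:L' \to L$; the external tensor product $\calK \boxtimes \calK$ corresponds to the Baer sum of $p_1^*L'$ and $p_2^*L'$, given concretely by $P := (L'\times_k L')/\mu_n$, where $\mu_n$ acts antidiagonally via $\zeta\cdot(l'_1,l'_2) = (\zeta l'_1,\zeta^{-1}l'_2)$. The residual $\mu_n$-torsor structure $P\to L\times L$ can be realized equivalently by acting on either factor (the two actions agree on the quotient). Similarly, $m^*\calK$ corresponds to $m^*L'$ and $e^*\calK$ to $e^*L'$. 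Thus the character sheaf structure translates to an isomorphism $\tilde\mu: m^*L' \isom P$ of $\mu_n$-torsors over $L\times L$ and a trivialization $u$ of $e^*L'$, i.e., a $k$-point $e'\in L'$ lying over $e\in L$.

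First I would construct the multiplication. Pulling $P$ back to $L'\times L'$ along $\pi\times\pi$ produces a $\mu_n$-torsor carrying a tautological section $(l'_1,l'_2)\mapsto[(l'_1,l'_2)]$. Combining this section with $\tilde\mu$ yields a section of $(\pi\times\pi)^*m^*L'$, which is by definition a morphism $m':L'\times L'\to L'$ such that $\pi\circ m' = m\circ(\pi\times\pi)$. Because the residual $\mu_n$-action on $P$ can be written as an action on either factor of $L'\times L'$, the map $m'$ satisfies the two-sided equivariance
\begin{equation*}
m'(\zeta l'_1,l'_2) \;=\; \zeta\cdot m'(l'_1,l'_2) \;=\; m'(l'_1,\zeta l'_2),
\end{equation*}
where the action on the target is via the $\mu_n$-torsor structure on $L'$.

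Next, the group axioms follow directly from the axioms of Definition~\ref{def:char}. The unit compatibilities \eqref{umu1}--\eqref{umu2}, combined with the point $e'$ coming from $u$, give $m'(e',l') = l' = m'(l',e')$, so $e'$ is a two-sided identity. The cocycle diagram \eqref{cocycle}, interpreted in the category of $\mu_n$-torsors on $L\times L\times L$ and pulled back to $L'\times L'\times L'$, translates to the equality of the two sections defining $m'\circ(m'\times\id_{L'})$ and $m'\circ(\id_{L'}\times m')$, proving associativity. For inverses, note that for any geometric point $a'\in L'(\kbar)$ the map $m'(a',-)\colon L'_b \to L'_{\pi(a')b}$ is a $\mu_n$-equivariant morphism of $\mu_n$-torsors, hence a bijection for every $b$; associativity and the two-sided identity then yield an algebraic inverse map. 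Thus $L'$ is an algebraic group and $\pi$ is a group homomorphism by construction. Finally, $\ker(\pi) = L'_e$ is canonically identified with $\mu_n$ via $e'$, and specializing the equivariance formulas to $l'_1 = e'$ or $l'_2 = e'$ gives $m'(\zeta,l') = \zeta\cdot l' = m'(l',\zeta)$ for $\zeta\in\mu_n$, proving centrality.

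The main technical hurdle is the careful bookkeeping of the $\mu_n$-actions on $P$: one must check that the antidiagonal quotient admits a residual $\mu_n$-torsor structure whose action can be realized symmetrically on either factor, since it is precisely this symmetry that encodes the centrality of $\ker(\pi)$. Once this translation into the Picard category of $\mu_n$-torsors is set up, each remaining step of the proof is a mechanical reformulation of Definition~\ref{def:char}.
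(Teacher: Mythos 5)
Your proposal is correct and takes essentially the same route as the paper: the multiplication $m'$ is the lift of $m\circ(\pi\times\pi)$ furnished by $\mu$ (the paper phrases this as functoriality of the associated cover, you phrase it via the tautological section of the antidiagonal quotient $(L'\times L')/\mu_n$ realizing $\calK\boxtimes\calK$), the unit and associativity come from $u$ and the cocycle diagram, and centrality is precisely the observation that multiplication by $\ker(\pi)$ on either side coincides with the $\mu_n$-torsor action. The only divergence is the inverse: the paper obtains it directly from the canonical isomorphism $\iota^{*}\calK\cong\calK^{-1}$ of Remark \ref{r:char}\eqref{inversion char}, whereas you deduce it from bijectivity of translations — to see that it is a morphism, note for instance that $(a',b')\mapsto(a',m'(a',b'))$ is a map of finite \'etale covers of $L\times L$ lying over an automorphism of the base and bijective on geometric fibers, hence an isomorphism.
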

\begin{proof}
The associated cover of $(L\times L,\calK\boxtimes\calK)$ is $\pi\times\pi:L'\times L'\to L\times L$. By the functoriality of the construction of associated covers, the isomorphism $\mu: m^{*}\calK\cong\calK\boxtimes\calK$ gives a commutative diagram
\begin{equation*}
\xymatrix{L'\times  L' \ar[r]^{m'}\ar[d]^{\pi\times\pi} & L'\ar[d]^{\pi}\\
L\times L\ar[r]^{m} & L}
\end{equation*}
We use $m'$ to define the multiplication on $L'$. The associativity of $\mu$ shows that $m'$ is associative. The inversion on $L'$ comes from the isomorphism $\iota^{*}\calK\cong \calK^{-1}$ in Remark \ref{r:char}\eqref{inversion char}. The fiber of $L'$ at $e\in L$ is the discrete scheme $\mu_{n}(\Qlbar)$ (over $k$) under the trivialization $u:\Qlbar\cong e^{*}\calK$. We define the identity element of $L'$ to be the point over $e\in L$ corresponding to $1\in\mu_{n}(\Qlbar)$. This completes the construction of the algebraic group structure on $L'$. From the construction of $m'$, left and right multiplication of $\ker(\pi)=\mu_{n}(\Qlbar)$  are both the same as the $\mu_{n}(\Qlbar)$-action on $L'$ coming from the $\mu_{n}(\Qlbar)$-torsor structure. Therefore $\ker(\pi)$ is central in $L'$.
\end{proof}

 Let $\Cov(L)$ be the category consisting of central isogenies $\pi:L'\to L$ of {\em connected} $k$-algebraic groups with $\ker(\pi)$ discrete as a $k$-scheme. The formal (inverse) limit of all objects in $\Cov(L)$ gives the universal central isogeny $\pi^{\univ}: L^{\univ}\to L$. The kernel $\ker(\pi^{\univ})$ is a pro-finite abelian group, and we denote it by $\pi^{\Serre}_{1}(L)$. When $L$ is commutative, this is the same as the $\pi_{1}$ defined by Serre in \cite[\S6.1, Definition 1; \S6.2, Proposition 3]{Serre-Proalg}. Since each object in $\Cov(L)$ is also a finite \'etale cover of $L$, we have a surjection $\pi_{1}(L)^{\ab}\surj\pi^{\Serre}_{1}(L)$. To emphasize the dependence on the base field we write $\Cov(L/k)$ and $\pi_{1}^{\Serre}(L/k)$.

When $k\incl k'$ is a field extension, we have the base change functor $\Cov(L/k)\to \Cov(L/k')$ which is fully faithful. We have $\Cov(L/\kbar)=\varinjlim_{k\subset k'\subset \kbar}\Cov(L/k')$ and hence
\begin{equation*}
\pi_{1}^{\Serre}(L/\kbar)\cong \varprojlim_{k\subset k'\subset\kbar}\pi^{\Serre}_{1}(L/k').
\end{equation*}

\begin{theorem}\label{th:Serre pi1} Let $k$ be any base field and let $L$ be a connected algebraic group over $k$. Then there is a canonical isomorphism
\begin{equation*}
\CS(L)\cong \Hom_{\cont}(\pi^{\Serre}_{1}(L), \Qlbar^{\times})
\end{equation*}
\end{theorem}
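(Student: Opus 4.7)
The plan is to first establish the bijection for finite-order character sheaves, using the Lemma preceding the theorem together with an inverse pushout construction, and then pass to the $\ell$-adic limit for general rank one $\Qlbar$-character sheaves.

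For the finite-order case, I would define $\Phi_{\textup{fin}}: \CS(L)_{\tor} \to \Hom(\pi^{\Serre}_{1}(L), \mu_{\infty}(\Qlbar))$ by sending a character sheaf $\cK$ of order $n$ to the character associated with the central isogeny $\pi: L' \to L$ produced by the Lemma, namely the composition $\pi^{\Serre}_{1}(L) \to \ker(\pi) = \mu_n(\Qlbar) \hookrightarrow \Qlbar^{\times}$. To construct the inverse $\Psi_{\textup{fin}}$, start with a continuous $\chi: \pi^{\Serre}_{1}(L) \to \mu_n$; by the definition of $\pi^{\Serre}_{1}(L)$ as the inverse limit over $\Cov(L)$, $\chi$ factors through $\ker(\pi_0)$ for some central isogeny $\pi_0: L_0 \to L$, and the pushout $L'' := L_0 \times^{\ker\pi_0} \mu_n$ sits in a central extension $1 \to \mu_n \to L'' \to L \to 1$. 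The associated $\mu_n$-torsor on $L$ yields a rank one local system $\cK$, and the group structure on $L''$ supplies the character sheaf structure on $\cK$: concretely, the multiplication $L'' \times L'' \to L''$ covering $m: L \times L \to L$ induces the required isomorphism $\mu: m^{*}\cK \cong \cK \boxtimes \cK$, with unit $u$ coming from the identity of $L''$. Verifying $\Phi_{\textup{fin}} \circ \Psi_{\textup{fin}} = \id$ and $\Psi_{\textup{fin}} \circ \Phi_{\textup{fin}} = \id$ is a direct matter of unwinding definitions using the Lemma, since both constructions are really parametrizing the same data of a connected central extension of $L$.

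For the general case, I would exploit the equivalence between rank one $\Qlbar$-character sheaves on $L$ and compatible systems of rank one character sheaves with coefficients in $\calO_E/\frm_E^m$ for some finite $E/\Qp$; each such finite-level piece is of finite order. Applying $\Phi_{\textup{fin}}$ to each $\cK_m$ yields compatible characters $\pi^{\Serre}_{1}(L) \to (\calO_E/\frm_E^m)^{\times}$, whose inverse limit defines a continuous $\Phi(\cK): \pi^{\Serre}_{1}(L) \to \calO_E^{\times} \subset \Qlbar^{\times}$. Dually, any continuous character $\chi: \pi^{\Serre}_{1}(L) \to \Qlbar^{\times}$ has image in some $\calO_E^{\times}$ by compactness of the source, and its mod-$\frm_E^m$ reductions are of finite order; feeding these into $\Psi_{\textup{fin}}$ produces a compatible system of character sheaves assembling to $\Psi(\chi) \in \cCS(L)$.

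The main obstacle will be the careful handling of the $\ell$-adic limit: one must verify that $\cCS(L)$ really is the limit of finite-order character sheaves with $\calO_E/\frm_E^m$-coefficients, that the character sheaf structures pass cleanly through this limit, and that continuity of characters at the level of $\pi^{\Serre}_{1}$ corresponds exactly to compatibility at each finite level. The use of connectedness of $L$ is essential here (via Remark \ref{r:char}\eqref{char is a prop}), guaranteeing that being a character sheaf is a property and not additional structure, so that the character sheaf structures at each finite level are automatic. Once this dictionary is in place, the whole argument reduces to the finite-order equivalence, which is a transparent consequence of the Lemma preceding the theorem and the pushout construction above.
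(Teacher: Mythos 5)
Your proposal follows the same strategy as the paper's proof: establish the bijection for finite-order objects via the connected central isogeny supplied by the lemma, then pass to the $\ell$-adic limit. Your pushout $L'' = L_0 \times^{\ker\pi_0}\mu_n$ is the same group as the paper's $L' = L^{\univ}/\ker\chi$, and the two ways of endowing the associated rank-one local system with a character sheaf structure (your explicit use of the multiplication on $L''$ versus the paper's observation that $\pi^*\calK_{\chi}\cong\Qlbar$ forces $\calK_{\chi}$ to be a character sheaf, via Remark \ref{r:char}\eqref{char is a prop}) amount to the same thing; your treatment of the $\ell$-adic limit is merely more detailed than the paper's one-line appeal to passage to the limit.
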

\begin{proof}[Sketch of proof]
For those $\calK\in\cCS(L)$ of finite order, we have defined an object $(\pi:L'\to L)\in\Cov(L)$ together with an embedding $\ker(\pi)\incl \Qlbar^{\times}$. This gives a homomorphism $\pi^{\Serre}_{1}(L)\surj \ker(\pi)\incl \Qlbar^{\times}$. This construction clearly passes to the limit and defines a homomorphism $\xi:\CS(L)\to\Hom_{\cont}(\pi^{\Serre}_{1}(L), \Qlbar^{\times})$. Conversely, suppose we are given a character $\chi: \pi^{\Serre}_{1}(L)\to \Qlbar^{\times}$ of finite order. The kernel of $\chi$  corresponds to a central isogeny $\pi:L'\to L$ such that $\ker(\pi)\cong \Im(\chi)$. We then define $\calK_{\chi}=(\pi_{!}\Qlbar)_{\chi}$, the $\chi$-isotypical component of $\pi_{!}\Qlbar$. Since $\pi^{*}\calK_{\chi}\cong \Qlbar$ is a character sheaf, so is $\calK_{\chi}$. This construction  $\chi\mapsto\calK_{\chi}$ also passes to the limit to give a homomorphism $\Hom_{\cont}(\pi^{\Serre}_{1}(L), \Qlbar^{\times})\to\CS(L)$. It is not hard to check that it is inverse to the homomorphism $\xi$.
\end{proof}

\subsection{Connected groups over a finite field} 
In this subsection we assume $k$ is a finite field. 

\subsubsection{Lang torsor} For each $\calK\in\cCS(L)$, the sheaf-to-function correspondence gives a function  $f_{\calK}:L(k)\to \Qlbar^{\times}$. The isomorphisms $\mu: m^{*}\calK\cong\calK\boxtimes \calK$ and $u: \Qlbar\cong e^{*}\cK$ imply that $f_{\calK}$ is a group homomorphism. This way we obtain a homomorphism
\begin{equation*}
f_{L}:\CS(L)\to \Hom(L(k),\Qlbar^{\times}).
\end{equation*}

One the other hand, we have the Lang torsor
\begin{eqnarray}\label{Lang torsor}
\phi_{L}: L&\to& L\\
\notag g&\mapsto& \Frob_{L/k}(g)g^{-1}
\end{eqnarray}
where $\Frob_{L/k}: L\to L$ over $k$ ($\Frob^{*}_{L/k}$ raises functions on $L$ to the $\#k\nth$ power). The morphism $\phi_{L}$ is a right $L(k)$-torsor onto its image: $g\in L(k)$ acts on the source by right multiplication. When $L$ is connected, $\phi_{L}$ is surjective and is a right $L(k)$-torsor. When $L$ is not connected, $\phi_{L}$ may not be surjective. The push-forward sheaf $\phi_{L,!}\Qlbar$ carries an action of $L(k)$. For each character $\chi: L(k)\to \Qlbar^{\times}$, let $\calK_{\chi}=(\phi_{L,!} \Qlbar)_{\chi^{-1}}$ be the $\chi^{-1}$-isotypical direct summand of $\phi_{L,!}\Qlbar$ (we can take the projector in $\Qlbar[L(k)]$ corresponding to $\chi^{-1}$, and apply it to $\phi_{L,!}\Qlbar$ and take the image). When $L$ is connected, we get a homomorphism
\begin{equation}\label{char sh from Lang}
\l_{L}:\Hom(L(k), \Qlbar^{\times})\to \Loc_{1}(L)
\end{equation}
where $\Loc_{1}(L)$ is the group of isomorphism classes of rank one $\Qlbar$-local systems on $L$, with the group structure given by the tensor product.

\begin{lemma}\label{l:fun of Lang} Let $L$ be a connected algebraic group over a finite field $k$. Let $f'_{L}$ denote the sheaf-to-function correspondence $\Loc_{1}(L)\to \Fun(L(k))$. Then the composition $f'_{L}\l_{L}:\Hom(L(k),\Qlbar^{\times})\to\Loc_{1}(L)\to \Fun(L(k))$ is the natural inclusion of $\Hom(L(k),\Qlbar^{\times})$ into $\Fun(L(k))$.
\end{lemma}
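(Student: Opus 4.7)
The plan is to compute the geometric stalk of $\calK_{\chi}$ at an arbitrary $k$-point $g\in L(k)$ together with its Frobenius action, and read off $f'_{L}(\calK_\chi)(g) = \chi(g)$.

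First I would unpack the Lang torsor. Because $L$ is connected and $k$ is finite, $\phi_{L}$ is a finite \'etale Galois cover with deck group $L(k)$ acting on the source by right multiplication (indeed $\phi_{L}(h\gamma) = \Frob_{L/k}(h)\Frob_{L/k}(\gamma)(h\gamma)^{-1} = \Frob_{L/k}(h)h^{-1}$ for $\gamma\in L(k)$, since $\Frob_{L/k}$ fixes $L(k)$ pointwise). Consequently $\phi_{L,!}\Qlbar$ is a locally constant sheaf whose stalk at a geometric point over $g$ is naturally identified with $\Qlbar[\phi_{L}^{-1}(g)(\kbar)]$, a right regular representation of $L(k)$ after choosing a base point $h_{0}\in\phi_{L}^{-1}(g)(\kbar)$ (which exists by Lang's theorem since $L$ is connected). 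Under the bijection $\gamma\mapsto h_{0}\gamma$ we get an identification $(\phi_{L,!}\Qlbar)_{\bar g}\cong \Qlbar[L(k)]$ on which $L(k)$ acts by right multiplication.

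Next I would compute the arithmetic Frobenius action on this stalk. Since $\Frob_{L/k}$ acts on $L(\kbar)$ as the arithmetic Frobenius $\sigma$, for $h=h_{0}\gamma$ with $\gamma\in L(k)$ we have $\sigma(h_{0}\gamma) = \sigma(h_{0})\gamma = g h_{0}\gamma = h_{0}\cdot (h_{0}^{-1}gh_{0})\gamma$. A short verification using $\sigma(g)=g$ shows $h_{0}^{-1}gh_{0}\in L(k)$, so under the identification above the Galois action of $\sigma$ is given by left multiplication by $h_{0}^{-1}gh_{0}$ on $\Qlbar[L(k)]$. Taking $\chi^{-1}$-isotypic components, the character sheaf $\calK_\chi$ has one-dimensional stalk spanned by the idempotent $e_{\chi^{-1}} = \tfrac{1}{|L(k)|}\sum_{\gamma}\chi(\gamma)\gamma$. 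Since any one-dimensional character of $L(k)$ factors through its abelianization and is therefore a class function, $\sigma$ acts on $e_{\chi^{-1}}$ by the scalar $\chi\bigl((h_{0}^{-1}gh_{0})^{-1}\bigr) = \chi(g)^{-1}$. Passing to geometric Frobenius $\Frob_g = \sigma^{-1}$ gives $\Frob_{g}\cdot e_{\chi^{-1}} = \chi(g)\,e_{\chi^{-1}}$, so by definition of the sheaf-to-function correspondence $f'_{L}(\calK_\chi)(g) = \chi(g)$, as claimed.

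The main technical care needed is bookkeeping of left versus right actions of $L(k)$ (the $\chi^{-1}$-isotypic convention must match the right action coming from the Lang torsor) and of arithmetic versus geometric Frobenius. Once those conventions are nailed down the identity $f'_{L}\circ\lambda_{L} = \mathrm{incl}$ drops out of the character-projection computation, with the key input being that $h_{0}^{-1}g h_{0}\in L(k)$ and that one-dimensional characters are invariant under $L(k)$-conjugation.
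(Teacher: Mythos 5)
Your setup and the stalk computation follow the same route as the paper: identify the stalk of $\phi_{L,!}\Qlbar$ at $g$ with (functions on, or the free vector space on) the fiber $\{h_{0}\gamma:\gamma\in L(k)\}$, observe that arithmetic Frobenius permutes the fiber via $h_{0}\gamma\mapsto h_{0}a_{0}\gamma$ with $a_{0}=h_{0}^{-1}gh_{0}\in L(k)$, and conclude that geometric Frobenius acts on the $\chi^{-1}$-isotypic line by the scalar $\chi(a_{0})$. Up to this point the bookkeeping is correct and matches the paper's proof.

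The gap is the very last identification $\chi(a_{0})=\chi(g)$, which you justify by saying that a one-dimensional character is a class function. That argument does not apply: $a_{0}=h_{0}^{-1}gh_{0}$ is conjugate to $g$ only by the element $h_{0}\in L(\kbar)$, which in general does not lie in $L(k)$, and a character of $L(k)$ is only invariant under conjugation by elements of $L(k)$. Elements of $L(k)$ that are conjugate over $\kbar$ need not be conjugate in $L(k)$ and need not have the same image in $L(k)^{\ab}$; for instance a regular unipotent $u$ and $u^{-1}$ in $\SL_{2}(\FF_{3})$ are $\kbar$-conjugate but a cubic character of $\SL_{2}(\FF_{3})$ separates them. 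So something special about the pair $(g,a_{0})$ must be used. This is exactly where the paper's proof does its real work: it shows that $g$ and $a_{0}$ are in fact conjugate in $L(k)$, by noting that the obstruction is a class in $\cohog{1}{k,Z_{L}(g)}$ which (because $g$ lies in the neutral component $Z_{L}(g)^{\circ}$ of its centralizer) comes from $\cohog{1}{k,Z_{L}(g)^{\circ}}$, and the latter vanishes by Lang's theorem since $Z_{L}(g)^{\circ}$ is connected. You need to supply this (or an equivalent) argument to close the proof.
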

\begin{proof}
Let $\chi:L(k)\to \Qlbar^{\times}$ be a character. We shall calculate the trace of the geometric Frobenius $\Frob_{g}$ acting on the stalk of $\calK_{\chi}=(\phi_{L,!}\Qlbar)_{\chi^{-1}}$ at $g\in L(k)$. Let $x\in L(\kbar)$ be such that $\Frob_{L/k}(x)x^{-1}=g$. The preimage $\phi_{L}^{-1}(g)$ consists of points $xa$ where $a\in L(k)$. Since $\Frob_{L/k}(x)=gx$ is also in $\phi_{L}^{-1}(g)$, we have $gx=xa_{0}$ for some $a_{0}\in L(k)$. The arithmetic Frobenius action on $\phi^{-1}_{L}(g)$ is given by $xa\mapsto \Frob_{L/k}(xa)=gxa=xa_{0}a$.  We may identify the stalk of $\phi_{L,!}\Qlbar$ at $g$ with $\Fun(\phi^{-1}_{L}(g))$. Then the stalk of $(\phi_{L,!}\Qlbar)_{\chi^{-1}}$ at $g$ is spanned by the function $e_{g}:xa\mapsto\chi(a)$. The geometric Frobenius $\Frob_{g}$ on $\Fun(\phi^{-1}_{L}(g))$ sends $e_{g}$ to the function $xa\mapsto e_{g}(\Frob_{L/k}(xa))=\chi(a_{0}a)=\chi(a_{0})e_{g}(xa)$. Hence $\Tr(\Frob_{g}, \calK_{\chi})=\chi(a_{0})$. It remains to show that $\chi(a_{0})=\chi(g)$. Note that $a_{0}$ and $g$ are conjugate in $L(\kbar)$ via $x$;  we will show that $a_{0}$ and $g$ are actually conjugate in $L(k)$, and hence have the same value under $\chi$. As usual, $a_{0}$ determines a class  $[a_{0}]\in\cohog{1}{k, Z_{L}(g)}$ which sends the Frobenius element to $\Frob(x)x^{-1}=g$. Since $g$ lies in the neutral component $Z_{L}(g)^{\c}$ of the centralizer $Z_{L}(g)$, the class $[a_{0}]$ in fact comes from a class in $\cohog{1}{k,Z_{L}(g)^{\c}}$, which is trivial by Lang's theorem. Therefore $a_{0}$ and $g$ are conjugate in $L(k)$ and $\chi(a_{0})=\chi(g)$.
\end{proof}

Next we consider the case where $L$ is commutative and connected.

\begin{theorem}\label{th:char sh comm} Let $L$ be a connected commutative algebraic group over $k$. Then $f_{L}$ is an isomorphism of abelian groups
\begin{equation*}
\CS(L)\isom \Hom(L(k), \Qlbar^{\times}).
\end{equation*}
\end{theorem}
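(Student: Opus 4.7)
The plan is to produce a two-sided inverse to $f_L$ out of the Lang torsor and reduce injectivity to Lang's theorem applied to connected covers of $L$.

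First, I would observe that when $L$ is commutative, the Lang isogeny $\phi_L\colon L\to L$ of (\ref{Lang torsor}) is a homomorphism of algebraic groups with kernel the constant group scheme $L(k)$. Consequently, as in the construction behind Theorem \ref{th:Serre pi1}, the assignment $\chi\mapsto \calK_\chi := (\phi_{L,!}\Qlbar)_{\chi^{-1}}$ upgrades the map $\lambda_L$ of (\ref{char sh from Lang}) to a map $\lambda_L\colon \Hom(L(k),\Qlbar^\times)\to \CS(L)$, not merely into $\Loc_1(L)$. Lemma \ref{l:fun of Lang} immediately yields $f_L\circ\lambda_L=\id$, and in particular $f_L$ is surjective.

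The substance of the theorem is injectivity. Fix $\calK\in\CS(L)$ with $f_\calK\equiv 1$; I will show that $\calK$ is trivial. Using Theorem \ref{th:Serre pi1} together with the observation that continuous characters of a pro-finite group have finite image, I may realize $\calK\cong(\pi_!\Qlbar)_{\chi_0^{-1}}$ for some central isogeny $\pi\colon L'\to L$ in $\Cov(L/k)$ with $L'$ connected and $\chi_0\colon K(\kbar)\to\Qlbar^\times$ a $\Gk$-equivariant character of the finite discrete kernel $K:=\ker(\pi)$. A direct generalization of the stalk computation in the proof of Lemma \ref{l:fun of Lang} gives
\[ f_\calK(g)\;=\;\chi_0(a_0),\qquad a_0 := \Frob(x)\,x^{-1}\in K(\kbar), \]
for any lift $x\in L'(\kbar)$ of $g\in L(k)$; since $L'$ is commutative, the class $[a_0]\in\upH^1(k,K)=K(\kbar)/(\Frob-1)K(\kbar)$ is independent of the lift, and the Galois-equivariance of $\chi_0$ descends it to a character $\bar\chi_0\colon\upH^1(k,K)\to\Qlbar^\times$, so that $f_\calK=\bar\chi_0\circ\delta$ where $\delta$ is the connecting homomorphism.

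The key input is the long exact Galois cohomology sequence
\[ L'(k)\longrightarrow L(k)\xrightarrow{\;\delta\;}\upH^1(k,K)\longrightarrow\upH^1(k,L'), \]
whose last term vanishes by Lang's theorem applied to the connected $k$-group $L'$. Thus $\delta$ is surjective, and the hypothesis $f_\calK\equiv 1$ forces $\bar\chi_0\equiv 1$; tautologically this is equivalent to $\chi_0$ being trivial on $K(\kbar)$, hence to $\calK\cong\Qlbar$ in $\CS(L)$. The only delicate point is ensuring that $L'$ in the chosen realization of $\calK$ is defined and connected over the finite field $k$ (and not merely over $\kbar$), so that Lang's theorem is available; this is precisely guaranteed by working with $\Cov(L/k)$ rather than $\Cov(L/\kbar)$ in Theorem \ref{th:Serre pi1}.
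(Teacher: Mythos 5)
Your injectivity argument takes a genuinely different route from the paper's. The paper proves, via the symmetric-power factorization $L^{d}\to\Sym^{d}L\to L$ of the $d$-fold multiplication, that the Frobenius trace of $\calK$ at an arbitrary closed point $g$ equals its trace at $\Nm(g)\in L(k)$, and then deduces injectivity of $f_{L}$ from the Chebotarev density theorem. You instead present $\calK$ as an isotypical piece of $\pi_{!}\Qlbar$ for a connected cover $\pi\colon L'\to L$ and reduce to the surjectivity of the connecting map $\delta\colon L(k)\to\upH^{1}(k,K)$, which Lang's theorem supplies by killing $\upH^{1}(k,L')$. Once that realization is in hand, your argument is more transparent about which vanishing theorem is doing the work.

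The realization step, however, has a gap. It is not true that every continuous character of a pro-finite group into $\Qlbar^{\times}$ has finite image: $\Zl\cong 1+\ell\Zl\subset\Qlbar^{\times}$ gives a continuous character of $\hZZ$ with infinite image, and indeed over $\CC$ the torus $\Gm$ carries character sheaves of infinite order. So you cannot realize an arbitrary $\calK\in\CS(L)$ from a finite cover in $\Cov(L/k)$ merely from the continuity in Theorem \ref{th:Serre pi1}; the finiteness of the order of $\calK$ when $k$ is finite is part of what must be proved, not an input. The cleanest patch that keeps your strategy intact is to take $\pi=\phi_{L}$, the Lang isogeny itself. Since $L$ is commutative, $\phi_{L}=m\circ(\Frob_{L/k}\times\iota)\circ\Delta$ is a homomorphism, and the character-sheaf structure gives $\phi_{L}^{*}\calK\cong\Frob_{L/k}^{*}\calK\otimes\iota^{*}\calK\cong\calK\otimes\calK^{-1}\cong\Qlbar$, using the canonical isomorphism $\Frob_{L/k}^{*}\calK\cong\calK$ for the absolute Frobenius together with Remark \ref{r:char}\eqref{inversion char}. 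By adjunction $\calK$ is then a direct summand of $\phi_{L,!}\Qlbar=\bigoplus_{\chi}\calK_{\chi}$, so $\calK\cong\calK_{\chi}$ for a unique $\chi$, and Lemma \ref{l:fun of Lang} gives $f_{L}(\calK)=\chi$; this shows $\lambda_{L}$ is surjective and makes the appeal to Theorem \ref{th:Serre pi1} unnecessary.
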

\begin{proof} When $L$ is commutative, the map $\l_{L}$ in \eqref{char sh from Lang} has image in  $\CS(L)$. This follows from the fact the Lang isogeny $\phi_{L}$ is a group homomorphism when $L$ is commutative. Thus we have a pair of homomorphisms
\begin{equation*}
f_{L}:\CS(L)\to \Hom(L(k),\Qlbar^{\times}) \textup{   and   }\l_{L}:\Hom(L(k),\Qlbar^{\times})\to\CS(L).
\end{equation*}
By Lemma \ref{l:fun of Lang}, we have $f_{L}\l_{L}=\id$. Therefore it suffices to show that $f_{L}$ is injective to conclude that $f_{L}$ and $\l_{L}$ are inverse to each other. Let $\calK\in\CS(L)$. Let $g\in L$ be a closed point with residue field $k'$. Define the norm $h=\Nm(g):=\prod_{\sigma\in\Gal(k'/k)}\sigma(g)\in L(k)$. We claim that 
\begin{equation}\label{trace norm}
\Tr(\Frob_{g},\calK_{g})=\Tr(\Frob_{h},\calK_{h})
\end{equation}
Once this is proved, the Frobenius trace of $\calK$ at any closed point is determined by the function $f_{L}(\calK)$, and hence $\calK$ is also determined by $f_{L}(\cK)$ by the Chebotarev density theorem. Now we prove \eqref{trace norm}. Let $d=[k':k]$, then $g$ determines a $k$-point $g_{0}\in\Sym^{d}(L)(k)$. Since $L$ is commutative, the $d$-fold multiplication map factors as
\begin{equation*}
L^{d}\xrightarrow{s_{d}}\Sym^{d}(L)\xrightarrow{m_{d}}L.
\end{equation*}
By the definition of character sheaves we have $s_{d}^{*}m_{d}^{*}\calK\cong\calK^{\boxtimes d}$. By adjunction this gives a nonzero map $\alpha:m_{d}^{*}\calK\to (s_{d,*}\calK^{\boxtimes d})^{S_{d}}=:\calK^{(d)}$. It is easy to see that $\calK^{(d)}$ is also a rank one local system, therefore $\alpha$ has to be an isomorphism. We then compute the stalk of $m_{d}^{*}\calK$ and $\calK^{(d)}$ at $g_{0}$. On one hand, $(m_{d}^{*}\calK)_{g_{0}}=\calK_{m_{d}(g_{0})}=\calK_{h}$ hence $\Tr(\Frob_{g_{0}}, (m_{d}^{*}\calK)_{g_{0}})=\Tr(\Frob_{h},\calK_{h})$. On the other hand, the stalk of $\calK^{(d)}$ at $g_{0}$ is  $\otimes_{i=0}^{d-1}\calK_{\Frob^{i}(g)}$. The Frobenius equivariance structure of $\calK$ gives isomorphisms $\calK_{g}\xrightarrow{\iota}\calK_{\Frob(g)}\xrightarrow{\iota}\cdots\xrightarrow{\iota}\calK_{\Frob^{d-1}(g)}\xrightarrow{\iota}\calK_{g}$, and the iteration $\iota^{d}$ of these isomorphisms give the automorphism $\Frob_{g}$ on $\calK_{g}$. Let $v\in\calK_{g}$ be a basis, then $v\otimes\iota(v)\cdots\otimes \iota^{d-1}(v)$ is a basis of $\calK^{(d)}_{g_{0}}$. The action of $\Frob_{g_{0}}$ cyclically permuting the tensor factors, and sends $v\otimes\iota(v)\cdots\otimes \iota^{d-1}(v)$ to $\iota^{d}(v)\otimes\iota(v)\otimes\cdots\otimes\iota^{d-1}(v)=\Frob_{g}(v)\otimes\iota(v)\otimes\cdots\otimes\iota^{d-1}(v)$. Therefore $\Tr(\Frob_{g_{0}},\calK^{(d)}_{g_{0}})=\Tr(\Frob_{g}, \calK_{g})$. Combining these calculations we get \eqref{trace norm}. The theorem is proved.
\end{proof}

\begin{cor} We have isomorphisms
\begin{eqnarray}
\label{Serre pi1 comm} \pi_{1}^{\Serre}(L/k)\cong L(k),\\
\label{Serre pi1 comm limit}\pi_{1}^{\Serre}(L/\kbar)\cong \varprojlim_{k\subset k'\subset \kbar}L(k')
\end{eqnarray}
where the projective system is over finite extensions $k'/k$ and the transition maps are the norm maps $\Nm_{k''/k'}:L(k'')\to L(k')$. 
\end{cor}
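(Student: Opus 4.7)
The plan is to combine Theorem \ref{th:char sh comm} and Theorem \ref{th:Serre pi1} and then apply Pontryagin duality. First, Theorem \ref{th:char sh comm} gives a canonical isomorphism $\CS(L)\cong\Hom(L(k),\Qlbar^{\times})$, while Theorem \ref{th:Serre pi1} gives a canonical isomorphism $\CS(L)\cong\Hom_{\cont}(\pi_{1}^{\Serre}(L/k),\Qlbar^{\times})$. Concatenating these yields a canonical isomorphism of character groups
\begin{equation*}
\Hom(L(k),\Qlbar^{\times})\cong\Hom_{\cont}(\pi_{1}^{\Serre}(L/k),\Qlbar^{\times}).
\end{equation*}

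Since $L$ is of finite type over the finite field $k$, the group $L(k)$ is finite, so the left hand side is finite. Hence $\Hom_{\cont}(\pi_{1}^{\Serre}(L/k),\Qlbar^{\times})$ is finite, which forces the profinite abelian group $\pi_{1}^{\Serre}(L/k)$ itself to be finite (a profinite abelian group with only finitely many continuous characters valued in $\Qlbar^{\times}$ must be finite, since continuous characters separate points on each finite quotient). Pontryagin duality for finite abelian groups, together with the canonical double duality isomorphism, then produces a canonical isomorphism $\pi_{1}^{\Serre}(L/k)\cong L(k)$, which is \eqref{Serre pi1 comm}.

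For \eqref{Serre pi1 comm limit}, I would begin with the formula
\begin{equation*}
\pi_{1}^{\Serre}(L/\kbar)\cong\varprojlim_{k\subset k'\subset\kbar}\pi_{1}^{\Serre}(L/k')
\end{equation*}
recorded earlier in the text, and substitute \eqref{Serre pi1 comm} (applied over each finite extension $k'/k$) to obtain $\pi_{1}^{\Serre}(L/\kbar)\cong\varprojlim_{k'}L(k')$. The remaining task is to identify the transition map $L(k'')\to L(k')$ (for $k'\subset k''$) with the norm $\Nm_{k''/k'}$. Under the dictionary between $\CS(L/k')$ and $\Hom(L(k'),\Qlbar^{\times})$, base change of a character sheaf $\calK$ from $k'$ to $k''$ corresponds to precomposition with $\Nm_{k''/k'}$ on the character side; this is precisely the content of the identity \eqref{trace norm} that was established inside the proof of Theorem \ref{th:char sh comm}, applied iteratively. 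Dualizing this statement shows that the induced transition map on $\pi_{1}^{\Serre}(L/k'')\to\pi_{1}^{\Serre}(L/k')$, under the identification with $L$-points, is exactly the norm, yielding \eqref{Serre pi1 comm limit}.

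The main obstacle is the bookkeeping in the final paragraph: one must check that the isomorphism produced by the duality argument is genuinely canonical and compatible with base change of the base field, so that the inverse system on the $\pi_1^{\Serre}$ side is carried to the inverse system of $L(k')$'s with norm maps rather than to some twisted version thereof. Once the functoriality of both isomorphisms in Theorems \ref{th:char sh comm} and \ref{th:Serre pi1} is spelled out with respect to field extensions, this compatibility becomes formal.
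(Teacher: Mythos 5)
Your proposal is correct and follows essentially the same route as the paper: part (1) is obtained by concatenating Theorem \ref{th:char sh comm} with Theorem \ref{th:Serre pi1} (your Pontryagin-duality/finiteness step just makes explicit what the paper dismisses as ``follows directly''), and part (2) identifies the transition maps with norms by reinterpreting \eqref{trace norm} as the compatibility of pullback on $\CS$ with precomposition by $\Nm_{k''/k'}$, which is exactly the commutative diagram in the paper's proof.
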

\begin{proof}
\eqref{Serre pi1 comm} follows directly from Theorem \ref{th:char sh comm} and Theorem \ref{th:Serre pi1}. When passing to the limit, we need to compute the transition maps $\nu_{k''/k'}:\pi_{1}^{\Serre}(L/k'')\to \pi^{\Serre}_{1}(L/k')$ for $k'\subset k''$ in terms of $L(k')$ and $L(k'')$. By definition, $\nu_{k''/k'}$ is the one induced from the pullback map $\CS(L/k')\to \CS(L/k'')$. During the proof of Theorem \ref{th:char sh comm}, the formula \eqref{trace norm} can be reformulated as a commutative diagram
\begin{equation*}
\xymatrix{\CS(L/k')\ar[r]^{f_{L,k'}}\ar[d]^{\textup{pullback}} & \Hom(L(k'),\Qlbar^{\times})\ar[d]^{\Nm^{*}_{k''/k'}}\\
\CS(L/k'')\ar[r]^{f_{L,k''}} & \Hom(L(k), \Qlbar^{\times})}
\end{equation*}
This implies that $\nu_{k''/k'}$ is given by $\Nm_{k''/k'}:L(k'')\to L(k')$. This proves completes the proof. 
\end{proof}

\begin{exam}\label{ex:Kummer} (Kummer sheaves)
Let $L=\Gm$ be the one-dimensional torus over $k$. For each character $\chi: L(k)\to \Qlbar^{\times}$ we get an object $\calK_{\chi}\in\cCS(\Gm)$. We claim that the canonical surjection $h:\pi_{1}(\Gm/\kbar)\surj\pi_{1}^{\Serre}(L/\kbar)$ induces an isomorphism
\begin{equation}\label{pit pis}
\pi^{t}_{1}(\Gm/\kbar)\isom\pi_{1}^{\Serre}(\Gm/\kbar)
\end{equation}
where  $\pi^{t}_{1}(-)$ denotes the tame fundamental group. On one hand, $\pi_{1}^{\Serre}(L/\kbar)\cong\varprojlim_{k'\subset\kbar}k'^{\times}$ is an inverse limit of finite groups of order prime to $p$. Therefore $\pi_{1}(\Gm/\kbar)\surj\pi_{1}^{\Serre}(L/\kbar)$ factors through the tame quotient. One the other hand,  $\pi_{1}^{t}(\Gm/\kbar)\cong \hZZ'(1):=\varprojlim_{(m,p)=1} \mu_{m}(\kbar)$. Therefore $h$ induces a homomorphism
\begin{equation}\label{inv limit k}
\varprojlim_{(m,p)=1}\mu_{m}(\kbar)\to \varprojlim_{k'\subset\kbar}k'^{\times}.
\end{equation}
What is this homomorphism? This is almost the tautological one: we may replace the limit on the left side by $\varprojlim_{n}\mu_{q^{n}-1}(\kbar)$ (where $q=\#k$) because every integer prime to $p$ is divisible by some $q^{n}-1$. Clearly $\mu_{q^{n}-1}(\kbar)=\FF_{q^{n}}$, and the norm map $\FF_{q^{mn}}^{\times}\to \FF_{q^{n}}^{\times}$ is the same as the power map $[\frac{q^{mn}-1}{q^{n}-1}]:\mu_{q^{mn}-1}(\kbar)\to \mu_{q^{n}-1}(\kbar)$. Therefore the two sides of \eqref{inv limit k} are canonically isomorphic, and one can check that \eqref{inv limit k} is this canonical isomorphism.  This proves \eqref{pit pis}.

The above discussion can easily be generalized to any torus $T$ over $k$. We define the  (prime-to-$p$-part of the) Tate module of $T$ by
\begin{equation*}
\TT'(T):=\varprojlim_{(n,p)=1}T[n](\kbar)
\end{equation*}
as a $\Gk$-module. Note that when $T=\Gm$, $\TT'(\Gm)=\hZZ'(1)$. In general we have $\TT'(T)=\xcoch(T)\otimes_{\ZZ}\hZZ'(1)$ with $\Gk$ diagonally acting on both factors. Then \eqref{pit pis} implies a canonical isomorphisms of $\Gk$-modules
\begin{equation}\label{tame T}
\pi^{t}_{1}(T/\kbar)\cong\TT'(T)\cong \varprojlim_{k'\subset\kbar}T(k')\cong \pi_{1}^{\Serre}(T/\kbar).
\end{equation}
In conclusion, any tame local system of rank one on $T$ is a character sheaf. We call these sheaves {\em Kummer sheaves}.
\end{exam}

\begin{exam}\label{ex:AS}(Artin-Schreier sheaves)
When $L=\Ga$ is the additive group over $k$, for each additive character $\psi:k\to \Qlbar^{\times}$ we have a rank one character sheaf which we denote by $\AS_{\psi}$, the {\em Artin-Schreier} sheaf. More generally, if $V$ is a vector space over $k$, viewed as a commutative group scheme, then for every linear function $\phi\in V^{\vee}$, viewed as a group homomorphism $\phi: V\to\Ga$, the pullback $\phi^{*}\AS_{\psi}$ gives an object in $\cCS(V)$. Fixing $\psi$, every object in $\cCS(V)$ arises this way from a unique $\phi\in V^{\vee}$.
\end{exam}

\begin{exam}(A non-commutative pathology) When $L$ is not necessarily commutative, the image of the map $\l_{L}$ may not lie in $\CS(L)$. For example, take $k=\FF_{2}$ and $L=\SL_{2}$. Then $\SL_{2}(\FF_{2})\cong S_{3}$ has a unique character $\chi$ of order two. Let $\Ga\subset\SL_{2}$ denote the upper triangular unipotent matrices. The order two local system $\calK_{\chi}|_{\Ga}$ is the Artin-Schreier sheaf $\AS_{\psi}$, for the unique nontrivial character $\psi$ of $\FF_{2}$.  However, when we make $t=\diag(t,t^{-1})\in\SL_{2}(\kbar)$ act on $\Ga/\kbar$ by conjugation, the pullback $\Ad(t)^{*}\calK_{\chi}|_{\Ga}$ (over $\Ga/\kbar$) becomes $[t^{2}]^{*}\AS_{\psi}$ (where $[t^{2}]:\Ga\to \Ga$ is multiplication by $t^{2}$), which is not isomorphic to $\AS_{\psi}$ (by looking at the functions they define). But if $m^{*}\calK_{\chi}\cong\calK_{\chi}\boxtimes\calK_{\chi}$, the isomorphism type of $\calK_{\chi}$ would be unchanged under the left and right translation  of $\SL_{2}$ on itself, and in particular invariant under the adjoint action of $\SL_{2}$. Therefore,  $m^{*}\calK_{\chi}$ is not isomorphic to $\calK_{\chi}\boxtimes\calK_{\chi}$ in this case. In what follows we will see how to remedy this pathology.
\end{exam}

Now let $L$ be a connected reductive group over a finite field $k$. Let $L^{\sc}$ be the simply-connected cover of the derived group $L^{\der}$. We have a natural homomorphism $L^{\sc}(k)\to L(k)$ whose image is a normal subgroup of $L(k)$. We may therefore form the quotient $L(k)/L^{\sc}(k)$ (really quotienting by the image), which is abelian.

\begin{theorem}\label{th:char sh red} Let $L$ be a  connected reductive group over $k$. 
\begin{enumerate}
\item We have a natural isomorphism of abelian groups
\begin{equation}\label{CS red}
\CS(L)\isom \Hom(L(k)/L^{\sc}(k), \Qlbar^{\times}).
\end{equation}
\item Let $T$ be a maximal torus in $L$ and $T^{\sc}\subset L^{\sc}$ be its preimage in $L^{\sc}$.  Then we have isomorphisms
\begin{eqnarray}
\label{Serre pi1 red} \pi_{1}^{\Serre}(L/k)\cong T(k)/T^{\sc}(k)\cong L(k)/L^{\sc}(k),\\
\label{Serre pi1 red kbar}\pi_{1}^{\Serre}(L/\kbar)\cong \TT'(T)/\TT'(T^{\sc}).
\end{eqnarray}
\end{enumerate}
\end{theorem}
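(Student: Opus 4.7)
The plan is to prove (2) directly and then derive (1) via the identification $\CS(L) \cong \Hom_{\cont}(\pi_1^{\Serre}(L/k), \Qlbar^\times)$ from Theorem~\ref{th:Serre pi1} together with Pontryagin duality for finite abelian groups. Two inputs will be used throughout: first, $\pi_1^{\Serre}(L^{\sc}/k) = 0$, since the simply-connected $L^{\sc}$ admits no nontrivial connected central $k$-isogeny with \'etale kernel; second, the corollary to Theorem~\ref{th:char sh comm} giving $\pi_1^{\Serre}(T_0/k) \cong T_0(k)$ and $\pi_1^{\Serre}(T_0/\kbar) \cong \TT'(T_0)$ for any $k$-torus $T_0$, with transition maps given by norms.

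I would first produce a surjection $T(k)/T^{\sc}(k) \twoheadrightarrow \pi_1^{\Serre}(L/k)$ as follows. Any $\calK \in \CS(L)$ is invariant under $L$-conjugation (the cocycle condition $m^*\calK \cong \calK \boxtimes \calK$ applied to the conjugation morphism forces $\Ad(g)^*\calK \cong \calK$), hence determined by its restriction $\calK|_T$, since conjugates of $T$ cover the dense regular semisimple locus of $L$. Moreover, pulling $\calK$ back along $\phi: L^{\sc} \to L$ gives an object of $\CS(L^{\sc}) = \Hom_{\cont}(\pi_1^{\Serre}(L^{\sc}/k), \Qlbar^\times) = 0$, so $\calK|_T$ pulls back trivially to $T^{\sc}$; by Theorem~\ref{th:char sh comm} the trace function $f_T(\calK|_T) : T(k) \to \Qlbar^\times$ is trivial on the image of $T^{\sc}(k)$. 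Thus the composition $\CS(L) \hookrightarrow \CS(T) \cong \Hom(T(k), \Qlbar^\times)$ lands in $\Hom(T(k)/T^{\sc}(k), \Qlbar^\times)$; dualizing gives the desired surjection $\tau$.

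To identify $T(k)/T^{\sc}(k) \cong L(k)/L^{\sc}(k)$, the Bruhat decomposition of $L(k)$ (relative, in the quasi-split case) expresses every element as $u_1 \dot w t u_2$ with $u_1, u_2$ in the unipotent radical of a Borel, $\dot w$ a Weyl representative, and $t \in T(k)$. Unipotent subgroups lift uniquely to $L^{\sc}$ and each Weyl representative lifts rationally by Lang's theorem, so $g \equiv t \pmod{\phi(L^{\sc}(k))}$. The kernel of the resulting surjection $T(k) \twoheadrightarrow L(k)/L^{\sc}(k)$ is $T(k) \cap \phi(L^{\sc}(k))$; any $\tilde g \in L^{\sc}$ with $\phi(\tilde g) \in T$ lies in $\phi^{-1}(T) = \phi^{-1}(T^{\der}) = T^{\sc}$, so this kernel equals $\phi(T^{\sc}(k))$, establishing the second isomorphism in (2).

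The main obstacle will be to show $\tau$ is also injective, which amounts to constructing, for each character $\chi$ of $T(k)/T^{\sc}(k)$, a matching character sheaf on $L$. The approach is to invoke the classification of \'etale central isogenies of connected reductive $k$-groups by Galois-stable subgroups of $\pi_1^{\alg}(L) = \xcoch(T)/\xcoch(T^{\sc})$: each such subgroup gives a central isogeny $\tilde L \to L$, hence an element of $\pi_1^{\Serre}(L/k)$ and a character sheaf on $L$ via Theorem~\ref{th:Serre pi1}, and a Tate--Nakayama-type computation of $T(k)$ in terms of $\xcoch(T)$ shows that the resulting supply of character sheaves exactly matches $\Hom(T(k)/T^{\sc}(k), \Qlbar^\times)$. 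Once $\tau$ is an isomorphism, (1) follows by duality. The $\kbar$-version of (2) is then obtained by running the argument over each finite extension $k'/k$ and passing to the inverse limit, using $\TT'(T_0) \cong \varprojlim_{k'/k} T_0(k')$ with norm transition maps.
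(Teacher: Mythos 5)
The crux of the theorem is the existence direction: for \emph{every} character of $L(k)/L^{\sc}(k)$ (equivalently, in your setup, of $T(k)/T^{\sc}(k)$) one must actually produce a rank one character sheaf on $L$ inducing it, and this is precisely the step you do not carry out. You delegate it to ``the classification of \'etale central isogenies by Galois-stable subgroups of $\xcoch(T)/\xcoch(T^{\sc})$'' plus an unspecified ``Tate--Nakayama-type computation,'' but as stated this does not work: (a) the covers classified by sublattices $\xcoch(T^{\sc})\subset\Lambda\subset\xcoch(T)$ have kernels of multiplicative type, and Theorem \ref{th:Serre pi1} and the category $\Cov(L)$ only see isogenies whose kernel is \emph{discrete} over $k$, so constancy of the kernel (a condition mixing the Galois action on $\xcoch(T)/\Lambda$ with the roots of unity present in $k$) must be arranged, not assumed; (b) the existence over $k$ of a connected reductive cover realizing a given Galois-stable $\Lambda$ needs justification; and (c) one must exhibit the particular lattice that captures all of $\Hom(T(k)/T^{\sc}(k),\Qlbar^{\times})$ --- essentially $\xcoch(T^{\sc})+(\Frob-1)\xcoch(T)$, i.e.\ a Lang-isogeny-type cover --- and prove the matching; this is exactly the nontrivial content, asserted rather than proved. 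The paper avoids all of this by a direct construction: it takes the quotient of the Lang torsor of $L$ by the (non-central!) finite subgroup $\Im(L^{\sc}(k)\to L(k))$, defines $\calK_{\chi}$ as the $\chi$-isotypic summand of the pushforward, and then verifies the multiplicative structure by pulling back along the isogeny $Z^{\circ}\times L^{\sc}\to L$, using triviality on $L^{\sc}$, Theorem \ref{th:char sh comm} on $Z^{\circ}$, and the descent of the character-sheaf property along isogenies (Remark \ref{r:char}). Without an argument of comparable substance your $\tau$ is only known to be surjective, which does not yield \eqref{CS red} or \eqref{Serre pi1 red}.

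Two further soft spots. First, your injectivity of $\CS(L)\to\CS(T)$ is justified by ``conjugates of $T$ cover a dense locus,'' but a slice-by-slice argument for local systems on affine products is delicate in characteristic $p$ (K\"unneth fails for $\pi_{1}$ there); a correct and shorter route is that any object of $\Cov(L)$ restricts over $T$ to an isogeny of tori with the \emph{same} kernel (the preimage of $T$ is a maximal torus, hence connected), so $\pi_{1}^{\Serre}(T)\to\pi_{1}^{\Serre}(L)$ is surjective --- this also gives your surjection $\tau$ directly once combined with $\CS(L^{\sc})=0$. Second, the parts of your plan that are sound --- $\pi_{1}^{\Serre}(L^{\sc})=0$, the Bruhat-decomposition proof that $T(k)/T^{\sc}(k)\cong L(k)/L^{\sc}(k)$ (a nice direct argument; the paper obtains this identification only indirectly via a diagram for the isogeny $Z^{\circ}\times T^{\sc}\to T$), and the passage to $\kbar$ by taking inverse limits over norms --- are fine, but they do not substitute for the missing existence step.
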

\begin{proof} First suppose $L=L^{\sc}$. In this case, any central isogeny to $L$ is trivial. Hence $\pi_{1}^{\Serre}(L/k')$ and $\CS(L/k')$ are trivial for any base field $k'$. 

Next consider the general case. Suppose $\calK\in\cCS(L)$. Then its pullback to $L^{\sc}$ has to be the trivial local system by the above discussion. Therefore $f_{L}$ has image in $\Hom(L(k)/L^{\sc}(k), \Qlbar^{\times})$.

The inverse map $\Hom(L(k)/L^{\sc}(k), \Qlbar^{\times})\to\CS(L)$ is again given by (the restriction of) $\l_{L}$. We need to show that if $\chi:L(k)\to \Qlbar^{\times}$ is trivial on $L^{\sc}(k)$, then the corresponding local system $\calK_{\chi}$ is a character sheaf. Consider the intermediate  $L(k)/L^{\sc}(k)$-torsor $\phi'_{L}: L'\to L$ where $L'=L/\Im(L^{\sc}(k)\to L(k))$. For $\chi\in\Hom(L(k)/L^{\sc}(k), \Qlbar^{\times})$, the local system $\calK_{\chi}$ is the corresponding direct summand of $\phi'_{L,*}\Qlbar$. The natural homomorphism $L^{\sc}\to L$ admits a lift $L^{\sc}\to L'$, therefore $\calK_{\chi}$ is trivial when pulled back to $L^{\sc}$. Also it is easy to see that $\calK_{\chi}$ is a Kummer sheaf when restricted to the neutral component of the center $Z^{\circ}$, using Theorem \ref{th:char sh comm}. Consider the isogeny $Z^{\circ}\times L^{\sc}\to L$. In general, whenever we have an isogeny $\pi: \tilL\to L$, if $\pi^{*}\calF$ is a character sheaf, so is $\calF$ (using Remark \ref{r:char}\eqref{char is a prop}). Therefore $\calK_{\chi}\in\cCS(L)$. Checking that the two maps between $\CS(L)$ and $\Hom(L(k)/L^{\sc}(k), \Qlbar^{\times})$ are inverse to each other is left to reader.

The isomorphism \eqref{CS red} together with Theorem \ref{th:Serre pi1} implies that $\pi_{1}^{\Serre}(L)\cong L(k)/L^{\sc}(k)$. The central isogeny $Z^{\c}\times L^{\sc}\to L$ restricts to an isogeny of tori $\varphi: Z^{\c}\times T^{\sc}\to T$. Let $A=\ker(\varphi)$, which is a finite group scheme over $k$ of multiplicative type (but may not be discrete). We get a commutative diagram
\begin{equation*}
\xymatrix{1\ar[r] & \pi_{1}^{\Serre}(Z^{\c}\times T^{\sc})\ar[r]\ar[d] & \pi_{1}^{\Serre}(T) \ar[d]\ar[r] & A(k)\ar@{=}[d]\ar[r] & 1\\
1\ar[r] & \pi_{1}^{\Serre}(Z^{\c}\times L^{\sc})\ar[r] & \pi_{1}^{\Serre}(L)\ar[r] & A(k)\ar[r] & 1}
\end{equation*}
Note that $\pi_{1}^{\Serre}(Z^{\c}\times L^{\sc})\cong\pi^{\Serre}_{1}(Z^{\c})$ since $L^{\sc}$ does not admit nontrivial central isogenies. Also $\pi_{1}^{\Serre}(Z^{\c}\times T^{\sc})\cong \pi_{1}^{\Serre}(Z^{\c})\times\pi_{1}^{\Serre}(T^{\sc})$ by the discussion in Example \ref{ex:Kummer}. Therefore from the above diagram we conclude that
\begin{equation*}
\pi_{1}^{\Serre}(L)\cong\pi_{1}^{\Serre}(T)/\pi_{1}^{\Serre}(T^{\sc}).
\end{equation*}
\eqref{Serre pi1 red} then follows. Using $\TT'(T)\cong\varprojlim_{k'\subset\kbar}T(k')$ and passing to the limit we get \eqref{Serre pi1 red kbar}.
\end{proof}

\begin{theorem}\label{th:char sh} Let $L$ be a connected algebraic group over a finite field $k$.
\begin{enumerate}
\item The homomorphism $f_{L}:\CS(L)\to \Hom(L(k),\Qlbar^{\times})$ is injective.
\item There are surjective homomorphisms
\begin{equation*}
\pi_{1}(L)^{\ab}\surj L(k)^{\ab}\surj \pi_{1}^{\Serre}(L).
\end{equation*}
\end{enumerate}
\end{theorem}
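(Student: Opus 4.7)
\emph{Part (1): injectivity of $f_{L}$.} The strategy is to reduce to the case of a finite-order character sheaf, compute its Frobenius trace via the Lang coboundary attached to a central isogeny, and then invoke Lang--Steinberg surjectivity on the covering group. Suppose $\calK\in\cCS(L)$ satisfies $f_{\calK}=1$. By Theorem~\ref{th:Serre pi1}, $\calK$ corresponds to a continuous character $\chi\colon\pi^{\Serre}_{1}(L)\to\Qlbar^{\times}$; treating $\Qlbar^{\times}$ as discrete, $\chi$ has finite image, hence factors through $C(k)$ for some central isogeny $\pi\colon L'\to L$ with $L'$ connected and $C=\ker(\pi)$ a finite \'etale commutative $k$-group, as a $\Frob$-invariant character $\chi_{C}\colon C(\kbar)\to\Qlbar^{\times}$.

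Next, for $g\in L(k)$, lift to $x\in L'(\kbar)$; since $C$ is central abelian, the class $\delta(g):=[\Frob(x)x^{-1}]\in C(\kbar)/(\Frob-1)C(\kbar)=\cohog{1}{k,C}$ is independent of the lift, and a direct stalk computation modeled on the proof of Lemma~\ref{l:fun of Lang} (applied to $\calK=(\pi_{!}\Qlbar)_{\chi_{C}^{-1}}$) gives $f_{\calK}(g)=\chi_{C}(\delta(g))$. The Lang--Steinberg theorem applied to the connected group $L'$ over the finite field $k$ asserts that $x\mapsto\Frob(x)x^{-1}$ is surjective on $L'(\kbar)$, equivalently $\cohog{1}{k,L'(\kbar)}=1$. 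The long exact sequence of $1\to C\to L'(\kbar)\to L(\kbar)\to 1$ then yields $L(k)\surj\cohog{1}{k,C}$, so $\chi_{C}$ vanishes on $\cohog{1}{k,C}$; under the Pontryagin identification $\Hom(\cohog{1}{k,C},\Qlbar^{\times})\cong\Hom(C(\kbar),\Qlbar^{\times})^{\Frob}$ this forces $\chi_{C}=1$, hence $\calK\cong\Qlbar$.

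\emph{Part (2), first arrow $\pi_{1}(L)^{\ab}\surj L(k)^{\ab}$.} The Lang isogeny $\phi_{L}\colon L\to L$, $x\mapsto\Frob(x)x^{-1}$, is (by Lang--Steinberg for the connected group $L$) a connected \'etale Galois cover of $L$ with deck group $L(k)$. It therefore induces a surjection $\pi_{1}(L,e)\surj L(k)$, and abelianizing both sides gives the desired surjection.

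\emph{Part (2), second arrow $L(k)^{\ab}\surj\pi^{\Serre}_{1}(L)$.} Assemble the trace into the bilinear pairing
\begin{equation*}
\cCS(L)\times L(k)\longrightarrow\Qlbar^{\times},\qquad(\calK,g)\mapsto f_{\calK}(g).
\end{equation*}
By Theorem~\ref{th:Serre pi1} the left factor is identified with $\Hom_{\cont}(\pi^{\Serre}_{1}(L),\Qlbar^{\times})$, the Pontryagin dual of the profinite abelian group $\pi^{\Serre}_{1}(L)$. Pontryagin duality between profinite abelian groups and their discrete torsion duals then produces a canonical continuous homomorphism $L(k)\to\pi^{\Serre}_{1}(L)$ whose transpose is exactly $f_{L}$. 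Part (1) says $f_{L}$ is injective, so $L(k)\to\pi^{\Serre}_{1}(L)$ has dense image; since $L(k)$ is finite, its image is automatically closed in the Hausdorff profinite group $\pi^{\Serre}_{1}(L)$, so the map is surjective (and $\pi^{\Serre}_{1}(L)$ itself is finite). As the target is abelian, the map factors through $L(k)^{\ab}$, giving the second arrow.

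\emph{Main obstacle.} The main technical nuisance is verifying the compatibility between the two descriptions of $\calK$ used above: the character of $\pi^{\Serre}_{1}(L)$ provided by Theorem~\ref{th:Serre pi1} versus the presentation $\calK=(\pi_{!}\Qlbar)_{\chi_{C}^{-1}}$ needed for the Lang-boundary trace formula $f_{\calK}(g)=\chi_{C}(\delta(g))$. Care is required because $L'$ may be non-commutative, so one must check (using centrality of $C$ in $L'$) that $\delta$ is well defined and a homomorphism, and that the Frobenius-equivariant stalk calculation of Lemma~\ref{l:fun of Lang} goes through verbatim. Once this compatibility is pinned down, both parts are formal consequences of Lang--Steinberg surjectivity on the connected covering groups, plus a standard Pontryagin-duality argument using the finiteness of $L(k)$.
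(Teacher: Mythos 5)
Your Part (2) follows the same route as the paper. Your Part (1), however, is a genuinely different argument. The paper's Part (1) proceeds by a reduction lemma for extensions $1\to L'\to L\to L''\to 1$, quotes Theorems \ref{th:char sh comm} (commutative case) and \ref{th:char sh red} (reductive case), and finishes via a filtration of an arbitrary connected group. You instead go through the isomorphism $\CS(L)\cong\Hom_{\cont}(\pi_1^{\Serre}(L),\Qlbar^\times)$, realize a character sheaf via a central isogeny $\pi\colon L'\to L$, compute the trace as $\chi_C(\delta(g))$, and use Lang--Steinberg on $L'$ to make the connecting map $\delta\colon L(k)\to\cohog{1}{k,C}$ surjective. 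That is a more direct and conceptually appealing route, and it bypasses the commutative and reductive special cases entirely; the paper's route buys you the finer statements (that $f_L$ is an isomorphism in the commutative case, and the explicit computation $\pi_1^{\Serre}(L)\cong L(k)/L^{\sc}(k)$ in the reductive case) which the theorem itself does not need.

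There is, however, a real gap in your Part (1): the claim that ``treating $\Qlbar^\times$ as discrete, $\chi$ has finite image.'' $\CS(L)$ consists of $\Qlbar$-local systems, whose rank-one monodromy takes values in $\calO_E^\times$ for a finite extension $E/\Ql$, and such a character can have infinite image in the pro-$\ell$ part $1+\frm_E$. Theorem \ref{th:Serre pi1} is an isomorphism $\CS(L)\cong\Hom_{\cont}(\pi_1^{\Serre}(L),\Qlbar^\times)$ for the $\ell$-adic topology, not the discrete one, and does not by itself force $\chi$ to have finite order; indeed over $\kbar$ one has $\pi_1^{\Serre}(\Gm/\kbar)=\hZZ'(1)$, which has continuous characters of infinite order. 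The fact that over a finite field $\pi_1^{\Serre}(L)$ is finite — which would make the finite-order claim automatic — is essentially the content of Part (2), which you derive from Part (1). So as written, your argument is circular. The fix is a small reordering: first run your Lang--Steinberg argument only for $\calK$ of finite order (your central-isogeny computation needs nothing more); deduce that the torsion dual $\Hom_{\cont}(\pi_1^{\Serre}(L),\mu_\infty)$ injects into the finite group $\Hom(L(k),\Qlbar^\times)$; conclude by Pontryagin duality that $\pi_1^{\Serre}(L)$ is a finite quotient of $L(k)^{\ab}$ (this is Part (2)); then observe that every continuous character of a finite group has finite order, so Part (1) holds in general. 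You should also check the precise sign in the trace formula $f_\calK(g)=\chi_C(\delta(g))^{\pm1}$, though this does not affect the conclusion.
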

\begin{proof}
(1) Suppose $1\to L'\to L\to L''\to 1$ is an exact sequence of connected algebraic groups over $k$, and that $f_{L'}$ and $f_{L''}$ are known to be injective, we shall show that $f_{L}$ is also injective. Let $\calK\in\CS(L)$ be such that $f_{L}(\calK)=1$. Then $\calK|_{L'}\in\CS(L')$ also lies in the kernel of $f_{L'}$, hence $\calK|_{L'}$ is the constant sheaf by assumption. By Lemma \ref{l:descent cs}, $\calK$ descends to $\calK''\in\cCS(L'')$. The fact that $f_{L}(\calK)=1$ implies that $f_{L''}(\calK'')=1$, hence $\calK''$ is the constant sheaf by assumption. This shows that $\calK$ is also the constant sheaf.

In Theorem \ref{th:char sh comm} we have shown that $f_{L}$ is an isomorphism when $L$ is connected and  commutative; in Theorem \ref{th:char sh red} we have shown that $f_{L}$ is injective if $L$ is connected reductive. Since every connected algebraic group $L$ admits a filtration by normal subgroups with associated graded either connected commutative or connected reductive, we conclude that $f_{L}$ is injective for all connected $L$.

(2) The surjection $\pi_{1}(L)\surj L(k)$ is given by the Lang torsor; the surjection $L(k)^{\ab}\surj \pi_{1}^{\Serre}(L)$ comes from the injection $f_{L}: \CS(L)\cong\Hom_{\cont}(\pi^{\Serre}_{1}(L), \Qlbar^{\times})\incl \Hom(L(k), \Qlbar^{\times})$.
\end{proof}

\subsection{Case of a finite group scheme}  
Let $k$ be any perfect field, and $A$ be a finite group scheme over $k$. 

\begin{lemma}\label{l:char sh discrete} 
\begin{enumerate}
\item There is a canonical isomorphism
\begin{equation*}
\CS(A/\kbar)\cong \cohog{2}{A(\kbar), \Qlbar^{\times}}.
\end{equation*}
\item There is an exact sequence
\begin{equation}\label{CSA}
1\to\cohog{1}{k,\Hom(A(\kbar), \Qlbar^{\times})}\to \CS(A)\to  \cohog{2}{A(\kbar), \Qlbar^{\times}}^{\Gk}.
\end{equation}
\end{enumerate}
\end{lemma}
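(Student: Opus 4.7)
Proof plan.

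For (1), base change to $\kbar$: the finite $k$-group scheme $A$ becomes a disjoint union of points indexed by the finite set $A(\kbar)$, so a rank one local system $\calK$ on $A_{\kbar}$ is just the data of one-dimensional $\Qlbar$-lines $\calK_{a}$, one for each $a\in A(\kbar)$. Choose trivializations $\calK_{a}\cong\Qlbar$ for all $a$. Since $A_{\kbar}\times A_{\kbar}=\bigsqcup_{(a,b)}\Spec\kbar$, the data of the multiplication isomorphism $\mu:m^{*}\calK\isom\calK\boxtimes\calK$ is equivalent to a function $c:A(\kbar)\times A(\kbar)\to\Qlbar^{\times}$ defined by $\mu_{a,b}=c(a,b)\cdot\mathrm{id}$. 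A direct translation of the cocycle diagram \eqref{cocycle} reads
\begin{equation*}
c(a,b)\,c(ab,d)=c(b,d)\,c(a,bd),
\end{equation*}
i.e.\ $c$ is a 2-cocycle on $A(\kbar)$. The unit conditions \eqref{umu1}, \eqref{umu2} (with $u$ becoming the identity in our trivializations) become the normalization $c(a,e)=c(e,a)=1$. Changing the trivializations by $\lambda:A(\kbar)\to\Qlbar^{\times}$ alters $c$ by the coboundary $(a,b)\mapsto\lambda(a)\lambda(b)\lambda(ab)^{-1}$. Every normalized 2-cocycle clearly arises in this way, so the assignment $\calK\mapsto[c]$ yields the desired canonical bijection $\CS(A/\kbar)\isom\cohog{2}{A(\kbar),\Qlbar^{\times}}$.

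For (2), I would first extend Remark \ref{r:char}(4) by a standard Galois descent argument to the present discrete setting: an object of $\cCS(A)$ is the same datum as a $\Gk$-equivariant object of $\cCS(A/\kbar)$ (both the underlying local system and the multiplication/unit isomorphisms descend). The forgetful functor then gives a natural map $\CS(A)\to\CS(A/\kbar)^{\Gk}=\cohog{2}{A(\kbar),\Qlbar^{\times}}^{\Gk}$, which supplies the right-hand map of \eqref{CSA}. Its kernel consists of those $\Gk$-equivariant character sheaves which become trivial after base change to $\kbar$.

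To compute the kernel I would apply the usual non-abelian $H^{1}$ torsor argument for $\Gk$-equivariant structures on a fixed object. The trivial character sheaf $\Qlbar$ on $A/\kbar$ has automorphism group (in $\cCS(A/\kbar)$) equal to the group of multiplicative homomorphisms $\Hom(A(\kbar),\Qlbar^{\times})$, since any automorphism of the underlying local system compatible with $\mu$ is a character. This group carries the natural continuous $\Gk$-action. The trivial character sheaf carries the tautological $\Gk$-equivariant structure, hence isomorphism classes of $\Gk$-equivariant structures on $\Qlbar$ form a torsor under $\cohog{1}{k,\Hom(A(\kbar),\Qlbar^{\times})}$, and the tautological one furnishes a base point. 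This identifies the kernel with $\cohog{1}{k,\Hom(A(\kbar),\Qlbar^{\times})}$ and yields the left injection of \eqref{CSA}.

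The only genuinely nontrivial step is the extension of Remark \ref{r:char}(4) to the disconnected case, i.e.\ verifying that $\cCS(A)$ is equivalent to the category of $\Gk$-equivariant objects in $\cCS(A/\kbar)$. For a general disconnected scheme a single local system need not descend, but here the character-sheaf structure rigidifies enough of the automorphism groupoid that standard Galois descent for finite \'etale sheaves (with the multiplicative structure) applies; this is the point where some care is needed, but no new idea beyond descent.
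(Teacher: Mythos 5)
Your proposal is correct and takes essentially the same route as the paper: part (1) is the identical normalized $2$-cocycle computation (the paper only adds the remark that one may replace $A$ by $A^{\red}$, since character sheaves do not see nilpotents), and part (2) is the paper's argument of identifying $\cCS(A)$ with $\Gk$-equivariant objects of the Picard groupoid $\cCS(A/\kbar)$, whose isomorphism classes form $\cohog{2}{A(\kbar),\Qlbar^{\times}}$ and whose automorphism groups are $\Hom(A(\kbar),\Qlbar^{\times})$. The only difference is that the paper treats the resulting exact sequence as a formal consequence of this Picard-groupoid structure, whereas you make the kernel computation explicit via the twisting/torsor argument on the trivial object --- which is exactly how that formal fact is proved.
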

\begin{proof}
(1) We temporarily assume $k=\kbar$. We may assume $A$ is reduced (passing to $A^{\red}$ does not change the category of rank one character sheaves), and hence identify $A$ with the discrete group $A(\kbar)$. A rank one character sheaf on $A$ is a collection of $1$-dimensional $\Qlbar$-vector spaces $\calK_{a}$, one for each $a\in A$. The datum of $\mu$ is a collection of linear isomorphism $\mu_{a,b}:\calK_{ab}\cong\calK_{a}\otimes\calK_{b}$, one for each pair $(a,b)\in A^{2}$, that satisfy the cocycle relation
\begin{equation}\label{muab}
\mu_{ab,c}\circ(\mu_{a,b}\otimes\id_{\calK_{c}})=\mu_{a,bc}\circ(\id_{\calK_{a}}\otimes\mu_{b,c}), \forall a,b,c\in A.
\end{equation}
The datum of $u$ gives a basis $e_{1}\in\calK_{1}$ such that $\mu_{a,1}=\id_{\calK_{a}}\otimes e_{1}$ and $\mu_{1,a}=e_{1}\otimes\id_{\calK_{a}}$ for all $a\in A$. If we choose a basis $e_{a}\in\calK_{a}$ for every $a$ (keep the same choice for $e_{1}$ given by $u$), then $\mu_{a,b}(e_{ab})=\xi_{a,b}e_{a}\otimes e_{b}$ for some $\xi_{a,b}\in \Qlbar^{\times}$. The cocycle relation \eqref{muab} implies that $\{\xi_{a,b}\}$ is a 2-cocycle of $A$ with values in $\Qlbar^{\times}$ satisfying $\xi_{1,a}=\xi_{a,1}=1$. Changing the choices of $\{e_{a}\}$ changes $\xi_{a,b}$ by a coboundary of a map $\eta:A\to \Qlbar^{\times}$ with $\eta_{1}=1$. Therefore we have a well-defined homomorphism $\alpha:\CS(A/\kbar)\to \cohog{2}{A,\Qlbar^{\times}}$. 

Conversely, from a cocycle $\xi\in Z^{1}(A,\Qlbar^{\times})$, we may first multiply it by a coboundary to make $\xi_{a,1}=\xi_{1,a}=1$ for all $a\in A$. We then define $\calK$ to be the constant sheaf on $A$, and define the map $\mu_{a,b}:\Qlbar^{\times}=\calK_{ab}\to\calK_{a}\otimes\calK_{b}=\Qlbar^{\times}$ as multiplication by $\xi_{a,b}$. We also define $u:\Qlbar\cong \calK_{1}$ as the identity map. The fact that $\xi$ is a cocycle guarantees that $(\calK,\mu,u)$ thus defined is a rank one character sheaf on $A$. Changing $\xi$ by the coboundary of $\eta:A\to \Qlbar^{\times}$ with $\eta_{1}=1$ does not change the isomorphism type of $(\calK,\mu,u)$. This way we have defined a homomorphism $\beta:\cohog{2}{A,\Qlbar^{\times}}\to\CS(A/\kbar)$. It is easy to check that $\alpha$ and $\beta$ are inverse to each other.

(2) We have an  equivalence of categories $\cCS(A)\cong\cCS(A/\kbar)^{\Gk}$, the latter being the category of objects in $\cCS(A/\kbar)^{\Gk}$ together with a $\Gk$-equivariant structure. Since $\cCS(A/\kbar)$ is a Picard category with the group of isomorphism classes of objects $\CS(A/\kbar)\cong\cohog{2}{A(\kbar),\Qlbar^{\times}}$ by (1), and automorphism group $\Hom(A(\kbar), \Qlbar^{\times})$ by Remark \ref{r:char}\eqref{char bc}, the exact sequence \eqref{CSA} follows.
\end{proof}

\subsubsection{Equivariant sheaves}\label{sss:equiv sh} Let $L$ be an algebraic group acting on a scheme $X$ of finite type. Let $\calK\in\cCS(L)$. Let $a:L\times X\to X$ be the action map.  Then an {\em $(L,\calK)$-equivariant} perverse sheaf $\calF$ on $X$ is a pair $(\calF, \a)$  where $\cF$ is a perverse sheaf on $X$ and $\a$ is an isomorphism on $L\times X$
\begin{equation*}
\alpha: a^{*}\calF\cong\calK\boxtimes\calF
\end{equation*}
that restricts to $u\boxtimes\id_{F}$ on $\{e\}\times X$ and such that the composition
\begin{eqnarray*}
&&m^{*}\cK\boxtimes\cF=(m\times\id_{X})^{*}(\cK\boxtimes\cF)\xrightarrow{(m\times\id_{X})^{*}\a^{-1}}(m\times\id_{X})^{*}a^{*}\cF\\
&=&(\id_{L}\times a)^{*}a^{*}\cF\xrightarrow{(\id_{L}\times a)^{*}\a}(\id_{L}\times a)^{*}(\cK\boxtimes\cF)=\cK\boxtimes a^{*}\cF\xrightarrow{\id\boxtimes\a}\cK\boxtimes\cK\boxtimes\cF
\end{eqnarray*}
is equal to $\mu\boxtimes\id_{\cF}$.

In the main body of the paper we need a generalization of the notion of $(L,\calK)$-equivariant perverse sheaves. One can define, using the theory of $\ell$-adic sheaves on Artin stacks as developed by Laszlo and Olsson \cite{LO} or Liu and Zheng \cite{LZ}, a derived category $D^{b}_{(L,\calK)}(X)$ of $(L,\cK)$-equivariant $\Qlbar$-complexes on $X$. This construction works also when $X$ is an Artin stack of finite type.

\subsubsection{Twisted representations}\label{sss:twisted rep} To describe equivariant sheaves on a homogeneous variety, we need the notion of twisted representations of a group which we now recall. Let $\Gamma$ be a group and $\xi\in Z^{2}(\Gamma, \Qlbar^{\times})$ be a cocycle such that $\xi_{1,1}=1$. A $\xi$-twisted representation of $\Gamma$ is a finite-dimensional $\Qlbar$-vector space $V$ with automorphisms $T_{\gamma}:V\to V$, one for each $\gamma\in\Gamma$, such that $T_{1}=\id_{V}$ and
\begin{equation}\label{Twrep}
T_{\gamma\delta}=\xi_{\gamma,\delta}T_{\gamma}T_{\delta}, \forall\gamma,\delta\in\Gamma.
\end{equation}
A $\xi$-twisted representation of $\Gamma$ on $V$ gives a projective representation $\overline{T}:\Gamma\to\PGL(V)$ whose image under the connecting homomorphism $\cohog{1}{\Gamma,\PGL(V)}\to\cohog{2}{\Gamma,\Qlbar^{\times}}$ (associated with $1\to\Qlbar^{\times}\to\GL(V)\to\PGL(V)\to1$) is the class of $\xi$.

Let $\Rep_{\xi}(\Gamma)$ be the category of finite-dimensional $\xi$-twisted representations of $\Gamma$. It is an $\Qlbar$-linear abelian category. If $\xi'$ is another such cocycle in the same cohomology class as $\xi$, then $\xi'=\xi\cdot d\eta$ for some map $\eta: \Gamma\to \Qlbar^{\times}$ with $\eta_{1}=1$, and $\eta$ induces an equivalence $\Rep_{\xi}(\Gamma)\isom\Rep_{\xi'}(\Gamma)$ sending $(V,\{T_{\gamma}\}_{\gamma\in\Gamma})$ to $(V,\{\eta_{\gamma}T_{\gamma}\}_{\gamma\in\Gamma})$.  Therefore the category $\Rep_{\xi}(\Gamma)$ up to equivalence only depends on the cohomology class $[\xi]\in\cohog{2}{\Gamma,\Qlbar^{\times}}$. 

We consider the situation where $X$ is a homogeneous $L$-scheme.
\begin{lemma}\label{l:Ac} Let $k=\kbar$. Let $X$ be a scheme over $k$ with a transitive action of an algebraic group $L$, and let $\cK\in\cCS(L)$. Fix a point $x\in X(k)$ and let $L_{x}$ be its stabilizer with neutral component $L^{\circ}_{x}$. 
\begin{enumerate}
\item If $\calK|_{L_{x}^{\circ}}$ is not isomorphic to the constant sheaf, then $\Loc_{(L,\calK)}(X)$ consists only of the zero object.
\item If $\calK|_{L_{x}^{\circ}}$ is isomorphic to the constant sheaf, then $\calK|_{L_{x}}$ defines a class $\xi\in\cohog{2}{\pi_{0}(L_{x}),\Qlbar^{\times}}$, such that there is an equivalence of categories
\begin{equation*}
\Loc_{(L,\calK)}(X)\cong\Rep_{\xi}(\pi_{0}(L_{x})).
\end{equation*}
\end{enumerate}
\end{lemma}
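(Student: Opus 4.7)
I plan to reduce the problem to a descent calculation along the orbit map $\pi:L\to X$, $g\mapsto g\cdot x$, which is an $L_{x}$-torsor for right multiplication (recall $k=\kbar$). For $\cF\in\Loc_{(L,\cK)}(X)$, the pullback $\pi^{*}\cF$ carries two structures: a $(L,\cK)$-equivariance for left translation (induced by the equivariance of $\cF$), and a descent datum relative to $\pi$. First I would observe that because left translation of $L$ on itself is simply transitive, any $(L,\cK)$-equivariant local system on $L$ is canonically of the form $\cK\otimes V$ for the constant local system $V=(\pi^{*}\cF)_{e}=\cF_{x}$, via restriction of the equivariance isomorphism to $L\times\{e\}$. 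Transporting the $L_{x}$-descent datum through this identification and invoking the character sheaf structure $\mu:m^{*}\cK\cong\cK\boxtimes\cK$ and its cocycle property \eqref{cocycle}, I would show that the descent datum is equivalent to an isomorphism $\beta:\cK|_{L_{x}}\otimes V\isom V$ of local systems on $L_{x}$, satisfying a cocycle condition on $L_{x}\times L_{x}$.

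For Part~(1), the existence of such a $\beta$ with $\dim V\geq 1$ forces $\cK|_{L_{x}}^{\oplus\dim V}$ to be trivial as a local system on $L_{x}$; restricting to $L_{x}^{\circ}$ and using that $\cK|_{L_{x}^{\circ}}$ has rank one, this forces $\cK|_{L_{x}^{\circ}}\cong\Qlbar$. Contrapositively, if $\cK|_{L_{x}^{\circ}}$ is not trivial then $V=0$, hence $\cF=0$.

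For Part~(2), assume $\cK|_{L_{x}^{\circ}}\cong\Qlbar$ in $\cCS(L_{x}^{\circ})$. By Lemma~\ref{l:descent cs}, a choice of such trivialization descends $\cK|_{L_{x}}$ to a character sheaf on the finite group scheme $\pi_{0}(L_{x})$, which by Lemma~\ref{l:char sh discrete} corresponds to a class $\xi\in\cohog{2}{\pi_{0}(L_{x}),\Qlbar^{\times}}$; different trivializations differ by $\Qlbar^{\times}$ and yield the same class. Since any rank one local system on the discrete scheme $\pi_{0}(L_{x})$ is trivial as a local system, $\cK|_{L_{x}}$ itself is trivial as a local system; choose a trivialization compatible with the one on $L_{x}^{\circ}$. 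Under this trivialization, $\beta$ becomes a locally constant function $L_{x}\to\GL(V)$, which factors through a map $T:\pi_{0}(L_{x})\to\GL(V)$. The cocycle condition on $\beta$, once unwound through the character sheaf structure, translates precisely into the relation $T_{gh}=\xi_{g,h}T_{g}T_{h}$ of \eqref{Twrep}, exhibiting $V$ as a $\xi$-twisted representation of $\pi_{0}(L_{x})$. This gives a functor $\Loc_{(L,\cK)}(X)\to\Rep_{\xi}(\pi_{0}(L_{x}))$, $\cF\mapsto V=\cF_{x}$.

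To conclude I would construct the quasi-inverse: given $(V,\{T_{g}\})\in\Rep_{\xi}(\pi_{0}(L_{x}))$, form $\cK\otimes V$ on $L$ with the $L_{x}$-descent datum defined by the $T_{g}$'s (twisted by $\cK|_{L_{x}}$ via the chosen trivialization), verify the cocycle condition, descend via $\pi$, and check the resulting local system on $X$ carries a canonical $(L,\cK)$-equivariance. A direct comparison shows the two functors are mutually quasi-inverse. The main obstacle is the bookkeeping in Step~2 (recasting the $L_{x}$-torsor descent datum as the isomorphism $\beta$ on $L_{x}$) and its cocycle version in Step~6 (matching the descent cocycle on $L_{x}\times L_{x}$ with the $\xi$-twisted multiplicativity of $T$); both require delicate, though essentially formal, manipulations of the coherence constraints \eqref{umu1}--\eqref{cocycle} built into the definition of $\cCS(L)$.
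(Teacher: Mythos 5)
Your proof is correct and arrives at the same reduction as the paper, but you traverse the quotient\nobreakdash-stack equivalence in the opposite direction. The paper simply observes that restriction to the base point $x$ gives an equivalence $\Loc_{(L,\cK)}(X)\cong\Loc_{(L_{x},\cK|_{L_{x}})}(\pt)$, descends $\cK|_{L_{x}}$ to $\pi_{0}(L_{x})$ via Lemma~\ref{l:descent cs}, and then does the cocycle unwinding for a discrete group exactly as you do in your final step. You instead pull back along the orbit $L_{x}$-torsor $\pi:L\to X$, canonically identify the $(L,\cK)$-equivariant pullback with $\cK\otimes V$, and recast the descent datum on $L\times_{X}L\cong L\times L_{x}$ as an isomorphism $\beta$ on $L_{x}$ with a cocycle condition. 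This is the ``induction'' direction of the same equivalence and makes it entirely explicit; the price is more bookkeeping with the constraints \eqref{umu1}--\eqref{cocycle}, which you acknowledge. One small imprecision: since $L_{x}^{\circ}$ is connected, an isomorphism $\cK|_{L_{x}^{\circ}}\cong\Qlbar$ in $\cCS(L_{x}^{\circ})$ is actually unique (the automorphism group is $\Hom(\pi_{0}(L_{x}^{\circ}),\Qlbar^{\times})=1$ by Remark~\ref{r:char}\eqref{char bc}), so rather than arguing that trivializations ``differ by $\Qlbar^{\times}$ and yield the same class'', you could note that the trivialization is canonical; the $\Qlbar^{\times}$-ambiguity you have in mind arises only at the level of local systems, and Remark~\ref{r:char}\eqref{char is a prop} upgrades it to a character-sheaf isomorphism. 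This does not affect the correctness of your argument.
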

\begin{proof}
(1) Restricting an object $\calF\in\Loc_{(L,\calK)}(X)$ to $x$ we get an $(L_{x},\cK|_{L_{x}})$-equivariant local system $\calF_{x}$ over the point $x$. The action map of $L_{x}$ on $x$ becomes the structure map $L_{x}\to\Spec k$. The equivariance condition gives an isomorphism between the constant sheaf $\calF_{x}\otimes\Qlbar$ on $L_{x}$ and $\calF_{x}\otimes \cK|_{L_{x}}$. In particular, if $\calF_{x}\neq0$, $\cK|_{L_{x}}$ must be the constant sheaf, and therefore $\cK|_{L^{\c}_{x}}$ is also isomorphic to the constant sheaf. 

(2) Since $\cK|_{L^{\c}_{x}}$ is trivial, $\cK|_{L_{x}}$ descends to a rank one character sheaf $\overline{\cK}$ on $\pi_{0}(L_{x})$, hence giving a class $\xi\in\cohog{2}{\pi_{0}(L_{x}),\Qlbar^{\times}}$ by Lemma \ref{l:char sh discrete}. Restricting a local system to $x$ gives an equivalence of categories $\Loc_{(L,\cK)}(X)\cong\Loc_{(\pi_{0}(L_{x}), \overline{\cK})}(\pt)$. Therefore it suffices to show that $\Loc_{(\Gamma, \cK)}(\pt)\cong\Rep_{\xi}(\Gamma)$ for any finite group $\Gamma$ (viewed as a discrete group scheme over $k=\kbar$) and any rank one character sheaf $\cK\in\cCS(\Gamma)$ giving the class $\xi\in\cohog{2}{\Gamma,\Qlbar^{\times}}$. Given $\calF\in\Loc_{(\Gamma, \cK)}(\pt)$, its stalk is a vector space $V$ equipped with isomorphisms $\varphi_{\gamma}: \cK_{\gamma}\otimes V\to V$, one for each $\gamma\in\Gamma$, satisfying the associativity condition and $\varphi_{1}=\id$. Choosing a trivialization of $\cK_{\gamma}$, the cocycle $\xi$ is defined by the recipe given in Lemma \ref{l:char sh discrete}(1), and $\varphi_{\gamma}$ is identified with an automorphism  $T_{\gamma}$ of $V$. The associativity of $\{\varphi_{\gamma}\}$ becomes the property \eqref{Twrep} for $\{T_{\gamma}\}$; i.e., the $\{T_{\gamma}\}$ give a $\xi$-twisted representation of $\Gamma$ on $V$. Therefore $\Loc_{(\Gamma, \cK)}(\pt)\cong\Rep_{\xi}(\Gamma)$.
\end{proof}



\begin{thebibliography}{99}

\bibitem{BBD}
Beilinson, A.; Bernstein, J.; Deligne, P. 
Faisceaux pervers. 
In {\em Analysis and topology on singular spaces, I},  5--171, 
Ast\'erisque, 100, Soc. Math. France, Paris, 1982.

\bibitem{BS}
Borel, A.; Springer, T.
Rationality properties of linear algebraic groups. II. 
T\^ohoku Math. J. 20, 443--497 (1968)


\bibitem{BorelJacquet}
Borel, A.; Jacquet, H.
Automorphic forms and automorphic representations.
With a supplement ``On the notion of an automorphic representation'' by R. P. Langlands. 
Proc. Sympos. Pure Math., XXXIII, Automorphic forms, representations and $L$-functions, Part 1, pp. 189-207, Amer. Math. Soc., Providence, R.I., 1979. 

\bibitem{BG}
Braverman, A.;  Gaitsgory, D.
Geometric Eisenstein series.
Invent. math. 150 (2002), 287-384.


\bibitem{Deligne-ExpSum}
Deligne, P.
Applications de la formule des traces aux sommes trigonom\'etriques.
In {\em Cohomologie \'etale.} SGA 4$\frac{1}{2}$. 
Lecture Notes in Mathematics, Vol. 569. Springer-Verlag, Berlin-New York, 1977. 


\bibitem{WeilII}
Deligne, P.
La conjecture de Weil. II. 
Inst. Hautes \'Etudes Sci. Publ. Math. No. 52 (1980), 137--252.


\bibitem{DM}
Deligne, P.; Milne, J. S.
Tannakian categories.
In {\em Hodge cycles, motives, and Shimura varieties}, 101--228.
Lecture Notes in Mathematics, Vol. 900. Springer-Verlag, Berlin-New York,  1982.


\bibitem{DR}
Dettweiler, M.; Reiter, S. 
Rigid local systems and motives of type $G_2$. With an appendix by M.Dettweiler and N. M. Katz.  
Compositio Math.  146  (2010),  no. 4, 929--963.

\bibitem{FF}
Feit, W.; Fong, P. 
Rational rigidity of $G_{2}(p)$ for any prime $p>5$. 
Proceedings of the Rutgers group theory year, 1983-1984 (New Brunswick, N.J., 1983-1984), 323-326, Cambridge Univ. Press, Cambridge, 1985.


\bibitem{FG}
Frenkel, E.; Gross, B.
A rigid irregular connection on the projective line.
Ann. of Math. (2), 170 (2009), no. 3, 1469--1512.

\bibitem{GaDJ}
Gaitsgory, D.
On de Jong's conjecture.
Israel J. Math. 157 (2007), 155-191. 

\bibitem{Ginz}
Ginzburg, V.
Perverse sheaves on a Loop group and LanglandsÕ duality.
arXiv:math/9511007.

\bibitem{GR}
Gross, B.; Reeder, M.
Arithmetic invariants of discrete Langlands parameters. 
Duke Math. J. 154 (2010), no. 3, 431--508.

\bibitem{Gross-Prescribe}
Gross, B.
Irreducible cuspidal representations with prescribed local behavior.
Amer. J. Math. 133 (2011), no. 5, 1231-1258.

\bibitem{Gross-Trivial L}
Gross, B.
Trivial L-functions for the rational function field. 
J. Number Theory 133 (2013), no. 3, 970-976.


\bibitem{SGA1}
Grothendieck, A. 
Rev\^etements \'etales et groupe fondamental. 
S\'eminaire de G\'eom\'etrie Alg\'ebrique, 1960/61.
Institut des Hautes \'Etudes Scientifiques, Paris 1963.


\bibitem{GM}
Guralnick, R.; Malle, G.
Rational rigidity for $E_8(p)$.
Composito Math., to appear. arXiv:1207.1464.  

\bibitem{Heinloth}
Heinloth, J.
Uniformization of $\mathcal{G}$-bundles. 
Math. Ann. 347 (2010), 499Ð528

\bibitem{HNY}
Heinloth, J.; Ng\^o, B-C.; Yun, Z.
Kloosterman sheaves for reductive groups.
Ann. of Math. (2) 177 (2013), no. 1, 241--310. 


\bibitem{SGA5VI}
Jouanolou, J.-P.
Cohomologie $\ell$-adique.
Expos\'e VI in {\em S\'eminaire de G\'eom\'etrie Alg\'ebrique du Bois-Marie 1965-66} (SGA 5),
Lecture Notes in Mathematics, Vol. 589. Springer-Verlag, Berlin-New York,  1977.

\bibitem{Katz-canon}
Katz, N. M.
Local-to-global extensions of representations of fundamental groups. 
Ann. Inst. Fourier (Grenoble) 36 (1986), no. 4, 69-106. 

\bibitem{Katz-Gauss}
Katz, N. M.
{\em Gauss sums, Kloosterman sums, and monodromy groups.} 
Annals of Mathematics Studies, 116. 
Princeton University Press, Princeton, NJ, 1988. x+246 pp. 


\bibitem{Katz-DE}
Katz, N. M.
{\em Exponential Sums and Differential Equations.}
Ann. of Math. Studies, 124, 
Princeton Univ. Press, Princeton, NJ, 1990.

\bibitem{Katz}
Katz, N. M. 
{\em Rigid local systems. }
Annals of Mathematics Studies, 139. 
Princeton University Press, Princeton, NJ, 1996. viii+223 pp. 

\bibitem{Kottwitz}
Kottwitz, R.
Isocrystals with additional structure. II. 
Compositio Math. 109 (1997), no. 3, 255-339.


\bibitem{VLaff}
Lafforgue, V.
Chtoucas pour les groupes r\'eductifs et param\'etrisation de Langlands globale.
arXiv:1209.5352.


\bibitem{LO}
Laszlo, Y.; Olsson, M. 
The six operations for sheaves on Artin stacks. I. Finite coefficients.  II. Adic coefficients.  
Publ. Math. Inst. Hautes \'Etudes Sci.  No. 107  (2008), 109-168, 169-210.


\bibitem{LZ}
Liu, Y.; Zheng, W.
Enhanced six operations and base change theorem for sheaves on Artin stacks.
arXiv:1211.5948.

\bibitem{Lu}
Lusztig, G. 
Singularities, character formulas, and a $q$-analog of weight multiplicities. 
{\em Analysis and topology on singular spaces, II, III} (Luminy, 1981), 208-229, 
Ast\'erisque, 101-102, Soc. Math. France, Paris, 1983.

\bibitem{MM}
Malle, G.; Matzat, B. H.
{\em Inverse Galois theory}.
Springer Monographs in Mathematics. Springer-Verlag, Berlin, 1999.

\bibitem{MVW}
Matthews, C. R.; Vaserstein, L. N.; Weisfeiler, B.
Congruence properties of Zariski-dense subgroups. I.
Proc. London Math. Soc. (3) 48 (1984), no. 3, 514-532. 


\bibitem{Milne}
Milne, J. S. 
Shimura varieties and motives. 
In {\em Motives} (Seattle, WA, 1991), 447-523, 
Proc. Sympos. Pure Math., 55, Part 2, Amer. Math. Soc., Providence, RI, 1994.

\bibitem{MV}
Mirkovi\'c, I.; Vilonen, K. 
Geometric Langlands duality and representations of algebraic groups over commutative rings.  
Ann. of Math. (2)  166  (2007),  no. 1, 95-143.



\bibitem{Nori}
Nori, M.
On subgroups of $\GL_{n}(\FF_{p})$. 
Invent. Math. 88 (1987), no. 2, 257-275. 


\bibitem{RY}
Reeder, M.; Yu, J-K.
Epipelagic representations and invariant theory.
J. Amer. Math. Soc., electronically published on August 5, 2013


\bibitem{Serre-Proalg}
Serre, J.P.
Groupes proalg\'ebriques. 
Inst. Hautes \'Etudes Sci. Publ. Math. No. 7, 5--67, 1960.

\bibitem{Serre-local}
Serre, J.P.
{\em Corps locaux}. (Deuxi\'eme \'edition) 
Publications de l'Universit\'e de Nancago, No. VIII. Hermann, Paris, 1968. 245 pp.

\bibitem{Serre-motive}
Serre, J.P.
Propri\'et\'es conjecturales des groupes de Galois motiviques et des repr\'esentations $\ell$-adiques. 
In {\em Motives} (Seattle, WA, 1991), 377--400,
Proc. Sympos. Pure Math., 55, Part 1, Amer. Math. Soc., Providence, RI, 1994.

\bibitem{Serre-Coho}
Serre, J.P. 
Cohomologie Galoisienne. 
Fifth edition. Lecture Notes in Mathematics, 5. Springer-Verlag, Berlin, 1994. x+181 pp.

\bibitem{Serre-Galois}
Serre, J.P.
{\em Topics in Galois theory}. Second edition. With notes by Henri Darmon. 
Research Notes in Mathematics, 1. A K Peters, Ltd., Wellesley, MA, 2008. xvi+120 pp.


\bibitem{Thompson}
Thompson, J. G.
Rational rigidity of $G_{2}(5)$. 
Proceedings of the Rutgers group theory year, 1983-1984 (New Brunswick, N.J., 1983-1984), 321-322, Cambridge Univ. Press, Cambridge, 1985. 


\bibitem{Y-motive}
Z.Yun, 
Motives with exceptional Galois groups and the inverse Galois problem.
Invent. Math., 196 (2014), Issue 2,  267-337.


\bibitem{YunKl}
Z. Yun,
Galois representations attached to moments of Kloosterman sums and conjectures of Evans (with an appendix by C.Vincent).
Compositio Math., to appear. arXiv: 1308.3920.

\bibitem{Y-GenKloo}
Z.Yun, 
Epipelagic representations and rigid local systems. 
arXiv:1401.7647.

\end{thebibliography}
\end{document}